\numberwithin{equation}{section}
\newtheorem{theorem}{Theorem}[section]
\newtheorem{remark}[theorem]{Remark}
\newtheorem{lemma}[theorem]{Lemma}
\newtheorem{proposition}[theorem]{Proposition}
\newtheorem{definition}[theorem]{Definition}
\newcommand{\C}{\mathbf{C}}
\newcommand{\D}{\mathbf{D}}
\newcommand{\E}{\mathbf{E}}
\newcommand{\h}{\mathbf{H}}
\newcommand{\N}{\mathbf{N}}
\newcommand{\Z}{\mathbf{Z}}
\newcommand{\p}{\mathbf{P}}
\newcommand{\Q}{\mathbf{Q}}
\newcommand{\R}{\mathbf{R}}
\newcommand{\s}{\mathbf{S}}
\newcommand{\Fh}{\mathfrak {h}}
\newcommand{\CA}{\mathcal {A}}
\newcommand{\CB}{\mathcal {B}}
\newcommand{\CC}{\mathcal {C}}
\newcommand{\CD}{\mathcal {D}}
\newcommand{\CE}{\mathcal {E}}
\newcommand{\CF}{\mathcal {F}}
\newcommand{\CK}{\mathcal {K}}
\newcommand{\CL}{\mathcal {L}}
\newcommand{\CM}{\mathcal {M}}
\newcommand{\CN}{\mathcal {N}}
\newcommand{\CS}{\mathcal {S}}
\newcommand{\CT}{\mathcal {T}}
\newcommand{\CW}{\mathcal {W}}
\newcommand{\CZ}{\mathcal {Z}}
\newcommand{\CG}{\mathcal {G}}
\newcommand{\CH}{\mathcal {H}}
\newcommand{\SLE}{{\rm SLE}}
\newcommand{\CLE}{{\rm CLE}}
\newcommand{\dist}{\mathrm{dist}}
\newcommand{\diam}{\mathrm{diam}}
\newcommand{\var}{\mathrm{var}}
\newcommand{\im}{\mathrm{Im}}
\newcommand{\re}{\mathrm{Re}}
\newcommand{\cen}{\mathrm{cen}}
\newcommand{\len}{\mathrm{len}}
\newcommand{\confrad}{{\rm CR}}
\newcommand{\one}{{\bf 1}}
\newcommand{\wt}{\widetilde}
\newcommand{\wh}{\widehat}
\newcommand{\ol}{\overline}
\newcommand{\ul}{\underline}
\newcommand{\giv}{\,|\,}
\definecolor{ddred}{rgb}{0.5,0,0}
\definecolor{dred}{rgb}{0.8,0,0}
\definecolor{ddgreen}{rgb}{0,0.4,0}
\definecolor{dgreen}{rgb}{0,0.8,0}
\definecolor{ddblue}{rgb}{0,0,0.4}
\definecolor{dblue}{rgb}{0,0,0.8}
\definecolor{dorange}{rgb}{0.7,.3,0}
\newcommand{\BES}{\mathrm{BES}}
\newcommand{\strip}{{\mathscr S}}
\newcommand{\IG}{\mathrm{IG}}
\newcommand{\disthyp}{\mathrm{dist}_{\mathrm{hyp}}}
\newcommand{\Bhyp}{B_{\mathrm{hyp}}}
\newcommand{\pin}{\partial^{\mathrm{in}}}
\newcommand{\pout}{\partial^{\mathrm{out}}}
\newcommand{\Bin}{\mathrm{Bin}}
\newcommand{\closure}[1]{\mathrm{cl}(#1)}
\newcommand{\cut}{\mathrm{cut}}
\newcommand{\cutmeasure}[1]{\mu_{#1}^\cut}
\newcommand{\qcutmeasure}[2]{\mu_{#1,#2}^\cut}
\newcommand{\qmeasure}[1]{\mu_{#1}}
\newcommand{\qbmeasure}[1]{\nu_{#1}}
\newcommand{\qintmeasure}[3]{\mu_{#1,#2,#3}^\cap}
\newcommand{\intmeasure}[2]{\mu_{#1,#2}^\cap}
\newcommand{\UR}{\mathrm{UR}}
\newcommand{\udimM}{\ol{\dim}_M}
\begin{document}

\title[Conformal removability of non-simple Schramm-Loewner evolutions]{Conformal removability of\\non-simple Schramm-Loewner evolutions}

\author{Konstantinos Kavvadias, Jason Miller, Lukas Schoug}

\newcommand{\adjcon}{\CK}

\begin{abstract}
We consider the Schramm-Loewner evolution ($\SLE_\kappa$) for $\kappa \in (4,8)$, which is the regime that the curve is self-intersecting but not space-filling.  We let $\adjcon$ be the set of $\kappa \in (4,8)$ for which the adjacency graph of connected components of the complement of an $\SLE_\kappa$ is a.s.\ connected, meaning that for every pair of complementary components $U, V$ there exist complementary components $U_1,\ldots,U_n$ with $U_1 = U$, $U_n = V$, and $\partial U_i \cap \partial U_{i+1} \neq \emptyset$ for each $1 \leq i \leq n-1$.  It was proved by Gwynne and Pfeffer \cite{gp2020adj} that this set is non-empty. We show that the range of an $\SLE_\kappa$ for $\kappa \in \adjcon$ is a.s.\ conformally removable, which answers a question of Sheffield.  As a step in the proof, we construct the canonical conformally covariant volume measure on the cut points of an $\SLE_\kappa$ for $\kappa \in (4,8)$ and establish a precise upper bound on the measure that it assigns to any Borel set in terms of its diameter.
\end{abstract}

\date{\today}
\maketitle

\setcounter{tocdepth}{1}

\tableofcontents

\parindent 0 pt
\setlength{\parskip}{0.20cm plus1mm minus1mm}
 
\section{Introduction}
\label{sec:intro}

The Schramm-Loewner evolution ($\SLE_\kappa$) is a one parameter family of probability measures on curves which connect two boundary points in a simply connected domain.  It was introduced by Schramm~\cite{s2000sle} as a candidate for the scaling limit of the interfaces of discrete planar lattice models at criticality.  Since Schramm's original work, many such models have been conjectured to converge to $\SLE_\kappa$ in the scaling limit.  In the setting where the underlying graph is a planar lattice, this has been established in a number of different cases~\cite{s2001cardy,lsw2004lerw,ss2009contours,s2010ising} and more recently several works have proved convergence results in the setting where the underlying graph is given by a random planar map~\cite{s2016inventory,kmsw2019bipolar,gm2017perc,lsw2017wood,gm2021saw}.  The parameter $\kappa \geq 0$ determines the roughness of an $\SLE_\kappa$ curve.  An $\SLE_0$ is a smooth curve and $\SLE_\kappa$ curves become more fractal as $\kappa$ increases.  There are three regimes of $\kappa$ values which are of particular importance.  Namely, $\SLE_\kappa$ curves are simple for $\kappa \leq 4$, self-intersecting but not space-filling for $\kappa \in (4,8)$, and space-filling for $\kappa \geq 8$~\cite{rs2005basic}.  Moreover, the a.s.\ dimension of the range of an $\SLE_\kappa$ curve is given by $\min(1+\kappa/8,2)$~\cite{rs2005basic,beffara2008dimension}.

Schramm's original construction of $\SLE_\kappa$ is given in terms of the Loewner equation which we recall is defined as follows.  Recall that a set $A \subseteq \h$ is called a compact $\h$-hull if $\closure{A}$ is compact and $\h \setminus A$ is simply connected.  It was proved by Loewner that if one has a family $(K_t)_{t \geq 0}$ of compact $\h$-hulls with $K_0 = \emptyset$ which is non-decreasing, parameterized by half-plane capacity, and locally growing then there exists a continuous function $W \colon \R_+ \to \R$ so that if
\begin{equation}
\label{eqn:chordal_loewner_ode}
\partial_t g_t(z) = \frac{2}{g_t(z) - W_t},\quad g_0(z) = z
\end{equation}
then for each $t \geq 0$, $g_t$ is the unique conformal map $\h \setminus K_t \to \h$ with $g_t(z) -z \to 0$ as $z \to \infty$.  That is, one can encode a growing family of compact $\h$-hulls in terms of an $\R$-valued continuous function.  Conversely, if one solves~\eqref{eqn:chordal_loewner_ode} for a given continuous function $W \colon \R_+ \to \R$ and lets $K_t$ be the complement in $\h$ of the domain of $g_t$ then $(K_t)_{t \geq 0}$ defines a family of $\h$-hulls satisfying the assumptions above and $g_t$ is the unique conformal map $\h \setminus K_t \to \h$ with $g_t(z) -z \to 0$ as $z \to \infty$.  Schramm realized that if one assumes that the interfaces in a planar lattice model have a scaling limit, satisfy a spatial version of the Markov property, and are conformally invariant and one uses the chordal Loewner equation to encode their scaling limit then $W = \sqrt{\kappa} B$ where $\kappa \geq 0$ and $B$ is a standard Brownian motion.  It is not obvious from~\eqref{eqn:chordal_loewner_ode} that this defines a continuous curve and this was proved in \cite{rs2005basic} for $\kappa \neq 8$ and for $\kappa = 8$ in \cite{lsw2004lerw} (see also \cite{am2022sle8} for a proof which does not make use of discrete models).

Since Schramm's original work, several other representations of $\SLE_\kappa$ have been discovered.  In particular, there has been a substantial amount of work in recent years based on and centered around its representation as a random conformal welding, which we recall is defined as follows. Suppose that $\D_1$, $\D_2$ are copies of the unit disk $\D$ and $\phi \colon \partial \D_1 \to \partial \D_2$ is a homeomorphism.  A \emph{conformal welding} with \emph{welding homeomorphism $\phi$} consists of a simple loop $\eta$ on $\s^2$ and a pair of conformal maps $\psi_i$ from $\D_i$ to the two sides of $\s^2 \setminus \eta$ so that $\phi = \psi_2^{-1} \circ \psi_1$.  It is in general not obvious whether a conformal welding exists for a given welding homeomorphism $\phi$ and, if so, whether it is unique.  It was shown by Sheffield \cite{she2016zipper} that if the welding homeomorphism is defined using the boundary length measure for a certain type of Liouville quantum gravity (LQG) surface then the conformal welding exists and the welding interface is an $\SLE_\kappa$ curve for $\kappa < 4$.  Sheffield's welding result was extended to the case $\kappa = 4$ in \cite{hp2021welding} and a number of welding results for $\kappa > 4$ were proved in \cite{dms2021mating}.  In particular, the $\SLE_\kappa$ curves for $\kappa \in (4,8)$ arise as the welding interface when one glues together two independent stable looptrees $\CT_1$, $\CT_2$ where each loop is filled with a conditionally independent quantum disk.  A stable looptree consists of a countable collection of loops and the loops of $\CT_1$ (resp.\ $\CT_2$) correspond to the complementary components that the $\SLE_\kappa$ surrounds on its left (resp.\ right) side.  The welding in this case is defined as follows.  We first note that the stable looptrees $\CT_i$ each come with a measure $\nu_i$ on their outer boundary (in the same way that the continuum random tree comes with a measure).  In the finite volume case (so that the welding is homeomorphic to $\s^2$) we assume that we have conditioned $\nu_1$, $\nu_2$ to have total mass $1$ (or more generally have  sampled $\ell$  from some distribution on $\R_+$ and then take $\nu_1$, $\nu_2$ to have total mass $\ell$).  A pair of homeomorphisms $\psi_i \colon \CT_i \to \s^2$ for $i = 1,2$ give a welding of $\CT_1$, $\CT_2$ if they are conformal in each of the loops of the~$\CT_i$ (hence giving an embedding of each of the quantum disks in the $\CT_i$ into $\s^2$), the interiors of the loops of $\CT_1$, $\CT_2$ are embedded into disjoint subsets of $\s^2$, and $\psi_2^{-1} \circ \psi_1$ (which is only defined on the boundary of $\CT_1$) takes $\nu_1$ to $\nu_2$.  We refer the reader to \cite[Figures~1.7 and~1.8]{dms2021mating} for further explanation as well as a visualization of the definition of a welding in this setting as well as \cite[Section~1.3]{dms2021mating} for a discussion as to how one can use Moore's theorem to show that the topological gluing of independent trees results in a topological sphere without reference to $\SLE_\kappa$.

Recall that a set $K \subseteq \C$ is said to be \emph{conformally removable} if every homeomorphism $\varphi \colon \C \to \C$ which is conformal on $\C \setminus K$ is conformal on $\C$.  If a curve that arises as the welding interface is conformally removable, then it is the only curve that can arise as the welding curve corresponding to that welding homeomorphism, that is, the conformal welding is unique. In order to check that a simple curve is conformally removable, one often makes use of a criterion due to Jones and Smirnov \cite{js2000removability}.  We will not describe the Jones-Smirnov criterion in detail here, but in \cite{js2000removability} it is shown that if one views the simple curve as the boundary of a simply connected domain $D$ in $\C$ then the condition is implied by the modulus of continuity $\omega$ of the associated Riemann map $\varphi \colon \D \to D$ decaying to~$0$ sufficiently quickly as $\delta \to 0$.  For example, it is sufficient if~$\varphi$ is H\"older continuous up to $\partial \D$ meaning that~$\omega$ decays to~$0$ as $\delta \to 0$ as a power of~$\delta$.  In this case, $D$ is called a H\"older domain.  In fact, it is explained in \cite{js2000removability} that one can assume a much weaker hypothesis on $\omega$ and this was further refined in \cite{kn2005remove}.

In the case that $\kappa \in (0,4)$, it was proved in \cite{rs2005basic} that an $\SLE_\kappa$ curve forms the boundary of a H\"older domain so one obtains the conformal removability from \cite{js2000removability}.  The optimal H\"older exponent for the uniformizing map was computed in \cite{gms2018multifractal} and it is equal to $0$ for $\kappa = 4$ so that $\SLE_4$ does not form the boundary of a H\"older domain.  It was later shown in \cite{kms2021regularity} that the modulus of continuity of the $\SLE_4$ uniformizing map decays to $0$ as $(\log \delta^{-1})^{-1/3+o(1)}$ as $\delta \to 0$ and in fact that the condition of \cite{js2000removability} does not hold for $\SLE_4$.  The conformal removability of $\SLE_4$, however, was recently proved in the companion work \cite{kms2022sle4remov}.  $\SLE_\kappa$ curves for $\kappa \geq 8$ cannot be conformally removable because they are space-filling and any set of positive Lebesgue measure is not conformally removable.  This leaves the regime $\kappa \in (4,8)$, which is the focus of the present work.

The complement of an $\SLE_\kappa$ curve with $\kappa \in (4,8)$ consists of a countable collection of simply connected components, which means that it falls outside of the scope of \cite{js2000removability}.  In general, conformal removability in the context of domains which are not connected is more subtle.  For example, it is not difficult to check that the standard Sierpinski carpet is not conformally removable.  (See also the work \cite{n2021carpet} which proves that all topological Sierpinski carpets are not conformally removable.)  However, it is highly non-trivial to prove that the Sierpinski gasket is not conformally removable \cite{n2019nonremove}.  The main difference between the Sierpinski carpet and the Sierpinski gasket is that the boundaries of the complementary components in the latter intersect each other while they do not for the former.

Suppose that $D \subseteq \C$ is open and $K \subseteq D$ is closed in $D$.  We say that the adjacency graph of components of $K$ in $D$ is connected if for every pair of components $U,V$ of $D \setminus K$ there exist components $U_1,\ldots,U_n$ of $D \setminus K$ so that $U = U_1$, $V = U_n$, and $\partial U_i \cap \partial U_{i+1} \neq \emptyset$ for each $1 \leq i \leq n-1$.  Let $\adjcon$ be the set of $\kappa \in (4,8)$ so that the adjacency graph of complementary components of an $\SLE_\kappa$ in $\h$ is a.s.\ connected.

\begin{theorem}
\label{thm:sle_removable}
Fix $\kappa \in \adjcon$ and suppose that $\eta$ is an $\SLE_{\kappa}$ in $\h$ from $0$ to $\infty$.  It a.s.\ holds that if $f \colon \h \to \h$ is a homeomorphism which is conformal on $\h \setminus \eta$ then $f$ is conformal on all of $\h$.  In particular, the range of $\eta$ is a.s.\ conformally removable.
\end{theorem}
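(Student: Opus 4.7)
The plan is to first reduce the conformal removability of $\eta$ to the conformal removability of the cut point set $\adjcon' \subset \eta$ (the set of points of $\eta$ lying on the boundaries of more than two complementary components), and then to establish removability of the cut point set via the conformally covariant cut point measure and its diameter bound.

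For the reduction, I would use that each complementary component $U$ of $\h \setminus \eta$ is a Jordan domain whose boundary is, by $\SLE$ duality, locally controlled by an $\SLE_{16/\kappa}$-type curve with $16/\kappa \in (2,4)$; in particular $U$ is a H\"older domain in the sense of \cite{rs2005basic}, so by the Jones-Smirnov criterion \cite{js2000removability} each individual Jordan curve $\partial U$ is conformally removable, and $f|_U$ extends H\"older continuously to $\closure{U}$ as a conformal map onto $\closure{f(U)}$. If $p \in \eta$ is not a cut point, then a sufficiently small ball around $p$ meets $\eta$ only in arcs lying in a single $\partial U$, so on this ball $f$ is conformal off a single removable Jordan curve, and hence conformal through $p$. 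Consequently $f$ is already conformal on $\h \setminus \adjcon'$, and the full theorem reduces to showing that $\adjcon'$ itself is a removable set for $f$.

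The next step is to remove $\adjcon'$ using the conformally covariant cut point measure $\cutmeasure{\eta}$ constructed in the paper, which satisfies a sharp bound of the form $\cutmeasure{\eta}(A) \leq C\, \diam(A)^\alpha$ valid for all Borel $A$ and some $\alpha > 0$. Given that $f$ is H\"older on $\h$ and already conformal on $\h \setminus \adjcon'$, the idea is to apply a Jones-Smirnov--type removability criterion to $\adjcon'$ alone: one covers $\adjcon'$ efficiently using the measure and combines this with Koebe distortion estimates on each component of the complement to bound the total distortion of $f$ across $\adjcon'$, ultimately yielding that the distributional $\bar\partial f$ carries no mass on $\adjcon'$. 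The adjacency hypothesis $\kappa \in \adjcon$ enters precisely here: it guarantees that the normalization of $f$ on one component propagates through chains of adjacent components so that a single coherent measure-based criterion covers the whole cut point set (without $\adjcon$, cut points would fragment into disconnected ``islands'' for which such propagation would fail).

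The principal obstacle is the construction of $\cutmeasure{\eta}$ together with the diameter bound. Cut points form a random conformally covariant fractal of positive but sub-two Hausdorff dimension, and defining a canonical measure of the correct scaling dimension on them requires two- and multi-point $\SLE$ estimates, which I would approach via imaginary geometry flow lines together with direct $\SLE$ martingale computations to obtain the two-point Green's function for cut points. Upgrading pointwise dimension-type estimates to a \emph{uniform} upper bound on $\cutmeasure{\eta}(A)$ that holds for every Borel set $A$, rather than just for balls, is the most delicate part of the argument, as exactly this uniformity is what makes the removability criterion applicable to arbitrary coverings of $\adjcon'$.
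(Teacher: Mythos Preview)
Your reduction step contains a genuine gap. The claim that ``if $p \in \eta$ is not a cut point, then a sufficiently small ball around $p$ meets $\eta$ only in arcs lying in a single $\partial U$'' is false for $\kappa \in (4,8)$. Near \emph{every} point of the range of $\eta$ there are infinitely many complementary components (bubbles) of arbitrarily small diameter accumulating at that point; the local picture is never a single Jordan arc. More fundamentally, even granting that each individual $\partial U$ is conformally removable (which is true, since by duality it is an $\SLE_{16/\kappa}$-type boundary with $16/\kappa \in (2,4)$), removability is \emph{not} preserved under countable unions. The Sierpinski gasket is the standard cautionary example: each triangular hole has removable boundary, yet the gasket itself is non-removable by \cite{n2019nonremove}. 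So knowing that $f$ extends conformally across each $\partial U$ separately does not let you conclude that $f$ is conformal off any proper subset of $\eta$, and in particular not off the cut point set.

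The paper's argument is structurally quite different and does not pass through any such reduction. It verifies a multiscale removability criterion from the companion paper \cite[Theorem~8.1]{kms2022sle4remov}: for every point $z \in \eta \cap K$ and every scale $2^{-n}$, one must exhibit an annulus $A_{z,k}$ (with $k$ close to $n$) in which there is a finite cyclic chain $U_1,\ldots,U_m$ of complementary components of a \emph{localized} version of $\eta$ whose concatenated crossings separate the inside of the annulus from the outside, together with measures $\mu_j$ on $\partial U_{j-1}\cap\partial U_j$ satisfying upper and lower mass bounds and Whitney/hyperbolic-geodesic estimates (Definition~\ref{def:good_annulus}, Proposition~\ref{prop:good_probability_lbd}). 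The adjacency hypothesis $\kappa \in \adjcon$ is used not to ``propagate a normalization of $f$'' but to build these separating chains (Lemma~\ref{lem:graph_connected}), via an upgrade of the Gwynne--Pfeffer connectivity to a strong local form for $\CLE_{\kappa}$ explorations and radial $\SLE_{\kappa}(\kappa-6)$. The cut point measure $\cutmeasure{\eta}$ and its diameter bound (Lemma~\ref{lem:cut_point_measure_bc}) enter only as the source of the measures $\mu_j$ on the intersections $\partial U_{j-1}\cap\partial U_j$ in the criterion; they are not used to prove that the cut point set itself is removable.
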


Let us now make some comments about the statement of Theorem~\ref{thm:sle_removable}.
\begin{itemize}
\item It was proved by Gwynne and Pfeffer \cite{gp2020adj} that there exists $\kappa_0 \in (4,8)$ so that $(4,\kappa_0) \subseteq \adjcon$, so in particular $\adjcon \neq \emptyset$.  It is currently an open problem to determine $\adjcon$.
\item The assertion of Theorem~\ref{thm:sle_removable} holds if we replace $\h$ by an arbitrary simply connected domain~$D$ because we can first apply a conformal transformation which takes~$D$ to~$\h$.
\item One can think of an $\SLE_{\kappa}$ for $\kappa \in (4,8)$ as a random analog of the Sierpinski gasket in the sense that its range is a random fractal and the boundaries of its complementary components can intersect each other.  However, in contrast to \cite{n2019nonremove}, we find that the range of such a curve is conformally removable, at least for $\kappa \in \adjcon$.  One significant difference between the range of an $\SLE_\kappa$ for $\kappa \in (4,8)$ and the Sierpinski gasket is that the boundaries of two distinct complementary components with non-empty intersection of the latter intersect in exactly one place whereas in the case of the former the intersection set is uncountable and has dimension $3-3\kappa/8$ \cite{mw2017intersections}.
\end{itemize}

As a useful technical tool for proving things about $\SLE_\kappa$, in the proof of Theorem~\ref{thm:sle_removable} we will make use of the \emph{conformal loop ensembles} ($\CLE_\kappa$) \cite{s2009cle,sw2012markovian} .  Recall that $\CLE_\kappa$ is the loop analog of $\SLE_\kappa$.  They are defined for $\kappa \in (8/3,8)$ and consist of a countable collection of loops which do not cross each other and each locally look like an $\SLE_\kappa$ curve.  The phases of $\CLE_\kappa$ are analogous to $\SLE_\kappa$.  Namely, the loops are simple curves and do not intersect the domain boundary or each other for $\kappa \in (8/3,4]$ while they do intersect the domain boundary and each other for $\kappa \in (4,8)$.  The closure of the set of points not surrounded by a loop is called the \emph{$\CLE_\kappa$ carpet} if $\kappa \in (8/3,4]$ (in analogy with the Sierpinski carpet) and the \emph{$\CLE_\kappa$ gasket} if $\kappa \in (4,8)$ (in analogy with the Sierpinski gasket).  The dimension of the $\CLE_\kappa$ carpet (resp.\ gasket) is given by $2-(8-\kappa)(3\kappa-8)/(32\kappa)$ \cite{ssw2009radii,nw2011dimension,msw2014gasket}, so is a much larger set than the range of an $\SLE_\kappa$.  Both the $\CLE_\kappa$ carpet ($\kappa \in (8/3,4]$) and $\CLE_\kappa$ gasket ($\kappa \in (4,8)$) are in fact topological Sierpinski carpets, so are not conformally removable by \cite{n2021carpet}.  (We note that the non-removability of the the $\CLE_\kappa$ does not impact the validity of our result on the removability of $\SLE_\kappa$ since in the present paper it is used only as a technical tool.)

Let us now discuss briefly the differences between proving the conformal removability of $\SLE_4$ and the $\SLE_\kappa$ for $\kappa \in \CK$.  At a high level, the key difference between $\SLE_4$ and $\SLE_\kappa$ for $\kappa \in \CK$ is that the former can be viewed as the boundary of a Jordan domain while the latter is the boundary of a countable collection of Jordan domains.  Previously developed tools for removability focused on proving the removability of boundaries of Jordan domains but required that they satisfy strong regularity hypotheses which are not satisfied by $\SLE_4$ and did not even address the latter setting.  

In \cite{kms2022sle4remov}, we formulated a new removability condition which in \cite{kms2022sle4remov} and the present paper we show is applicable in both settings.  We review the exact conditions which we will verify in order to prove that a set $K \subseteq \C$ is conformally removable in Section~\ref{subsec:removability_theorem}. What it boils down to is checking that for every $z \in K$ and at a sufficiently dense set of integers $k$ (subject to certain parameter choices) there exists a ``nice'' curve $\gamma$ circling $z$ in an annular region of size approximately $2^{-k}$.  Here, ``nice'' roughly means the curve $\gamma$ only passes through a finite number of subdomains $U_1,\ldots,U_m$ of $\C \setminus K$ (condition~\eqref{it:components}; see Figure~\ref{fig:annulus_condition}) with some control on the relationship between the hyperbolic metric and Euclidean metric as $\gamma$ approaches each $\partial U_i$.  In practice, the $U_1,\ldots,U_m$ will not be components of $\C \setminus K$ but rather complementary components of an enlargement of $K$.  In order to ensure the existence of a good choice of $\gamma$, we require that there are sufficiently many ``good'' points in each of the intersections $\partial U_i \cap \partial U_{i+1}$ for $1 \leq i \leq m$ where $U_{m+1} = U_1$.  Goodness of a point $w \in \partial U_i$ is determined by the structure of the nested bottlenecks as one approaches $w$ along a curve from a fixed reference point in $U_i$ and this is quantified precisely in terms of the relationship between the Euclidean and hyperbolic metrics on $U_i$.  We quantify the occurrence of good points using measures on (parts of) the intersections $\partial U_i \cap \partial U_{i+1}$.  These measures should be viewed as the natural Hausdorff measure on the intersection, but the construction and properties of the measures is a non-trivial matter.  Altogether, these assumptions serve to restrict the way that the $U_i$ ``harmonically communicate'' with each other.  More good points in each intersection $\partial U_i \cap \partial U_{i+1}$ translates into more ``harmonic rigidity''; this is not present in the case of the Sierpinski gasket as in that case components whose boundaries intersect do so at precisely one point and this is what makes it possible to construct the exceptional homeomorphism in \cite{n2019nonremove}.

The difficulties for the cases of $\SLE_4$ and $\SLE_\kappa$ for $\kappa \in \CK$ are largely complementary. Indeed, in the $\SLE_4$ case, each measure is taken to be the natural parameterization on parts of $\SLE_4$-type curves which serve to enlarge the original $\SLE_4$ and are described in terms of level lines of the GFF.  The construction and precise estimates for the natural parameterization were studied extensively before \cite{ls2011natural,lz2013natural,lr2015minkowski,b2018natural,rz2017higher} so we are able to use these results as a black box.  Considerable effort in \cite{kms2022sle4remov} was aimed at showing that most points (according to the natural parameterization) on an $\SLE_4$ have the correct bound on the growth of hyperbolic distance relative to the Euclidean distance as one approaches $\SLE_4$ curve; this is non-trivial since $\SLE_4$ curves do not cut out H\"older domains and form nested and tight bottlenecks.  Much of the effort in \cite{kms2022sle4remov} is also put into localizing the behavior of the $\SLE_4$ near a point in its range in order to make certain events approximately independent across scales.  On the other hand, to prove the removability of $\SLE_\kappa$ for $\kappa \in \CK$, it is necessary first to construct and prove the necessary (optimal) moment bounds for the natural measure on the cut points of an $\SLE_\kappa$.  Controlling the growth of the hyperbolic metric relative to the Euclidean metric in each $U_i$ is a quick matter, however, as the bubbles cut out by $\SLE_\kappa$ for $\kappa \in (4,8)$ are $\SLE_{16/\kappa}$ type curves hence H\"{o}lder domains.  Moreover, the localization procedure makes use of flow lines of the GFF starting from interior points, which is not possible in the case of $\SLE_4$.  Finally, it requires some effort to prove a localized version of the connectivity of the adjacency graph of the complementary components of an $\SLE_\kappa$ for $\kappa \in \CK$ which results in uniformity in the number and geometry of components at a sufficiently dense set of scales.

\subsection*{Outline}

The remainder of this article is structured as follows.  In Section~\ref{sec:preliminaries}, we will collect a number of preliminaries.  Next, in Section~\ref{sec:natural_measure_on_cut_points} we construct the canonical conformally covariant measure on the cut points of an $\SLE_{\kappa}$ for $\kappa \in (4,8)$.  In the case that $\kappa \in (4,8)$, this is a strict but non-empty subset of the range of an $\SLE_\kappa$ curve.  For $\kappa \in (0,4]$, an $\SLE_\kappa$ curve is simple so every point in its range is a cut point and $\SLE_\kappa$ for $\kappa \geq 8$ does not have cut points.  The measure that we construct is for $\kappa \in (4,8)$ the cut point analog of the so-called natural parameterization for $\SLE$ which was first constructed in \cite{ls2011natural,lz2013natural} and later shown to be equivalent to the Minkowski content in \cite{lr2015minkowski}.  The construction that we will give here makes use of the Gaussian free field (GFF) based approach to the natural parameterization developed in \cite{b2018natural} and also employed in \cite{ms2022volume}.  We will also establish a precise upper bound on the measure assigned to any Borel set in terms of its diameter, in analogy with the results proved for the natural parameterization in \cite{rz2017higher}.  The proof of this upper bound is based on GFF techniques and we expect that similar methods could lead to precise upper bounds for natural measures on $\SLE$-type fractals.  Next, in Section~\ref{sec:good_cube_lemmas} we will prove some harmonic measure estimates which we will later apply in the case of $\SLE_\kappa$ curves for $\kappa \in (4,8)$ near cut points.  Finally, in Section~\ref{sec:pocket_argument} we will complete the proof of Theorem~\ref{thm:sle_removable}.  This will involve showing that if $\kappa \in \adjcon$ then one has a stronger version of the adjacency graph of complementary components being connected which will allow us to localize the behavior of an $\SLE_\kappa$ in an annulus so that we can show that it satisfies the sufficient condition for conformal removability given in \cite{kms2022sle4remov}.

\subsection*{Notation}
We let $\Z$ denote the set of integers, $\N$ the set of positive integers, $\N_0 = \N \cup \{ 0 \}$, $\R$ the real numbers, $\C$ the complex plane, $\h$ the upper half-plane, $\D$ the unit disk and $\strip = \R \times (0,\pi)$. For a set $A \subseteq \C$, we denote by $\closure{A}$ the closure of $A$.

For two quantities $a,b$, we write $a \lesssim b$ if there is a constant $C$, independent of the parameters of interest, such that $a \leq C b$. Moreover, we write $a \gtrsim b$ if $b \lesssim a$ and we write $a \asymp b$ if $a \lesssim b$ and $a \gtrsim b$.

\subsection*{Acknowledgements}
J.M.\ and L.S.\ were supported by ERC starting grant 804166 (SPRS).  K.K.\ was supported by the EPSRC grant EP/L016516/1 for the University of Cambridge CDT (CCA) as well as by ERC starting grant 804166 (SPRS).

\section{Preliminaries}
\label{sec:preliminaries}

In this section, we will review some preliminaries which will be used throughout the rest of the paper.  We start in Section~\ref{subsec:measures} by recalling some basics of random measures before collecting some of the basic facts about the $\SLE$ processes and their variants in Section~\ref{subsec:sle}.  We will then give a review of the GFF in Section~\ref{subsec:gff} and then the coupling of $\SLE$ with the GFF in Section~\ref{subsec:ig}.  The purpose of Section~\ref{subsec:cle} is to review the construction of $\CLE$ and its relationship with space-filling $\SLE$.  Next, in Section~\ref{subsec:lqg} we will go over the basics of LQG and the aspects of its relationship with $\SLE$ which will be important for this work.  In Section~\ref{subsec:good_annuli} we will review the independence properties of the GFF across nested annuli and then, finally, in Section~\ref{subsec:removability_theorem} we review the criteria for conformal removability that we will use.

\subsection{Random measures}
\label{subsec:measures}
We now recall some basic facts about random measures. For more, we refer the reader to~\cite{kallenberg2017rmbook}. A random measure $\xi$ is a random element taking values in a space of measures on a Borel metric space $X$. In this paper, we will consider the case $X = \C$. Let $\CB(X)$ be the Borel $\sigma$-algebra on $X$. For a random measure $\xi$, we denote by $\E[\xi]$ its intensity, that is, the measure given by $\E[\xi](A) = \E[\xi(A)]$ for $A \in \CB(X)$ and given a $\sigma$-algebra $\CG$ contained in the Borel $\sigma$-algebra we define the conditional intensity given $\CG$ to be the random measure defined by $\E[ \xi \giv \CG](A) = \E[ \xi(A) \giv \CG]$ for $A \in \CB(X)$.  For a function $f: X \to \R$, we write $\xi(f) = \int_X f d\xi$.

A sequence of measures $(\nu_n)$ is said to converge vaguely or in the vague topology to a measure $\nu$ if for each continuous function $f$ with bounded support, we have $\nu_n(f) \to \nu(f)$.  In the case that the measures are locally finite and the space $X$ is separable and complete,  we have that a sequence $(\xi_n)$ of random measures converge a.s.\ in the vague topology to a measure $\xi$ if, on an event of probability one, $\xi_n \to \xi$ vaguely, or equivalently (see~\cite[Lemma~4.8]{kallenberg2017rmbook}), if for each fixed continuous function $f$ on $X$ with bounded support, $\xi_n(f) \to \xi(f)$ a.s.

Finally we remark that our definition of a random measure differs slightly from that of~\cite{kallenberg2017rmbook}, in that we do not require a random measure to be locally finite. It turns out, however, that the measures of interest are locally finite and we will prove that (see Section~\ref{sec:natural_measure_on_cut_points}) and hence the results of~\cite{kallenberg2017rmbook} apply to the measures we consider.

\subsection{Schramm-Loewner evolution}
\label{subsec:sle}

The starting point for the definition of the Schramm-Loewner evolution ($\SLE_\kappa$) is the so-called Loewner equation.  Suppose that $W \colon \R_+ \to \R$ is a continuous function.  For each $z \in \h$ we let $g_t(z)$ be the unique solution to the ODE
\begin{equation}
\label{eqn:sle_def}
\partial_t g_t(z) = \frac{2}{g_t(z) - W_t}, \quad g_0(z) = z
\end{equation}
up to the time $\tau(z) = \inf\{t \geq 0 : \im(g_t(z)) = 0\}$.  We let $K_t = \{z \in \h : \tau(z) \leq t\}$ and $\h_t = \h \setminus K_t$.  Then $g_t$ is the unique conformal map $\h_t \to \h$ with $g_t(z) - z \to 0$ as $z \to \infty$.  In the case of $\SLE_\kappa$ one fixes $\kappa \geq 0$ and then takes $W = \sqrt{\kappa} B$ where $B$ is a standard Brownian motion.  It was proved by Rohde-Schramm  \cite{rs2005basic} that for each $\kappa \neq 8$ there exists a continuous curve $\eta$ in $\h$ from $0$ to $\infty$ so that for each $t \geq 0$ we have that $\h_t$ is the unbounded component of $\h \setminus \eta([0,t])$.  This result was extended to the case $\kappa = 8$ by Lawler-Schramm-Werner \cite{lsw2004lerw} as a consequence of the convergence of the uniform spanning tree to $\SLE_8$.  See also \cite{am2022sle8} for a proof which does not make use of discrete models.  As we mentioned in Section~\ref{sec:intro}, the dimension of the range of an $\SLE_\kappa$ is $\min(1+\kappa/8,2)$ \cite{rs2005basic,beffara2008dimension} and $\SLE_\kappa$ curves are simple for $\kappa \in (0,4]$, self-intersecting but not space-filling for $\kappa \in (4,8)$, and space-filling for $\kappa \geq 8$ \cite{rs2005basic}.  Chordal $\SLE_\kappa$ in a simply connected domain $D \subseteq \C$ connecting distinct boundary points $x,y$ is defined by taking the conformal image of a chordal~$\SLE_\kappa$ in~$\h$ from~$0$ to~$\infty$ where the conformal map takes $0$ to $x$ and $\infty$ to $y$.

In this work, will also need to consider the $\SLE_\kappa(\rho)$ processes, which are an important variant of $\SLE$ in which one keep tracks of extra marked points \cite[Section~8.3]{lsw2003confres} called force points.  Fix $x_1,\ldots,x_n \in \closure{\h}$ and $\rho_1,\ldots,\rho_n \in \R$.  Let $W$ be the solution to the SDE
\begin{equation}
\label{eqn:sle_kappa_rho_def}
\begin{split}
 dW_t &= \sqrt{\kappa} dB_t + \sum_{i=1}^n \re\left( \frac{\rho_i}{W_t - V_t^i} \right) dt\\
 dV_t^i &= \frac{2}{V_t^i - W_t} dt,\quad V_0^i = x_i \quad\text{for}\quad 1 \leq i \leq n.
 \end{split}
\end{equation}
The $\SLE_\kappa(\ul{\rho})$ process with force points at $\ul{x} = (x_1,\ldots,x_n)$ and weights $\ul{\rho} = (\rho_1,\ldots,\rho_n)$ is defined by replacing $W$ in~\eqref{eqn:sle_def} with the solution to~\eqref{eqn:sle_kappa_rho_def}, at least as long as the solution to~\eqref{eqn:sle_kappa_rho_def} exists.  Chordal $\SLE_\kappa(\ul{\rho})$ in a simply connected domain $D \subseteq \C$ connecting distinct boundary points $x,y$ is defined by taking the conformal image of a chordal $\SLE_\kappa(\ul{\rho})$ in $\h$ from $0$ to $\infty$ where the conformal map takes $0$ to $x$ and $\infty$ to $y$.

In the case that the force points are in $\partial \h$, we will use the notation $\SLE_\kappa(\ul{\rho}_L; \ul{\rho}_R)$ where the weights $\ul{\rho}_L = (\rho_{1,L},\ldots, \rho_{\ell,L})$ correspond to the force points $x_{\ell,L} < \dots < x_{1,L} \leq 0$ and the weights $\ul{\rho}_R = (\rho_{1,R},\ldots,\rho_{k,R})$ correspond to the force points $0 \leq x_{1,R} < \dots < x_{k,R}$.  In this case, we let $V_t^{i,q}$ for $q \in \{L,R\}$ denote the evolution of the force points under the Loewner flow.  It was shown in \cite[Section~2]{ms2016imag1} that there is a unique solution to~\eqref{eqn:sle_kappa_rho_def} up until the so-called \emph{continuation threshold}, which is $\inf\{t \geq 0 : \sum_{i,q : V_t^{i,q} = W_t} \rho_i \leq -2\}$.  The continuity of $\SLE_\kappa(\rho)$ in this case was proved in \cite{ms2016imag1}.  The continuity in the case that $\rho \in (-2-\kappa/2,-2)$ (with a single force point) was proved in \cite{msw2017cleperc,ms2019gfflightcone}.  It is not possible to define $\SLE_\kappa(\rho)$ with $\rho \leq -2-\kappa/2$ after $W$ collides with the evolution of the force point $V$.  (We will not go into the details of constructing the $\SLE_\kappa(\rho)$ processes in the case $\rho \in (-2-\kappa/2,-2)$ as there are some extra technicalities and they will not play a role in this paper.)

In this work, we will also need to consider the radial and whole-plane $\SLE_\kappa$ and $\SLE_\kappa(\rho)$ processes.  The starting point for the definition of these processes is the radial form of the Loewner equation.  Suppose that $W \colon \R_+ \to \partial \D$ is a continuous function.  For each $z \in \D$ we let $g_t(z)$ be the unique solution to the ODE
\begin{equation}
\label{eqn:radial_loewner}
\partial_t g_t(z) = g_t(z)\frac{W_t + g_t(z)}{W_t - g_t(z)},\quad g_0(z) = z
\end{equation}
up to the time $\tau(z) = \inf\{t \geq 0 : |g_t(z)| = 1\}$.  We let $K_t = \{z \in \D : \tau(z) \leq t\}$ and $\D_t = \D \setminus K_t$.  Then for each $t \geq 0$ we have that $g_t$ is the unique conformal map $\D_t \to \D$ with $g_t(0) = 0$ and $g_t'(0) > 0$.  In fact, due to the form of~\eqref{eqn:radial_loewner} time is parameterized so that $\log g_t'(0) = t$ for each $t \geq 0$.  Radial $\SLE_\kappa$ is defined by taking $W_t = e^{i \sqrt{\kappa} B_t}$ where $B$ is a standard Brownian motion.  That radial $\SLE_\kappa$ corresponds to a continuous curve follows from the chordal case described above and that radial $\SLE_\kappa$ is locally absolutely continuous with respect to chordal $\SLE_\kappa$.  A radial $\SLE_\kappa(\rho)$ process is a variant of radial $\SLE_\kappa$ where one keeps track of an extra marked boundary point.  Let
\[ \Psi(w,z) = -z\frac{z+w}{z-w} \quad\text{and}\quad \wt{\Psi}(z,w) = \frac{\Psi(z,w) + \Psi(1/\ol{z},w)}{2}.\]
Radial $\SLE_\kappa(\rho)$ with force point at $x \in \partial \D$ is defined by solving~\eqref{eqn:radial_loewner} where $W$ is taken to be the solution to the SDE
\begin{equation}
\label{eqn:radial_sle_kappa_rho_driving}
\begin{split}
d W_t &= \left(\frac{\rho}{2} \wt{\Psi}(O_t,W_t)-\frac{\kappa}{2} W_t\right) dt + i \sqrt{\kappa} W_t dB_t\\
d O_t &= \Psi(W_t,O_t)dt,\quad O_0 = x.
\end{split}
\end{equation}
where $B$ is a standard Brownian motion.  By a comparison with a Bessel process, one can see that~\eqref{eqn:radial_sle_kappa_rho_driving} has a solution when $\rho > -2$.  (As in the case of chordal $\SLE_\kappa(\rho)$ one can also consider $\rho \in (-2-\kappa/2,-2)$ but we will not need this in the present article.)  Radial $\SLE_\kappa(\rho)$ in a simply connected domain $D \subseteq \C$ connecting a boundary point $x$ to an interior point $z$ is defined by taking the conformal image of a radial $\SLE_\kappa(\rho)$ in $\D$ from $1$ to $0$ where the conformal map takes $1$ to $x$ and $0$ to $z$.

It is explained in \cite[Section~2]{ms2017ig4} that using the local absolute continuity of a radial $\SLE_\kappa(\rho)$ process with respect to a chordal $\SLE_\kappa(\rho)$ process one has the continuity of the radial $\SLE_\kappa(\rho)$ processes when $\rho > -2$.

We now turn to give the definition of whole-plane $\SLE_\kappa$ and $\SLE_\kappa(\rho)$ processes for $\rho>-2$.  
Whole-plane $\SLE_{\kappa}(\rho)$ is a random growth process $(K_t)_{t \in \R}$ where,  for all $t \in \R$,  $K_t \subseteq \C$ is compact with $\C \setminus K_t$ simply connected (viewed as a subset of the Riemann sphere).  For each $t \in \R$,  we let $g_t : \C_t \to \C \setminus \closure{\D}$ be the unique conformal transformation with $g_t(\infty) = \infty$ and $g_t'(\infty) > 0$ (i.e.,  $\lim_{z \to \infty} \frac{g_t(z)}{z} > 0$).  Then,  $g_t(z)$ solves 
\begin{equation*}
\partial_t g_t(z) = g_t(z)\frac{W_t + g_t(z)}{W_t - g_t(z)}
\end{equation*}
up to time $\tau(z) = \sup\{t \in \R : |g_t(z)| > 1\}$,   where $(W,O)$ is given by the time-stationary solution to 
\begin{equation*}
\begin{split}
d W_t &= \left(\frac{\rho}{2} \wt{\Psi}(O_t,W_t)-\frac{\kappa}{2} W_t\right) dt + i \sqrt{\kappa} W_t dB_t\\
d O_t &= \Psi(W_t,O_t)dt
\end{split}
\end{equation*}
which is defined for all $t \in \R$.  Whole-plane $\SLE_{\kappa}$ corresponds to the case that $\rho = 0$,  i.e.,  $W_t = e^{i \sqrt{\kappa}B_t}$ with $B_t$ being a two-sided standard Brownian motion.  It follows from \cite[Proposition~2.5]{ms2017ig4} that whole-plane $\SLE_{\kappa}(\rho)$ is a.s.  generated by a continuous curve when $\rho>-2$.

\subsection{Gaussian free field}
\label{subsec:gff}

We now recall the definition and properties of the Gaussian free field (GFF).  For more on the GFF, see~\cite{she2007gff}. Let $D \subseteq \C$ be a simply connected domain with harmonically non-trivial boundary, let $H_0(D)$ denote the Hilbert space completion of $C_0^\infty(D)$ with respect to the Dirichlet inner product
\begin{align*}
	(f,g)_\nabla = \frac{1}{2\pi}\int_D \nabla f(z) \cdot \nabla g(z) dz,
\end{align*}
and let $\| \cdot \|_\nabla$ be the norm associated with $(\cdot,\cdot)_\nabla$. Let $(\phi_n)_{n \geq 1}$ be a $(\cdot,\cdot)_\nabla$-ONB of $H_0(D)$ and let $(\alpha_n)_{n \geq 1}$ be a sequence of i.i.d.\ $N(0,1)$ variables. Then, the \emph{zero-boundary GFF} on $D$ is the random distribution defined by $h = \sum_{n \geq 1} \alpha_n \phi_n$ (distribution because while the partial sums do not converge in any nice space of functions, they do converge in the Sobolev space $H^{-\epsilon}(D)$ for each $\epsilon > 0$). Moreover, for $f,g \in H_0(D)$, $(h,f)_\nabla \sim N(0,\| f \|_\nabla^2)$ and the covariance of $(h,f)_\nabla$ and $(h,g)_\nabla$ is $(f,g)_\nabla$.

An important property of the GFF is the domain Markov property, which states that on any subdomain $U \subseteq D$, we can decompose a zero-boundary GFF $h$ as $h = h_U^0 + \Fh_U$ where $h_U^0$ is a zero-boundary GFF on $U$ and $\Fh_U$ is a random distribution which restricts to a harmonic function on $U$. Moreover, with this in mind, a GFF with boundary data $f$ is defined as the sum of a zero-boundary GFF $h$ with the harmonic extension of $f$ to the interior of $D$.

Next, we recall that $H_0(D)$ is the Cameron-Martin space of the GFF. That is, if $h$ is a (not necessarily zero-boundary) GFF and $F \in H_0(D)$, then the laws of $h$ and $h+F$ are mutually absolutely continuous and the Radon-Nikodym derivative of the latter with respect to the former is given by 
\begin{align}\label{eq:RN_derivative}
	\CD_{h,F} \coloneqq \exp((h,F)_\nabla - \| F \|_\nabla^2/2).
\end{align}

\begin{remark}\label{rmk:RN_derivative}
We note that for each $p$, $\E[ \CD_{h,F}^p] = \exp((p^2-p) \| F\|_\nabla^2/2)$. This is particularly useful in comparing the laws of different GFFs, as it is often easy to find some $F$ with $\| F \|_\nabla < \infty$ such that the laws that we are interested in comparing are given by, say, $h|_U$ and $(h+F)|_U$ for some $U \subseteq D$. Then, this implies that the moments are finite and quantities of one of the fields are comparable to the corresponding ones of the other field, by applying H\"older's inequality.
\end{remark}

Finally, we recall that one can construct the GFF on the whole-plane or with free boundary conditions in an analogous manner, except one replaces the space $H_0(D)$ with the space $H(D)$ which is the Hilbert space closure with respect to $\| \cdot \|_\nabla$ of those functions in $f \in C^\infty(D)$ which are integrable with $\int f(x) dx = 0$ and which satisfy $\| f \|_\nabla < \infty$.  One then views the whole-plane or free boundary GFF as a distribution modulo additive constant; the additive constant can be fixed in many different ways but there is often a convenient choice based on the context.

\subsection{Imaginary geometry}
\label{subsec:ig}

A tool that will play an important role in this paper is the coupling between $\SLE$ and the GFF \cite{ms2016imag1,ms2017ig4}.  Throughout, we fix $\kappa \in (0,4)$ and let $\kappa'=16/\kappa \in (4,8)$.  Let
\[ \lambda = \frac{\pi}{\sqrt{\kappa}},\quad \lambda' = \frac{\pi}{\sqrt{\kappa'}}, \quad\text{and}\quad \chi = \frac{2}{\sqrt{\kappa}} - \frac{\sqrt{\kappa}}{2}.\]

Fix a vector of weights $(\ul{\rho}_L; \ul{\rho}_R)$ and points $(\ul{x}_L; \ul{x}_R)$ in $\partial \h$.  We assume that $|\ul{\rho}_L| = |\ul{x}_L| = \ell$ and $|\ul{\rho}_R| = |\ul{x}_R| = k$ where the $\ul{x}_L$ (resp.\ $\ul{x}_R$) are to the left (resp.\ right) of $0$ and given in decreasing (resp.\ increasing order).  We set $x_{0,L} = 0^-$, $x_{0,R} = 0^+$, $x_{\ell+1,L} = -\infty$, $x_{k+1,R} = +\infty$, and $\rho_{0,L} = \rho_{0,R} = 0$.  Suppose that $h$ is a GFF on $\h$ with boundary conditions given by
\begin{align*}
  -\lambda\left( 1 + \sum_{i=1}^j \rho_{i,L} \right)& \quad\text{in}\quad (x_{j+1,L},x_{j,L}] \quad\text{for each}\quad 0 \leq j \leq \ell \quad\text{and}\\ 
  \lambda\left( 1 + \sum_{i=1}^j \rho_{i,R} \right)& \quad\text{in}\quad (x_{j,R},x_{j+1,R}] \quad\text{for each}\quad 0 \leq j \leq k.
\end{align*}
By \cite[Theorem~1.1]{ms2016imag1}, there exists a coupling of $h$ with an $\SLE_\kappa( \ul{\rho}_L; \ul{\rho}_R)$ process $\eta$ in $\h$ from $0$ to $\infty$ so that the following is true.  Let $(g_t)$ be the Loewner flow associated with $\eta$, $W$ its Loewner driving function, and let $f_t = g_t - W_t$ be its centered Loewner flow.  Then for each stopping time $\tau$ for $\eta$ we have that the conditional law of $h \circ f_\tau^{-1} - \chi \arg( f_\tau^{-1})'$ given $\eta|_{[0,\tau]}$ is that of a GFF on $\h$ with boundary conditions given by $-\lambda$ (resp.\ $\lambda$) in $(f_\tau(x_{0,L}),0^-]$ (resp.\ $(0^+,f_\tau(x_{0,R})]$) and
\begin{align*}
  -\lambda\left( 1 + \sum_{i=1}^j \rho_{i,L} \right)& \quad\text{in}\quad (f_\tau(x_{j+1,L}),f_\tau(x_{j,L})] \quad\text{for each}\quad 0 \leq j \leq \ell \quad\text{and}\\ 
  \lambda\left( 1 + \sum_{i=1}^j \rho_{i,R} \right)& \quad\text{in}\quad (f_\tau(x_{j,R}),f_\tau(x_{j+1,R})] \quad\text{for each}\quad 0 \leq j \leq k.
\end{align*}
Moreover, in this coupling we have that $\eta$ is a.s.\ determined by $h$ \cite[Theorem~1.2]{ms2016imag1}.  In this coupling, $\eta$ is called the \emph{flow line} of $h$ from $0$ to $\infty$.  We note that in the case that the boundary conditions are given by $-\lambda$ (resp.\ $\lambda$) on $\R_-$ (resp.\ $\R_+$) the flow line of $h$ from $0$ to $\infty$ is an $\SLE_\kappa$ curve.

We can also define flow lines starting from other points $x \in \partial \h$ and with other angles $\theta$.  For each $x \in \partial \h$ and $\theta \in \R$ we let $\eta_\theta^x$ be the flow line of $h + \theta \chi$ starting from $x$.  Then \cite[Theorem~1.5]{ms2016imag1} describes how the flow lines starting from different boundary points and with different angles interact with each other, which we restate here.
\begin{theorem}
\label{thm:interaction}
Suppose that $x_1,x_2 \in \partial \h$ with $x_1 \leq x_2$ and angles $\theta_1, \theta_2$. Then the following a.s.\ hold.
\begin{enumerate}[(i)]
\item If $\theta_1 > \theta_2$, then $\eta_{\theta_1}^{x_1}$ stays to the left of $\eta_{\theta_2}^{x_2}$.
\item If $\theta_1 = \theta_2$, then $\eta_{\theta_1}^{x_1}$ merges with $\eta_{\theta_2}^{x_2}$ upon intersecting and does not subsequently separate.
\item If $\theta_2 - \pi < \theta_1 < \theta_2$, then $\eta_{\theta_1}^{x_1}$ crosses $\eta_{\theta_2}^{x_2}$ upon intersecting and does not subsequently cross back. 	
\end{enumerate}
\end{theorem}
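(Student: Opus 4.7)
The plan is to derive parts (i)--(iii) from the flow line coupling already cited (the $\kappa \in (0,4)$ case of the GFF/SLE coupling), together with the fact that the flow line is a.s.\ determined by the field. The unifying idea is: run one of the two flow lines, say $\eta_{\theta_1}^{x_1}$, up to a stopping time $\tau_1$, then use the coupling to compute the conditional boundary data of the field on each complementary component, and finally identify $\eta_{\theta_2}^{x_2}$ inside that component via its characterization as a flow line.

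For part (i), I would argue by monotonicity of boundary data. After drawing $\eta_{\theta_1}^{x_1}$ for an amount of time, the field on a given complementary component is a GFF whose boundary values along $\eta_{\theta_1}^{x_1}$ jump by $\chi \theta_1$ across the curve, and the ``right side'' of $\eta_{\theta_1}^{x_1}$ sees boundary value $-\lambda + \chi\theta_1$. If one starts an $\SLE_\kappa(\ul{\rho})$ flow line at $x_2$ with angle $\theta_2 < \theta_1$ inside that component, then its boundary data is $-\lambda + \chi\theta_2 < -\lambda + \chi\theta_1$ on the $\eta_{\theta_1}^{x_1}$ side, which via a comparison of $\SLE_\kappa(\ul{\rho})$ driving functions (equivalently, positivity of the Bessel-type drift coming from the force point encoding $\eta_{\theta_1}^{x_1}$) forces $\eta_{\theta_2}^{x_2}$ to stay on that side. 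To make this rigorous, I first prove the claim in the setting where both flow lines start from $0$ with the same angle but in different fields obtained by adding a constant, using the coupling between $\SLE_\kappa(\rho^L;\rho^R)$ and the GFF; then extend to arbitrary starting points and angles by absolute continuity (comparing the field to one with flat boundary data via Remark~\ref{rmk:RN_derivative}).

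For part (ii), I would use the strong Markov property of the pair (field, curve) together with the a.s.\ determinism of flow lines by the field. Let $\tau_1$ be a stopping time for $\eta_{\theta_1}^{x_1}$ at which it has just hit $\eta_{\theta_2}^{x_2}$ (first at its target side, using continuity and (i) applied with perturbed angles to confine potential intersection points to a single side). Conditionally on $\eta_{\theta_1}^{x_1}|_{[0,\tau_1]}$ and $\eta_{\theta_2}^{x_2}$ up to its hitting time $\tau_2$, the field on the component containing the remainders is a GFF whose boundary data along both traces equals $-\lambda + \chi\theta_1 = -\lambda + \chi\theta_2$ on one side (and $+\lambda + \chi\theta_i$ on the other). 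Thus the continuation of each curve is, by the characterization of the flow line applied to this conditional field, the flow line of the \emph{same} GFF starting from the common hit point with the common angle. By uniqueness of the flow line (the a.s.\ determinism from \cite{ms2016imag1}), the two continuations must coincide, giving merging without subsequent separation.

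For part (iii), the same conditioning argument applies but now with $\theta_1 < \theta_2$. After $\eta_{\theta_1}^{x_1}$ first hits $\eta_{\theta_2}^{x_2}$, the boundary data on the two sides of $\eta_{\theta_2}^{x_2}$, as seen from the $\eta_{\theta_1}^{x_1}$-flow-line characterization, shows that $\eta_{\theta_1}^{x_1}$ must cross to the opposite side (the allowed jump $\chi(\theta_2-\theta_1) \in (0,\pi \chi)$ is exactly the regime where a flow line can cross a boundary rather than bounce off or be repelled). Once on the other side, the relative angle is reversed in sign, so by part (i) applied to the continuation (against the roles swapped), $\eta_{\theta_1}^{x_1}$ can never cross back. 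The hypothesis $\theta_1 > \theta_2 - \pi$ is what keeps the boundary data in the range where the target side is actually hittable and the crossing behavior (rather than reflection or the continuation threshold being reached) occurs.

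The main obstacle is the first step in (ii) and (iii): to make the ``stop $\eta_{\theta_1}^{x_1}$ at the moment it first hits $\eta_{\theta_2}^{x_2}$'' rigorous one needs to know that the two flow lines can be simultaneously defined in a single probability space as functions of the same GFF, and that the hitting time is a stopping time with respect to a filtration to which the Markov property of the field applies. I would handle this by invoking the a.s.\ determinism of $\eta_{\theta_i}^{x_i}$ by $h + \theta_i\chi$, working in the joint filtration generated by $h|_{(\h\setminus \eta_{\theta_1}^{x_1}[0,s])^c}$ over $s$, and appealing to \cite[Theorem~1.2]{ms2016imag1}. The boundary data computations on the complementary component are then a direct application of the coupling statement recalled above.
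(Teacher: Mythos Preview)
The paper does not prove this statement at all: it is simply a restatement of \cite[Theorem~1.5]{ms2016imag1}, cited as background in the preliminaries section. So there is no ``paper's own proof'' to compare against.

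That said, your sketch is essentially the strategy used in \cite{ms2016imag1} to establish the result: condition on one flow line, read off the conditional boundary data of the GFF on the complementary components via the coupling, and then analyze the second flow line as an $\SLE_\kappa(\ul{\rho})$ in that domain, using Bessel-type comparisons for the monotonicity in (i), uniqueness of the flow line from the field for merging in (ii), and the admissible range of boundary-data jumps for the crossing in (iii). The actual execution in \cite{ms2016imag1} is considerably more delicate than your outline suggests---in particular, making the stopping-time and filtration arguments rigorous, handling the continuation threshold, and establishing the initial monotonicity statement all require substantial work spread over several sections---but the conceptual skeleton you describe is correct.
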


We can also couple an $\SLE_{\kappa'}(\ul{\rho}_L; \ul{\rho}_R)$ process (note here the change to $\kappa' = 16/\kappa$) with force points at $(\ul{x}_L;\ul{x}_R)$ with the GFF.  In this case, one takes the boundary data to be
\begin{align*}
  \lambda'\left( 1 + \sum_{i=1}^j \rho_{i,L} \right)& \quad\text{in}\quad (x_{j+1,L},x_{j,L}] \quad\text{for each}\quad 0 \leq j \leq \ell \quad\text{and}\\ 
  -\lambda'\left( 1 + \sum_{i=1}^j \rho_{i,R} \right)& \quad\text{in}\quad (x_{j,R},x_{j+1,R}] \quad\text{for each}\quad 0 \leq j \leq k.
\end{align*}
Then there exists a coupling of $h$ with an $\SLE_{\kappa'}( \ul{\rho}_L; \ul{\rho}_R)$ process $\eta'$ in $\h$ from $0$ to $\infty$ so that the following is true.  Let $(g_t)$ be the Loewner flow associated with $\eta$, $W$ its Loewner driving function, and let $f_t = g_t - W_t$ be its centered Loewner flow.  Then for each stopping time $\tau$ for $\eta$ we have that the conditional law of $h \circ f_\tau^{-1} - \chi \arg (f_\tau^{-1})'$ given $\eta|_{[0,\tau]}$ is that of a GFF on $\h$ with boundary conditions given by $\lambda'$ (resp.\ $-\lambda'$) in $(f_\tau(x_{0,L}),0^-]$ (resp.\ $(0^+,f_\tau(x_{0,R})]$) and
\begin{align*}
  \lambda'\left( 1 + \sum_{i=1}^j \rho_{i,L} \right)& \quad\text{in}\quad (f_\tau(x_{j+1,L}),f_\tau(x_{j,L})] \quad\text{for each}\quad 0 \leq j \leq \ell \quad\text{and}\\
  -\lambda'\left( 1 + \sum_{i=1}^j \rho_{i,R} \right)& \quad\text{in}\quad (f_\tau(x_{j,R}),f_\tau(x_{j+1,R})] \quad\text{for each}\quad 0 \leq j \leq k.
\end{align*}
Moreover, in this coupling we have by \cite[Theorem~1.2]{ms2016imag1} that $\eta'$ is a.s.\ determined by $h$.  In this coupling, $\eta'$ is called the \emph{counterflow line} of $h$ from $0$ to $\infty$. We note in the case that the boundary conditions are given by $\lambda'$ (resp.\ $-\lambda'$) on $\R_-$ (resp.\ $\R_+$) the counterflow line of $h$ from $0$ to $\infty$ is an $\SLE_{\kappa'}$ curve.  If the boundary conditions are given by $\lambda'$ (resp.\ $\lambda'-2\pi \chi$) on $\R_-$ (resp.\ $\R_+$) the counterflow line of $h$ from $0$ to $\infty$ is an $\SLE_{\kappa'}(\kappa'-6)$ curve where the force point is located at $0^+$. As we will see in Section~\ref{subsec:cle}, this choice will be important because this is the one which corresponds to $\CLE_{\kappa'}$.

Suppose that $D \subseteq \C$ is simply connected, $x,y \in \partial D$ are distinct, and $\psi \colon \h \to D$ is a conformal transformation.  With $\wt{h} = h \circ \psi^{-1} - \chi \arg( \psi^{-1})'$ one can consider the flow and counterflow lines of~$\wt{h}$ which are defined to be the image under~$\psi$ of the flow and counterflow lines of~$h$.

It is also possible to define GFF flow lines starting from interior points \cite{ms2017ig4}.  We will first describe this in the case of the whole-plane GFF. Since adding $2\pi \chi$ to the field does not change its flow lines, it suffices to consider the whole-plane GFF modulo $2\pi \chi$ (i.e., viewed as a random variable in the space of distributions modulo the equivalence relation where two distributions are equivalent if they differ by an integer multiple of $2\pi \chi$).  To be concrete, we can sample a representative from the law of the whole-plane GFF modulo $2\pi \chi$ as follows.  We first sample $U \in [0,2\pi \chi]$ uniformly and then sample a whole-plane GFF with its additive constant fixed so that its average on $\partial \D$ is equal to $U$.  \cite[Theorem~1.1]{ms2017ig4} implies that we can generate flow lines of $h$ starting from different points and with different angles and that the marginal law of such a flow line is that of a whole-plane $\SLE_\kappa(2-\kappa)$ process from its starting point to $\infty$.  Moreover, \cite[Theorem~1.2]{ms2017ig4} implies that these flow lines are a.s.\ determined by the field.  The manner in which these flow lines interact with each other is described in \cite[Theorem~1.7]{ms2017ig4}.  The flow line interaction is governed by the angle at which two flow lines intersect each other and is the same as in the case that the flow lines start from the boundary. In particular, flow lines with the same angle eventually merge and form a space-filling tree.  It is shown in \cite{ms2017ig4} that the Peano curve associated with this space-filling tree is a.s.\ continuous and this process is called space-filling $\SLE_{\kappa'}$.

If one has a GFF $h$ on a domain $D \subseteq \C$ then viewed as a distribution modulo $2\pi \chi$ it is locally absolutely continuous with respect to the law of a whole-plane GFF modulo $2\pi \chi$.  Consequently, one can make sense of the flow lines of $h$ starting from interior points.  Let us mention one important result in this direction \cite{ms2017ig4}, namely that one can define a counterflow line $\eta'$ targeted at an interior point.  Suppose that we have a GFF $h$ on $\h$ whose boundary conditions are compatible with a coupling of an $\SLE_{\kappa'}(\ul{\rho}_L;\ul{\rho}_R)$ as a counterflow line.  Fix $z \in \h$ and let $\eta_L$ (resp.\ $\eta_R$) be the flow line of $h$ starting from $z$ with angle $-\pi/2$ (resp.\ $\pi/2$).  Then $\eta_L$ (resp.\ $\eta_R$) gives the left (resp.\ right) boundary of $\eta'$ targeted at $z$. For specifics on this, see \cite[Theorem~4.1]{ms2017ig4}.

\subsection{Conformal loop ensembles}
\label{subsec:cle}

We are now going to review the construction and basic properties about conformal loop ensembles ($\CLE_\kappa$) \cite{s2009cle, sw2012markovian}.  We will focus on the regime $\kappa' \in (4,8)$ since it is the one which is relevant for this paper.  See \cite{msw2014gasket,msw2017cleperc} for other thorough but brief introductions.  The starting point for the construction of $\CLE_{\kappa'}$ is the so-called \emph{exploration tree} developed in \cite{s2009cle}.  Suppose that $D \subseteq \C$ is a simply connected domain and $x, y \in \partial D$ are distinct.  Let $\eta'$ be an $\SLE_{\kappa'}(\kappa'-6)$ in $D$ from $x$ to $y$ which has its force point located at $x^+$.  The law of $\eta'$ has the special property that it is \emph{target invariant} \cite{sw2005coordinate}.  This means that if we fix $z \in \partial D \setminus \{x\}$ then the law of $\eta'$ up until it first disconnects $y$ from $z$ is the same as that of an $\SLE_{\kappa'}(\kappa'-6)$ in $D$ from $x$ to $z$ with its force point at $x^+$.  Suppose that $(y_n)$ is a countable dense subset of $\partial D$.  This implies that we can construct a coupling of $\SLE_{\kappa'}(\kappa'-6)$ processes $(\eta_n')$ in $D$ where each $\eta_n'$ is an $\SLE_{\kappa'}(\kappa'-6)$ in $D$ from $x$ to $y_n$ with force point at $x^+$ and any finite collection $(\eta_{n_1}',\ldots,\eta_{n_k}')$ agree up until disconnecting their target points after which they evolve independently.  That is, the $(\eta_n')$ have the structure of a tree which is rooted at $x$.  This tree is used to define the loops of $\Gamma$ which intersect~$\partial D$.

Fix $n \in \N$ and a point~$z$ in the clockwise arc of $\partial D$ from $x$ to $y_n$.  Let~$\tau_n$ be the first time that~$\eta_n'$ disconnects $z$ from $y_n$ and let~$\sigma_n$ be the last time that~$\eta_n'$ hits $\partial D$ before the time~$\tau_n$.  Then $\eta_n'|_{[\sigma_n,\tau_n]}$ is part of a loop $\CL$ of $\Gamma$ which intersects $\partial D$. The rest of the loop is found as follows. Let $(y_{k_m^n})$ be a subsequence of $(y_k)$ which lie in the counterclockwise arc of $\partial D$ from $\eta_n'(\sigma_n)$ to $y_n$, such that for each $m \in \N$, $y_{k_{m+1}^n}$ lies in the counterclockwise arc of $\partial D$ from $\eta_n'(\sigma_n)$ to $y_{k_m^n}$, and $y_{k_m^n} \to \eta_n'(\sigma_n)$ as $m \to \infty$. Let $\CL_m$ denote the part of $\eta_{k_m^n}'$ which is traced between the time it first hits $\eta_n'(\sigma_n)$ and the first time it separates $y_{k_m^n}$ from $\eta_n'(\sigma_n)$. Clearly, $\eta_n'([\sigma_n,\tau_n]) \subseteq \CL_m$ and $\CL_m \subseteq \CL_{m+1}$. Consequently, we find the sought loop $\CL$ to be the limit of the $\CL_m$. If we carry out this procedure for each $y_k$ and consider a dense set of $z \in \partial D$ (so that each $y_k$ is separated from some such $z$, as above) then we find all the loops of the $\CLE_{\kappa'}$ which intersect $\partial D$. To generate the rest of the $\CLE_{\kappa'}$, we iterate the same procedure inside of each of the complementary components which are not surrounded by a boundary touching loop.  That the loops of a $\CLE_{\kappa'}$ correspond to continuous curves was stated conditionally in \cite{s2009cle} on the continuity of $\SLE_{\kappa'}(\kappa'-6)$ which was proved in \cite{ms2016imag1}.  Also, that the law of the resulting ensemble of loops does not depend on the choice of $x \in \partial D$ was stated in \cite{s2009cle} conditionally on the reversibility of $\SLE_{\kappa'}$ for $\kappa' \in (4,8)$ which was proved in \cite{ms2016imag3}.  We also note that we can consider the nested version of $\CLE_{\kappa'}$ where in each complementary component of a $\CLE_{\kappa'}$ we sample an independent $\CLE_{\kappa'}$ and then iterate.

The local finiteness of the loops of a $\CLE_{\kappa'}$ (i.e., for every $\epsilon > 0$ there are only finitely many loops with diameter at least $\epsilon$) in the case that $D$ is a bounded Jordan domain was proved as a consequence of the continuity of space-filling $\SLE_{\kappa'}$ in \cite{ms2017ig4}.  As this will be important for us later, we will now explain how this works in more detail.  We can define a path $\Lambda$ which explores all of the loops in a nested $\CLE_{\kappa'}$ in $D$ iteratively as follows.  We start off by defining a path $\Lambda_1$ which explores only the boundary touching loops by considering the path which goes around $\partial D$ counterclockwise starting from $x$ except whenever it hits a loop of $\Gamma$ for the first time it immediately follows it clockwise in its entirety.  In each of the components which are disconnected by $\Lambda_1$ we know that we have an independent $\CLE_{\kappa'}$.  We generate $\Lambda_2$ by starting with $\Lambda_1$ except at each time a component $U$ is disconnected by $\Lambda_1$ we follow an exploration inside of $U$ starting from the last point on $\partial U$ visited by $\Lambda_1$ where we swap the roles of clockwise and counterclockwise if $\partial U$ is not surrounded by a loop of $\Gamma$.  By iterating this procedure $n$ times we obtain the path $\Lambda_n$ and the space-filling curve which is obtained in the limit as $n \to \infty$ is a space-filling $\SLE_{\kappa'}$.  We also note that if we start with $\Lambda_1$ and then target it at $z \in \partial D \setminus \{x\}$ then the resulting path is the branch of the $\CLE_{\kappa'}$ exploration tree from $x$ to $z$.

\subsubsection{A particular exploration of the $\CLE_{\kappa'}$}
\label{subsubsec:exploration}
Let $\Gamma$ be a $\CLE_{\kappa'}$ in $\h$. We now recall from \cite[Section~3.3]{gwynne2021conformal} a certain exploration of $\Gamma$.

Let $P:[0,1] \to \closure{\h}$ be a simple path (deterministic or random) which is independent of $\Gamma$ and let $U \subseteq \h$ be an open set with $P \cap \h \subseteq U$. For $m,\ell \in \N$, we the $(m,\ell)$-exploration process of $\Gamma$ along~$P$ relative to~$U$ is the pair $(\alpha_{m,\ell},\Gamma_m)$ defined as follows. In particular, we will later use a curve $\eta_{m,\ell}$ which arises in the definition of said exploration. (For an illustration of the process in $\D$ rather than in $\h$, see \cite[Figure~5]{gwynne2021conformal}.)

\begin{itemize}
\item Let $t_m$ be the $m$th smallest $t \in [0,1]$ for which the following is true: there exists $\CL \in \Gamma$ such that $\CL \not \subseteq \closure{U}$ and $P$ hits $\CL$ for the first time at time $t$; or let $t_m = 1$ if there are fewer than $m$ such times $t \in [0,1]$.  The local finiteness of $\Gamma$ implies that $t_m$ is well-defined. 
\item If $t_m<1$,  we parameterize the loop $\CL$ in the counterclockwise direction by $[0,1]$ such that $\CL(0) = \CL(1) = P(t_m)$.  Let $\sigma_0 = 0$.  If $j \in \N_0$,  we inductively let $\sigma_{j+1}$ be the first time after $\sigma_j$ at which $\CL$ completes a crossing from $P$ to $\h \setminus U$, i.e.,  the smallest $s \in (\sigma_j,1)$ for which $\CL(s) \notin U$ and there exists $s' \in (\sigma_j,s)$ for which $\CL(s') \in P$.  Let $\sigma_{j+1} = 1$ if $\CL$ does not make any crossings from $P$ to $\h \setminus \closure{U}$ after time $\sigma_j$.
\item Let $\ol{\xi}$ be the last time that the time-reversal of $\CL|_{[\sigma_{\ell},1]}$ completes a crossing from $P$ to $\h \setminus U$,  or let $\ol{\xi}$ be the starting time for this time-reversal if it does not make any such crossings.  Let $\xi$ be the time for $\CL$ corresponding to $\ol{\xi}$ and let $\eta_{m,\ell} = \CL|_{[\sigma_{\ell},\xi]}$.
\item Let $\alpha_{m,\ell}$ be the concatenation of the time-reversal of $\CL|_{[\xi,1]}$ and $\CL|_{[0,\sigma_{\ell}]}$.  Let $\Gamma_m$ be the set of loops in $\Gamma$ which intersect $P$ other than $\CL$. 
\end{itemize}

The reason for this exploration being useful, is that it is Markovian, in a sense which we now describe. We denote by $\Gamma(P;U)$ the set of loops in $\Gamma$ which intersect $P$ and are contained in $U$, we let $\cup \Gamma(P;U)$ denote the union of the loops in $\Gamma(P;U)$ and write $\Gamma(P) = \Gamma(P;\closure{\h})$. For a loop $\CL \in \Gamma$, we say that an arc $\alpha$ of $\CL$ is a $P$-excursion of $\CL$ into $U$ if $\alpha \subseteq \closure{U}$, $\alpha \cap P \neq \emptyset$ and $\alpha$ is not properly contained in any larger arc of $\CL$ with these properties. We say that $\alpha$ is proper if $\alpha \notin \{ \CL,\emptyset\}$. Furthermore, we say that an arc $\alpha'$ of $\CL$ is a complementary $P$-excursion of $\CL$ out of $U$ if $\alpha'$ does not overlap with any $P$-excursion of $\CL$ into $U$ and $\alpha'$ is not contained in any larger arc of $\CL$ with this property.  We denote by $\CS_\Gamma(P;U)$ (resp.\ $\CC_\Gamma(P;U)$) the set of all proper $P$-excursions into $U$ of loops in $\Gamma$ (resp.\ the set of all complementary $P$-excursions of loops in $\Gamma$ out of $U$). 

By \cite[Corollary~3.7]{gwynne2021conformal}, for $P$ and $U$ as above, $\Gamma$ satisfies the Markov property with respect to $(P,U)$. That is, the following holds. Choose $\alpha \in \CS_\Gamma(P;U)$ in a way which is measurable with respect to $\sigma(\Gamma(P;U),\CS_\Gamma(P;U))$, let $x$ be its endpoint and let $\eta_x$ be the complementary $P$-excursion out of $U$ from $x$ to its endpoint $x^*$ and let $\Sigma_x = \sigma(\Gamma(P;U),\CS_\Gamma(P;U),\CC_\Gamma(P;U)\setminus \eta_x)$. Then,
\begin{enumerate}
	\item $\closure{\cup \Gamma(P;U)}$ is almost surely connected,
	\item if $\CS_\Gamma(P;U) \neq \emptyset$ and we condition on $\Sigma_x$, then the conditional law of $\eta_x$ is that of an independent $\SLE_{\kappa'}$ from $x$ to $x^*$ in the connected component of $\h \setminus \closure{\cup \Gamma(P) \setminus \eta_x}$ with $x$ on its boundary,
	\item if we condition on $\Gamma(P)$, then the conditional law of $\Gamma|_{\h \setminus \closure{\cup \Gamma(P)}}$ is that of a collection of independent $\CLE_{\kappa'}$ in the connected components of $\h \setminus \closure{\cup \Gamma(P)}$.
\end{enumerate}

\subsection{Liouville quantum gravity}
\label{subsec:lqg}

Suppose that $h$ is an instance (of some form) of the GFF $h$ on a domain $D$.  The Liouville quantum gravity (LQG) surface described by $h$ is formally given by the metric tensor
\begin{equation}
\label{eqn:lqg_metric}
e^{\gamma h(z)} (dx^2 + dy^2) \quad\text{where}\quad \gamma \in (0,2]
\end{equation}
is a parameter.  This expression does not make literal sense as the GFF is not a function and does not take values at points.  The associated measure was defined in \cite{ds2011lqg} and also the boundary length measure \cite{she2016zipper,dms2021mating}.  The way that the area measure is defined is via a regularization procedure.  Namely, for each $\epsilon > 0$ and $z \in D$ we let $h_\epsilon(z)$ denote the average of $h$ on $\partial B(z,\epsilon)$.  Then we have that $\qmeasure{h}$ is given by the limit as $\epsilon \to 0$ of
\begin{align*}
\epsilon^{\gamma^2/2} e^{\gamma h_\epsilon(z)} dz
\end{align*}
where $dz$ denotes Lebesgue measure.  Due to the choice of normalization, the measure $\qmeasure{h}$ satisfies the following coordinate change rule.  Suppose that $D, \wt{D}$ are domains, $\psi \colon D \to \wt{D}$ is a conformal map, $\wt{h}$ is a GFF on $\wt{D}$, and 
\begin{equation}
\label{eqn:change_of_coordinates}
h = \wt{h} \circ \psi + Q \log |\psi'| \quad\text{where}\quad Q = \frac{2}{\gamma} + \frac{\gamma}{2}.
\end{equation}
Then we have that $\qmeasure{h}(A) = \qmeasure{\wt{h}}(\psi(A))$ for all $A \subseteq D$ Borel.  If $h$, $\wt{h}$ are related as in~\eqref{eqn:change_of_coordinates}, then we say that $(D,h)$, $(\wt{D},\wt{h})$ are equivalent as quantum surfaces.  A quantum surface is an equivalence class under this equivalence relation.  One can also consider quantum surfaces with marked points and in this case the conformal map $\psi$ is required to preserve the marked points.   In the case that $h$ has free boundary conditions on a linear segment $L$, the boundary length measure $\qbmeasure{h}$ is defined to be the limit as $\epsilon \to 0$ of
\begin{align*}
\epsilon^{\gamma^2/4} e^{\gamma h_\epsilon(z)/2} dz
\end{align*}
where $dz$ denotes Lebesgue measure on $L$.  We have that $\qbmeasure{h}$ satisfies the same change of coordinates formula as for $\qmeasure{h}$ and this allows one to define $\qbmeasure{h}$ on boundary segments which are not necessarily linear.

We will now give the definition of two of the quantum surfaces which will be important for this work: the \emph{quantum wedge} and the \emph{quantum cone}.

We let $\CH_1(\h)$ (resp.\ $\CH_2(\h)$) be the subspace of those functions in $H(\h)$ which are constant (resp.\ have mean zero) on $\h \cap \partial B(0,r)$ for all $r \geq 0$.  Then we have that $H(\h) = \CH_1(\h) \oplus \CH_2(\h)$ gives an orthogonal decomposition of $H(\h)$.

We will now give the definition of a quantum wedge.  We note that there are two types of quantum wedge.  A quantum wedge can either be a thick wedge or a thin wedge, the former meaning that it is homeomorphic to $\h$, and the latter meaning that it consists of a Poissonian chain of surfaces each of which is homeomorphic to $\D$.  We will first start with the thick case and then describe the construction in the thin case.

\begin{definition}
\label{def:quantum_wedge_def}
Fix $\alpha < Q$.  An \emph{$\alpha$-quantum wedge} is the doubly marked quantum surface $\CW = (\h,h,0,\infty)$ whose law can be sampled from as follows.
\begin{enumerate}[(i)]
\item Let $A \colon \R \to \R$ be the process defined as follows.  For $s \geq 0$, $A_s = B_{2s} + \alpha s$ where $B$ is a standard Brownian motion with $B_0 = 0$, and for $s \leq 0$, $A_s = \wh{B}_{-2s} + \alpha s$ where $\wh{B}$ is a standard Brownian motion independent of $B$ with $\wh{B}_0 = 0$ conditioned so that $\wh{B}_{2u} + (Q-\alpha)u > 0$ for all $u > 0$.  We take the projection of $h$ onto $\CH_1(\h)$ to be the function whose common value on $\h \cap \partial B(0,e^{-r})$ is $A_r$ for each $r \in \R$.
\item We take the projection of $h$ onto $\CH_2(\h)$ to be the corresponding projection of a GFF with free boundary conditions independently of its projection onto $\CH_1(\h)$.
\end{enumerate}
\end{definition}

The particular embedding here is called the \emph{circle average embedding} because the embedding is such that $\sup\{r \geq 0 : h_r(0) + Q\log r = 0\} = 1$.

Let $\strip = \R \times (0,\pi)$ be the infinite strip.  It is also natural to parameterize a quantum wedge by $\strip$ instead of by $\h$.  Let $\CH_1(\strip)$ (resp.\ $\CH_2(\strip)$) consist of those functions which are constant (resp.\ have mean zero) on lines of the form $r + (0,i \pi)$.  Then we similarly have that $H(\strip) = \CH_1(\strip) \oplus \CH_2(\strip)$ gives an orthogonal decomposition of $H(\strip)$.  

Then we can sample from the law of a quantum wedge $\CW = (\strip,h,-\infty,\infty)$ as follows.
\begin{enumerate}[(i)]
\item Let $\wt{A} \colon \R \to \R$ be the process defined as follows.  For $s \geq 0$, $\wt{A}_s = B_{2s} + (Q-\alpha) s$ where $B$ is a standard Brownian motion with $B_0 = 0$ conditioned so that $B_{2u} + (Q-\alpha)u > 0$ for all $u > 0$, and for $s \leq 0$, $A_s = \wh{B}_{-2s} + (Q-\alpha) s$ where $\wh{B}$ is a standard Brownian motion independent of $B$ with $\wh{B}_0 = 0$.  We take the projection of $h$ onto $\CH_1(\strip)$ to be the function whose common value on $r + (0,i\pi)$ is $A_r$ for each $r \in \R$.
\item We take the projection of $h$ onto $\CH_2(\strip)$ to be the corresponding projection of a GFF with free boundary conditions independently of its projection onto $\CH_1(\strip)$.
\end{enumerate}

We can similarly parameterize the space of quantum wedges by \emph{weight} $W$ instead of by $\alpha$.  Using weight instead of $\alpha$ is often convenient because, as we will recall momentarily, weight is additive under the welding operation.  The relationship between $W$ and $\alpha$ is given by
\begin{equation}
\label{eqn:q_wedge_weight}
 W = \gamma(\gamma/2 + Q - \alpha).
\end{equation}

A compact way to describe the process $\wt{A}$ in the definition of an $\alpha$-quantum wedge parameterized by $\strip$ is as follows.  Let $Z \sim \BES^\delta$ where
\begin{equation}
\label{eqn:bessel_wedge_dimension}
\delta = 2+ \frac{2(Q-\alpha)}{\gamma} = 1 + \frac{2}{\gamma^2} W.
\end{equation}
If we reparameterize the process $2\gamma^{-1} \log Z$ so that its quadratic variation is $2ds$ and then take the horizontal translation so that it last time it hits $0$ is at time $0$ we obtain a process with the same law as $\wt{A}$.  Note that $\alpha < Q$ corresponds to the Bessel process dimension being at least $2$.  This definition makes sense, however, even if the Bessel process dimension is in $(0,2)$.

\begin{definition}
\label{def:thin_quantum_wedge_def}
Fix $\alpha \in (Q,Q+\gamma/2)$.  An \emph{$\alpha$-quantum wedge} is the beaded quantum surface (that is, a countable collection of quantum surfaces) $\CW$ whose law can be sampled from as follows.  Let $\delta = 2+2(Q-\alpha)/\gamma$ and let $Z \sim \BES^\delta$.  For each excursion $e$ that $Z$ makes from $0$ we let $(\strip,h_e,-\infty,\infty)$ be the doubly marked surface where:
\begin{enumerate}[(i)]
\item The projection of $h_e$ onto $\CH_1(\strip)$ is equal to $2\gamma^{-1} \log e$ reparameterized to have quadratic variation $2ds$.
\item The projection of $h_e$ onto $\CH_2(\strip)$ is given by the corresponding projection of a GFF with free boundary conditions independently of its projection onto $\CH_1(\strip)$.
\end{enumerate}
\end{definition}

\begin{remark}
\label{rem:bessel_area_encoding}
In the encoding in Definition~\ref{def:thin_quantum_wedge_def} the conditional law of the quantum area of a bead $(\strip,h_e,-\infty,\infty)$ is equal to that of the length of $e$ times a random variable $X$ which is independent of $e$.  This implies that we can associate with a thin quantum wedge $\CW$ a Bessel process $Z$ with the same dimension as in Definition~\ref{def:thin_quantum_wedge_def} so that the lengths of the excursions of $Z$ from $0$ are exactly equal to the quantum areas of the successive beads in $\CW$.
\end{remark}

The following is the basic cutting and welding result for quantum wedges \cite{she2016zipper,dms2021mating}.

\begin{theorem}
\label{thm:wedge_cutting}
Fix $W > 0$ and suppose that $\CW$ is a quantum wedge of weight $W$.  Suppose that $\rho_1,\rho_2 > -2$ are such that $\rho_1 + \rho_2 + 4 = W$ and that $\eta$ is an $\SLE_\kappa(\rho_1;\rho_2)$ in $\CW$ from $0$ to $\infty$. Set $W_i = \rho_i + 2$ for $i=1,2$. Let~$\CW_1$ (resp.\ $\CW_2$) be the quantum surfaces parameterized by the regions which are to the left (resp.\ right) of $\eta$.  Then $\CW_1, \CW_2$, respectively, are independent quantum wedges of weight $W_1$, $W_2$.
\end{theorem}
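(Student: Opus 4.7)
The plan is to prove this via the conformal welding interpretation of Sheffield's zipper, which expresses the theorem as an inverse to the following welding operation: given two independent weight-$W_i$ quantum wedges, conformally welding them along their boundaries according to $\qbmeasure{\cdot}$ produces a weight-$W$ quantum wedge together with an $\SLE_\kappa(\rho_1;\rho_2)$ interface. Since $W_1 + W_2 = \rho_1 + \rho_2 + 4 = W$, the additivity of weight under welding will be the combinatorial heart of the statement, and the task reduces to a careful identification of laws using the GFF.

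First I would choose a convenient embedding of $\CW$. When $\CW$ is thick (i.e.\ when $W \geq \gamma^2/2$), pick the circle average embedding on $\h$, so that $h$ decomposes as $h = h_1 + h_2$ with $h_1 \in \CH_1(\h)$ encoding the Bessel log-process $\wt A$ (or equivalently a $\BES^\delta$ with $\delta = 1 + 2W/\gamma^2$) and with $h_2 \in \CH_2(\h)$ an independent free-boundary GFF projection. I would then sample $\eta \sim \SLE_\kappa(\rho_1;\rho_2)$ from $0$ to $\infty$ independently of $h$, and consider the Loewner flow $(g_t)$ with centered maps $f_t = g_t - W_t$. Using the change-of-coordinates rule~\eqref{eqn:change_of_coordinates}, define the transported field $h_t = h \circ f_t^{-1} + Q \log |(f_t^{-1})'|$ on the two slit half-planes produced by $\eta|_{[0,t]}$.

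The core computation is to track the quantum boundary length process. For a fixed capacity interval, compute $L_t$ (resp.\ $R_t$) defined as the $\qbmeasure{h_t}$-length of the left (resp.\ right) side of $\eta([0,t])$ as seen from the unbounded component. Using Itô's formula on the driving SDE~\eqref{eqn:sle_kappa_rho_def} together with the Girsanov-type shifts inherited from the wedge's $h_1$ component, I expect to verify that $(L_t, R_t)$ evolves as a pair of continuous nonnegative processes whose joint law, after appropriate time change by capacity, matches the known boundary length evolution in the zipper. The endpoint of this step is that the two processes are Bessel processes of dimensions $1 + 2W_i/\gamma^2$ satisfying~\eqref{eqn:bessel_wedge_dimension}, and moreover that the residual fields $h_t|_{\text{left}}, h_t|_{\text{right}}$ orthogonal to the Bessel log are independent free-boundary GFF projections. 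The additivity $L_t + R_t$ (in the appropriate sense) reflects the identity $W_1 + W_2 = W$.

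The main obstacle is the identification step: that the conditional law of the two sides given $\eta$ is that of independent quantum wedges of weights $W_1, W_2$. Here I would not try to diagonalize the field directly; instead I would invoke the conformal welding theorem of Sheffield/DMS (in the generality of~\cite{she2016zipper,dms2021mating}) which uniquely characterizes the joint law whose quantum boundary lengths match and whose interface is SLE type. A secondary difficulty is the thin-wedge regime when $W_i \in (0, \gamma^2/2)$ (equivalently $\rho_i \in (-2, \gamma^2/2 - 2)$), where the resulting $\CW_i$ is a Poissonian chain of beads as in Definition~\ref{def:thin_quantum_wedge_def}; handled via Remark~\ref{rem:bessel_area_encoding}, one matches the bead structure with excursions of the Bessel process away from $0$, i.e.\ the times at which $\eta$ bounces off the boundary. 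The continuation threshold of $\SLE_\kappa(\rho_1;\rho_2)$ corresponds exactly to the boundary length on one side hitting $0$, which in the Bessel picture is the natural absorption time and poses no additional issue once the length processes are identified.
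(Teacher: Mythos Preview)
The paper does not prove this theorem at all: it is stated in the preliminaries as a citation of the basic cutting/welding result from \cite{she2016zipper,dms2021mating}, with no proof given. So there is nothing in the paper to compare your argument against.

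That said, your sketch does track the overall architecture of the proofs in those references (zipper dynamics, boundary-length processes identified as Bessel, bead structure in the thin case), but as written it is not a proof. Phrases like ``I expect to verify'' and ``I would invoke the conformal welding theorem \ldots\ which uniquely characterizes the joint law'' are precisely the hard content of \cite{she2016zipper,dms2021mating}; the identification of the left/right boundary-length processes as \emph{independent} Bessel processes of the claimed dimensions, and the independence of the residual lateral fields on the two sides, require substantial work (local absolute continuity, martingale characterizations, and a delicate stationarity argument for the zipper) that your outline elides. If you intend to supply a self-contained argument rather than a citation, you would need to fill in exactly those steps; otherwise, citing \cite{she2016zipper,dms2021mating} as the paper does is the appropriate move.
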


It is is also natural to consider $\SLE_{\kappa'}$ processes on top of certain types of quantum wedges.

\begin{theorem}
\label{thm:sle_kappa_prime}
Let $\CW = (\h,h,0,\infty)$ be a quantum wedge of weight $3\gamma^2/2-2$ and let $\eta'$ be an $\SLE_{\kappa'}$ in $\h$ from $0$ to $\infty$ which is sampled independently of $\CW$.  For each $t \geq 0$ we let $T_t^1$ (resp.\ $B_t^1$) be the quantum length of the part of $\partial \h_t \setminus \partial \h$ (resp.\ $\partial \h \setminus \partial \h_t$) which is to the left of $\eta'(t)$ (resp.\ $\eta'(0)$).  Define $T_t^2$ (resp.\ $B_t^2$) in the same way but with left replaced by right.  Set $X_t^i = T_t^i - B_t^i$.  There exists a time parameterization of $\eta'$ so that $(X_t^1,X_t^2)$ are independent $\kappa'/4$-stable L\'{e}vy processes.
\end{theorem}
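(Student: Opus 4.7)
This is the non-simple mating-of-trees boundary-length theorem from \cite{dms2021mating}, and I outline its proof via imaginary geometry and the wedge cutting theorem. First, couple $\eta'$ with a GFF $h$ on $\h$ (sampled independently of the wedge field) with boundary data $\lambda'$ on $\R_-$ and $-\lambda'$ on $\R_+$, so that $\eta'$ is the counterflow line of $h$. By the imaginary geometry description, the left (resp.\ right) outer boundary of $\eta'$ is a flow line $\eta_L$ (resp.\ $\eta_R$) of $h$ with angle $\pi/2$ (resp.\ $-\pi/2$) from $0$ to $\infty$, which is a simple $\SLE_\kappa(\rho_1;\rho_2)$ curve with explicit weights determined by the boundary data and the angle shift. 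Iterating Theorem~\ref{thm:wedge_cutting} first along $\eta_L$ and then along $\eta_R$ in the resulting right piece decomposes $\CW$ into independent quantum surfaces; the specific weight $W = 3\gamma^2/2-2$ is chosen precisely so that, after a further imaginary-geometry argument, the left and right \emph{pocket chains} of $\eta'$ emerge as independent thin quantum wedges (Definition~\ref{def:thin_quantum_wedge_def}) of explicit weights.

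Second, apply Definition~\ref{def:thin_quantum_wedge_def} and Remark~\ref{rem:bessel_area_encoding} to the side thin wedges: each is encoded by a Bessel process $Z^i$ of dimension $\delta$ given by~\eqref{eqn:bessel_wedge_dimension}, whose excursions from $0$ are in bijection with the successive pockets on side $i$, with excursion length equal to the pocket's quantum boundary length. Between consecutive pocket closures on side $i$, the process $X_t^i$ grows continuously as the tip of $\eta'$ adds quantum length to $T_t^i$; at a closure time $X_t^i$ has a downward jump equal to the quantum boundary length of the just-closed pocket (the loss of its top boundary contributes to a decrease in $T_t^i$ and the swallowing of its bottom arc contributes to an increase in $B_t^i$, both pointing in the same direction).

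Third, identify $(X_t^1, X_t^2)$ as a pair of independent $(\kappa'/4)$-stable L\'evy processes under a suitable time parameterization of $\eta'$. Independence of the two coordinates is inherited from the independence of the side wedges in the first step; stationary independent increments of each coordinate follow from the target-invariance of $\SLE_{\kappa'}$ together with the domain Markov property of the GFF and the conformal covariance of the wedge; the stable index $\kappa'/4$ is forced by the standard excursion-theory/Lamperti correspondence between Bessel processes and totally asymmetric stable L\'evy processes, once the Bessel dimension $\delta$ obtained from the wedge weight via~\eqref{eqn:bessel_wedge_dimension} is substituted in.

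The main obstacle is the first step: establishing that the two pocket chains are genuinely \emph{independent} thin quantum wedges requires a delicate imaginary-geometry argument that goes beyond a direct application of Theorem~\ref{thm:wedge_cutting} (which handles only the cutting by a single simple flow line), and it requires the specific choice $W = 3\gamma^2/2-2$ in order for the weight arithmetic in the iterated cutting to close up and produce matched, independent side wedges. This independence is the technical heart of the mating-of-trees construction and is the reason the pair $(X_t^1, X_t^2)$ decouples into two independent L\'evy processes rather than a genuinely two-dimensional process.
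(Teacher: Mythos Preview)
The paper does not give a proof of this theorem: it is stated in the preliminaries (Section~\ref{subsec:lqg}) as a known input from \cite{dms2021mating}, alongside the wedge-cutting results, with no argument supplied. So there is no ``paper's own proof'' to compare against; one can only judge whether your outline is a faithful sketch of the mating-of-trees argument.

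Your first step is essentially the content of Remark~\ref{rem:sle_kp_divide}: cutting the weight $3\gamma^2/2-2$ wedge by $\eta_L$ and then $\eta_R$ yields three independent wedges of weights $\gamma^2-2$, $2-\gamma^2/2$, $\gamma^2-2$, and the two outer (thin) wedges are exactly the left/right pocket chains. That part is right. However, two points in your later steps are off. First, in the standard Bessel encoding of a thin wedge (Definition~\ref{def:thin_quantum_wedge_def} and Remark~\ref{rem:bessel_area_encoding}) the excursion lengths record quantum \emph{area}, not boundary length; the boundary-length encoding you invoke requires a separate argument or a different normalization. Second, and more seriously, your appeal to ``target-invariance of $\SLE_{\kappa'}$'' for stationary independent increments is incorrect: ordinary chordal $\SLE_{\kappa'}$ is not target-invariant (that property belongs to $\SLE_{\kappa'}(\kappa'-6)$). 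The actual mechanism in \cite{dms2021mating} combines the conformal Markov property of $\SLE_{\kappa'}$ with the scale invariance of the quantum wedge to show that the post-$t$ picture, after conformally mapping back, is a fresh independent copy of the original wedge-plus-curve; this is what gives stationary independent increments, and the $\kappa'/4$ index then follows from matching the wedge's scaling exponent. Your sketch has the right architecture but misattributes the key invariance.
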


\begin{remark}\label{rmk:quantum_natural_time}
The time parameterization in Theorem~\ref{thm:sle_kappa_prime} is the so-called \emph{quantum natural time} of $\eta'$. The quantum natural time of $\eta'$ can be recovered as follows. If we denote by $\wt{N}_\epsilon^{h,\eta'}(O)$ the number of bubbles in $O$ with quantum boundary length in $[\epsilon,2\epsilon]$ (with respect to $h$), cut out by $\eta'$, then the amount of quantum natural time that $\eta'$ spends in $O$ is given by $\lim_{\epsilon \to 0} \epsilon^{\kappa'/4} \wt{N}_\epsilon^{h,\eta'}(O)$.
\end{remark}

\begin{remark}
\label{rem:sle_kp_divide}
The law of the left boundary of an $\SLE_{\kappa'}$ process is that of an $\SLE_\kappa(\kappa-4; \kappa/2-2)$ and the conditional law of the right boundary of an $\SLE_{\kappa'}$ given its left boundary is that of an $\SLE_\kappa(-\kappa/2; \kappa-4)$ (see, e.g., \cite[Figure~2.5]{mw2017intersections}).  It thus follows by applying Theorem~\ref{thm:wedge_cutting} twice that if we start with a quantum wedge $\CW = (\h,h,0,\infty)$ of weight $3\gamma^2/2-2$ and let $\eta'$ be an independent $\SLE_{\kappa'}$ on $\h$ from $0$ to $\infty$ then the quantum surfaces $\CW_1$, $\CW_2$, $\CW_3$, which are respectively parameterized by the components which are to the left of $\eta'$, between the left and right boundaries of $\eta'$, and to the right of $\eta'$ are independent quantum wedges with weights $\gamma^2-2$, $2-\gamma^2/2$, and $\gamma^2-2$.
\end{remark}

We let $\CH_1(\C)$ (resp.\ $\CH_2(\C)$) be the subspace of those functions in $H(\C)$ which are constant (resp.\ have mean zero) on $\partial B(0,r)$ for all $r \geq 0$.  Then we have that $H(\C) = \CH_1(\C) \oplus \CH_2(\C)$ gives an orthogonal decomposition of $H(\C)$.

\begin{definition}
\label{def:quantum_cone_def}	
Fix $\alpha < Q$.  An \emph{$\alpha$-quantum cone} is the doubly marked quantum surface $\CC = (\C,h,0,\infty)$ whose law can be sampled from as follows.
\begin{enumerate}[(i)]
\item Let $A \colon \R \to \R$ be the process defined as follows.  For $s \geq 0$, $A_s = B_s + \alpha s$ where $B$ is a standard Brownian motion with $B_0 = 0$, and for $s \leq 0$, $A_s = \wh{B}_{-s} + \alpha s$ where $\wh{B}$ is a standard Brownian motion independent of $B$ with $\wh{B}_0 = 0$ conditioned so that $\wh{B}_u + (Q-\alpha)u > 0$ for all $u > 0$.  We take the projection of $h$ onto $\CH_1(\C)$ to be the function whose common value on $\partial B(0,e^{-r})$ is $A_r$ for each $r \in \R$.
\item We take the projection of $h$ onto $\CH_2(\C)$ to be the corresponding projection of a whole-plane GFF independently of its projection onto $\CH_1(\C)$.
\end{enumerate}
\end{definition}

The particular embedding here is called the \emph{circle average embedding} because the embedding is such that $\sup\{r \geq 0 : h_r(0) + Q\log r = 0\} = 1$.

It is convenient to parameterize the space of quantum cones using weight $W$ rather than $\alpha$.  The relationship between these two quantities is
\begin{equation}
\label{eqn:q_cone_weight}
 W = 2\gamma(Q-\alpha).
\end{equation}

The following is the basic cutting result for quantum cones \cite{dms2021mating}.

\begin{theorem}
\label{thm:cone_cutting}
Fix $W > 0$ and suppose that $\CC$ is a quantum cone of weight $W$.  Let $\eta$ be an $\SLE_\kappa(W-2)$ process independent of $\CC$.  Then the quantum surfaces parameterized by the components in the complement of $\eta$ are a quantum wedge of weight $W$.
\end{theorem}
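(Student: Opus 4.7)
The plan is to prove Theorem~\ref{thm:cone_cutting} via the equivalent welding statement: that self-welding a weight-$W$ quantum wedge $\CW$ along its two boundary arcs via the quantum boundary length measure produces a weight-$W$ quantum cone, with the welding interface being an $\SLE_\kappa(W-2)$ curve from $0$ to $\infty$. For $\kappa \in (0,4)$ the $\SLE_\kappa$ curve is a.s.\ conformally removable by \cite{rs2005basic,js2000removability}, so such a conformal welding is uniquely determined by the boundary length identification and the cutting and welding formulations are logically equivalent.

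To identify the welded surface, I would embed $\CW$ as $(\h,h,0,\infty)$ via Definition~\ref{def:quantum_wedge_def} in its circle-average embedding, so that the boundary length measure $\nu_h$ has infinite mass on each of $\R_-$ and $\R_+$. Define the welding homeomorphism to match points on $\R_-$ and $\R_+$ with equal $\nu_h$-distance from $0$, and let $\CS$ be the resulting topological quantum surface with $\eta$ the image of $\R$ inside $\CS$. I would then verify two structural properties of $(\CS,\eta)$: (a) the surface $\CS$ is scale invariant in law, in the sense that the rescaling $z \mapsto cz$ combined with adding $-Q\log c$ to the field preserves the distribution; and (b) $\eta$ has the radial Markov property characteristic of an $\SLE_\kappa(W-2)$ from $0$ to $\infty$. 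Property~(a) follows from the built-in stationarity of the Bessel encoding~\eqref{eqn:bessel_wedge_dimension} of the $\CH_1(\h)$-projection of a wedge, while~(b) follows from the wedge Markov property (again via the Bessel encoding) combined with the target-invariance and radial SDE~\eqref{eqn:radial_sle_kappa_rho_driving} for $\SLE_\kappa(\rho)$ recalled in Section~\ref{subsec:sle}. Together~(a) and~(b) should identify, via a uniqueness characterization, $\CS$ as a quantum cone in the sense of Definition~\ref{def:quantum_cone_def} carrying an independent radial $\SLE_\kappa(W-2)$.

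The main obstacle is pinning down that the weight of the resulting cone equals $W$ (and not some other value compatible with the scale invariance). A concrete way to do this is to match Bessel drifts through the weight-to-$\alpha$ conversions~\eqref{eqn:q_wedge_weight} and \eqref{eqn:q_cone_weight}: one applies Theorem~\ref{thm:wedge_cutting} iteratively to decompose $\CW$ into sub-wedges cut by independent $\SLE_\kappa(\rho_1;\rho_2)$ curves whose weights sum to $W$, and then uses the compatibility of these cuts with the self-welding to reduce the weight identification to a direct computation involving Brownian drifts. A secondary technical point is the well-definedness of the self-welding when $\CW$ is thin (for $W < \gamma^2/2$, Definition~\ref{def:thin_quantum_wedge_def}), in which case one performs the boundary-length identification inside each bead and concatenates; Remark~\ref{rem:bessel_area_encoding} controls the relative quantum areas of beads, and the Radon-Nikodym computation~\eqref{eq:RN_derivative} together with Remark~\ref{rmk:RN_derivative} lets one pass to limits of finite-bead approximations to justify the overall construction.
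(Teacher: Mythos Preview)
The paper does not give its own proof of Theorem~\ref{thm:cone_cutting}; it is stated in the preliminaries section as ``the basic cutting result for quantum cones \cite{dms2021mating}'' and simply cited from Duplantier--Miller--Sheffield. So there is nothing to compare your proposal against in this paper.

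That said, your sketch is considerably more ambitious than a citation, and while it gestures in the right direction (the result in \cite{dms2021mating} is indeed proved via a zipping/welding argument), several steps are not justified at the level you present them. The assertion that scale invariance plus a radial Markov property \emph{characterizes} the quantum cone with an independent $\SLE_\kappa(W-2)$ is the heart of the matter, and it does not follow from the material recalled in Section~\ref{subsec:lqg}; in \cite{dms2021mating} this requires the full quantum-zipper machinery (stationarity of the capacity-time unzipping, identification of the field law after unzipping via an explicit Girsanov/KPZ computation, and a limiting argument to pass from finite-volume to the infinite cone). Your plan to ``match Bessel drifts through \eqref{eqn:q_wedge_weight} and \eqref{eqn:q_cone_weight}'' and to invoke Theorem~\ref{thm:wedge_cutting} iteratively does not supply this; Theorem~\ref{thm:wedge_cutting} is itself one of the results proved alongside Theorem~\ref{thm:cone_cutting} in \cite{dms2021mating} and cannot be used as an independent input to derive it. Likewise, the existence of the self-welding (not just its uniqueness via removability) is a substantial part of the argument and is not addressed. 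For the purposes of this paper the correct ``proof'' is the citation; if you want to reconstruct the argument, you should follow \cite[Theorem~1.5 and Section~7]{dms2021mating} directly.
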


\subsection{Independence of the GFF across nested annuli}
\label{subsec:good_annuli}

Suppose that $h$ is an instance of the GFF on $\h$, $z \in \h$, and $r \in (0,\im(z)/2)$.  Let~$\Fh_{z,r}$ be the distribution on~$\h$ which is harmonic in $B(z,r)$ so that we can write $h = h_{z,r} + \Fh_{z,r}$ where~$h_{z,r}$ is a zero-boundary GFF on $B(z,r)$ independent of~$\Fh_{z,r}$.  We say that $(z,r)$ is $M$-good for~$h$ if
\[ \sup_{w \in B(z,15r/16)} |\Fh_{z,r}(w) - \Fh_{z,r}(z)| \leq M.\]
It was proved in \cite[Proposition~4.3]{mq2020geodesics} that the $M$-good scales are very likely to be dense among all scales, which we restate below for the convenience of the reader.

\begin{lemma}
\label{lem:good_dense}
Fix $z \in \h$ and $r \in (0,\im(z)/2)$.  For each $k \in \N$ we let $r_k = 2^{-k} r$.  Fix $K \in \N$ and let $N = N(K,M)$ be the number of $1 \leq k \leq K$ so that $B(z,r_k)$ is $M$-good.  For every $a > 0$ and $b \in (0,1)$ there exists $M_0 = M(a,b)$ and $c_0 = c_0(a,b)$ so that for all $M \geq M_0$ we have
\[ \p[ N(K,M) \leq b K ] \leq c_0 e^{- a K}.\]	
\end{lemma}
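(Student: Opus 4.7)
The plan is to use the domain Markov property of the GFF to decompose $\Fh_{z,r_k}$ as a sum of independent scale-increments, bound the oscillation each increment contributes on $B(z,15r_k/16)$, and then apply a concentration bound for the number of good scales.

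First I would set $f_k := (\Fh_{z,r_k}-\Fh_{z,r_{k-1}})|_{B(z,r_k)}$ for $k \geq 2$, so that on $B(z,r_k)$ one has the telescoping identity $\Fh_{z,r_k} = \Fh_{z,r_1}|_{B(z,r_k)} + \sum_{j=2}^k f_j$. Writing $h = h_{z,r_j}+\Fh_{z,r_j}$ for $j=k-1$ and $j=k$ and subtracting on $B(z,r_k)$ will identify $f_k$ with the harmonic part of the zero-boundary GFF $h_{z,r_{k-1}}$ under its own Markov decomposition at the sub-ball $B(z,r_k)\subseteq B(z,r_{k-1})$. Since $h_{z,r_{k-1}}$ is independent of $h|_{\h \setminus B(z,r_{k-1})}$ while $f_2,\ldots,f_{k-1}$ are all measurable with respect to this latter $\sigma$-algebra, the increments $(f_k)$ would be mutually independent.

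Next, because $r<\im(z)/2$, the ball $B(z,r)$ is compactly contained in $\h$, so the deterministic boundary-data contribution to $\Fh_{z,r_k}$ has oscillation $O(r_k)$ on $B(z,15r_k/16)$. The random part of $f_k$ is the harmonic extension of a zero-boundary GFF, whose law is scale invariant, so the variable
\[
X_k := \sup_{w \in B(z,15r_k/16)}|f_k(w)-f_k(z)|
\]
should have a common distribution (up to the negligible boundary correction), and a Fourier expansion on concentric circles will give the sub-Gaussian tail $\p[X_k > t]\leq Ce^{-ct^2}$ uniformly in $k$. A standard harmonic gradient estimate then yields $\sup_{w\in B(z,15r_k/16)}|f_j(w)-f_j(z)|\leq C'\,2^{-(k-j)}X_j$ for $j\leq k$, hence
\[
\sup_{w\in B(z,15r_k/16)}|\Fh_{z,r_k}(w)-\Fh_{z,r_k}(z)| \leq o(1) + C''\,S_k, \qquad S_k := \sum_{j=2}^k 2^{-(k-j)} X_j,
\]
so that $(z,r_k)$ is $M$-good as soon as $S_k \leq M/(2C'')$ (and $k$ is large enough that the $o(1)$ tail is negligible).

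Finally I would observe that $S_k = X_k + \tfrac12 S_{k-1}$ is a uniformly ergodic AR(1)-type Markov chain driven by the i.i.d.\ sub-Gaussian innovations $(X_k)$; its stationary distribution has sub-Gaussian tails, so taking $M$ large makes the stationary probability of $\{S\leq M/(2C'')\}$ arbitrarily close to $1$. A Chernoff-type concentration inequality for additive functionals of ergodic Markov chains (or alternatively a Chernoff bound applied directly to the i.i.d.\ indicators $\one_{\{X_k\leq M_1\}}$, combined with the exponential decay of the weights $2^{-(k-j)}$ and the $O(\sqrt{\log K})$ bound on $\max_{j\leq K}X_j$ from the sub-Gaussian tail) will then give $\#\{1\leq k\leq K : S_k\leq M/(2C'')\}\geq bK$ except on an event of probability at most $c_0 e^{-aK}$, proving the lemma. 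The main subtlety lies in this last step: since consecutive $S_k$'s share innovations $X_j$ in a geometrically weighted way, the goodness indicators are not independent, and the Chernoff bound must be routed either through Markov chain ergodicity or through the i.i.d.\ innovations together with explicit control of the weighted tail contribution from isolated bad scales.
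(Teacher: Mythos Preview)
The paper does not give its own proof of this lemma; it is quoted verbatim from \cite[Proposition~4.3]{mq2020geodesics} and simply cited.  Your outline is in fact essentially the argument used there: the telescoping decomposition $\Fh_{z,r_k}=\Fh_{z,r_1}+\sum_{j=2}^k f_j$ into independent, identically distributed (up to scaling) harmonic increments, the sub-Gaussian tails of $X_k=\sup_{B(z,15r_k/16)}|f_k-f_k(z)|$, and the gradient estimate giving the geometrically weighted sum $S_k=\sum_{j\le k}2^{-(k-j)}X_j$ are all correct.

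One point to tighten in your final step.  Your alternative~(b), routing through $\max_{j\le K}X_j=O(\sqrt{\log K})$, does not by itself yield the required bound: the event $\{\max_j X_j>C\sqrt{\log K}\}$ has probability only polynomially small in $K$, whereas you need $c_0e^{-aK}$.  The clean way to finish (and the one used in the cited reference) is an $L^1$--Markov argument: since $\sum_{k=1}^K S_k\le 2\sum_{j=1}^K X_j$ by Fubini, one has
\[
\#\{k\le K:S_k>M'\}\ \le\ \frac{2}{M'}\sum_{j=1}^K X_j,
\]
and a Chernoff bound on the i.i.d.\ sub-Gaussian sum gives $\p[\sum_j X_j>tK]\le e^{-c(t)K}$ with $c(t)\to\infty$ as $t\to\infty$.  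Choosing $M'$ (hence $M$) large enough that $c(M'(1-b)/2)\ge a$ yields the statement.  Your Markov-chain alternative~(a) also works but is heavier than necessary.
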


We let $E_{z,r}^M$ be the event that $B(z,r)$ is $M$-good.  We also recall from \cite{mq2020geodesics} the following Radon-Nikodym derivative estimate for the $M$-good scales (\cite[Lemma~4.1]{mq2020geodesics}).

\begin{lemma}
\label{lem:good_scale_radon}
Fix $z \in \h$ and $r \in (0,\im(z)/2)$.  The conditional law of $h - h_r(z)$ restricted to $B(z,7r/8)$ is mutually absolutely continuous with respect to the law of a zero-boundary GFF in $B(z,r)$ restricted to $B(z,7r/8)$,  where $h_r(z)$ denotes the average of $h$ on $\partial B(z,r)$.  Let $\CZ_{z,r}$ be the Radon-Nikodym derivative of the latter with respect to the former.  For each $p, M > 0$ there exists a constant $c_{p,M}$ depending only on $p,M$ such that
\[ \E[ \CZ_{z,r}^p \giv \CF_{z,r}] \one_{E_{z,r}^M} \leq c_{p,M} \one_{E_{z,r}^M}.\]
\end{lemma}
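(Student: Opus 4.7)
The plan is to decompose $h = h_{z,r} + \Fh_{z,r}$ via the Markov property of the GFF, so that $h_{z,r}$ is a zero-boundary GFF on $B(z,r)$ independent of $\Fh_{z,r}$. Since the zero-boundary GFF on $B(z,r)$ has vanishing circle average on $\partial B(z,r)$ and $\Fh_{z,r}$ is harmonic (so its value at $z$ equals its circle average on $\partial B(z,r)$), we have $h_r(z) = \Fh_{z,r}(z)$, and hence
\begin{align*}
(h - h_r(z))|_{B(z,7r/8)} = h_{z,r}|_{B(z,7r/8)} + g|_{B(z,7r/8)},
\end{align*}
where $g := \Fh_{z,r} - \Fh_{z,r}(z)$ is $\CF_{z,r}$-measurable, is harmonic on $B(z,r)$, and on the event $E_{z,r}^M$ is bounded by $M$ on $B(z,15r/16)$. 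Note that $E_{z,r}^M \in \CF_{z,r}$, so $\one_{E_{z,r}^M}$ pulls inside any conditional expectation given $\CF_{z,r}$.

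Next I would introduce a smooth radial cutoff $\chi$ with $\chi \equiv 1$ on $B(z,7r/8)$, $\chi \equiv 0$ outside $B(z,29r/32)$, and $|\nabla \chi| \lesssim 1/r$, and set $\phi := \chi g$. Since $\supp \phi \subset \overline{B(z,29r/32)}$ lies at distance at least $r/32$ inside $B(z,15r/16)$, the standard interior Cauchy estimate for the harmonic function $g$ on $B(z,15r/16)$ gives $|\nabla g| \lesssim M/r$ on $\supp \phi$ on the event $E_{z,r}^M$. Combining this with $|g| \leq M$ on $\supp \phi$, a direct expansion of $\int |\nabla(\chi g)|^2$ produces $\|\phi\|_\nabla^2 \leq C_1 M^2$ for some constant $C_1 > 0$ independent of $r$ and $M$. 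Crucially, $\phi \in H_0(B(z,r))$ and $\phi \equiv g$ on $B(z,7r/8)$, so the conditional law of $(h - h_r(z))|_{B(z,7r/8)}$ given $\CF_{z,r}$ agrees with the law of $(h_{z,r} + \phi)|_{B(z,7r/8)}$ with $\phi$ treated as deterministic.

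The remainder is a Cameron-Martin plus Jensen computation. Let $P'$ denote the law of $h_{z,r}$ on $B(z,r)$ and $Q'$ the law of $h_{z,r} + \phi$; by Cameron-Martin, $dQ'/dP' = \CD := \CD_{h_{z,r},\phi}$. Writing $P, Q$ for the restrictions of $P', Q'$ to $\sigma(\omega|_{B(z,7r/8)})$, conditioning gives $dQ/dP = Y := \E_{P'}[\CD \giv \sigma(\omega|_{B(z,7r/8)})]$, whence $\CZ_{z,r} = 1/Y$. For $p \geq 1$ the convexity of $x \mapsto x^{1-p}$ and Jensen's inequality, together with the explicit Gaussian identity $\E_{P'}[\CD^{1-p}] = \exp(p(p-1)\|\phi\|_\nabla^2/2)$, yield
\begin{align*}
\E[\CZ_{z,r}^p \giv \CF_{z,r}] \;=\; \E_P[Y^{1-p}] \;\leq\; \E_{P'}[\CD^{1-p}] \;=\; \exp\bigl(p(p-1)\|\phi\|_\nabla^2/2\bigr),
\end{align*}
which on $E_{z,r}^M$ is at most $\exp(p(p-1) C_1 M^2/2)$. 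The case $p \in (0,1)$ follows from $\E_Q[\CZ_{z,r}] = 1$ and concavity of $x \mapsto x^p$, giving the trivial bound $1$.

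The only genuine technical point is the cutoff construction: $\supp \phi$ must sit strictly inside $B(z,15r/16)$ because $E_{z,r}^M$ controls $g$ only on that slightly larger ball (not on all of $B(z,r)$), and one needs exactly that control to invoke the interior gradient estimate for harmonic functions. Once $\|\phi\|_\nabla^2 \lesssim M^2$ is established, the rest is a standard Cameron-Martin plus Jensen argument.
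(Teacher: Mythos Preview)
Your proof is correct. The paper does not supply its own argument for this lemma; it simply recalls the statement from \cite[Lemma~4.1]{mq2020geodesics}. Your approach---Markov decomposition $h = h_{z,r} + \Fh_{z,r}$, smooth cutoff of the centered harmonic part to obtain $\phi \in H_0(B(z,r))$ with $\|\phi\|_\nabla^2 \lesssim M^2$ on $E_{z,r}^M$, then Cameron--Martin together with Jensen's inequality applied to the conditional expectation $Y = \E_{P'}[\CD \mid \sigma(\omega|_{B(z,7r/8)})]$---is the standard one and is essentially what the cited reference does. The identification $h_r(z) = \Fh_{z,r}(z)$ and the use of the interior gradient estimate on the slightly smaller ball $B(z,15r/16)$ (where the $M$-good condition gives control) are handled correctly.
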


\subsection{A removability theorem}\label{subsec:removability_theorem}
We next introduce the removability theorem used to prove the conformal removability of $\SLE_4$, namely \cite[Theorem~8.1]{kms2022sle4remov}. Below, we denote by $\disthyp^D(z,w)$ the hyperbolic distance between $z$ and $w$ in the domain $D$ (to be properly defined in Section~\ref{sec:good_cube_lemmas}) and for a square $Q \subseteq \C$, we denote by $\cen(Q)$ its center and by $\len(Q)$ its side length. We now turn to the conditions.

Let $X \subseteq \C$ and suppose that there exists some $a \in (0,1)$ such that for each compact $K \subseteq X$, there exists $M>1$ such that the following hold.
\begin{itemize}
	\item Let $\udimM(K)$ denote the upper Minkowski dimension of $K$. Then $\udimM(K) < 2$ and $0 < a < (2 - \udimM(K))/5$.
	\item There exists a family $\CA = \cup_{k = 1}^\infty \CA_k$ of open subsets of $\C$ such that each $A \in \CA_k$ is a topological annulus with $\diam(A) \leq M 2^{-k}$. For $A \in \CA$, we denote by $\pin A$ (resp.\ $\pout A$) the boundary of the bounded (resp.\ unbounded) connected component of $\C \setminus A$.
	\item There exists $n_0 \in \N$ such that for all $n \geq n_0$ and each $z \in K$, there exist $k \in \N$ such that $(1-a^2)n \leq k \leq n$ and $A \in \CA_k$ so that $B(z,2^{-n})$ is contained in the bounded connected component of $\C \setminus A$ and such that the following hold. 
	\begin{enumerate}[(I)]
		\item\label{it:components} There exist $1 \leq m \leq M$ pairwise disjoint, open and simply connected subsets $U_1, \dots,U_m$ of $A \setminus X$ such that for each $1 \leq i \leq m$, there exists $I_i \subseteq \partial U_i \cap \partial U_{i+1}$ closed (with the convention that $U_0 = U_m$ and $U_{m+1} = U_1$) such that each loop $\gamma : \s^1 \to \closure{ \cup_{i=1}^m U_i}$ hits each $I_i$ exactly once, is disjoint from $(\partial U_i \cup \partial U_{i+1}) \setminus I_i$ and disconnects $\pin A$ from $\pout A$.
		\item\label{it:intersections} For each $1 \leq i \leq m$, there exists $d_i \in (10a,2-10a)$ and a (positive) finite measure $\mu_i$ supported on $I_i$ so that the following hold. Let $\ol{\mu}_i$ be $\mu_i$ normalized to be a probability measure.
		\begin{enumerate}[(a)]
			\item\label{it:intersection_lbd} We have that $\mu_i(I_i) \geq M^{-1} 2^{-d_i k}$.
			\item\label{it:intersection_ubd} For every Borel set $Y \subseteq I_i$, we have that $\mu_i(Y) \leq M \diam(Y)^{d_i - a}$.
			\item\label{it:g_assumptions} For each $1 \leq i \leq m$, let $\CW_i$ be a Whitney square decomposition of $U_i$ and for each $Q \in \CW_i$. There exists a square $Q_i \in \CW_i$ such that the following holds. Let $G_i^-$ be the set of $w \in I_{i-1}$ (where $I_0 = I_m$) such that if $\gamma_{i,w}$ is the hyperbolic geodesic from $\cen(Q_i)$ to $w$,then for each $Q$ which is hit by $\gamma_{i,w}$, $\disthyp^{U_i}(\cen(Q_i),\cen(Q)) \leq M (2^k \len(Q))^{-a}$. We let $G_i^+$ be defined in the same way, but by replacing $I_{i-1}$ with $I_i$. Then
			\begin{enumerate}[(i)]
				\item\label{it:g_meas_lbd} $\ol{\mu}_{i-1}(G_i^-) \geq 3/4$ and $\ol{\mu}_i(G_i^+) \geq 3/4$ and
				\item\label{it:g_meas_ubd} The number of squares in $\CW_i$ with side length $2^{-j}$ which are hit by a hyperbolic geodesic from $\cen(Q_i)$ to a point in $G_i^+$ (resp.\ $G_i^-$) is at most $M 2^{(d_i + a)(j-k)}$ (resp.\ $M 2^{(d_{i-1}+a)(j-k)}$).
			\end{enumerate}
		\end{enumerate}
	\end{enumerate}
\end{itemize}

\begin{figure}[ht!]
\begin{center}
\includegraphics[scale=0.9]{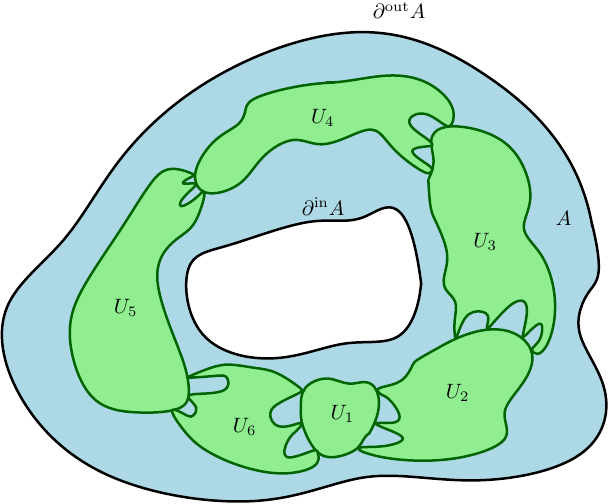}	
\end{center}
\caption{\label{fig:annulus_condition}  Illustration of the assumptions above. The sets $U_1,\ldots,U_m$ (with $m=6$) from~\eqref{it:components} are shown in green.  The main assumption is~\eqref{it:intersections}, which controls the behavior of $\partial U_i \cap \partial U_{i+1}$ for each $1 \leq i \leq m$ (where $U_{m+1} = U_1$).}  
\end{figure}

\begin{theorem}[Theorem~8.1 of \cite{kms2022sle4remov}]
Suppose that $D \subseteq \C$ is open and $X \subseteq D$ is closed in $D$ and satisfies the above assumptions. If $f : D \to \C$ is a homeomorphism onto its image and is conformal on $D \setminus X$, then $f$ is conformal on $D$.
\end{theorem}

\section{Conformally covariant measure on cut points}
\label{sec:natural_measure_on_cut_points}

The purpose of this section is to construct and analyze the properties of a conformally covariant measure on the cut points of an $\SLE_{\kappa'}$ process $\eta'$ with $\kappa' \in (4,8)$, that is, the set of points $\{ \eta(t): \eta((0,t)) \cap \eta((t,\infty)) = \emptyset \}$.  In particular, in Section~\ref{subsec:measure_construction} we will give the construction of the measure.  Next, in Section~\ref{subsec:measure_locally_finite} we will show that the measure that we have constructed is a.s.\ locally finite and in fact that the expected amount of mass that it assigns to any compact set is finite.  We will then prove that the measure is conformally covariant in Section~\ref{subsec:measure_conformally_covariant}.  Finally, in Section~\ref{subsec:measure_moments} we will show that the positive moments of the measure evaluated on any Borel subset of a fixed compact set are all finite and satisfy a certain precise upper bound when we have truncated on an event which only depends on the compact set.

\subsection{Construction of the measure}
\label{subsec:measure_construction}

Fix $\kappa' \in (4,8)$, let $\kappa = 16/\kappa'$, and let $\gamma = \sqrt{\kappa}$. Let $\CW = (\h,h,0,\infty)$ be a quantum wedge of weight $3\gamma^2/2-2$, let $\eta'$ be an independent $\SLE_{\kappa'}$ process in $\h$ from $0$ to $\infty$, and let $\eta_L$ (resp.\ $\eta_R$) denote the left (resp.\ right) outer boundary of $\eta'$.  Let $\CW_1$, $\CW_2$, and $\CW_3$ respectively be the (beaded) quantum surfaces parameterized by the regions which are to the left of~$\eta_L$, between~$\eta_L$ and~$\eta_R$, and to the right of~$\eta_R$.  Then~$\CW_2$ is a quantum wedge of weight $2-\gamma^2/2$ (Remark~\ref{rem:sle_kp_divide}) and we can couple~$\CW_2$ with $X \sim \BES^\delta$, $\delta = \kappa'/4 \in (1,2)$ (recall~\eqref{eqn:bessel_wedge_dimension}), so that the ordered collection of excursions of~$X$ from $0$ are in correspondence with the ordered collection of beads of $\CW_2$ where the length of each excursion is equal to the quantum area of the corresponding bead (Remark~\ref{rem:bessel_area_encoding}).  In particular, in~$T$ units of local time for~$X$ at~$0$ the number of beads of quantum area at least $\epsilon$ in $\CW_2$ is distributed as a Poisson random variable with mean equal to a constant times $T \epsilon^{\delta/2-1}$.  We note that the local time of $X$ at $0$ induces a measure on $\eta_L \cap \eta_R$ (i.e., the cut points of $\eta'$) which we will denote by $\qcutmeasure{h}{\eta'}$. We have included $h$ in the notation because, as we will see in just a moment, it is natural to consider this measure with another choice of field.

For each Borel set $O \subseteq \h$ and $\epsilon > 0$ we let $N_\epsilon^{h,\eta'}(O)$ denote the number of points in $O$ which are the closing points of beads (i.e., the point on the boundary of a bead last visited by $\eta_L$ or $\eta_R$) of quantum area at least $\epsilon$ in $\CW_2$ which are contained in $O$. Then the sequence of measures $(\epsilon^{1-\delta/2} N_\epsilon^{h,\eta'})_{\epsilon > 0}$ converges a.s.\ in the vague topology to a measure $\qcutmeasure{h}{\eta'}$ as $\epsilon \to 0$.  Now let $h^0$ be a GFF on $\h$ with zero boundary conditions which is taken to be independent of $\eta'$.  Since the law of~$h^0$ restricted to any compact set in $\h$ is absolutely continuous with respect to that of $h$, it follows that the sequence of measures $(\epsilon^{1-\delta/2} N_\epsilon^{h^0,\eta'})$ converges a.s.\ in the vague topology to a measure $\qcutmeasure{h^0}{\eta'}$ as $\epsilon \to 0$.  The same is likewise true if $h^0$ is instead a GFF on a simply connected domain $D \subseteq \C$ and $\eta'$ is an $\SLE_{\kappa'}$ in $D$ between distinct boundary points.  In this more general setting, we then define the measure $\cutmeasure{\eta'}$ by
\begin{align}
\label{eqn:cut_measure_definition}
	\cutmeasure{\eta'}(f) =\int f(z)  r_D(z) d\E[ \qcutmeasure{h^0}{\eta'} \giv \eta'](z) \quad\text{where}\quad r_D(z) = \confrad(z,D)^{2-\frac{8}{\kappa'} -\frac{\kappa'}{8}}
\end{align}
for $f$ bounded and Borel.

Throughout, we will make use of the following standard concentration result for Poisson random variables.  If $Z$ is a Poisson random variable with mean $\lambda$ and $\alpha \in (0,1)$ then
\begin{equation}
\label{eqn:poisson_concentration}
\p[ Z \leq \alpha \lambda ] \leq \exp(\lambda (\alpha- \alpha \log \alpha - 1)).
\end{equation}

\subsection{Local finiteness}
\label{subsec:measure_locally_finite}

Although $\qcutmeasure{h^0}{\eta'}$ a.s.\ defines a locally finite measure on $\h$, it is not immediate that $\cutmeasure{\eta'}$ defines a locally finite measure on $\h$ due to the conditional expectation in its definition~\eqref{eqn:cut_measure_definition}.  The purpose of the next proposition is to establish that this is the case.

\begin{proposition}
\label{lem:local_finiteness_cut_measure}
Almost surely, $\cutmeasure{\eta'}$ is locally finite as a measure on $\h$. In fact, for each compact set $K \subseteq \h$ we have that $\E[\cutmeasure{\eta'}](K) < \infty$.
\end{proposition}

The proof of Proposition~\ref{lem:local_finiteness_cut_measure} will require several steps.  First, in Lemma~\ref{lem:counterflow_close_fills_ball} we will collect a result from \cite{ghm2020almost} which gives that whenever an $\SLE_{\kappa'}$ travels distance at least $\epsilon$ it is very likely to disconnect a ball from $\infty$ which has diameter at least $\epsilon^{1+\delta}$.  Next, in Lemma~\ref{lem:disconnection_time_bound} we will show in the case of a quantum wedge $\CW = (\h,h,0,\infty)$ embedded so that $\nu_h([-1,0]) = 1$ and an $\SLE_{\kappa'}$ process $\eta'$ parameterized by quantum natural time that the amount of time $\tau$ necessary to disconnect $\D \cap \h$ from $\infty$ has a polynomial tail.  This will then be used to prove Lemma~\ref{lem:disconnection_local_time_bound}, i.e., that $\qcutmeasure{h}{\eta'}(\eta'([0,\tau]))$ also has a polynomial tail.  

In order to prove Proposition~\ref{lem:local_finiteness_cut_measure}, we fix two compact subsets $K \subseteq K'$ of $\D \cap \h$ with $\dist(K,K') > 0$ (by scale invariance of the law of $h^0$ the results for general $K \subseteq \h$ then follows) and let $0 < \tau_1 < \sigma_1 < \dots < \tau_N < \sigma_N$ be such that $[\tau_k,\sigma_k]$, $1 \leq k \leq N$ are the excursion time intervals of $\eta'$ from $\partial K$ to $\partial K'$. Then,
\begin{align*}
\qcutmeasure{h}{\eta'}(K) \leq \sum_{k=1}^N \qcutmeasure{h}{\eta'}(\eta'([\sigma_k,\tau_k])) \leq \qcutmeasure{h}{\eta'}(\eta'([0,\tau])).
\end{align*}
From Lemma~\ref{lem:counterflow_close_fills_ball} we deduce a bound on the probability that $N$ is large and Lemma~\ref{lem:disconnection_local_time_bound} provides a bound on the probability that $\qcutmeasure{h}{\eta'}(\eta'([0,\tau]))$ is large. Thus, in bounding $\p[ \qcutmeasure{h}{\eta'}(K) \geq x ]$ we may truncate on the events that $N$ and $\qcutmeasure{h}{\eta'}(\eta'([0,\tau]))$ are not too large. The probability of this latter truncated event can then be upper bounded (roughly) by the probability that the $\qmeasure{h}(K')$ is large, which is unlikely, since $\E[ \qmeasure{h}(K')^p]$ is finite for $p \in (0,4/\gamma^2)$. Then, one can bound $\E[ \qcutmeasure{h^0}{\eta'}(K)]$ in terms of $\E[\qcutmeasure{h}{\eta'}(K')]$ using Lemma~\ref{lem:wedge_exponential_moments_unit_boundary_length}.

\begin{lemma}
\label{lem:counterflow_close_fills_ball}
Let $\eta'$ be an $\SLE_{\kappa'}$ process in $\h$ from $0$ to $\infty$.  Fix $K \subseteq \h$ compact.  For each $\epsilon, \delta > 0$ we let $E_{\epsilon,\delta}^K$ be the event that for every $0 < s < t$ with $\eta'(s), \eta'(t) \in K$ and $|\eta'(s)-\eta'(t)| \leq \epsilon$ we have that $\eta'([s,t])$ disconnects from $\infty$ a ball with diameter at least $|\eta'(s) - \eta'(t)|^{1+\delta}$.  Then $\p[ E_{\epsilon,\delta}^K] \to 1$ as $\epsilon \to 0$ (but with $\delta$, $K$ fixed) faster than any power of $\epsilon$.
\end{lemma}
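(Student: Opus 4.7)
The plan is to prove this by a multi-scale union bound, combining a dyadic covering of $K$ with a quantitative estimate on how often an $\SLE_{\kappa'}$ curve can self-approach without enclosing a macroscopic bubble. First I would dyadically discretize the scales. Fix $\delta > 0$ and let $F_k$ be the event that there exist $s<t$ with $\eta'(s),\eta'(t)\in K$, $2^{-k-1} < |\eta'(s)-\eta'(t)| \leq 2^{-k}$, and $\eta'([s,t])$ fails to disconnect from $\infty$ any ball of diameter at least $2^{-k(1+\delta)}$. Since $(E_{\epsilon,\delta}^K)^c \subseteq \bigcup_{k \geq \lceil \log_2(1/\epsilon)\rceil} F_k$, it suffices to show that for every $N>0$ there is $c_N = c_N(K,\delta)$ with $\p[F_k] \leq c_N 2^{-Nk}$, after which summing over $k$ yields the required super-polynomial decay.

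Next I would cover $K$ with balls $B(z_j,2^{-k})$ centered on a $2^{-k}$-grid, so that $N_k \lesssim 2^{2k}$. On $F_k$ both $\eta'(s)$ and $\eta'(t)$ lie in $B(z_j,C 2^{-k})$ for some $j$ and a universal $C$. By a union bound it suffices to prove that for each fixed ball $B = B(z_j, C2^{-k})$, the event $A_B^k$ that $\eta'$ enters $B$ at two times $s<t$ separated by an excursion leaving $B(z_j, 2C2^{-k})$ while the resulting sub-arc fails to disconnect any ball of diameter $\geq 2^{-k(1+\delta)}$ satisfies $\p[A_B^k] = O(2^{-Nk})$ for any $N$.

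To reduce $A_B^k$ to a scale-invariant statement I would apply the conformal Markov property at the first entry time $\sigma_1$ of $\eta'$ into $B$. Let $\psi$ uniformize the unbounded component of $\h \setminus \eta'([0,\sigma_1])$ to $\h$, sending $\eta'(\sigma_1)$ to $0$ and $\infty$ to $\infty$. On the (complementary) event that $\eta'([0,\sigma_1])$ has not already encircled most of $B$ (a case handled by a direct disconnection estimate), Koebe distortion implies $\psi(B)$ is comparable in shape to a Euclidean disc near $0$, at scale $r = r_k \asymp 2^{-k}$. The chordal Markov property reduces $A_B^k$ to the following intrinsic statement: for chordal $\SLE_{\kappa'}$ in $\h$ from $0$ to $\infty$, the probability that the curve returns to $B(0,r)$ after exiting $B(0,2r)$ without the intermediate sub-arc disconnecting a ball of diameter $\geq r^{1+\delta'}$, for some $\delta' < \delta$ chosen to absorb Koebe distortion, decays faster than any power of $r$.

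The main obstacle will be proving this intrinsic self-approach estimate. The plan is to use the imaginary geometry description of Section~\ref{subsec:ig}: whenever $\eta'$ closes up a pocket, its boundary is composed of two flow lines of an auxiliary GFF meeting at a specific angle determined by $\kappa'$. At the closing point of the pocket the opening angle is a.s.\ positive, and a quantitative version should be available by a Radon-Nikodym comparison with an appropriate $\SLE_\kappa(\rho)$ process near the closing point (which at the driving-function level is a Brownian perturbation of the unweighted driver), giving a stretched-exponential tail on the probability of an atypically small opening. Combining this opening-angle estimate with an iteration over nested annuli around the closing point shows that conditional on return to $B(0,r)$ the enclosed pocket has diameter $\geq r^{1+\delta'}$ with probability that tends to $1$ faster than any power of $r$. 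Putting this together with the union bound over the $O(2^{2k})$ balls and summing over $k\geq \lceil \log_2(1/\epsilon)\rceil$ yields the claimed super-polynomial rate.
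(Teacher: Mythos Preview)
The paper does not give an independent proof here; it simply cites \cite[Lemma~3.6]{ghm2020almost}, so there is no argument in the paper to compare against directly. Your multi-scale union-bound framework (dyadic scales, grid covering, conformal Markov reduction) is a reasonable outer shell, but the crucial step---your ``intrinsic self-approach estimate''---is where essentially all the content lies, and your sketch for it has a real gap. The notion of ``opening angle at the closing point of a pocket'' is not well-defined for a non-smooth $\SLE_{\kappa'}$ curve, and even if one made sense of it, a large opening angle does not by itself imply that the enclosed region contains a ball of diameter $\geq r^{1+\delta'}$: the pocket could be long and thin. Your proposed Radon--Nikodym comparison and nested-annulus iteration are gestures toward a proof rather than a proof; you would need a precise statement of the form ``conditionally on $\eta'$ up to the first entry into $B(0,r)$, on the event that it later exits $B(0,2r)$ and returns, the intermediate sub-arc disconnects a ball of diameter $\geq r^{1+\delta'}$ with probability $\geq 1 - O(r^N)$ for every $N$,'' and you have not indicated how to obtain this. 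You also need to handle all returns to $B$, not just the first, and the Koebe-distortion control on $\psi(B)$ fails precisely on the problematic event that $\eta'([0,\sigma_1])$ already nearly surrounds $B$.

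The argument in \cite{ghm2020almost} (which the paper also invokes in the proof of Lemma~\ref{lem:small_components}) proceeds instead through the space-filling $\SLE_{\kappa'}$ curve $\ol{\eta}'$ coupled with $\eta'$ as in Section~\ref{subsec:ig}: one proves a quantitative regularity estimate for $\ol{\eta}'$ saying that whenever it travels Euclidean distance $\geq d$ it fills in a ball of diameter $\geq d^{1+\delta}$, with failure probability decaying faster than any power of $d$. Since the region traced by $\ol{\eta}'$ between the space-filling times corresponding to $\eta'(s)$ and $\eta'(t)$ is disconnected from $\infty$ by $\eta'([s,t])$, the statement for $\eta'$ follows immediately. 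This bypasses any direct analysis of pocket geometry and is the cleaner route; if you want a self-contained proof, that is the estimate to aim for.
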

\begin{proof}
This follows from \cite[Lemma~3.6]{ghm2020almost}.
\end{proof}

\begin{lemma}
\label{lem:disconnection_time_bound}
Suppose that $\CW = (\h,h,0,\infty)$ is a quantum wedge of weight $3\gamma^2/2-2$ with the embedding into~$\h$ such that $\qbmeasure{h}([-1,0]) = 1$ and $\eta'$ is an independent $\SLE_{\kappa'}$ in $\h$ from $0$ to $\infty$ which is subsequently parameterized by quantum natural time.  Let $\tau$ be the first time $t$ that $\eta'|_{[0,t]}$ disconnects $\D \cap \h$ from $\infty$.  There exists $\alpha > 0$ so that $\p[ \tau \geq t ] = O(t^{-\alpha})$.
\end{lemma}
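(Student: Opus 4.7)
The plan is to combine a Euclidean tail estimate for the $\SLE_{\kappa'}$ hull at the disconnection time with a lower bound relating the quantum natural time of $\eta'$ to the quantum area accumulated inside that hull. For $R \geq 1$, let $T_R$ be the first capacity time $\eta'$ exits $B(0,R)$, let $\sigma$ be the first capacity time $\eta'$ disconnects $\D \cap \h$ from $\infty$, and set $A_R = \{\sigma \leq T_R\}$. Since for $\kappa' \in (4,8)$ the $\SLE_{\kappa'}$ hulls a.s.\ fill $\h$, we plan to iterate the chordal $\SLE_{\kappa'}$ Markov property together with a one-step ``closing off'' estimate (the curve has positive probability of closing the remaining portion of $\D$ within each dyadic scale it reaches) to obtain $\p[A_R^c] \leq C R^{-\beta}$ for some $\beta > 0$; on $A_R$, every bubble cut off by $\eta'|_{[0,\sigma]}$ lies in $\closure{B(0,R)} \cap \h$.

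By Remark~\ref{rem:sle_kp_divide} the middle region $\CW_2$ between the outer boundaries of $\eta'$ is a thin quantum wedge of weight $2-\gamma^2/2$, and by Remark~\ref{rem:bessel_area_encoding} together with the discussion at the start of Section~\ref{subsec:measure_construction}, it is encoded by a Bessel process $X \sim \BES^{\kappa'/4}$ whose excursions from $0$ are in correspondence with the bubbles cut off by $\eta'$ and whose local time at $0$ is proportional to the quantum natural time of $\eta'$. In particular, in $t$ units of quantum natural time the number of bubbles of quantum area at least~$1$ that have been cut off is Poisson with mean~$c_0 t$ for some $c_0>0$. By the Poisson concentration inequality~\eqref{eqn:poisson_concentration}, with probability at least $1 - e^{-c t}$ there are at least $c_0 t/2$ such bubbles, and on $A_R \cap \{\tau \geq t\}$ their disjoint union lies inside $\closure{B(0,R)} \cap \h$, forcing $\qmeasure{h}(B(0,R) \cap \h) \geq c_0 t / 2$.

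The wedge field $h$ restricted to any bounded region of $\h$ is mutually absolutely continuous with respect to a GFF plus an explicit log singularity at~$0$, with Radon-Nikodym derivative having finite moments of all orders (Remark~\ref{rmk:RN_derivative}), so standard LQG moment bounds \cite{ds2011lqg} give $\E[\qmeasure{h}(B(0,R) \cap \h)^p] \leq C_p R^{\beta_1(p)}$ for every $p \in (0, 4/\gamma^2)$. Markov's inequality combined with the previous paragraph yields
\begin{align*}
\p[A_R, \tau \geq t] \leq C_p R^{\beta_1(p)} t^{-p} + e^{-c t},
\end{align*}
and combining with $\p[A_R^c] \leq C R^{-\beta}$ and optimizing $R$ as a suitable positive power of~$t$ produces $\p[\tau \geq t] = O(t^{-\alpha})$ for some $\alpha > 0$. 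The main technical obstacle I anticipate is extracting the quantitative polynomial decay $\p[A_R^c] \leq C R^{-\beta}$: qualitatively $K_\sigma$ is a.s.\ bounded because the $\SLE_{\kappa'}$ hull fills $\h$, but converting this into a polynomial rate requires careful iteration of the $\SLE_{\kappa'}$ Markov property across scales, which is standard but not completely automatic; fortunately any polynomial rate suffices, since the optimization is flexible.
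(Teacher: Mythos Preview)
Your overall architecture---bound the Euclidean size of the hull at disconnection, then show that large quantum natural time forces large quantum area inside that hull, then use moment bounds on $\qmeasure{h}(B(0,R)\cap\h)$---is a reasonable alternative to the paper's argument, but the second step contains a genuine error. You write that the Bessel process $X$ encoding $\CW_2$ has ``local time at $0$ proportional to the quantum natural time of $\eta'$.'' This is false: the local time of $X$ at $0$ is the \emph{cut point measure} $\qcutmeasure{h}{\eta'}$ (this is exactly how Section~\ref{subsec:measure_construction} defines it), whereas quantum natural time is accumulated \emph{inside} each $\CW_2$ bead as $\eta'$ traverses it (Remark~\ref{rmk:quantum_natural_time}). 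The relationship between the two is nontrivial; in fact Lemma~\ref{lem:disconnection_local_time_bound} is devoted precisely to proving that $T_\ell$ (quantum natural time elapsed in $\ell$ units of local time) satisfies $\p[T_\ell\le\ell]\le c\exp(-\ell^b)$, i.e.\ quantum natural time is typically \emph{much larger} than local time. Consequently your Poisson claim ``in $t$ units of quantum natural time the number of $\CW_2$ bubbles of quantum area $\ge 1$ is Poisson with mean $c_0 t$'' does not follow from the cited references and is not correct as stated.

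Your strategy is repairable: one can instead use Theorem~\ref{thm:sle_kappa_prime} directly, noting that in $t$ units of quantum natural time the number of jumps of $X^1$ of size $\ge 1$ (bubbles of quantum \emph{boundary length} $\ge 1$ cut off on the left) is Poisson with mean $\asymp t$, and then argue that a positive fraction of these have quantum area bounded below; but this requires an additional i.i.d.\ argument relating boundary length to area of the disconnected quantum disks. The paper takes a quite different route, avoiding both the Euclidean tail estimate on $K_\sigma$ and any area argument: it works entirely with the boundary length processes $(X^1,X^2)$, defining stopping times $\tau_n^1,\tau_n^2$ that mark successive excursions of $\eta'$ between $\R_-$ and $\R_+$, showing via an $\SLE$ hitting estimate that each excursion cycle closes off $\D\cap\h$ with uniformly positive probability, and controlling the growth of the $\tau_n^j$ via stable-process tail bounds (passage times have polynomial tails, overshoots have stretched-exponential tails). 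This is more self-contained for the present purpose, since it does not require a separate lemma establishing $\p[A_R^c]\le CR^{-\beta}$.
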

\begin{proof}
Let $\CW = (\h,h,0,\infty)$ be a quantum wedge of weight $3\gamma^2/2-2$ with the embedding into~$\h$ so that $\qbmeasure{h}([-1,0]) = 1$.  Let $\eta'$ be an independent $\SLE_{\kappa'}$ process in $\h$ from $0$ to $\infty$ which is subsequently parameterized by quantum natural time.  Let $(X^1,X^2)$ be the left/right boundary length process for $\eta'$.  For $j \in \{1,2\}$ and $t \geq 0$ we let $I_t^j = \inf_{0 \leq s \leq t} X_s^j$.  Let
\[ \tau_1^1 = \inf\{t \geq 0 : I_t^1 \leq -1\} \quad\text{and}\quad \tau_1^2 = \inf\{t \geq \tau_1^1 : X_t^2 = I_t^2\}.\]
Given that we have defined $\tau_1^1,\tau_1^2,\ldots,\tau_n^1,\tau_n^2$, we set
\[ \tau_{n+1}^1 = \inf\{t \geq \tau_n^2 : X_t^1 = I_t^1\} \quad\text{and}\quad \tau_{n+1}^2 = \inf\{t \geq \tau_{n+1}^1 : X_t^2 = I_t^2\}.\]
We note that the successive time-intervals $[\tau_n^1,\tau_n^2]$ and $[\tau_n^2,\tau_{n+1}^1]$ correspond to the successive excursions that~$\eta'$ makes between $\R_-$ and $\R_+$ starting with the first time it hits $(-\infty,-1]$.  Let~$E_n$ be the event that $\eta'|_{[0,\tau_n^2]}$ has disconnected $\D \cap \h$ from $\infty$.  Let also $\CF_t = \sigma(\eta'(s) : s \leq t)$. Next, writing $\tau_0^2 = 0$, we note that there exists $p \in (0,1)$ such that for all $n \geq 0$,
\begin{align}\label{eq:lower_bound_separation}
	\p[ E_{n+1} \giv \CF_{\tau_n^2} ] \geq p.
\end{align}
Indeed, let $(f_t)$ denote the centered Loewner flow of $\eta'$, let $K_t$ denote its hulls, $\ell_t = \inf\{ x: x \in K_t \cap \R\}$, $r_t = \sup \{ x: x \in (K_t \cap \R) \cup \{ 1 \} \}$ and define the family of conformal maps $(\varphi_t)$ by 
\begin{align*}
	\varphi_t(z) = 2 \frac{f_t(z) - f_t(\ell_t)}{f_t(r_t) - f_t(\ell_t)} - 1. 
\end{align*}
Assume we are on the event that $\eta'|_{[0,\tau_n^2]}$ has not disconnected $\h \cap \D$ from $\infty$.  With $\h_t$ denoting the unbounded component of $\h \setminus \eta'([0,t])$, we have that $\varphi_t: \h_t \to \h$, $\varphi(\ell_t) = -1$, $\varphi_t(r_t) = 1$, and $\varphi_t(\infty) = \infty$.  Set $x_n = \varphi_{\tau_n^2}(\eta'(\tau_n^2))$ and note that $x_n \in (-1,1]$.  Let also $\wt{\eta}'$ be the image under $\varphi_{\tau_n^2}$ of the time-reversal of $\eta'|_{[\tau_n^2,\infty)}$.  Note that there exists a universal constant $p_0 \in (0,1)$ such that for every $z \in \D \cap \h_{\tau_n^2}$,  the probability that a Brownian motion starting from $z$ exits $\h_{\tau_n^2}$ in $\eta'([0,\tau_n^2]) \cup [0,1]$ is at least $p_0$.  Thus,  combined with the Beurling estimate,  this implies that there exists $R<\infty$ large enough (depending only on $p_0$) such that $\varphi_{\tau_n^2}(\D \cap \h_{\tau_n^2}) \subseteq \h \cap B(0,R/8)$.  Consider the conformal automorphism~$f$ of~$\h$ given by $f(z) = -R/(z-x_n)$ and set $\wh{\eta}' = f(\wt{\eta}')$.  Then by \cite[Theorem~1.1]{ms2016imag3} we have that~$\wh{\eta}'$ has the law of an $\SLE_{\kappa'}$ in $\h$ from $0$ to $\infty$ conditionally on $\eta'|_{[0,\tau_n^2)}$.  Moreover,  by applying \cite[Lemma~2.5]{mw2017intersections} three times,  we obtain that there exists a constant $p_1 \in (0,1)$ such that with probability at least $p_1$,  the curve $\wh{\eta}'$ hits $(1/2,1)$ before hitting $(-1,-1/2)$ or exiting $\D \cap \h$ and then next hits $\R \setminus (1/2,1)$ in $(-1,-1/2)$ before exiting $\D \cap \h$ and then next hits $\R \setminus (-1,-1/2)$ in $(R,\infty)$.  By taking $R$ sufficiently large, if the latter occurs, we have that $\wt{\eta}'$ first hits $(-4R,-R/4)$ and then $(R/4 , 4R)$ before exiting $\h \setminus B(0,R/4)$ and then it hits $\R \setminus (R/4, 4R)$ next in $(x_n-1,x_n)$.  Note that $\varphi_{\tau_n^2}^{-1}((x_n-1,x_n)) \subseteq \eta'([0,\tau_n^2])$ and so if the latter event occurs,  then the time-reversal of $\eta'|_{[\tau_n^2,\infty)}$ first hits $(-\infty,-1)$ and then $(1,\infty)$ before hitting $\D \cap \h$,  and then next hits $\partial \h_{\tau_n^2} \setminus (1,\infty)$ in $\eta'([0,\tau_n^2])$ (and necessarily to the left of $\eta'(\tau_n^2)$).  On this event, we have that $\eta'([0,\tau_{n+1}^2])$ disconnects $\D \cap \h$ from $\infty$ so that~\eqref{eq:lower_bound_separation} holds.

We let $T_1^1 = \tau_1^1$ and then set
\begin{align*}
	T_n^1 = \frac{\tau_n^1 - \tau_{n-1}^2}{(X_{\tau_{n-1}^2}^1 - I_{\tau_{n-1}^2}^1)^{\kappa'/4}} \quad\text{for} \quad n \geq 2 \quad\text{and}\quad T_n^2 = \frac{\tau_n^2 - \tau_n^1}{(X_{\tau_n^1}^2 - I_{\tau_n^1}^2)^{\kappa'/4}} \quad\text{for} \quad n \geq 1.
\end{align*}
By scaling, the strong Markov property of $(X^1, X^2)$, and the independence of $X^1$ and $X^2$, we have that the sequences of random variables $(T_n^1)$, $(T_n^2)$ are both i.i.d., are independent and have the same distribution as each other.  We claim that
\begin{equation}
\label{eqn:t_n_tail_bound}
\p[T_{n+1}^1 \geq x \giv \CF_{\tau_n^2}] = O(x^{4/\kappa' - 1}) \quad\text{and}\quad \p[T_{n+1}^2 \geq x \giv \CF_{\tau_{n+1}^1}] = O(x^{4/\kappa' - 1}).
\end{equation}
To see this, first note that $\p[T_{n+1}^1 \geq x \giv \CF_{\tau_n^2}] = \p[T_{n+1}^2 \geq x \giv \CF_{\tau_{n+1}^1}] = \p[\tau_1^1 \geq x]$. Then, since $|X_1|$ is not a subordinator,~\eqref{eqn:t_n_tail_bound} follows from~\cite[Chapter~VIII, Proposition~2]{bertoin1996levy} and scaling; note that $\rho = 1 - 4/\kappa'$ in the context of \cite[Chapter~VIII]{bertoin1996levy}).

Next,  fix $u \in (0, 1-4/\kappa')$ and note that $C = \E[ (T_i^j)^{u}] = \E[ (\tau_1^1)^{u} ] < \infty$ for each $1\leq i \leq n$ and $j=1,2$.  Then by the Chernov bound,
\begin{align}
\label{eqn:log_t_n_tail}
\p\left[ \sum_{j=1}^2 \sum_{i=1}^n \log T_i^j \geq c_1 n \right] \leq e^{-c_1 u n} \prod_{j=1}^2 \prod_{i=1}^n \E[  (T_i^j)^{u} ] = C^{2n} e^{-c_1 u n} = e^{-c_2 n},
\end{align}
where $c_2 = c_1 u - 2 \log C$ and $c_1 > 0$ is chosen large enough so that $c_2 > 0$.

We recall that $\tau_0^2 = 0$ and define
\begin{align*}
	R_n^1 = \frac{(X_{\tau_n^2}^1 - I_{\tau_n^2}^1)^{\kappa'/4}}{\tau_n^2 - \tau_n^1} \quad\text{and}\quad R_n^2 = \frac{(X_{\tau_n^1}^2 - I_{\tau_n^1}^2)^{\kappa'/4}}{\tau_n^1 - \tau_{n-1}^2} \quad \text{for} \quad n \geq 1.
\end{align*}
As above, by the strong Markov property of $(X^1,X^2)$ and the independence of $X^1,X^2$, the sequences of random variables $(R_n^1)$, $(R_n^2)$ are both i.i.d., independent and have the same distribution as each other.  In particular,  we have that $R_n^j$ has the same law as $(Z_1^j)^{\kappa'/4}$ where $Z_t^j = X_t^j - I_t^j$ for $j=1,2$,  $n \in \N$,  and $t \geq 0$.  Note that there exist constants $c_3,c_4 > 0$ so that
\begin{equation}
\label{eqn:x_time_tail}
\p[ R_n^1 \geq x \giv \CF_{\tau_n^1}] \leq c_3 e^{-c_4 x^{4/\kappa'}} \quad\text{and}\quad \p[ R_{n+1}^2 \geq x \giv \CF_{\tau_n^2}] \leq c_3 e^{-c_4 x^{4/\kappa'}}.
\end{equation}
Indeed, letting $S_t^j = \sup_{0 \leq s \leq t} X_t^j$, for $t \geq 0$, $j=1,2$, we have by~\cite[Chapter~VI, Proposition~3]{bertoin1996levy} that the processes $Z^j$ and $S^j$ have the same law.  Moreover, \cite[Chapter~VII, Corollary~2]{bertoin1996levy} implies that~$S_1^j$, and hence~$Z_1^j$, has exponential moments of all orders, which gives~\eqref{eqn:x_time_tail}.
By adjusting the values of $c_1, c_2 > 0$ we have in addition to~\eqref{eqn:log_t_n_tail} that 
\begin{equation}
\label{eqn:log_r_n_tail}
\p\left[ \sum_{j=1}^2 \sum_{i=1}^n \log R_i^j \geq c_1 n \right] \leq e^{-c_2 n}.
\end{equation}
Writing $R_0^1 = 1$ and recalling that $T_1^1 = \tau_1^1$, we note for each $j \in \N$ that
\begin{align*}
\log(\tau_{j+1}^1-\tau_j^2) &= \log(T_{j+1}^1) +  \sum_{i=1}^j \left( \log T_{i}^1 + \log T_i^2 + \log R_i^1 + \log R_i^2 \right) \quad \text{and} \\
\log(\tau_j^2-\tau_j^1) &= \sum_{i=1}^j \left( \log T_i^1 + \log T_i^2 + \log R_{i-1}^1 + \log R_i^2 \right).
\end{align*}
It therefore follows from~\eqref{eqn:log_t_n_tail}, \eqref{eqn:log_r_n_tail} that there exist constants $c_5, c_6 > 0$ so that
\begin{align}
\label{eqn:t_i_bound}
\p[ \tau_{j+1}^1 - \tau_j^2 \geq e^{c_5 j} ] \leq e^{-c_6 j} \quad\text{and}\quad \p[ \tau_j^2 - \tau_j^1 \geq e^{c_5 j} ] \leq e^{- c_6 j} \quad\text{for each}\quad j \in \N.
\end{align}
Since $\tau_n^2 = \tau_1^1 + \sum_{j=1}^{n-1}(\tau_{j+1}^1 - \tau_j^2) + \sum_{j=1}^n(\tau_j^2 - \tau_j^1)$ for every $n \geq 2$ and $\tau_1^2 = \tau_1^1 + (\tau_1^2-\tau_1^1)$,  it follows that there exist constants $c_7, c_8 > 0$ so that
\begin{align*}
\p[ \tau_n^2 \geq e^{c_7 n} ]
&\leq \p[\tau_1^1 \geq e^{c_5 n}]+ \sum_{j=1}^{n-1} \p[ \tau_{j+1}^1 - \tau_j^2 \geq e^{c_5 n}] + \sum_{j=1}^n \p[ \tau_j^2 - \tau_j^1 \geq e^{c_5 n}]\\
&= O(e^{-c_8 n}) \quad\text{(by~\eqref{eqn:t_i_bound})}.
\end{align*}
Note that
\[ \p[ \tau \geq t] \leq \p[ E_n, \tau_n^2 \geq t] + \p[E_n^c]\]
so that taking $n = \lfloor c_7^{-1} \log t \rfloor$ and $\alpha = \min\{c_8/c_7 , -\log(1-p)/c_7\}$ completes the proof.
\end{proof}

\begin{lemma}
\label{lem:disconnection_local_time_bound}
Suppose that $\CW = (\h,h,0,\infty)$ is a quantum wedge of weight $3\gamma^2/2-2$ with the embedding into $\h$ such that $\qbmeasure{h}([-1,0]) = 1$ and $\eta'$ is an independent $\SLE_{\kappa'}$ in $\h$ from $0$ to $\infty$ which is subsequently parameterized by quantum natural time.  Let $\tau$ be the first time $t$ that $\eta'|_{[0,t]}$ has disconnected $\D \cap \h$.  There exists $\alpha > 0$ so that
\[ \p[ \qcutmeasure{h}{\eta'}( \eta'([0,\tau])) \geq \ell] = O(\ell^{-\alpha}).\]
\end{lemma}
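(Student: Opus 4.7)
The plan is to express $\qcutmeasure{h}{\eta'}(\eta'([0,\tau]))$ as the local time at $0$ of a Bessel process, apply Bessel scaling, and then combine with the tail bound on $\tau$ from Lemma~\ref{lem:disconnection_time_bound}. By the construction in Section~\ref{subsec:measure_construction}, we can write $\qcutmeasure{h}{\eta'}(\eta'([0,\tau])) = c_0 L_{\sigma(\tau)}$, where $L$ is the local time at $0$ of the Bessel process $X \sim \BES^\delta$, $\delta = \kappa'/4 \in (1,2)$, encoding $\CW_2$; $\sigma(\tau)$ is the total quantum area of bubbles of $\CW_2$ closed by $\eta'$ up to quantum natural time $\tau$ (equivalently, the Bessel time at which the corresponding excursions of $X$ have been completed); and $c_0 > 0$ is a deterministic constant. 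Brownian scaling for $X$ gives $L_T \stackrel{d}{=} T^{1-\delta/2} L_1$ with $L_1$ having moments of all orders, so $\p[L_T \geq \ell] \leq C_p \ell^{-p} T^{p(1-\delta/2)}$ for any $p > 0$. Monotonicity of $L$ then yields, for every $T > 0$,
\[ \p[\qcutmeasure{h}{\eta'}(\eta'([0,\tau])) \geq \ell] \leq \p[\sigma(\tau) \geq T] + \p[L_T \geq \ell/c_0], \]
reducing the problem to a polynomial tail bound on $\sigma(\tau)$.

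To bound $\sigma(\tau)$, note that every bubble of $\CW_2$ closed by $\eta'$ up to time $\tau$ is contained in any Euclidean ball containing $\eta'([0,\tau])$, hence $\sigma(\tau) \leq \qmeasure{h}(B(0,R) \cap \h)$ on $\{\eta'([0,\tau]) \subseteq B(0,R)\}$. Thus for any $R, t > 0$,
\[ \p[\sigma(\tau) \geq T] \leq \p[\qmeasure{h}(B(0,R) \cap \h) \geq T] + \p[\eta'([0,t]) \not\subseteq B(0,R)] + \p[\tau \geq t]. \]
The third term decays polynomially in $t$ by Lemma~\ref{lem:disconnection_time_bound}. The first term is handled by standard GFF moment estimates: the normalization $\qbmeasure{h}([-1,0]) = 1$ anchors the field near $0$, so $\qmeasure{h}(B(0,R) \cap \h)$ has finite moments of order up to $4/\gamma^2$ with polynomial growth in $R$. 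For the second term, I would invoke Lemma~\ref{lem:counterflow_close_fills_ball}: if $\eta'([0,t])$ reaches Euclidean distance $R$ from the origin, it must swallow a ball of macroscopic size at that scale, which has quantum area bounded below by a positive power of $R$ with high probability; via the Bessel encoding and the L\'evy-process description in Theorem~\ref{thm:sle_kappa_prime}, this translates into a polynomial lower bound on $t$.

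Choosing $t, R, T$ as carefully tuned small positive powers of $\ell$ makes every term $O(\ell^{-\alpha})$ for some $\alpha > 0$, completing the proof. The main obstacle is the diameter estimate for the second term, where passing from Euclidean scale through quantum area and Bessel time to quantum natural time requires careful quantitative bookkeeping, even though each individual step uses standard ingredients.
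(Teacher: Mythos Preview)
Your route is genuinely different from the paper's, and the step you flag as ``the main obstacle'' is where a real gap lies.

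The paper never touches Euclidean geometry or $\qmeasure{h}$-moments.  Instead it compares local time and quantum natural time directly: writing $T_\ell$ for the quantum natural time needed to accumulate $\ell$ units of local time at $0$ for $X$, it shows $\p[T_\ell \leq \ell] \leq c\exp(-\ell^{b})$.  The argument is that in $\ell$ units of local time the Bessel process produces (by Poisson concentration) $\gtrsim \ell^{\kappa'/4-1}$ bubbles of $\CW_2$ with parameter $e^* \geq \ell$, and in each such bubble the $\SLE_{\kappa'}(\kappa'/2-4;\kappa'/2-4)$ accumulates $\gtrsim \ell^{\kappa'/4}$ units of quantum natural time with probability bounded below.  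Since $\{\qcutmeasure{h}{\eta'}(\eta'([0,\tau]))\geq \ell,\ \tau\leq \ell\}\subseteq\{T_\ell\leq \ell\}$, combining with Lemma~\ref{lem:disconnection_time_bound} finishes the proof.  Everything stays in the quantum encoding; no diameter bound is needed.

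Your proposed argument for the diameter term does not go through as stated.  Swallowing one large ball corresponds to a single large downward jump in one of the stable L\'evy processes $(X^1,X^2)$ of Theorem~\ref{thm:sle_kappa_prime}; such a jump can occur at arbitrarily small quantum natural time, so it gives no lower bound on $t$.  Lemma~\ref{lem:counterflow_close_fills_ball} is also formulated for a fixed compact set $K$ and small separations, not for excursions to scale $R$.  A way to rescue your scheme would be to drop the intermediate $t$ and directly prove that $\p[\eta'([0,\tau])\not\subseteq B(0,R)]$ decays polynomially in $R$; this is a purely Euclidean transience statement about $\SLE_{\kappa'}$ (the curve reaching scale $R$ before disconnecting $\D\cap\h$) and is plausible, but it is a separate lemma that neither you nor the paper supplies, and it is not what your sketch argues.
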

\begin{proof}
Let $\CW_2$ be the weight $2-\gamma^2/2$ quantum wedge which is between the left and right boundaries of $\eta'$ as described in Section~\ref{subsec:measure_construction} and let $X \sim \BES^{\kappa'/4}$ be the Bessel process encoding $\CW_2$.  For each $\ell \geq 0$ we let $T_\ell$ be the amount of quantum natural time for $\eta'$ which has elapsed in $\ell$ units of local time for~$X$ at~$0$.  We claim that there exist constants $c,b>0$ depending only on $\kappa'$ such that 
\begin{equation}\label{eqn:natural_time_exponential_tail}
\p[T_{\ell} \leq \ell] \leq c \exp(- \ell^b)\quad\text{for every} \quad  \ell >0.
\end{equation}
The idea is that after a certain amount of local time is accumulated, the Bessel process is likely to have generated at least a certain number of beads with lower bounded quantum mass. In each of those beads, $\eta'$ is then likely to accumulate at least a certain amount of quantum natural time in each such bead and together it is very unlikely that less than $\ell$ units of quantum natural time has accumulated in $\ell$ units of local time for $X$. We now spell it out in more detail. Note that by the proof of \cite[Proposition~4.18]{dms2021mating}, the quantum surface parameterized by $\CW_2$ can be sampled by first sampling a p.p.p.  $\Lambda^*$ according to $du \otimes (c t^{\kappa'/4-3}dt)$ (for some $c = c(\kappa')>0$) and then associating with each $(u,e^*)$ a distribution $h_{(u,e^*)}+\frac{2}{\gamma}\log(e^*)$ on $\strip$,  where the $h_{(u,e^*)}$ are i.i.d.\ random variables indexed by $\Lambda^*$ and each one with the law of the distribution as in~\cite[Lemma~4.20]{dms2021mating} with $a = \frac{\sqrt{\kappa'}}{2} - \frac{4}{\sqrt{\kappa'}}$.  Moreover, by~\cite[Proposition~29.12]{kallenberg2021foundations} the quantum natural time that $\eta'$ spends in each bead $B$ of $\CW_2$ is given by the limit as $\epsilon \to 0$ of $c_0 \epsilon^{\kappa'/4}$ times the number of bubbles cut off from $\infty$ by $\eta'$, which have quantum length at least $\epsilon$ and are contained in $B$, for some constant $c_0 = c_0(\kappa) >0$.  Furthermore,  conditional on the left and right outer boundaries of $\eta'$,  the law of $\eta'$ restricted to every bead~$B_{(u,e^*)}$ of $\CW_2$ is that of a chordal $\SLE_{\kappa'}(\kappa'/2 - 4 ; \kappa'/2 -4)$ starting from the opening and ending at the closing point of $B_{(u,e^*)}$,  independently of the parts of $\eta'$ restricted to the rest of the beads of~$\CW_2$.  It follows that for each $(u,e^*) \in \Lambda^*$,  the quantum natural time spent by $\eta'$ in $B_{(u,e^*)}$ is given by $(e^*)^{\kappa'/4} q_{(u,e^*)}$ where $q_{(u,e^*)}$ are i.i.d.  and each one having the law of the limit as $\epsilon \to 0$ of $c_0 \epsilon^{\kappa'/4}$ times the number of bubbles of a chordal $\SLE_{\kappa'}(\kappa'/2 - 4; \kappa'/2 -4)$ in $\strip$ from $-\infty$ to $+\infty$ which are cut off $+\infty$ and with quantum length at least $\epsilon$ with respect to $h_{(u,e^*)}$.

Next we let $N_{\ell}$ be the number of beads in $\CW_2$ which correspond to some $(u,e^*) \in \Lambda^*$ with $u \leq \ell$ and $e^* \geq \ell$.  Then $N_{\ell}$ is a Poisson random variable with mean $c_1 \ell^{\kappa'/4 - 1}$,  for some constant $c_1 >0$. Moreover, by~\eqref{eqn:poisson_concentration} we can find constants $c_2,c_3>0$ such that 
\begin{equation}\label{eqn:number_of_bubbles_concetration}
\p[N_{\ell} \leq c_2 \ell^{\kappa'/4 - 1}] \leq \exp(-c_3 \ell^{\kappa'/4 - 1}) \quad\text{for every}\quad \ell >0.
\end{equation}
We pick $x>0$ such that $\p[q_{(u,e^*)} \geq x] \geq \tfrac{1}{2}$ for $(u,e^*) \in \Lambda^*$.  We also let $\wt{N}_{\ell}$ be the number of $(u,e^*) \in \Lambda^*$ such that $u \leq \ell$,  $e^* \geq \ell$, and $(e^*)^{\kappa'/4} q_{(u,e^*)} \geq x (e^*)^{\kappa'/4}$.  Then on the event $\{ N_\ell \geq c_2 \ell^{\kappa'/4-1} \}$, $\wt{N}_{\ell}$ is stochastically dominated from below by a $\Bin(c_2 \ell^{\kappa'/4-1},\tfrac{1}{2})$ random variable.  It follows from \cite[Lemma~2.6]{mq2020geodesics} that there exist constants $c_4,c_5>0$ such that
\begin{equation}\label{eqn:natural_time_concetration}
\p[\wt{N}_{\ell} \leq c_4 \ell^{\kappa'/4 - 1} \giv N_{\ell} \geq c_2 \ell^{\kappa'/4 - 1}] \leq \exp(-c_5 \ell^{\kappa'/4 - 1})\quad\text{for every}\quad \ell >0.
\end{equation}
Combining~\eqref{eqn:number_of_bubbles_concetration} with~\eqref{eqn:natural_time_concetration} and by possibly taking $c_5$ to be smaller,  we obtain that we can find $c_6>0$ depending only on $\kappa'$ such that
\begin{align*}
\p[\wt{N}_{\ell} \leq c_4 \ell^{\kappa'/4 - 1}] \leq c_6 \exp(-c_5 \ell^{\kappa'/4 - 1})\quad\text{for every} \quad  \ell>0.
\end{align*}
Note that if $\wt{N}_{\ell} \geq c_4 \ell^{\kappa'/4 - 1}$,  we have that $T_{\ell} \geq c_4 x \ell^{\kappa'/2 - 1}$.  Then~\eqref{eqn:natural_time_exponential_tail} follows by taking $b \in (0, (\kappa'/4 - 1)/(\kappa'/2-1))$.

Finally,  let $\alpha > 0$ be the exponent from Lemma~\ref{lem:disconnection_time_bound} and fix $\ell > 0$. Then we have that 
\begin{align*}
\p[ \qcutmeasure{h}{\eta'}( \eta'([0,\tau])) \geq \ell]
&\leq \p[ \qcutmeasure{h}{\eta'}( \eta'([0,\tau])) \geq \ell, \tau \leq \ell] + \p[\tau > \ell]\\
&\leq \p[T_\ell \leq \ell] + O(\ell^{-\alpha}) \leq c \exp(- \ell^b) + O(\ell^{-\alpha}).
\end{align*}
Therefore the assertion of the lemma follows with the same value of $\alpha > 0$ as in Lemma~\ref{lem:disconnection_time_bound}.
\end{proof}

Now we have the ingredients at hand to prove Proposition~\ref{lem:local_finiteness_cut_measure}.

\begin{proof}[Proof of Proposition~\ref{lem:local_finiteness_cut_measure}]

\noindent{\it Step 1. Tail bound for the number of excursions from $K$.}  Let $\CW = (\h,h,0,\infty)$ be a weight $3\gamma^2/2 - 2$ quantum wedge with the embedding into $\h$ such that $\qbmeasure{h}([-1,0]) = 1$, and suppose that $\eta'$ is sampled independently of $\CW$ and then parameterized by quantum natural time.  Let $K \subseteq K' \subseteq \D \cap \h$ be compact with $\dist(\partial K, \partial K') > 0$, $\epsilon \in (0,\dist(\partial K, \partial K'))$, and $\delta > 0$.  We let $E_{\epsilon,\delta}^{K'}$ be the event from Lemma~\ref{lem:counterflow_close_fills_ball}.  Suppose that we are working on $E_{\epsilon,\delta}^{K'}$.  Let $\tau_1 = \inf\{t \geq 0 : \eta'(t) \in K\}$ and $\sigma_1 = \inf\{t \geq \tau_1 : \eta'(t) \in \partial K'\}$.  Given that we have defined $\tau_1,\sigma_1,\ldots,\tau_n,\sigma_n$, we let $\tau_{n+1} = \inf\{t \geq \sigma_n : \eta'(t) \in K\}$ and $\sigma_{n+1} = \inf\{t \geq \tau_{n+1} : \eta'(t) \in \partial K'\}$ (with the convention that a stopping time is infinite if the set of $t$ over which the infimum is taken is empty).  Let $N = \min\{ n \in \N : \tau_{n+1} = \infty\}$ and note that $N$ is a.s.\ finite as $\eta'$ is a.s.\ continuous and transient.  For each $1 \leq k \leq N$ we have that $\eta'([\tau_k,\sigma_k])$ disconnects from $\infty$ a ball of diameter at least $\epsilon^{1+\delta}$.  As these balls must be disjoint for different $k$, there exists a constant $c_0 > 0$ so that $N \leq c_0 \epsilon^{-2-2\delta}$ on $E_{\epsilon,\delta}^{K'}$.  If we choose $\epsilon = (c_0/n)^{1/(2+2\delta)}$ then we have that $\p[ N \geq n ] \leq \p[ (E_{\epsilon,\delta}^{K'})^c]$.  By Lemma~\ref{lem:counterflow_close_fills_ball}, $\p[ (E_{\epsilon,\delta}^{K'})^c] \to 0$ as $\epsilon \to 0$ faster than any power of $\epsilon$.  Therefore $\p[N \geq n] \to 0$ as $n \to \infty$ faster than any negative power of $n$.

\noindent{\it Step 2. Concentration for the quantum area.}  Let $\CW_2$ be the quantum wedge of weight $2-\gamma^2/2$ which is parameterized by the surfaces between the left and right boundaries of $\eta'$ and let $X \sim \BES^{\kappa'/4}$ be the  Bessel process encoding $\CW_2$.  We note that there exists a constant $c_1 = c_1(\kappa') > 0$ so that for any $0 < s < t$ the number of beads in $\CW_2$ with quantum area at least $\epsilon$ which correspond to the interval $[s,t]$ of local time at $0$ for $X$ is distributed as a Poisson random variable with mean $c_1(t-s) \epsilon^{\kappa'/8-1}$.

Fix $\beta > 1$, $\epsilon > 0$, and suppose that $T > 0$ is large.  For each $k \in \N$, we let $I_k = [(k-1) T , k T]$.  Then the number of beads in $\CW_2$ which correspond to the (local time of $X$ at $0$) time interval $I_k$ of quantum area at least $T^{1-\epsilon}$ is distributed as a Poisson random variable with mean
\[ c_1 T \cdot T^{(\kappa'/8-1)(1-\epsilon)} = c_1 T^\zeta \quad\text{where}\quad \zeta = (\kappa'/8-1)(1-\epsilon)+1.\]
By taking $\epsilon > 0$ sufficiently small, we have that $\zeta > 0$.  By~\eqref{eqn:poisson_concentration}, there exists a constant $c_2 > 0$ so that the probability that there does not exist a bead in $\CW_2$ which has quantum area at least $T^{1-\epsilon}$ in the time interval $I_k$ is at most $\exp(-c_2 T^\zeta)$.  By possibly decreasing the value of $c_2>0$ and by taking a union bound over $1 \leq k \leq \lceil T^{\beta-1} \rceil$ we see that the probability that there exists $1 \leq k \leq \lceil T^{\beta-1} \rceil$ for which there does not exist a bead in $\CW_2$ which has quantum area at least $T^{1-\epsilon}$ in the time interval $I_k$ is at most $\exp(-c_2 T^\zeta)$.

\noindent{\it Step 3.  Completion of the proof.}  Let $\tau$ be the first time $t$ that $\eta'$ has disconnected $\D \cap \h$ from $\infty$ and let $L = \qcutmeasure{h}{\eta'}(\eta'([0,\tau]))$.  We have for each $x,y > 0$ and $n \in \N$ that
\begin{align*}
  \p[ \qcutmeasure{h}{\eta'}(K) \geq x]
&\leq \p[ \qcutmeasure{h}{\eta'}(K) \geq x, N \leq n, L \leq y] + \p[ N > n] + \p[L > y].
\end{align*}
Let $\alpha$ be as in Lemma~\ref{lem:disconnection_local_time_bound} and fix $\epsilon > 0$ small.  Let $n = x^\epsilon$ and $y = x^{(4/\gamma^2+1)/\alpha}$.  By Step 1, we have that $\p[ N \geq x^\epsilon] \to 0$ as $x \to \infty$ faster than any negative power of $x$.  By Lemma~\ref{lem:disconnection_local_time_bound}, we have that $\p[L > x^{(4/\gamma^2+1)/\alpha}] = O(x^{-4/\gamma^2-1})$.  Altogether, this yields
\[ \p[ \qcutmeasure{h}{\eta'}(K) \geq x] = \p[ \qcutmeasure{h}{\eta'}(K) \geq x, N \leq x^\epsilon, L \leq x^{(4/\gamma^2+1)/\alpha}] + O(x^{-4/\gamma^2-1}).\]

We note that $L = \qcutmeasure{h}{\eta'}(\eta'([0,\tau])) \geq \sum_{k=1}^N \qcutmeasure{h}{\eta'}(\eta'([\tau_k,\sigma_k])) \geq \qcutmeasure{h}{\eta'}(K)$ so on the event $F_x = \{\qcutmeasure{h}{\eta'}(K) \geq x, N \leq x^{\epsilon}, L \leq x^{(4/\gamma^2+1)/\alpha}\}$ there exists $1 \leq k \leq x^\epsilon$ so that
\[ \qcutmeasure{h}{\eta'}(\eta'([0,\sigma_k])) \leq x^{(4/\gamma^2+1)/\alpha} \quad\text{and}\quad \qcutmeasure{h}{\eta'}(\eta'([\tau_k,\sigma_k])) \geq x^{1-\epsilon}.\]
Assume that we have taken $\beta > 1$ in Step 2 to be at least $(4/\gamma^2+1)/(\alpha (1-\epsilon))$ and $T = x^{1-\epsilon}/2$.  Then we have that $T^\beta > x^{(1+4/\gamma^2)/\alpha}$ for $x>0$ sufficiently large.  By Step 2, off an event whose probability decays to $0$ faster than any power of $x$ we have on $F_x$ that the quantum area disconnected from $\infty$ by $\eta'([\tau_k,\sigma_k])$ is at least $x^{(1-\epsilon)^2}/2^{1-\epsilon}$.  On this event, we have that $\qmeasure{h}(K') \geq x^{(1-\epsilon)^2}/2^{1-\epsilon}$.  It follows that
\begin{align*}
\p[\qcutmeasure{h}{\eta'}(K) \geq x] \leq O(x^{-4/\gamma^2 - 1}) + \p[ \qmeasure{h}(K') \geq x^{(1-\epsilon)^2}/2^{1-\epsilon}] \quad\text{as}\quad x \to \infty.
\end{align*}
Suppose that $h^0$ and $\Fh$ are as in Lemma~\ref{lem:wedge_exponential_moments_unit_boundary_length} and such that $h = h^0+\Fh$.  We fix $\epsilon \in (0,1)$ sufficiently small such that $4(1-\epsilon)^2 / \gamma^2 > 1$ and let $p \in (1,4/\gamma^2)$ be such that $p(1-\epsilon)^2 > 1$.  Then we have that $\qmeasure{h}(K') \leq \exp\left(\gamma \sup_{z \in K'}\Fh(z)\right) \qmeasure{h^0}(K')$ and so combining with Lemma~\ref{lem:wedge_exponential_moments_unit_boundary_length} and \cite[Proposition~3.5]{robert2010gaussian} gives that $\E[\qmeasure{h}(K')^p] < \infty$.  It follows that $\p[\qcutmeasure{h}{\eta'}(K) \geq x] = O(x^{-p(1-\epsilon)^2})$ as $x \to \infty$ and so
\begin{equation}
\label{eqn:q_cut_measure_moment}
\E[\qcutmeasure{h}{\eta'}(K)^{p'}] <\infty \quad\text{for each}\quad p' \in (1,p(1-\epsilon)^2).
\end{equation}
Then, since $N_\epsilon^{h^0,\eta'}|_K \leq N_\epsilon^{h - \inf_{z \in K'} \Fh(z),\eta'}|_K$ it follows that
\begin{equation}
\label{eqn:q_cut_measure_ubd}
\qcutmeasure{h^0}{\eta'}(K) \leq \exp(-\gamma(1-\kappa'/8) \inf_{z \in K'} \Fh(z)) \qcutmeasure{h}{\eta'}(K')	.
\end{equation}
We choose $C<\infty$ such that $\p[E] > 0$ where $E = \{|\inf_{z \in K'}\Fh(z) | \leq C \}$.  Note also that $E \in \sigma(\Fh)$ and so $E$ is independent of $h^0$ and $\eta'$.  In particular,  $E$ is independent of $\qcutmeasure{h^0}{\eta'}(K)$.  Therefore, it follows from Lemma~\ref{lem:wedge_exponential_moments_unit_boundary_length} and H\"older's inequality that
\begin{align*}
 \E[\qcutmeasure{h^0}{\eta'}(K)] \p[E]
&= \E[\qcutmeasure{h^0}{\eta'}(K) \one_E ] \quad\text{(independence of $E$ and $h^0$)}\\
&\leq \E[ \exp(-\gamma(1-\kappa'/8) \inf_{z \in K'} \Fh(z)) \qcutmeasure{h}{\eta'}(K') \one_E] \quad\text{(by~\eqref{eqn:q_cut_measure_ubd})}\\
&\leq e^{\gamma(1-\kappa'/8) C} \E[\qcutmeasure{h}{\eta'}(K')] \quad\text{(definition of $E$)}\\
&< \infty \quad\text{(by~\eqref{eqn:q_cut_measure_moment})}.
\end{align*}
This implies that $\E[ \qcutmeasure{h^0}{\eta'}(K)] < \infty$ since $\p[E] > 0$.

Finally for the general case,  we fix $K\subseteq K'$ arbitrary compact subsets of $\h$ such that $\dist(\partial K ,  \partial K')>0$.  Let $R>0$ be such that $K' \subseteq B(0,R)$ and set $\wh{h}^0 = h^0(R\cdot)$,  $\wh{\eta}' = R^{-1}\eta'$.  Note that $R^{-1}K' \subseteq \D \cap \h$ and that the measures $\qcutmeasure{h^0}{\eta'}$,  $\qcutmeasure{\wh{h}^0}{\wh{\eta}'}$ have the same law.  Therefore it follows that
\begin{align*}
\E[ \qcutmeasure{h^0}{\eta'}(K)] \leq R^{\gamma (1-\kappa'/8)Q} \E[ \qcutmeasure{\wh{h}^0}{\wh{\eta}'}(R^{-1}K')] < \infty.
\end{align*}
The proof of the lemma is then complete since $\sup_{z \in K}r_{\h}(z) < \infty$.
\end{proof}

\subsection{Conformal covariance}
\label{subsec:measure_conformally_covariant}

Recall from \cite[Theorem~1.2]{mw2017intersections} that the cut points of $\eta'$ a.s.\ have dimension
\begin{align*}
	d_{\kappa'}^\cut = 3 - \frac{3 \kappa'}{8}.
\end{align*}
We now prove that $\cutmeasure{\eta'}$ is conformally covariant with exponent $d_{\kappa'}^\cut$. The proof is similar to that of~\cite[Lemma~4.3]{benoist2018natural} and~\cite[Lemmas~3.4 and~4.2]{ms2022volume}.

Before giving the proof of the conformal covariance of $\cutmeasure{\eta'}$, we recall the definition of an image measure.  Suppose that $D, \wt{D} \subseteq \C$ are simply connected domains and $\psi \colon D \to \wt{D}$ is a Borel measurable map.  For a given Borel measure $\nu$ on $D$, the image measure $\nu_\psi$ on $\wt{D}$ is defined by $\nu_\psi(A) = \nu(\psi^{-1}(A))$ for each Borel set $A \subseteq D$.
\begin{proposition}
\label{lem:conformal_covariance_cut_measure}
Suppose that $D, \wt{D} \subseteq \C$ are simply connected domains, $x,y \in \partial D$ are distinct, $\eta'$ is an $\SLE_{\kappa'}$ in $D$ from $x$ to $y$, $\psi \colon D \to \wt{D}$ is a conformal map, and $\wt{\eta}' = \psi(\eta')$.  We a.s.\ have for every continuous function $f$ compactly supported in $\wt{D}$ that
\begin{align}
\label{eqn:conf_cov_formula}
	\int_{\wt{D}} f(z) d \cutmeasure{\wt{\eta}'}(z) = \int_D f \circ \psi(z) |\psi'(z)|^{d_{\kappa'}^\cut} d\cutmeasure{\eta'}(z) = \int_{\wt{D}} f(z) |(\psi^{-1})'(z)|^{-d_{\kappa'}^\cut} d(\cutmeasure{\eta'})_\psi(z).
\end{align}
Equivalently, $(\cutmeasure{\eta'})_\psi$ is absolutely continuous with respect to $\cutmeasure{\wt{\eta}'}$ with Radon-Nikodym derivative $|(\psi^{-1})'|^{-d_{\kappa'}^\cut}$.
\end{proposition}
We note that the second equality in~\eqref{eqn:conf_cov_formula} follows from the definition of an image measure.
\begin{proof}[Proof of Proposition~\ref{lem:conformal_covariance_cut_measure}]
Suppose that $h^0$ and $\wt{h}^0$ are zero-boundary GFFs on $D$ and $\wt{D}$ respectively which are coupled together such that $h^0 = \wt{h}^0 \circ \psi$ and they are both independent of $\eta'$.  Then $\qcutmeasure{\wt{h}^0}{\wt{\eta}'}$ and $(\qcutmeasure{h^0}{\eta'})_\psi$ are both locally finite Borel measures on $\wt{D}$. Since $\psi$ is a homeomorphism it follows that the operation of considering the image measure with respect to $\psi$ preserves vague limits so we have that $\epsilon^{1-\kappa'/8} (N_{\epsilon}^{h^0,\eta'})_\psi$ converges to $(\qcutmeasure{h^0}{\eta'})_\psi$ a.s.\ in the vague topology as $\epsilon \to 0$.  By the invariance of quantum mass under the conformal coordinate change $h^0 \mapsto h^0 \circ \psi^{-1} + Q \log |(\psi^{-1})'| = \wt{h}^0 + Q \log |(\psi^{-1})'|$, we have that $N_\epsilon^{\wt{h}^0 + Q \log|(\psi^{-1})'|,\wt{\eta}'}(A) = (N_\epsilon^{h^0,\eta'})_\psi(A)$ for each Borel set $A \subseteq \wt{D}$ and hence $\epsilon^{1-\kappa'/8} N_\epsilon^{\wt{h}^0 + Q \log|(\psi^{-1})'|,\wt{\eta}'}$ converges to $(\qcutmeasure{h^0}{\eta'})_\psi$ a.s.\ in the vague topology as well. We note that the result follows if we prove that $(\cutmeasure{\eta'})_\psi$ and $\cutmeasure{\wt{\eta}'}$ are mutually absolutely continuous and that $d(\cutmeasure{\eta'})_\psi/d \cutmeasure{\wt{\eta}'} = |(\psi^{-1})'|^{-d_{\kappa'}^\cut}$.  We will proceed by proving the corresponding statements for $(\qcutmeasure{h^0}{\eta'})_\psi$ and $\qcutmeasure{\wt{h}^0}{\wt{\eta}'}$ and we begin with the Radon-Nikodym derivative, that is,
\begin{equation}
\label{eqn:local_density_convergence}
\frac{(\qcutmeasure{h^0}{\eta'})_\psi(B(z,r))}{\qcutmeasure{\wt{h}^0}{\wt{\eta}'}(B(z,r))} \to |(\psi^{-1})'(z)|^{\gamma Q(1 - \kappa'/8)}\quad\text{as}\quad r \to 0\quad\text{for}\quad \qcutmeasure{\wt{h}^0}{\wt{\eta}'} \quad\text{a.e.}\quad  z \in \wt{D}.
\end{equation}
(We note that establishing~\eqref{eqn:local_density_convergence} suffices to identify the Radon-Nikodym derivative by the Lebesgue differentiation theorem \cite[Chapter~3]{folland1999realanalysis}.)  Indeed, a.s.\ the following hold.  Fix $z \in \wt{D}$, $r>0$ such that $\closure{B(z,r)} \subseteq \wt{D}$ and let $(r_m)_{m\in\N}$ be a strictly increasing sequence of positive numbers such that $r_m \to r$ as $m \to \infty$.  For every $m \in \N$,  we fix $f_m \in C_0(\wt{D})$ such that $\one_{\closure{B(z,r_m)}} \leq f_m \leq \one_{B(z,r)}$.  By the monotone convergence theorem we have that 
\begin{align*}
\int_{\wt{D}} f_m d(\qcutmeasure{h^0}{\eta'})_\psi \to (\qcutmeasure{h^0}{\eta'})_\psi(B(z,r)) \quad\text{and}\quad \int_{\wt{D}} f_m d\qcutmeasure{\wt{h}^0}{\wt{\eta}'} \to \qcutmeasure{\wt{h}^0}{\wt{\eta}'}(B(z,r))\quad\text{as} \quad  m \to \infty.
\end{align*}
Note that if $\wt{h}_1 \leq \wt{h}_2$ are two distributions on $\wt{D}$ then $N_\epsilon^{\wt{h}_1,\wt{\eta}'}(A) \leq N_\epsilon^{\wt{h}_2,\wt{\eta}'}(A)$ for each Borel set $A \subseteq \wt{D}$ (provided that the quantities are well-defined). Moreover, for a constant $C > 0$ we have that $N_\epsilon^{\wt{h}^0+C,\wt{\eta}'}(A) = N_{\epsilon e^{-\gamma C}}^{\wt{h}^0,\wt{\eta}'}(A)$ and thus for $f \in C_0(\wt{D})$ we a.s.\ have that
\begin{align}
\label{eq:add_constant_to_field}
	\lim_{\epsilon \to 0} \epsilon^{1-\kappa'/8} N_\epsilon^{\wt{h}^0+C,\wt{\eta}'}(f) = \lim_{\epsilon \to 0} e^{(1-\kappa'/8)\gamma C} (\epsilon e^{-\gamma C})^{1-\kappa'/8} N_{\epsilon e^{-\gamma C}}^{\wt{h}^0,\wt{\eta}'}(f) = e^{(1-\kappa'/8)\gamma C} \qcutmeasure{\wt{h}^0}{\wt{\eta}'}(f).
\end{align}
Thus since $\wt{h}^0 + \inf_{w \in B(z,r)} Q \log |(\psi^{-1})'(w)| \leq \wt{h}^0 + Q \log|(\psi^{-1})'| \leq \wt{h}^0 + \sup_{w \in B(z,r)} Q \log |(\psi^{-1})'(w)|$ on $B(z,r)$ we have that 
\begin{align*}
	N_{\epsilon \inf | (\psi^{-1})'|^{-\gamma Q}}^{\wt{h}^0,\wt{\eta}'}(f_m)  \leq N_\epsilon^{\wt{h}^0 + Q \log|(\psi^{-1})'|}(f_m) \leq N_{\epsilon \sup |(\psi^{-1})'|^{-\gamma Q}}^{\wt{h}^0, \wt{\eta}'}(f_m) 
\end{align*}
(where $\sup$ and $\inf$ are taken over $B(z,r)$) for every $m \in \N$. By taking $\epsilon \to 0$ and using the a.s.\ vague convergence of $(\epsilon^{1-\kappa'/8} N_\epsilon^{\wt{h}^0,\wt{\eta}'})_{\epsilon > 0}$ and $(\epsilon^{1-\kappa'/8} N_\epsilon^{\wt{h}^0+Q\log |(\psi^{-1})'|,\wt{\eta}'})_{\epsilon > 0}$ and~\eqref{eq:add_constant_to_field} we have that
\begin{equation}\label{eqn:local_density_convergence_1}
\begin{split}
&\inf_{w \in B(z,r)}|(\psi^{-1})'(w)|^{\gamma Q (1-\kappa'/8)} \qcutmeasure{\wt{h}^0}{\wt{\eta}'}(f_m)\\
 &\leq (\qcutmeasure{h^0}{\eta'})_\psi(f_m)
\leq \sup_{w \in B(z,r)}|(\psi^{-1})'(w)|^{\gamma Q (1-\kappa'/8)} \qcutmeasure{\wt{h}^0}{\wt{\eta}'}(f_m).
\end{split}
\end{equation}
Hence~\eqref{eqn:local_density_convergence} follows by~\eqref{eqn:local_density_convergence_1}, the continuity of $|(\psi^{-1})'|$, and by letting $m \to \infty$.

We now show that $(\qcutmeasure{h^0}{\eta'})_\psi$ and $\qcutmeasure{\wt{h}^0}{\wt{\eta}'}$ are mutually absolutely continuous a.s.  Indeed,  a.s.\ the following hold.  Fix $A \subseteq \wt{D}$ Borel such that $\qcutmeasure{\wt{h}^0}{\wt{\eta}'}(A) = 0$ and let $K \subseteq \wt{D}$ be compact.  Then $\qcutmeasure{\wt{h}^0}{\wt{\eta}'}(A \cap K) = 0$ and since $\qcutmeasure{\wt{h}^0}{\wt{\eta}'}$ is a locally finite Borel measure,  we can find a decreasing sequence of open sets $(U_m)$ such that $A \cap K \subseteq U_m \subseteq \closure{U_m} \subseteq \wt{D}$ and $\qcutmeasure{\wt{h}^0}{\wt{\eta}'}(\closure{U_m}) \leq 2^{-m}$ for every $m \in \N$.  Moreover, as above, we have that
\begin{align*}
(\qcutmeasure{h^0}{\eta'})_\psi(U_m) &\leq \liminf_{\epsilon \to 0}\epsilon^{1-\kappa'/8}N_{\epsilon}^{h^0,\eta'}(\psi^{-1}(U_m))\\
&= \liminf_{\epsilon \to 0} \epsilon^{1-\kappa'/8}N_{\epsilon}^{\wt{h}^0 + Q \log |(\psi^{-1})'|,\wt{\eta}'}(U_m)\\
&\leq \sup_{z \in U_m} |(\psi^{-1})'(z)|^{\gamma Q(1 - \kappa'/8)} \qcutmeasure{\wt{h}^0}{\wt{\eta}'}(\closure{U_m})\\
&\leq \sup_{z \in U_m} |(\psi^{-1})'(z)|^{\gamma Q (1-\kappa'/8)} / 2^m
\end{align*}
It follows from the continuity of $|(\psi^{-1})'|$ and by taking $m \to \infty$ that $(\qcutmeasure{h^0}{\eta'})_\psi(A \cap K) = 0$.  Since $K$ was arbitrary, we have that $(\qcutmeasure{h^0}{\eta'})_\psi (A) = 0$ and similarly it holds that $\qcutmeasure{\wt{h}^0}{\wt{\eta}'}(A) = 0$ for every Borel set $A \subseteq \wt{D}$ such that $(\qcutmeasure{h^0}{\eta'})_\psi(A) = 0$. Hence $(\qcutmeasure{h^0}{\eta'})_\psi$ and $\qcutmeasure{\wt{h}^0}{\wt{\eta}'}$ are mutually absolutely continuous and by~\eqref{eqn:local_density_convergence} the Radon-Nikodym derivative $d(\qcutmeasure{h^0}{\eta'})_\psi/d\qcutmeasure{\wt{h}^0}{\wt{\eta}'}$ is given by $|(\psi^{-1})'|^{\gamma Q (1-\kappa'/8)}$.

Finally, noting that $\gamma Q (1-\kappa'/8) = 1 - \tfrac{\kappa'}{4} + \tfrac{8}{\kappa'}$, $\confrad(z, \wt{D}) = |(\psi^{-1})'(z)|^{-1} \confrad(\psi^{-1}(z), D)$, and $\sigma(\eta') = \sigma(\wt{\eta}')$, we a.s.\ have for any continuous function $f$ with compact support contained in $\wt{D}$ that
\begin{align*}
\cutmeasure{\wt{\eta}'}(f)
&= \int_{\wt{D}} f(z) r_{\wt{D}}(z) d\E\!\left[ \qcutmeasure{\wt{h}^0}{\wt{\eta}'} \, \middle| \, \wt{\eta}' \right](z)\\
&= \int_{\wt{D}} f(z) r_{\wt{D}}(z) |(\psi^{-1})'(z)|^{-(1 - \frac{\kappa'}{4} + \frac{8}{\kappa'})} d\E\!\left[ (\qcutmeasure{h^0}{\eta'})_\psi \, \middle| \, \eta' \right](z) \quad\text{(by~\eqref{eqn:local_density_convergence})}\\
&= \int_{\wt{D}} f(z) \frac{r_{\wt{D}}(z)}{r_D(\psi^{-1}(z))} |(\psi^{-1})'(z)|^{-(1 - \frac{\kappa'}{4} + \frac{8}{\kappa'})} d (\cutmeasure{\eta'})_\psi(z)\\
&= \int_{\wt{D}} f(z) |(\psi^{-1})'(z)|^{-d_{\kappa'}^\cut} d (\mu_{\eta'}^\cut)_\psi(z).
\end{align*}
This gives~\eqref{eqn:conf_cov_formula}, which concludes the proof.
\end{proof}

\subsection{Finiteness of moments}
\label{subsec:measure_moments}

The main purpose of this section is to prove the following proposition which roughly states that the measure $\mu_{\eta'}^{\text{cut}}$ is a.s.  H\"older continuous with respect to the Euclidean metric when restricted to compact subsets of $\h$.

\begin{proposition}
\label{prop:cut_point_measure_bc}
Suppose that $\eta'$ is an $\SLE_{\kappa'}$ in $\h$ from $0$ to $\infty$.  For every compact set $K \subseteq \h$ and $a > 0$ there a.s.\ exists $C > 0$ so that
\[ \cutmeasure{\eta'}(A) \leq C \diam(A)^{d_{\kappa'}^\cut-a} \quad\text{for all}\quad A \subseteq K \quad\text{Borel}.\]
\end{proposition}

The main input of the proof of Proposition~\ref{prop:cut_point_measure_bc} is the following lemma.

\begin{lemma}
\label{lem:moments_finite}
For every compact set $K \subseteq \h$ and $p_0  \in (0,1)$ there exists $\delta_0 \in (0,1)$ such that the following holds.  For every $\delta \in (0,\delta_0)$ there exists an event $E$ so that $\p[E] \geq p_0$ and for every $p > 0$  there exists a constant $c_0 < \infty$ depending only on $\delta$, $p_0$, $p$, and $K$ such that
\[ \E[ (\cutmeasure{\eta'}(B(z,\epsilon)))^p \one_E ] \leq c_0 \epsilon^{p d_{\kappa'}^\cut(1-\delta)}\]
for every $z \in K$ and $\epsilon \in (0,\dist(K,\partial \h)/2)$.
\end{lemma}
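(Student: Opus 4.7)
The approach is to combine the conformal covariance of $\cutmeasure{\eta'}$ (Lemma~\ref{lem:conformal_covariance_cut_measure}) with the good-scale machinery for the GFF (Lemmas~\ref{lem:good_dense} and~\ref{lem:good_scale_radon}) and the mean bound from Lemma~\ref{lem:local_finiteness_cut_measure}. A useful preliminary observation is that the restriction $p \in (0,1)$ lets us reduce the task to a first-moment estimate. Indeed, since $x \mapsto x^p$ is concave for $p \in (0,1)$, Hölder's inequality gives
\[ \E[\cutmeasure{\eta'}(B(z,\epsilon))^p \one_E] \leq \E[\cutmeasure{\eta'}(B(z,\epsilon)) \one_E]^p \p[E]^{1-p} \leq \E[\cutmeasure{\eta'}(B(z,\epsilon)) \one_E]^p, \]
so it suffices to produce an event $E$ with $\p[E] \geq p_0$ on which $\E[\cutmeasure{\eta'}(B(z,\epsilon)) \one_E] \leq c_0 \epsilon^{d_{\kappa'}^\cut(1-\delta)}$ uniformly in $z \in K$ and small $\epsilon$.

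To construct $E$, I would fix a compact $K'$ with $K \subseteq K' \Subset \h$, couple $\eta'$ with a quantum wedge $(\h,h,0,\infty)$ of weight $3\gamma^2/2 - 2$ that is independent of $\eta'$ (so that the local construction of $\qcutmeasure{h}{\eta'}$ of Section~\ref{subsec:measure_construction} applies), and also take the independent zero-boundary field $h^0$ appearing in~\eqref{eqn:cut_measure_definition}. Then $E$ will be the intersection of two events on the fields: (a) that the harmonic part $\Fh$ relating $h$ and $h^0$ is uniformly bounded on $K'$ by some $M$, and (b) that for every dyadic scale $r \in [r_0, 1]$ (with $r_0$ fixed small depending on $K$), a positive density of dyadic sub-scales at every point of a sufficiently fine net of $K$ is $M$-good for $h^0$ in the sense of Section~\ref{subsec:good_annuli}. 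By Lemma~\ref{lem:good_dense} and a union bound, taking $M$ large forces $\p[E] \geq p_0$. On $E$, every $z \in K$ has an $M$-good scale $r(z,\epsilon) \in [\epsilon, \epsilon^{1-\delta}]$ for $\epsilon$ small.

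The first-moment bound then proceeds by scaling. On $E$, since $B(z,\epsilon) \subseteq B(z,r)$, monotonicity gives $\cutmeasure{\eta'}(B(z,\epsilon)) \le \cutmeasure{\eta'}(B(z,r))$, and by the conformal covariance formula~\eqref{eqn:conf_cov_formula} applied with the scaling $\psi(w) = (w-z)/r$,
\[ \cutmeasure{\eta'}(B(z,r)) = r^{d_{\kappa'}^\cut} \cutmeasure{\psi(\eta')}(B(0,1)). \]
Because $(z,r)$ is $M$-good, Lemma~\ref{lem:good_scale_radon} says the law of $h^0 - h^0_r(z)$ on $B(z,7r/8)$ is mutually absolutely continuous w.r.t.\ a zero-boundary GFF on $B(z,r)$ with controlled Radon-Nikodym moments; combined with scale invariance of the SLE$_{\kappa'}$ law in $\h$, this lets me compare $\E[\cutmeasure{\psi(\eta')}(B(0,1)) \one_E]$ with the expected mass of a canonical cut measure on a unit-scale ball, which is finite by Lemma~\ref{lem:local_finiteness_cut_measure}. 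Taking the exponent loss from $r \leq \epsilon^{1-\delta}$ completes the first moment bound.

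The main obstacle is keeping the good-scale/Radon-Nikodym comparison uniform in $z \in K$ and in $\epsilon \to 0$ while respecting the fact that $\cutmeasure{\eta'}$ is defined through a conditional expectation over $h^0$ given $\eta'$: one must handle the ordering of the conditioning and the image-measure scaling carefully, and the $\delta$ loss in the exponent ultimately comes from the freedom to choose the good radius anywhere in the logarithmic window $[\epsilon, \epsilon^{1-\delta}]$, together with the fact that the $r_D$ factor in~\eqref{eqn:cut_measure_definition} is a.s.\ bounded and bounded away from $0$ on $K$.
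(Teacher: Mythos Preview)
Your Jensen reduction to a first-moment bound is legitimate for $p\in(0,1)$, but the argument for that first-moment bound has two genuine gaps.

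First, the event $E$ you construct depends only on the auxiliary LQG fields (the wedge field $h$ and the zero-boundary field $h^0$), which are \emph{independent} of $\eta'$. Since $\cutmeasure{\eta'}$ is $\sigma(\eta')$-measurable (the conditional expectation in~\eqref{eqn:cut_measure_definition} integrates out $h^0$), independence gives $\E[\cutmeasure{\eta'}(B(z,\epsilon))\one_E]=\E[\cutmeasure{\eta'}(B(z,\epsilon))]\,\p[E]$, so the truncation achieves nothing: you would need the unconditional bound $\E[\cutmeasure{\eta'}(B(z,\epsilon))]\le c_0\epsilon^{d_{\kappa'}^\cut(1-\delta)}$, which you have not established.

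Second, the scaling step does not close. The identity $\cutmeasure{\eta'}(B(z,r))=r^{d_{\kappa'}^\cut}\cutmeasure{\psi(\eta')}(B(0,1))$ is correct, but $\psi(\eta')$ is an $\SLE_{\kappa'}$ in the half-plane $\psi(\h)$, started at $-z/r$, with boundary at distance $\im(z)/r\to\infty$ from $B(0,1)$. Lemma~\ref{lem:local_finiteness_cut_measure} only gives finiteness of $\E[\cutmeasure{\eta'}(K)]$ for a \emph{fixed} domain and compact set; it does not give a bound on $\E[\cutmeasure{\psi(\eta')}(B(0,1))]$ that is uniform over this family of configurations. Getting such a bound is exactly the content of the quantitative one-point estimate, and you have no tool to produce it.

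The paper's proof takes a completely different route. It couples $\eta'$ with an \emph{imaginary-geometry} GFF $h$ (so $\eta'$ is the counterflow line of $h$), and the event $E$ is built from this $h$: regularity of the space-filling curve (Lemma~\ref{lem:counterflow_close_fills_ball}) together with $M$-good scales and a bound on the number of flow-line strands (Lemma~\ref{lem:finite_strands}). Because $\eta'$ is a deterministic function of $h$, this $E$ is genuinely correlated with $\cutmeasure{\eta'}$. The key localization is that cut points of $\eta'$ in $B(z,\epsilon)$ are contained in the intersection of angle-$\pm\pi/2$ flow lines started from a random point in an annulus around $z$, which reduces the problem to the flow-line intersection measure $\intmeasure{\cdot}{\cdot}$. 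The one-point bound $\p[\text{both flow lines hit }B(w,\epsilon)]\lesssim\epsilon^{2-d_{\kappa'}^\cut-a}$ (Lemma~\ref{lem:gff_flow_lines_hit}) and the small-ball mean $\E[\intmeasure{\ol\eta_1}{\ol\eta_2}(B(0,\epsilon))]=O(\epsilon^2)$ (Lemma~\ref{lem:intersection_small_ball_measure}) are then combined across scales via the good-scale decoupling and the annulus algorithm (Lemma~\ref{lem:annuli_algorithm}) to bound $n$-point densities, yielding all integer moments; H\"older then gives $p\in(0,1)$. None of this machinery---in particular the flow-line localization that makes the problem genuinely local in $h$---appears in your sketch.
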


In order to prove  Lemma~\ref{lem:moments_finite},  we will give an upper bound on the density $g(z_1,\dots,z_n)$ of the measure $\E[ \cutmeasure{\eta'} \otimes \cdots \otimes \cutmeasure{\eta'} \one_E]$ (with the product taken~$n$ times) for every fixed $n \in \N$ and an appropriate choice of the event $E$,  and then complete the proof of Lemma~\ref{lem:moments_finite} using H\"older's inequality.  In particular,  we will bound from above the measure on cut points of $\eta'$ in terms of measures on the intersection of interior flow lines of $h$ on the event $E$.  This is achieved using the conformal covariance property (Lemma~\ref{lem:conformal_covariance_cut_measure}) and Lemmas~\ref{lem:gff_flow_lines_hit} and~\ref{lem:intersection_small_ball_measure},  together with the fact that the GFF on $\h$ which is coupled with $\eta'$ so that $\eta'$ is the counterflow line of $h$ from $0$ to $\infty$,  is locally absolutely continuous with respect to a whole-plane GFF with well-controlled Radon-Nikodym derivative (Lemma~\ref{lem:good_scale_radon}).

First,  we state Lemma~\ref{lem:gff_flow_lines_hit}.  We will use Lemma~\ref{lem:gff_flow_lines_hit} in order to localize the event that the cut points of $\eta'$ get close to a point $z$ in order to make use of the independence properties of the GFF.

\begin{lemma}
\label{lem:gff_flow_lines_hit}
Suppose that $h$ is a zero-boundary GFF on $\D$ and $z_0 \in B(0,1/2) \setminus B(0,1/4)$ is sampled from Lebesgue measure (normalized to be a probability measure) independently of $h$.  Let $\eta_1$ (resp.\ $\eta_2$) be the flow line of $h$ starting from $z_0$ with angle $-\pi/2$ (resp.\ $\pi/2$).  For $j=1,2$, let $\tau_j = \inf\{t \geq 0 : \eta_j(t) \notin B(0,3/4)\}$.  Then for all $a>0$ sufficiently small  there exist constants $c_0 < \infty$ and $\epsilon_0 \in (0,1/8)$ depending only on $a$ such that
\begin{align*}
	\p[ \eta_j([0,\tau_j]) \cap B(w,\epsilon) \neq \emptyset \ \text{for} \ j=1,2]  \leq c_0 \epsilon^{2-d_{\kappa'}^\cut-a} \quad\text{for all}\quad w \in B(0,1/8) \quad\text{and}\quad \epsilon \in (0,\epsilon_0).
\end{align*}
\end{lemma}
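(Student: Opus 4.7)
\textbf{Proof plan for Lemma~\ref{lem:gff_flow_lines_hit}.}

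The plan is to reinterpret the event as a statement about cut points of a counterflow line and then invoke a one-point estimate whose exponent is the codimension $2 - d_{\kappa'}^{\cut}$ of the cut point set. By the imaginary geometry results reviewed in Section~\ref{subsec:ig} (in particular \cite[Theorem~1.7]{ms2017ig4} on flow lines from interior points), the flow lines $\eta_1$ and $\eta_2$ of angles $-\pi/2$ and $\pi/2$ emanating from the interior point $z_0$ are precisely the right and left outer boundaries of the counterflow line $\eta'$ of $h$ (from a boundary point, say $1 \in \partial \D$) targeted at $z_0$. Consequently, $\eta_1 \cap \eta_2$ is the set of cut points of $\eta'|_{\text{up to hitting }\partial \D\text{-neighborhood of } z_0}$, and the event that both $\eta_j([0,\tau_j])$ meet $B(w,\epsilon)$ is, up to an innocuous localization in $B(0,3/4)$, the event that $\eta'$ has a cut point in $B(w,\epsilon)$.

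First I would set up the imaginary geometry picture and condition on $z_0$. Because $z_0 \in B(0,1/2)\setminus B(0,1/4)$ and $w \in B(0,1/8)$, we always have $|w-z_0| \geq 1/8$; combined with $B(w,\epsilon) \subset B(0,1/4)$ for $\epsilon < 1/8$, the hitting event in $B(w,\epsilon)$ happens at positive distance from both $z_0$ and $\partial \D$. This will let us apply the standard absolute continuity of the zero-boundary GFF on $\D$ against, say, a whole-plane GFF on a neighborhood of $B(w,\epsilon)$ with only an $O(1)$ Radon-Nikodym derivative (as in Lemma~\ref{lem:good_scale_radon} and Remark~\ref{rmk:RN_derivative}), so that the one-point estimate we derive for the whole-plane or chordal setting applies after an $O(1)$ blow-up in the constant.

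Next I would prove the one-point estimate: for every $a > 0$ there exists $\epsilon_0, c_0$ such that for all $\epsilon \in (0,\epsilon_0)$ and all $w$ at distance $\geq 1/8$ from $z_0$ and $\partial \D$,
\begin{equation*}
\p\!\left[ \eta_1 \cap B(w,\epsilon) \neq \emptyset \text{ and } \eta_2 \cap B(w,\epsilon) \neq \emptyset \right] \leq c_0 \epsilon^{2 - d_{\kappa'}^{\cut} - a}.
\end{equation*}
The strategy is to exploit the fact that the two-sided hitting exponent for the two flow lines at an interior point is, by the Miller--Wu analysis \cite{mw2017intersections}, exactly $2 - d_{\kappa'}^{\cut}$: they compute the two-point function for pairs of cut points, whose diagonal one-point asymptotics give precisely this exponent. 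Concretely, let $\sigma_1 = \inf\{ t : \eta_1(t) \in \partial B(w,\epsilon)\}$; up to $\sigma_1$, $\eta_1$ is (conformally) an $\SLE_\kappa(2-\kappa)$-type process, and the probability of this event has a standard Bessel hitting exponent $\epsilon^{1-\kappa/8 - o(1)}$. Conditional on $\eta_1|_{[0,\sigma_1]}$, use the GFF coupling to identify the conditional law of $\eta_2$ up to its first hit of $B(w,\epsilon)$ as a flow line in the complement whose boundary data near $\eta_1(\sigma_1)$ is explicit. A Girsanov/Radon-Nikodym comparison with a free $\SLE_\kappa$ in the slit domain, combined with Bessel process hitting estimates for the remaining $\SLE_\kappa(\ul{\rho})$ process targeting the force point coming from $\eta_1(\sigma_1)$, gives the additional factor $\epsilon^{2 - d_{\kappa'}^{\cut} - (1-\kappa/8) - o(1)}$; multiplying yields the claim.

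The main obstacle will be the exponent bookkeeping in the second conditional hitting step: isolating the effect of the conditioning on $\eta_1|_{[0,\sigma_1]}$ on the driving function of $\eta_2$ and extracting precisely the cut-point exponent $2 - d_{\kappa'}^{\cut}$ (rather than some strictly larger number that would come from treating the two hits as independent events). This is where the imaginary-geometry boundary-data bookkeeping, the change of variables to the slit domain, and the Bessel/$\SLE_\kappa(\ul{\rho})$ hitting estimates have to be combined carefully; the averaging over $z_0$ is harmless since $z_0$ is independent of $h$ with a bounded density on $B(0,1/2) \setminus B(0,1/4)$ and the estimate we establish is uniform in $z_0$ in this annulus.
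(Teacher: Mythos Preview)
Your plan takes a genuinely different route from the paper, and it has a real gap at exactly the place you flag as ``the main obstacle''.

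\textbf{What the paper actually does.} The paper does not attempt to compute or import any hitting exponent directly. Instead it uses the intersection measure $Z=\intmeasure{\eta_1^{\tau_1}}{\eta_2^{\tau_2}}(B(w,\epsilon))$ built in Section~\ref{subsec:measure_construction} and the elementary inequality $\p[E]\le \E[Z]/\E[Z\mid E]$. The numerator is $O(\epsilon^2)$ by Lemma~\ref{lem:intersection_small_ball_measure}. For the denominator, the paper introduces a good event $G$ (both flow lines reach $B(w,\wh\epsilon)$ with $\wh\epsilon^{1-\delta}=\epsilon$, plus control on the harmonic part of the conformally mapped field and on harmonic measures of the flow line sides at many scales via Lemma~\ref{lem:lower_bound_harmonic_measure}), shows $\p[E_{\wh\epsilon}(w)\cap G^c]=O(\wh\epsilon^2)$ by GFF arguments, and then proves that on $G$ the two flow lines form a pocket around $w$ with uniformly positive conditional probability (Lemma~\ref{lem:pocket_formation_positive_prob}), forcing $\E[Z\mid G]\gtrsim \epsilon^{d_{\kappa'}^{\cut}(1+2\delta)}$ by conformal covariance. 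Combining gives $\p[G]\lesssim \epsilon^{2-d_{\kappa'}^{\cut}(1+2\delta)}$ and hence the lemma. No exponent bookkeeping for conditional SLE hitting is ever needed.

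\textbf{Why your plan has a gap.} First, a small point: the event $\{\eta_1,\eta_2\text{ both hit }B(w,\epsilon)\}$ is not the event that there is a cut point in $B(w,\epsilon)$; the two curves could enter the ball on opposite sides without meeting. The exponents should coincide, but this is a lemma, not an ``innocuous localization''. The serious issue is the second step: you want $\p[\eta_2\text{ hits }B(w,\epsilon)\mid \eta_1\text{ hits }B(w,\epsilon)]\le\epsilon^{(2-d_{\kappa'}^{\cut})-(1-\kappa/8)-o(1)}$. Conditioning on the first hit is conditioning on an event of probability $\epsilon^{1-\kappa/8+o(1)}$, and under that conditioning $\eta_1$ near its tip is described by a two-sided radial/chordal $\SLE_\kappa$; the conditional law of $\eta_2$ in the slit domain involves boundary data that depends on the realized $\eta_1$ and a force point sitting at (or near) the tip $\eta_1(\sigma_1)$. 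Extracting the precise boundary hitting exponent for $\eta_2$ in that domain, uniformly over realizations of $\eta_1$, is a substantial calculation in its own right; it is not contained in \cite{mw2017intersections} in the form you need (that paper works with chordal curves from the boundary, and its two-point function is for pairs of cut points, not a conditional one-point estimate for interior flow lines). You correctly identify this as the main obstacle; the paper's measure-based argument avoids it entirely.
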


In order to prove Lemma~\ref{lem:gff_flow_lines_hit}, we will make use of the following inequality.  Suppose that $E$ is an event with $\p[E] > 0$ and $Z$ is a non-negative random variable such that $\E[Z] < \infty$ and $\E[ Z \giv E] \in (0,\infty)$.  Then $\p[E] \leq \E[Z]/\E[Z \giv E]$.  We will use Lemma~\ref{lem:intersection_small_ball_measure} to show that $\E[ Z ] = O(\epsilon^2)$ and Lemma~\ref{lem:pocket_formation_positive_prob} together with conformal covariance to show that $\E[Z \giv E] \gtrsim \epsilon^{d_{\kappa'}^{\text{cut}}(1+2\delta)}$ as $\epsilon \to 0$,  where $\delta \in (0,1)$ is small but fixed.  However,  we will need to compare the law of $h$ with the law of a GFF on $\h$ with different boundary conditions when applying the conformal covariance formula.  For that purpose,  we will have to introduce another event $G$ and use Lemma~\ref{lem:lower_bound_harmonic_measure} to argue that $\p[ E \cap G^c ] = O(\epsilon^2)$ as $\epsilon \to 0$ and so 
\begin{align*}
\p[E] \leq \p[ E \cap G] + \p[G^c] \lesssim \epsilon^{2-d_{\kappa'}^{\text{cut}}-2\delta d_{\kappa'}^{\text{cut}}} + \epsilon^2 \quad \text{as}\,\,\epsilon \to 0,
\end{align*}
which shows the claim of Lemma~\ref{lem:gff_flow_lines_hit}.

We will now define the event $E$ and random variable $Z$ which we will use.  Suppose that $\CC = (\C,h,0,\infty)$ is a quantum cone of weight $4-\gamma^2$ with the circle average embedding and let $h^\IG$ be a whole-plane GFF on $\C$ with values modulo $2\pi \chi$ which is taken to be independent of $h$.  Let $\eta_1$ (resp.\ $\eta_2$) be the flow line of $h^\IG$ from $0$ to $\infty$ with angle $-\pi/2$ (resp.\ $\pi/2$).  Then the collection of quantum surfaces which are parameterized by the components of $\C \setminus (\eta_1 \cup \eta_2)$ which are to the right of $\eta_1$ and to the left of $\eta_2$ form a quantum wedge $\CW$ of weight $2-\gamma^2/2$ (Theorems~\ref{thm:wedge_cutting} and~\ref{thm:cone_cutting}).    Let $X \sim \BES^{\kappa'/4}$ be the Bessel process associated with $\CW$.  As in the case of $\SLE_{\kappa'}$ as described in Section~\ref{subsec:measure_construction}, the local time of $X$ at $0$ induces a measure on $\eta_1 \cap \eta_2$ which we denote by $\qintmeasure{h}{\eta_1}{\eta_2}$, in order to emphasize its dependence on $h$, $\eta_1$, and $\eta_2$.  (Naturally this can in fact be done with other GFF-type distributions in place of the whole-plane GFF.) Now suppose that $h^0$ is a GFF with zero boundary conditions on a simply connected domain $D$ which contains $0$ and let $\tau_i$ be a stopping time for $\eta_i$ so that a.s.\ $\eta_i([0,\tau_i]) \subseteq \closure{D}$ for $i=1,2$.  For $i=1,2$ we let $\eta_i^{\tau_i} = \eta_i|_{[0,\tau_i]}$.  Let $\qintmeasure{h^0}{\eta_1^{\tau_1}}{\eta_2^{\tau_2}}$ be the restriction of $\qintmeasure{h^0}{\eta_1}{\eta_2}$ to $\eta_1([0,\tau_1]) \cap \eta_2([0,\tau_2])$.  Since the law of~$h^0$ restricted to any compact subset~$K$ of~$D$ which has positive distance from $0$ is absolutely continuous with respect to the corresponding restriction for $h$, it follows that $\qintmeasure{h^0}{\eta_1^{\tau_1}}{\eta_2^{\tau_2}}$ is a.s.\ defined.  We let $r_D$ be as in~\eqref{eqn:cut_measure_definition}.  We then define
\begin{equation}
\intmeasure{\eta_1^{\tau_1}}{\eta_2^{\tau_2}}(dz) = r_D(z) \E[ \qintmeasure{h^0}{\eta_1^{\tau_1}}{\eta_2^{\tau_2}} (dz) \giv \eta_1^{\tau_1}, \eta_2^{\tau_2}].	
\end{equation}
We note that $\intmeasure{\eta_1^{\tau_1}}{\eta_2^{\tau_2}}$ is in fact defined if we replace $(\eta_1^{\tau_1}, \eta_2^{\tau_2})$ by any pair $(\wt{\eta}_1, \wt{\eta}_2)$ whose law is locally absolutely continuous with respect to $(\eta_1^{\tau_1}, \eta_2^{\tau_2})$.  The same argument used to prove Proposition~\ref{lem:conformal_covariance_cut_measure} implies that the measure we have defined is conformally covariant with exponent $d_{\kappa'}^\cut$.

In the setting of the proof of Lemma~\ref{lem:gff_flow_lines_hit}, we will take $E$ to be the event that $\eta_i([0,\tau_i]) \cap B(0,\epsilon) \neq \emptyset$ for $i=1,2$ and we will take $Z = \intmeasure{\eta_1}{\eta_2}(B(0,\epsilon))$.  We thus need in particular to show that $\E[Z] < \infty$, which is the content of the following lemma.  We note in particular that due to our choice of definition of $\intmeasure{\eta_1}{\eta_2}$ to prove that $\E[Z] < \infty$ it suffices to prove for each $r \in (0,1)$ that $\qintmeasure{h^0}{\eta_1}{\eta_2}(B(0,r))$ has a finite $p$th moment for some $p > 1$ where $h^0$ is a GFF on $\D$ with zero boundary conditions.

\begin{lemma}
\label{lem:quantum_measure_zero_boundary_finite}
Suppose that $h^\IG$ is a GFF on $\D$ with zero boundary conditions and $z \in B(0,1/2)$ is picked from Lebesgue measure independently of $h^\IG$.  Let $\eta_1$ (resp.\ $\eta_2$) be the flow line of $h^\IG$ starting from $z$ with angle $-\pi/2$ (resp.\ $\pi/2$).  For $r \in (1/2,1)$ and $i=1,2$ let $\tau_i = \inf\{t \geq 0 : \eta_i(t) \notin B(0,r)\}$.  There exists $p > 1$ so that $\E[ (\intmeasure{\eta_1^{\tau_1}}{\eta_2^{\tau_2}}(B(0,r))^p] < \infty$. 
\end{lemma}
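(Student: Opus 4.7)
The plan is to reduce via conditional Jensen's inequality to bounding a moment of $\qintmeasure{h^0}{\eta_1^{\tau_1}}{\eta_2^{\tau_2}}(B(0,r))$ itself, then transfer via local absolute continuity to a quantum cone setup where the Bessel process encoding is available, and finally adapt the moment bound strategy from the proof of Lemma~\ref{lem:local_finiteness_cut_measure}. First, since $\confrad(z,\D) \in [1-r,1+r]$ for $z \in B(0,r)$, the weight $r_\D$ is uniformly bounded on $B(0,r)$ by some constant $C$, and the defining identity $\intmeasure{\eta_1^{\tau_1}}{\eta_2^{\tau_2}}(dz) = r_\D(z) \E[\qintmeasure{h^0}{\eta_1^{\tau_1}}{\eta_2^{\tau_2}}(dz) \giv \eta_1^{\tau_1},\eta_2^{\tau_2}]$ together with conditional Jensen's inequality gives $\E[\intmeasure{\eta_1^{\tau_1}}{\eta_2^{\tau_2}}(B(0,r))^p] \leq C^p \, \E[\qintmeasure{h^0}{\eta_1^{\tau_1}}{\eta_2^{\tau_2}}(B(0,r))^p]$, so it suffices to bound the right-hand side for some $p > 1$.

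Next I would transfer to a quantum cone setting. Let $\CC = (\C,h,0,\infty)$ be a weight $4-\gamma^2$ quantum cone with circle average embedding, independent of $h^\IG$, and fix $r'' \in (r,1)$ and small $\delta \in (0,r)$. On $B(0,r'') \setminus B(0,\delta)$ the laws of $h$ and $h^0$ are mutually absolutely continuous (they differ by an element of the Cameron--Martin space plus an additive constant with good tails), so by Remark~\ref{rmk:RN_derivative} the Radon--Nikodym derivative has finite moments of all orders. Similarly, the zero-boundary flow lines $\eta_1,\eta_2$ stopped at their exits of $B(0,r)$ are locally absolutely continuous with respect to the corresponding pair of flow lines of a whole-plane imaginary geometry field, again with Radon--Nikodym derivative of finite moments of all orders. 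On the event that the flow lines avoid $B(0,\delta)$ (an event of positive probability uniform in small $\delta$, by a standard imaginary geometry argument), H\"older's inequality reduces matters to bounding $\E[\qintmeasure{h}{\eta_1^{\tau_1}}{\eta_2^{\tau_2}}(B(0,r))^{p'}]$ for some $p' > p$ in the quantum cone setup.

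Finally, in the quantum cone setting, the surface between $\eta_1$ and $\eta_2$ is a weight $2-\gamma^2/2$ quantum wedge encoded by a Bessel process $X \sim \BES^{\kappa'/4}$, and $\qintmeasure{h}{\eta_1^{\tau_1}}{\eta_2^{\tau_2}}$ is induced by the local time of $X$ at $0$. Since $\kappa'/4 \in (1,2)$ is the same Bessel dimension as in Lemma~\ref{lem:local_finiteness_cut_measure}, Steps~1--3 of that proof adapt essentially verbatim: a disconnection estimate analogous to Lemma~\ref{lem:counterflow_close_fills_ball} gives super-polynomial tails on the number of excursions of $(\eta_1,\eta_2)$ into $B(0,r)$ before leaving $B(0,r'')$; Poisson concentration for the Bessel bubble counts shows that $\qintmeasure{h}{\eta_1^{\tau_1}}{\eta_2^{\tau_2}}(B(0,r)) \geq x$ forces a bubble of quantum area at least $x^{(1-\epsilon)^2}/2^{1-\epsilon}$ inside $B(0,r'')$; and $\qmeasure{h}(B(0,r''))$ has finite $q$th moment for every $q \in (0,4/\gamma^2)$ by standard Gaussian multiplicative chaos estimates. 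Combining these yields $\p[\qintmeasure{h}{\eta_1^{\tau_1}}{\eta_2^{\tau_2}}(B(0,r)) \geq x] = O(x^{-p'(1-\epsilon)^2})$, and choosing $p' \in (1,4/\gamma^2)$ close to $1$ with $\epsilon$ correspondingly small produces the desired moment. The main obstacle is the flow-line analog of Lemma~\ref{lem:counterflow_close_fills_ball} together with a careful handling of the quantum cone field near its tip; the former follows along the lines of \cite{ghm2020almost}, and the latter is managed by the positive-probability restriction that the flow lines stay away from $0$.
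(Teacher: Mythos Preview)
Your first reduction via conditional Jensen is fine and matches the paper's closing line. The gap is in your second and third paragraphs. You set up a quantum cone $\CC=(\C,h,0,\infty)$ with marked point at $0$, but the flow lines $\eta_1,\eta_2$ emanate from the Lebesgue-random point $z\in B(0,1/2)$, not from $0$. The assertion that ``the surface between $\eta_1$ and $\eta_2$ is a weight $2-\gamma^2/2$ quantum wedge encoded by a Bessel process'' is precisely what Theorems~\ref{thm:wedge_cutting} and~\ref{thm:cone_cutting} give \emph{when the flow lines start at the cone's marked point}; with flow lines from $z\neq 0$ there is no such encoding, so the Bessel/Poisson argument in your third paragraph has no foundation. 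Restricting to the event that the flow lines avoid $B(0,\delta)$ does not help, because the structural issue is at $z$, not at $0$. And you cannot simply recentre the cone at $z$ by absolute continuity, since the $-\gamma\log|\cdot-z|$ singularity is not a Cameron--Martin perturbation of $h^0$ on any neighbourhood of $z$ (and the flow lines, hence their intersection, start at $z$).

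The paper's route is to \emph{manufacture} the log singularity at $z$ by rooting. With $h$ a zero-boundary GFF on $\D$ and $Z=\qintmeasure{h}{\eta_1^{\tau_1}}{\eta_2^{\tau_2}}(B(0,r))$, one writes $\E[Z^p]=\E\big[(Z/\qmeasure{h}(B(0,r)))^{p}\,\qmeasure{h}(B(0,r))^{p}\big]$ and applies Cauchy--Schwarz to obtain a factor $(\E[\qmeasure{h}(\D)^{2p-1}])^{1/2}$, finite for $2p-1<4/\gamma^2$, times $\big(\E[(Z/\qmeasure{h}(B(0,r)))^{2p}\,\qmeasure{h}(\D)]\big)^{1/2}$. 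Under the tilted law $\qmeasure{h}(\D)\,d\p$ the field becomes $\wt h+\gamma G(z,\cdot)$ with $z$ sampled from a density $\propto\confrad(\cdot,\D)^{\gamma^2/2}$, which on $B(0,1/2)$ is comparable to Lebesgue and hence to the hypothesis of the lemma. Now the field carries a $\gamma$-log singularity \emph{at $z$}; up to a bounded harmonic correction it agrees on $B(z,2)$ with a weight $4-\gamma^2$ cone centred at $z$, the flow lines start at the cone's marked point, and Lemma~\ref{lem:quantum_measure_quantum_cone_finite} bounds all moments of the ratio $Z/\qmeasure{h}(B(0,r))$ by a short Poisson concentration argument (there is no need for the full Steps~1--3 of Lemma~\ref{lem:local_finiteness_cut_measure}).
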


The main step in the proof of Lemma~\ref{lem:quantum_measure_zero_boundary_finite} is the following result in the setting of a quantum cone and flow lines of a whole-plane GFF.

\begin{lemma}
\label{lem:quantum_measure_quantum_cone_finite}
Fix $r \in (0,1)$ and $K \subseteq \closure{\D}$ compact which contains $B(0,r)$.  Suppose that $\CC = (\C,h,0,\infty)$ is a quantum cone of weight $4-\gamma^2$ with the circle average embedding.  Let $h^\IG$ be an independent whole-plane GFF modulo $2\pi \chi$ (recall the definition near the end of Section~\ref{subsec:ig}).  Let~$\eta_1$ (resp.\ $\eta_2$) be the flow line of $h^\IG$ starting from $0$ with angle $-\pi/2$ (resp.\ $\pi/2$) and, for $i=1,2$, let $\tau_i = \inf\{t \geq 0 : \eta_i(t) \notin K\}$.  Let $Z =  \qintmeasure{h}{\eta_1^{\tau_1}}{\eta_2^{\tau_2}}(K)$.  For every $p \geq 1$ we have that $\E[ (Z/\qmeasure{h}(K))^p ] = O(1)$ where the implicit constants depends only on $r$, $p$.
\end{lemma}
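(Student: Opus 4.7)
The plan is to identify $Z$ as the local time at $0$ of a Bessel process and then combine Bessel scaling with negative moments of the quantum area of a fixed ball around $0$. By Theorems~\ref{thm:wedge_cutting} and~\ref{thm:cone_cutting}, the region between $\eta_1$ and $\eta_2$ (viewed as the left/right boundaries of a space-filling $\SLE_{\kappa'}$ started at $0$ on the weight $4-\gamma^2$ cone) carries a quantum wedge $\CW$ of weight $2-\gamma^2/2$, and by Remark~\ref{rem:bessel_area_encoding} it is encoded by a Bessel process $X \sim \BES^{\delta}$ with $\delta = \kappa'/4 \in (1,2)$ whose successive excursion lengths equal the quantum areas of the successive beads of $\CW$. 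The measure $\qintmeasure{h}{\eta_1}{\eta_2}$ is precisely the local time of $X$ at $0$. Letting $T$ denote the Bessel time corresponding to the beads of $\CW$ whose opening/closing points both lie in $\eta_1([0,\tau_1]) \cap \eta_2([0,\tau_2])$, we have $Z = L_T(0)$, and since the corresponding beads are contained in $K$, $T$ is at most the quantum area of $K \cap \CW$ and hence the deterministic bound $T \leq \qmeasure{h}(K)$ holds.

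The second step is a scaling bound for the Bessel local time: the identity $L_t(0) \stackrel{d}{=} t^{1-\delta/2} L_1(0)$ combined with finiteness of all moments of $L_1(0)$ gives $\E[L_t(0)^p] = C_p t^{p(1-\kappa'/8)}$ for all $p \geq 1$. The subtle point is that $T$ is a stopping time entangled with the spatial embedding of $\CW$ in $\C$, so this distributional identity cannot be applied verbatim. The cleanest way to handle it is to condition on the embedding data of $\CW$ (equivalently, on the entire configuration $(h,\eta_1,\eta_2)$), so that $T$ becomes a deterministic functional of $X$, and then apply Bessel scaling at the resulting stopping time together with the strong Markov property at bead-completion times. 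This should yield a pathwise bound $\E[Z^p \mid T] \leq C_p T^{p(1-\kappa'/8)}$, possibly up to an independent random factor with uniformly bounded $p$-th moment.

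Combining these two ingredients with $T \leq \qmeasure{h}(K)$ and $\qmeasure{h}(K) \geq \qmeasure{h}(B(0,r))$ gives
\[
\E[(Z/\qmeasure{h}(K))^p] \leq C_p \E[\qmeasure{h}(K)^{-p\kappa'/8}] \leq C_p \E[\qmeasure{h}(B(0,r))^{-p\kappa'/8}].
\]
It then suffices to show that $\qmeasure{h}(B(0,r))$ has negative moments of all orders, with bounds depending only on $r$. For the circle average embedding of a weight $4-\gamma^2$ quantum cone, the field has an $\alpha\log(1/|z|)$ singularity at $0$ with $\alpha < Q$, and the circle average construction of the quantum measure gives a pathwise lower bound of the form $\qmeasure{h}(B(0,r)) \geq c(r) e^{\gamma h_r(0)}$ where $h_r(0)$ is Gaussian with positive mean $\alpha\log(1/r)$ and bounded variance. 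Negative exponential moments of a Gaussian then immediately provide the required bound with constants depending only on $r$ and $p$.

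The main obstacle is the second step: because the stopping time $T$ is determined by the geometry of $K$ relative to the embedding of $\CW$ rather than intrinsically by $X$, the distributional scaling for the Bessel process cannot be applied directly. One would either condition on the $\sigma$-algebra fixing $T$ as a deterministic function of $X$ and scale on each excursion interval, or cover the range of $T$ by a dyadic sequence of deterministic times and union bound; either way, a careful accounting of the correlation between $T$ and $L_T(0)$ is the technical heart of the proof.
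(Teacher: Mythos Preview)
Your strategy---identify $Z$ with Bessel local time at $0$, compare it with $\qmeasure{h}(K)$ via $T \le \qmeasure{h}(K)$, and finish with negative moments of $\qmeasure{h}(B(0,r))$---matches the paper's ingredients, but the argument has two genuine gaps.

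The main one is the step you flag yourself, and it is more serious than your discussion suggests. Conditioning on $(h,\eta_1,\eta_2)$ does not leave $T$ as a deterministic functional of a \emph{random} Bessel process: $X$ is itself a measurable function of $(h,\eta_1,\eta_2)$, so after this conditioning nothing is random. There is no $\sigma$-algebra that fixes the embedding of $\CW$ into $\C$ while leaving the excursion lengths of $X$ free, because the embedding is built from those very lengths together with the outer wedges and the circle-average normalisation. Your dyadic-cover suggestion is closer to what works, but to carry it out you need the idea the paper supplies: instead of a conditional moment bound on $L_T(0)$, one controls $\p[Z\ \text{large},\ \qmeasure{h}(K)\ \text{small}]$ directly by Poisson concentration. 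On $\{m \le Z < m+1,\ \qmeasure{h}(K) \le m/2^{k-1}\}$, the first $m$ units of local time of $X$ see no excursion of length $\ge m/2^{k-1}$ (all those beads sit inside $K$); but the number of such excursions is Poisson with mean $c_0\, m^{\kappa'/8}\, 2^{(k-1)(1-\kappa'/8)}$, and~\eqref{eqn:poisson_concentration} makes this exponentially unlikely in both $m$ and $k$. Summing $2^{(k+1)p}$ times this probability over $m,k \ge 1$ gives the bound. This replaces the unproved inequality $\E[Z^p \mid T] \le C_p T^{p(1-\kappa'/8)}$ by a deterministic implication whose probability involves only the excursion Poisson process, sidestepping the correlation entirely.

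The second gap is the pathwise bound $\qmeasure{h}(B(0,r)) \ge c(r)\, e^{\gamma h_r(0)}$, which is false for Gaussian multiplicative chaos: a single circle average does not control the measure from below. The paper instead writes $h|_{\D}$ as a whole-plane GFF (normalised to have zero average on $\partial\D$) minus $\gamma\log|\cdot|$, decomposes the GFF on $\D$ into a zero-boundary part plus an independent harmonic part, and combines the negative GMC moments from \cite{robert2010gaussian} with exponential moments for the supremum of the harmonic part from \cite{ghm2020almost}.
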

\begin{proof}
Fix $p \geq 1$.  We begin by noting that $\qmeasure{h}(B(0,r))$ (hence $\qmeasure{h}(K)$) has finite negative moments of all orders.  Indeed,  since $h|_{\D}$ can be expressed as $\wt{h}|_{\D}-\gamma \log |\cdot|$ where $\wt{h}$ is a whole-plane GFF such that $\wt{h}_1(0) = 0$, it suffices to show that $\qmeasure{\wt{h}}(B(0,r))$ has finite negative moments of all orders.  This in turn, follows by writing $\wt{h}|_\D = \wt{h}^0 + \wt{\Fh}$ where $\wt{h}^0$ is a zero-boundary GFF on $\D$ and $\wt{\Fh}$ is a harmonic function on $\D$, independent of $\wt{h}^0$, noting that for fixed $q > 0$,
\begin{align*}
	\E[ \qmeasure{\wt{h}}(B(0,r))^{-q}] \leq \E[ \qmeasure{\wt{h}^0}(B(0,r))^{-q}] \E\Big[ \exp\Big(q\gamma \sup_{z \in B(0,r)}|\wt{\Fh}(z)|\Big) \Big],
\end{align*}
and then applying \cite[Proposition~3.6]{robert2010gaussian} (resp.\ \cite[Lemma~3.11]{ghm2020almost}) to bound the first (resp.\ second) factor on the right side. Moreover, since $\qmeasure{h}(K) \geq \qmeasure{h}(B(0,r))$, it follows that $\qmeasure{h}(K)$ has finite negative moments of all orders as well. In particular, it suffices to bound $\E[ (Z / \qmeasure{h}(K))^p \one_{\{Z \geq 1\}}]$.  We have that
\begin{align}
\label{eqn:z_mu_expectation_bound}
   \E\left[ \left(\frac{Z}{\qmeasure{h}(K)} \right)^p \one_{\{Z \geq 1\}} \right]
&\leq \sum_{m=1}^\infty \sum_{k=-\infty}^{\infty} 2^{(k+1) p} \p[ m \leq Z < m+1,\ 2^k \leq Z/\qmeasure{h}(K) \leq 2^{k+1}] \notag\\
&\leq 2^p + \sum_{m=1}^\infty \sum_{k=1}^\infty 2^{(k+1)p} \p[ m \leq Z < m+1, \ \qmeasure{h}(K) \leq m/2^{k-1}].	
\end{align}
Let~$\CW$ be the weight $2-\gamma^2/2$ quantum wedge parameterized by the connected components of $\C \setminus (\eta_1 \cup \eta_2)$ which are to the right of $\eta_1$ and to the left of $\eta_2$.  Let also $X \sim \BES^{\kappa'/4}$ be the corresponding Bessel process encoding $\CW$.  For each $u \geq 0$, we let~$\CW_u$ be the part of~$\CW$ up to when~$X$ has accumulated $u$ units of local time at~$0$.  Then there exists a constant $c_0 > 0$ so that the number of beads of~$\CW_m$ which have quantum area at least $m/2^{k-1}$ is a Poisson random variable with mean given by
\[ \lambda_{\kappa',m} = c_0 m ( m/2^{k-1})^{\kappa'/8-1} = c_0 m^{\kappa'/8} 2^{(k-1) (1-\kappa'/8)}.\]
Since $\kappa' \in (4,8)$ we have that $1-\kappa'/8 > 0$.  Finally, on the event $\{m\leq Z < m+1 , \ \qmeasure{h}(K) \leq m/2^{k-1} \}$ $\CW_m$ does not contain a bead with quantum area at least $m/2^{k-1}$.  Applying~\eqref{eqn:poisson_concentration} with $\lambda = \lambda_{\kappa',m}$ and $\alpha = 1/2$ we thus see there exists a constant $c_1 > 0$ so that
\begin{equation}
\label{eqn:z_mu_range}
\p[ m \leq Z < (m+1),\ \qmeasure{h}(K) \leq m/2^{k-1}] \leq \exp(-c_1 \lambda_{\kappa',m}) \quad\text{for each}\quad m,k \in \N.
\end{equation}
Inserting~\eqref{eqn:z_mu_range} into~\eqref{eqn:z_mu_expectation_bound} gives the result.
\end{proof}

\begin{proof}[Proof of Lemma~\ref{lem:quantum_measure_zero_boundary_finite}]
Let $h$ be a GFF on $\D$ with zero boundary conditions taken to be independent of $h^\IG$.  Let $Z =  \qintmeasure{h}{\eta_1^{\tau_1}}{\eta_2^{\tau_2}}(B(0,r))$.  Fix $p \in (1,2/\gamma^2+1/2)$.  Let $c_0 = (\E[ \qmeasure{h}(\D)^{2p-1}])^{1/2}$.  Note that $c_0 < \infty$ since $2p-1 < 4/\gamma^2$ \cite[Proposition~3.5]{robert2010gaussian}.  We thus have that
\begin{align*}
\E[ Z^p ]
&= \E\left[ \left( \frac{Z}{\qmeasure{h}(B(0,r))} \right)^p  \qmeasure{h}(B(0,r))^p \right]\\
&\leq \left(\E\left[ \left( \frac{Z}{\qmeasure{h}(B(0,r))} \right)^{2p} \qmeasure{h}(B(0,r)) \right] \right)^{1/2} \left( \E[ \qmeasure{h}(B(0,r))^{2p-1} ] \right)^{1/2}\\
&\leq c_0 \left(\E\left[ \left( \frac{Z}{\qmeasure{h}(B(0,r))} \right)^{2p} \qmeasure{h}(\D) \right] \right)^{1/2}.
\end{align*}
We recall from \cite[Section~3.3]{ds2011lqg} that we can sample from the law of $h$ weighted by $\qmeasure{h}(\D)$ using the following procedure.  We first sample $z$ according to the law with density with respect to Lebesgue measure on $\D$ given by a normalizing constant times $\confrad(z,\D)^{\gamma^2/2}$ and then, given $z$, take the field with law equal to that of $\wt{h} + \gamma G(z,\cdot)$ where $\wt{h}$ is a GFF on $\D$ with zero boundary conditions and $G$ is the Green's function on $\D$ with Dirichlet boundary conditions.

We note that the conditional law of $z$ given $z \in B(0,1/2)$ is absolutely continuous with respect to Lebesgue measure on $B(0,1/2)$ with a Radon-Nikodym derivative which is bounded from above and below by finite and positive constants.  In particular, it suffices to prove the statement of the lemma with~$z$ chosen from this law in place of Lebesgue measure.  Consider $\wh{h} = \wt{h}^w - \gamma \log|z-\cdot|$ where $\wt{h}^w$ is a whole-plane GFF with the additive constant chosen so that the average of $\wh{h}$ on $\partial B(z,2)$ is equal to~$0$.  Since the restriction of this field to $B(z,2)$ has the same law as the corresponding restriction of a quantum cone of weight $4-\gamma^2$ with the marked point taken to be $z$ and with the circle average embedding (with $B(z,1)$ replaced by $B(z,2)$) it follows from Lemma~\ref{lem:quantum_measure_quantum_cone_finite}  that if we let $\wh{Z} = \qintmeasure{\wh{h}}{\eta_1^{\tau_1}}{\eta_2^{\tau_2}}(B(0,r))$ then $\E[ (\wh{Z}/ \qmeasure{\wh{h}}(B(0,r)))^{2p} ] < \infty$ for every $p > 0$.

Let~$\Fh^w$ be the distribution which is harmonic in~$\D$ so that we can write $\wt{h}^w = h^{0,w} + \Fh^w$ where $h^{0,w}$ is a zero-boundary GFF in $\D$ and $h^{0,w}$, $\Fh^w$ are independent.  Let $\wh{h}^w = h^{0,w} + \gamma G(z,\cdot)$ and let
\begin{align*}
Z^{0,w} &= 
\qintmeasure{\wh{h}^w}{\eta_1}{\eta_2}(\eta_1([0,\tau_1]) \cap \eta_2([0,\tau_2])).
\end{align*}
Since $\wh{h}^w = \wh{h} +\gamma (G(z,\cdot) + \log |z-\cdot|) - \Fh^w$ and $|G(x,\cdot)+\log |x-\cdot || \leq \log 2$ whenever $x \in B(0,1/2)$ it follows that there is a constant $c_1 > 1$ such that
\begin{align*}
	Z^{0,w} \leq c_1 \exp\!\Big( \gamma \sup_{x \in B(0,r)} |\Fh^w(x)|\Big) \wh{Z} 
\quad\text{and}\quad \qmeasure{\wh{h}^w}(B(0,r)) \geq c_1^{-1} \exp\!\Big( -\gamma \sup_{x \in B(0,r)} |\Fh^w(x)|\Big) \qmeasure{\wh{h}}(B(0,r)).
\end{align*}
Thus, by the above inequalities and the Cauchy-Schwarz inequality we have that
\begin{align*}
 \E[ (Z^{0,w} / \qmeasure{\wh{h}^w}(B(0,r)))^{2p} ] \leq c_1^{4p}\!\Big( \E\Big[ \exp\Big(8p \gamma \sup_{x \in B(0,r)} |\Fh^w(x)|\Big)\Big] \Big)^{1/2} \left( \E[ (\wh{Z} / \qmeasure{\wh{h}}(B(0,r)))^{4p} ] \right)^{1/2}.
\end{align*}
The latter term was noted above to be finite and by~\cite[Lemma~3.11]{ghm2020almost}, $\E[\exp(8p\gamma \sup_{x \in B(0,r)}|\Fh^w(x)|)]$ is finite as well.  Thus, recalling the discussion in the second paragraph of the proof, we have for a constant $c_2 > 0$ that

\begin{align*}
\E\left[ \left( \frac{Z}{\qmeasure{h}(B(0,r))}\right)^{2p} \qmeasure{h}(\D)\right] = c_2 \E\left[ \left( \frac{Z^{0,w}}{\qmeasure{\wh{h}^w}(B(0,r))}\right)^{2p}\right] <\infty
\end{align*}
and so $\E[Z^p] < \infty$ for every $p \in (1,1/2 + 2/\gamma^2)$.  The proof is then complete since
\begin{align*}
\E[(\intmeasure{\eta_1^{\tau_1}}{\eta_2^{\tau_2}}(K)^p] \leq \sup_{w \in K}(r_{\D}(w)^p) \E[Z^p] < \infty.
\end{align*}
\end{proof}

Next, we show that as long as the harmonic part of a GFF-type distribution is controlled, then the expected mass of the intersection measure of its flow lines in a small ball is small.

\begin{lemma}
\label{lem:intersection_small_ball_measure}
Fix $M > 0$, $r \in (1/2,1)$.  Then there exists $\xi \in (r,1)$ such that the following holds.  Suppose that $h^\IG = h^{\IG,0} + \Fh^{\IG,0}$ where $h^{\IG,0}$ is a GFF on $\D$ with zero boundary conditions and $\Fh^{\IG,0}$ is harmonic on $\D$ with $\sup_{w \in B(0,\xi)} |\Fh^{\IG,0}(w)| \leq M$.  Let $z \in B(0,1/2) \setminus B(0,1/4)$ be picked from Lebesgue measure independently of $h^\IG$.  Let $\eta_1$ (resp.\ $\eta_2$) be the flow line of $h^\IG$ starting from $z$ with angle $-\pi/2$ (resp.\ $\pi/2$).  For $i=1,2$, let $\tau_i = \inf\{t \geq 0 : \eta_i(t) \notin B(0,r)\}$ and write $\ol{\eta}_i$ for $\eta_i|_{[0,\tau_i]}$.  Then
\[ \E[ \intmeasure{\ol{\eta}_1}{\ol{\eta}_2}(B(0,\epsilon))]  = O(\epsilon^2) \quad\text{as}\quad \epsilon \to 0.\]
\end{lemma}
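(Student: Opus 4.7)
The plan is to reduce to the zero-boundary setting by a Cameron--Martin shift, and then bound the expected intersection measure of $B(0,\epsilon)$ by relating it to the classical LQG one-point function for the quantum area.

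First I would use Cameron--Martin to reduce to the case that $h^{\IG}$ is a zero-boundary GFF on $\D$. Pick $s' \in (r,s)$ and a smooth cutoff $\chi$ equal to $1$ on $B(0,r)$ and vanishing outside $B(0,s')$. Set $F := \chi\,\Fh^{\IG,0}$; then $F \in H_0(\D)$ with Dirichlet norm bounded by a constant depending only on $M$ and $\chi$. Since $\ol{\eta}_1,\ol{\eta}_2 \subseteq B(0,r)$, they and hence the intersection measure depend only on the restriction of $h^{\IG}$ to $B(0,r)$, where $h^{\IG} = h^{\IG,0}+F$. Remark~\ref{rmk:RN_derivative} gives that the Radon--Nikodym derivative $\CD_{h^{\IG,0},F}$ has all finite moments, so by H\"older and interpolation with the $L^{p_0}$-moment bound from Lemma~\ref{lem:quantum_measure_zero_boundary_finite} (for some $p_0 > 1$), it suffices to establish the $L^1$ bound $\E[\intmeasure{\ol{\eta}_1}{\ol{\eta}_2}(B(0,\epsilon))] = O(\epsilon^2)$ in the case $h^{\IG} = h^{\IG,0}$ is the zero-boundary GFF.

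Next, set $Z = \qintmeasure{h^0}{\ol{\eta}_1}{\ol{\eta}_2}(B(0,\epsilon))$ and $\mu = \qmeasure{h^0}(B(0,\epsilon))$, where $h^0$ is the independent zero-boundary GFF entering the definition of $\intmeasure{}{}$. Since the conformal radius factor $r_\D(w)$ in~\eqref{eqn:cut_measure_definition} is uniformly bounded on $B(0,1/4)$, the claim reduces to $\E[Z] = O(\epsilon^2)$, where the expectation is over both $h^0$ and the flow lines. Write $\E[Z] = \E[\mu] \cdot \E^*[Z/\mu]$, where $\E^*[\cdot] = \E[\cdot\,\mu]/\E[\mu]$ is the expectation weighted by $\mu$. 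The factor $\E[\mu] = O(\epsilon^2)$ is the classical LQG one-point function for the zero-boundary GFF, computed via $\E[\mu] = \int_{B(0,\epsilon)} \confrad(w,\D)^{\gamma^2/2}\,dw$. Under the $\mu$-weighting, by the rooted-measure formulation of~\cite[Section~3.3]{ds2011lqg}, $h^0$ is replaced by $h^0 + \gamma G_\D(w,\cdot)$ with $w \in B(0,\epsilon)$ sampled from an appropriate measure, and near $w$ the shifted field is locally equivalent to a quantum cone. The ratio $Z/\mu$ is then bounded in expectation by adapting the Poisson--Bessel concentration argument from~\eqref{eqn:z_mu_expectation_bound}--\eqref{eqn:z_mu_range} of Lemma~\ref{lem:quantum_measure_quantum_cone_finite}, yielding $\E^*[Z/\mu] = O(1)$ and hence the desired bound.

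The main obstacle is that in the weighted law the flow lines $\ol{\eta}_1,\ol{\eta}_2$ originate from the independently-sampled $z \in B(0,1/2)\setminus B(0,1/4)$ rather than from the quantum-cone marked point $w \in B(0,\epsilon)$, whereas Lemma~\ref{lem:quantum_measure_quantum_cone_finite} is stated for flow lines starting at the cone's marked point. The Poisson--Bessel concentration step is nonetheless robust, as it depends only on the excursion structure of the quantum wedge between the flow lines and not on where the flow lines originate; the main technical work is to verify that the negative moments of $\mu$ remain controlled under the shifted field $h^0 + \gamma G_\D(w,\cdot)$ uniformly in $w \in B(0,\epsilon)$, which should follow because $z$ is at macroscopic distance from $w$ and so the flow-line law can be compared between the shifted and unshifted fields via a Cameron--Martin estimate whose cost is uniform in $w \in B(0,\epsilon)$.
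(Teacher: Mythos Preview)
Your approach diverges from the paper's, and the key step has a genuine gap.

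\textbf{Where the argument breaks.} The Poisson--Bessel step does not transfer to your setting. In Lemma~\ref{lem:quantum_measure_quantum_cone_finite} the bound on $Z/\qmeasure{h}(K)$ works because the flow lines start at the cone's marked point and are stopped upon exiting $K$, so \emph{every} bubble of the weight-$(2-\gamma^2/2)$ wedge (i.e.\ every excursion of the Bessel process) is contained in $K$; hence ``large local time $\Rightarrow$ a large bubble $\Rightarrow$ large $\qmeasure{h}(K)$''. In your setup the flow lines start at $z\in B(0,1/2)\setminus B(0,1/4)$ while you want to compare against $\mu=\qmeasure{h^0}(B(0,\epsilon))$. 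Even if the intersection local time restricted to $B(0,\epsilon)$ is large, the bubbles adjacent to those pinch points need not lie in $B(0,\epsilon)$ at all --- they can (and typically will) extend to macroscopic scale. So the implication ``large $Z$ $\Rightarrow$ large $\mu$'' fails, and there is no quantum-wedge structure between $\ol\eta_1,\ol\eta_2$ and the independent field $h^0$ that would let you localize the excursion picture to $B(0,\epsilon)$. The weighting by $\mu$ plants a $\gamma$-singularity at $w\in B(0,\epsilon)$, which is at macroscopic distance from $z$; this changes the quantum area near $0$ but does nothing to force the wedge bubbles to sit in $B(0,\epsilon)$.

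A secondary issue: your Cameron--Martin reduction via H\"older and interpolation with Lemma~\ref{lem:quantum_measure_zero_boundary_finite} only yields $O(\epsilon^{2-\delta})$ for the perturbed field, not $O(\epsilon^2)$, since the $L^{p_0}$ input is for the fixed ball $B(0,r_0)$ and carries no $\epsilon$-decay.

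\textbf{What the paper does instead.} The paper never tries to extract $\epsilon^2$ from LQG. The $\epsilon^2$ comes entirely from the randomization of the starting point $z$. For each $w$ in the annulus $A$ one takes the M\"obius map $\psi_w:\D\to\D$ sending $w$ to a fixed $w_0\in A$; by conformal covariance of $\intmeasure{\cdot}{\cdot}$ and a Radon--Nikodym comparison (with cost $\CZ_w$) the flow lines from $w$ become flow lines from $w_0$. After Fubini the $w$-integral isolates the factor
\[
\sup_{u\in B(0,r_0)}\int_A \one_{B(0,\epsilon)}\bigl(\psi_w^{-1}(u)\bigr)\,dw \;=\; O(\epsilon^2),
\]
which is just a Lebesgue-measure estimate. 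The remaining factor is controlled by H\"older using the $L^q$ bound on $\intmeasure{\wt\eta_1^{w_0}}{\wt\eta_2^{w_0}}(\D)$ from Lemma~\ref{lem:quantum_measure_zero_boundary_finite} and a Borell--TIS bound on $\sup_{w\in A}\CZ_w$. This argument works directly for the field $h^{\IG}$ with bounded harmonic part, so no factor is lost.
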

\begin{proof}
Fix $r_0 \in (1/2,1)$ and $q > 1$.  We will set their precise values later.  For each $w \in B(0,r)$ we let $\ol{\eta}_1^w$, $\ol{\eta}_2^w$ be the flow line of $h^\IG$ starting from $w$ with angle $-\pi/2$, $\pi/2$, respectively, and stopped upon exiting $B(0,r)$.  We define $\wt{\eta}_1^w$, $\wt{\eta}_2^w$ in the same way but with $B(0,r)$ replaced by $B(0,r_0)$. Let $A = B(0,1/2) \setminus \closure{B(0,1/4)}$ and $z \in A$ be as in the statement of the lemma.  

Note that if $\wh{\eta}_1^w$ (resp.\ $\wh{\eta}_2^w$) is the flow line of $h^{\IG,0}$ of angle $-\pi/2$ (resp.\ $\pi/2$) from $w \in \D$, stopped at the first time $\wh{\tau}_1^w$ (resp.\ $\wh{\tau}_2^w$) it exits $B(0,r_0)$, then by Lemma~\ref{lem:quantum_measure_zero_boundary_finite} there exists $p>1$ such that $\E[ \intmeasure{\wh{\eta}_1^z}{\wh{\eta}_2^z}(B(0,r_0))^p ] < \infty$.  Moreover, by Fubini's theorem, there exists a $w_0 \in A$ such that $\E[ \intmeasure{\wh{\eta}_1^{w_0}}{\wh{\eta}_2^{w_0}}(B(0,r_0))^p] < \infty$.
In fact, the same holds for some (possibly different) $p>1$ when replacing $\wh{\eta}_j^{w_0}$ by $\wt{\eta}_j^{w_0}$. Indeed, note that $\intmeasure{\wh{\eta}_1^z}{\wh{\eta}_2^z}(B(0,r_0))$ and $\intmeasure{\wt{\eta}_1^z}{\wt{\eta}_2^z}(B(0,r_0))$ are determined by $h^{\IG,0}|_{B(0,r_0)}$ and $h^\IG|_{B(0,r_0)}$ respectively. Choose $r_0 < s_0 < \xi < 1$, fix some $\psi \in C_0^\infty(B(0,s_0))$ such that $0 \leq \psi \leq 1$ and $\psi|_{B(0,r_0)} \equiv 1$, let $F = \psi \Fh^{\IG,0}$ and note that $(h^{\IG,0}+F)|_{B(0,r_0)} = h^\IG|_{B(0,r_0)}$. Since $\| F \|_\nabla \leq C \sup_{x \in B(0,\xi)} | \Fh^{\IG,0}(x)| \leq C M$, where $C$ depends only on $\psi$, it follows as explained in Remark~\ref{rmk:RN_derivative} that $\E[ \CD_{h^{\IG,0},F}^{q}] < \infty$ for all $q>0$ and hence, by H\"older's inequality there is some $q \in (1,p)$ such that 
\begin{align}\label{eq:intmeasure_bound_ball}
	\E[ \intmeasure{\wt{\eta}_1^{w_0}}{\wt{\eta}_2^{w_0}}(\D)^q ] = \E[ \intmeasure{\wt{\eta}_1^{w_0}}{\wt{\eta}_2^{w_0}}(B(0,r_0))^q ] < \infty.
\end{align}
For each $w \in \D$, let $\psi_w:\D \to \D$ be the unique conformal map satisfying $\psi_w(w) = w_0$ and $\psi_w'(w) > 0$. We note that by~\cite[Theorem~3.21]{lawler2008conformally} and the Koebe-1/4 theorem, there exists $r_0 \in (r,1)$ such that $\psi_w(B(0,r)) \subseteq B(0,r_0)$ for each $w \in A$ (and we take the fixed $r_0$ above to be this particular value). We let $h^{\IG,w} = h^\IG \circ \psi_w^{-1} - \chi \arg( \psi_w^{-1})'$ and note that if $\phi \in C_0^\infty(B(0,s_0))$ satisfies $0 \leq \phi \leq 1$ and $\phi|_{B(0,r_0)} \equiv 1$ and we set $F_w =\phi(\Fh^{\IG,0}\circ \psi_w^{-1} -\chi  \arg(\psi_w^{-1})' -\Fh^{\IG,0})$ then the laws of $(h^\IG+F_w)|_{B(0,r_0)}$ and $h^{\IG,w}|_{B(0,r_0)}$ are equal. We write $\CZ_w = \CD_{h^\IG,F_w}$, let $\ol{\eta}_{j,w} = \psi_w(\ol{\eta}_j^w)$ and note that $\ol{\eta}_{1,w}$ (resp.\ $\ol{\eta}_{2,w}$) is the flow line of $h^{\IG,w}$ starting from $w_0$ with angle $-\pi/2$ (resp.\ $\pi/2$). Then by the conformal covariance of $\intmeasure{\ol{\eta}_1}{\ol{\eta}_2}$, we have that
\begin{align}\label{eq:int_measure_epsilon_ball}
	\E[ \intmeasure{\ol{\eta}_1}{\ol{\eta}_2}(B(0,\epsilon)) ] &= \frac{16}{3\pi} \int_A \E\!\left[ \int \one_{B(0,\epsilon)}(u) d\intmeasure{\ol{\eta}_1^w}{\ol{\eta}_2^w}(u) \right] dw \nonumber \\
	&= \frac{16}{3\pi} \int_A \E\!\left[ \int |(\psi_w^{-1})'(u)|^{d_{\kappa'}^\cut} \one_{B(0,\epsilon)}(\psi_w^{-1}(u)) d\intmeasure{\ol{\eta}_{1,w}}{\ol{\eta}_{2,w}}(u) \right] dw \nonumber \\
	&\leq \frac{16}{3\pi} \int_A \E\!\left[ \CZ_w \int |(\psi_w^{-1})'(u)|^{d_{\kappa'}^\cut} \one_{B(0,\epsilon)}(\psi_w^{-1}(u)) d\intmeasure{\wt{\eta}_1^{w_0}}{\wt{\eta}_2^{w_0}}(u) \right] dw \nonumber \\
	&= \frac{16}{3\pi} \E\!\left[ \CZ_w \int \int_{A} |(\psi_w^{-1})'(u)|^{d_{\kappa'}^\cut} \one_{B(0,\epsilon)}(\psi_w^{-1}(u)) dw d\intmeasure{\wt{\eta}_1^{w_0}}{\wt{\eta}_2^{w_0}}(u) \right].
\end{align}
We note that there exists a constant $c_0 > 0$ such that $|(\psi_w^{-1})'(u)| \leq c_0$ for each $w \in A$ and $u \in B(0,r_0)$.  Thus,~\eqref{eq:int_measure_epsilon_ball} is at most
\begin{align}\label{eq:int_measure_ubd}
	c_0^{d_{\kappa'}^\cut} \E\!\left[ \sup_{w \in A} \CZ_w \cdot \intmeasure{\wt{\eta}_1^{w_0}}{\wt{\eta}_2^{w_0}}(\D) \cdot \sup_{u \in B(0,r_0)} \int_A \one_{B(0,\epsilon)}(\psi_w^{-1}(u)) dw \right],
\end{align}
since $\intmeasure{\wt{\eta}_1^{w_0}}{\wt{\eta}_2^{w_0}}$ is supported in $B(0,r_0)$. Noting that
\begin{align}\label{eq:epsilon_square_bound}
	\sup_{u \in B(0,r_0)} \int_A \one_{B(0,\epsilon)} (\psi_w^{-1}(u)) dw = O(\epsilon^2),
\end{align}
it follows that if $q$ is as in~\eqref{eq:intmeasure_bound_ball}, and $p>1$ is such that $p^{-1} + q^{-1} = 1$, then by applying H\"older's inequality and~\eqref{eq:intmeasure_bound_ball}, there exist constants $c_1,c_2 > 0$ so that~\eqref{eq:int_measure_ubd} is at most
\begin{align}
	c_1 \E[ (\sup_{w \in A} \CZ_w)^p ]^{1/p} \E[ \intmeasure{\wt{\eta}_1^{w_0}}{\wt{\eta}_2^{w_0}}(\D)^q] ^{1/q} \epsilon^2 \leq c_2 \E[ (\sup_{w \in A} \CZ_w)^p ]^{1/p} \epsilon^2.
\end{align}
Thus, we are done if we bound the expectation on the right-hand side. Note that there exists a constant $c_3 > 0$ depending only on $\phi$, so that $\sup_{w \in A} \| \phi \arg(\psi_w^{-1})' \|_\nabla \leq c_3$.  For each $w \in A$ we let $Z_w^0 = (h^{\IG,0},-\chi \phi \arg(\psi_w^{-1})')_\nabla$ and $Z_w = (h^\IG,-\chi \phi \arg(\psi_w^{-1})')_\nabla$.  Then $(Z_w^0)_{w \in A}$ is a family of centered Gaussian random variables such that $\sup_{w \in A} \var[Z_w^0] < \infty$.  Thus by the Borell-TIS inequality, there exist constants $c_4,c_5>0$ such that $\p[ \sup_{w \in A} |Z_w^0| - c_4 > x] \leq \exp(-c_5 x^2)$ for each $x \geq 0$.  Since $Z_w-Z_w^0$ is bounded deterministically for $w \in A$, by possibly increasing the values of $c_4,c_5$ we have that $\p[ \sup_{w \in A} |Z_w| - c_4 > x] \leq \exp(-c_5 x^2)$ for each $x \geq 0$.  It follows that $\E[ (\sup_{w \in A} \CZ_w)^p] < \infty$ for all $p>0$. Thus the proof is done.
\end{proof}

Next,  before proving Lemma~\ref{lem:gff_flow_lines_hit},  we state and prove Lemmas~\ref{lem:pocket_formation_positive_prob} and~\ref{lem:lower_bound_harmonic_measure}.  The latter are going to be important inputs in the proof of Lemma~\ref{lem:gff_flow_lines_hit}.

\begin{lemma}
\label{lem:pocket_formation_positive_prob}
Let $\wt{h}$ be a GFF on $\h$ with boundary values given by $-\lambda- \pi \chi/2$ on $(-\infty,-1]$,  $\lambda - \pi \chi/2$ on $(-1,0]$,  $-\lambda + \pi \chi /2$ on $(0,1]$ and $\lambda + \pi \chi /2$ on $(1,\infty)$.  Fix $z \in \h$ and let $\wt{\eta}_1$ (resp.\ $\wt{\eta}_2$) be the flow line of $\wt{h}$ starting from $1$ (resp.\ $-1$) with angle $-\frac{\pi}{2}$ (resp.\ $\frac{\pi}{2}$) and targeted at $\infty$.  Fix $0 < r_1 < \im(z)$, $r_2 > r_1$ and let $E_{r_1,r_2}^z$ be the event that $\wt{\eta}_1$ and $\wt{\eta}_2$ form a pocket which is contained in $B(z,r_2)$ and contains $B(z,r_1)$. Then $\p[E_{r_1,r_2}^z] > 0$.
\end{lemma}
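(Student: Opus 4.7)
Our plan is to identify $\wt{\eta}_1$ and $\wt{\eta}_2$ as parts of the outer boundary of a counterflow line $\eta'$ of $\wt{h}$, and then to show that $\eta'$ has positive probability of performing the required pocket-forming action. Using the identity $\pi\chi/2 = \lambda - \lambda'$ (which follows from $\lambda = \pi/\sqrt{\kappa}$, $\lambda' = \pi\sqrt{\kappa}/4$, and $\chi = 2/\sqrt{\kappa} - \sqrt{\kappa}/2$), the boundary data of $\wt{h}$ can be rewritten as $\lambda' - 2\lambda$, $\lambda'$, $-\lambda'$, $2\lambda - \lambda'$ on the successive intervals $(-\infty,-1]$, $(-1,0]$, $(0,1]$, $(1,\infty)$. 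Comparing with the counterflow line coupling described in Section~\ref{subsec:ig}, this is precisely the boundary data compatible with an $\SLE_{\kappa'}(-\kappa'/2; -\kappa'/2)$ counterflow line $\eta'$ of $\wt{h}$ from $0$ to $\infty$ with force points located at $\pm 1$. Since $-\kappa'/2 \in (-2-\kappa'/2, -2)$ for $\kappa' \in (4,8)$, the process $\eta'$ is well-defined (in the sense of Section~\ref{subsec:sle}) and a.s.\ hits each of $\pm 1$.

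By the imaginary geometry framework, $\wt{\eta}_1$ and $\wt{\eta}_2$ are a.s.\ determined by $\wt{h}$, and their boundary data is consistent with their coinciding with the right and left outer boundaries of $\eta'$ stopped at the first times $T_1$ and $T_{-1}$ that $\eta'$ hits $1$ and $-1$, respectively. Consequently, the event $E_{r_1, r_2}^z$ is implied by the event $F$ that $\eta'$ carves out a pocket around $z$ contained in $B(z, r_2) \setminus B(z, r_1)$ before the time $T_1 \wedge T_{-1}$. It therefore suffices to show $\p[F] > 0$.

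By the local absolute continuity of $\SLE_{\kappa'}(-\kappa'/2; -\kappa'/2)$ with respect to chordal $\SLE_{\kappa'}$ from $0$ to $\infty$, away from the force points, this reduces to the analogous statement for ordinary chordal $\SLE_{\kappa'}$: with positive probability it forms a pocket containing $B(z,r_1)$ and contained in $B(z,r_2) \cap \h$, without the curve entering small neighborhoods of $\pm 1$ before the pocket is closed. For $\kappa' \in (4,8)$ the curve is non-simple and carves out pockets at all scales, and one may verify this positive-probability event by constructing an explicit continuous deterministic Loewner driver producing a trace enclosing $B(z,r_1)$ inside a pocket of diameter at most $r_2$ at the prescribed location; by the Rohde--Schramm continuity of the $\SLE_{\kappa'}$ trace in its driving function, Brownian drivers in a sufficiently small $L^\infty$-neighborhood of this deterministic driver (over the finite time window in which the pocket is formed) produce traces with the same topological property, which occurs with positive Wiener measure.

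The main obstacle is executing the imaginary geometry identification in the second paragraph: the force points of $\eta'$ have weight $-\kappa'/2 < -2$, so $\eta'$ interacts non-trivially with $\pm 1$, and one must carefully appeal to the continuation results for $\SLE_{\kappa'}(\rho)$ with $\rho \in (-2-\kappa'/2,-2)$ cited in Section~\ref{subsec:sle} to ensure the outer boundaries of $\eta'$ stopped at $T_{\pm 1}$ genuinely agree with the flow lines $\wt{\eta}_1, \wt{\eta}_2$.
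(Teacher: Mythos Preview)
Your approach is genuinely different from the paper's and carries two real gaps.

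\textbf{The identification step is not established.} You assert that $\wt\eta_1,\wt\eta_2$ coincide with the right and left outer boundaries of the counterflow line $\eta'$ stopped at $T_1,T_{-1}$, but this is not what the imaginary geometry duality gives. The outer boundaries of $\eta'([0,t])$ are flow lines emanating from the tip $\eta'(t)$ and running back toward $0$, not flow lines starting at the boundary force points $\pm 1$ and running toward $\infty$. Your own final paragraph concedes this is ``the main obstacle,'' and indeed the argument as written never resolves it. Even granting the correct $\SLE_{\kappa'}(-\kappa'/2;-\kappa'/2)$ identification of $\eta'$, translating a pocket of $\eta'$ before time $T_1\wedge T_{-1}$ into the statement that $\wt\eta_1$ and $\wt\eta_2$ themselves enclose $B(z,r_1)$ inside $B(z,r_2)$ requires a careful boundary-data argument you have not supplied.

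\textbf{The driver-perturbation argument is not rigorous.} The Loewner trace is \emph{not} a continuous function of the driver in the $L^\infty$ topology in any sense strong enough to transfer the topological event ``encloses $B(z,r_1)$ in a pocket contained in $B(z,r_2)$'' from a deterministic driver to nearby Brownian paths. Rohde--Schramm continuity is a statement about a fixed (random) driver, not a stability result.

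The paper's route avoids both issues by working with the flow lines directly. It invokes \cite[Lemma~2.3]{mw2017intersections} once and \cite[Lemma~2.5]{mw2017intersections} twice; these are off-the-shelf positive-probability estimates for $\SLE_\kappa(\ul\rho)$ processes, asserting respectively that with positive probability a flow line enters a prescribed neighborhood of an interior point before leaving a larger one, and that it next hits the boundary in a prescribed interval. Applying these to $\wt\eta_1$ and $\wt\eta_2$ in sequence forces the two flow lines to wrap around $B(z,r_1)$ inside $B(z,r_2)$ and meet, producing the pocket without ever introducing a counterflow line or a driver-stability argument.
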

\begin{proof}
The result follows by using first \cite[Lemma~2.3]{mw2017intersections} once and then \cite[Lemma~2.5]{mw2017intersections} twice.
\end{proof}

\begin{lemma}\label{lem:lower_bound_harmonic_measure}
Suppose that we have the set up of the statement of Lemma~\ref{lem:gff_flow_lines_hit}.  Fix $\delta \in (0,1)$.  Then there exists $\wt{\delta} \in (0,1)$ sufficiently small such that the following holds.  For $j=1,2$ and $1 \leq m \leq \delta \log(1/\epsilon)$,  we set $\tau_{j,m} = \inf\{t \geq 0 : \eta_j(t) \in \partial B(w,2^m \epsilon)\} \wedge \tau_j$ and let $F_m$ be the event that the harmonic measure as seen from $w$ of the left and right sides of $\eta_j([0,\tau_{j,m}])$ in the connected component of $\C \setminus \cup_{j=1}^2 \eta_j([0,\tau_{j,m}])$ containing $w$ is at least $\wt{\delta}$.  Then, on the event that both of $\eta_1$ and $\eta_2$ hit $B(w,\epsilon)$, off an event with probability $O(\epsilon^2)$ as $\epsilon \to 0$,  we have that $F_m$ occurs for at least $\delta \log(1/\epsilon)/2$ values of $1 \leq m \leq \delta \log(1/\epsilon)$.
\end{lemma}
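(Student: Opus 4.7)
The plan is to prove this via a conditional Bernoulli concentration argument across the $N := \lfloor \delta \log(1/\epsilon) \rfloor$ scales. Let $\CG_m := \sigma(\eta_j|_{[0,\tau_{j,m+1}]} : j = 1,2)$; as $m$ decreases, $\CG_m$ gains information, and $F_m$ is $\CG_{m-1}$-measurable. The central one-step claim I would establish is that for each $M$-good scale $m$ (in the sense of Section~\ref{subsec:good_annuli}),
\[
\p[F_m^c \giv \CG_m] \leq p(\wt\delta, M)
\]
on $E^* := \{\eta_j \cap B(w,\epsilon) \neq \emptyset \text{ for } j = 1,2\}$, with $p(\wt\delta, M) \to 0$ as $\wt\delta \to 0$ for each fixed $M$. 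Granting this, a backward telescoping Chernoff bound applied to $(\one_{F_m^c})_m$ yields that the number of good-scale failures exceeds $N/4$ with probability at most $\exp(-I(p)N) = \epsilon^{I(p)\delta}$, where $I(p) \to \infty$ as $p \to 0$. Combined with Lemma~\ref{lem:good_dense} (applied to ensure that at least $3N/4$ scales are $M$-good off an event of probability $O(\epsilon^c)$ for arbitrarily large $c$, by taking $M$ large), choosing $\wt\delta$ small enough that $I(p(\wt\delta,M))\delta \geq 2$ gives the $O(\epsilon^2)$ bound, since having at least $3N/4$ good scales with fewer than $N/4$ good-scale failures guarantees at least $N/2$ values of $m$ for which $F_m$ holds.

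For the one-step estimate, I would condition on $\CG_m$ and work on $E^*$. By the Markov property of the GFF and the imaginary geometry coupling (Section~\ref{subsec:ig}), the continuation $\eta_j|_{[\tau_{j,m+1}, \tau_{j,m}]}$ is a pair of flow lines in the component $D_m$ of $\C \setminus \bigcup_j \eta_j([0, \tau_{j,m+1}])$ containing $w$, of a GFF with random boundary data determined by the past. I would uniformize $D_m$ by a conformal map sending $w$ to a fixed reference point; on an $M$-good scale, the Koebe $1/4$ theorem together with the harmonic-part bound in $B(w, 2^{m+1}\epsilon)$ guarantees that the image of the annulus at scale $m$ has bounded modulus and that the transferred harmonic boundary data is uniformly bounded (in $M$). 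Lemma~\ref{lem:good_scale_radon} then provides an $L^p$-bounded Radon-Nikodym factor relating the conditional field to a GFF with deterministic bounded boundary data in the reference frame. In this reference setting, $F_m^c$ translates to one of the four harmonic-measure sides (left/right of each flow line) seen from the image of $w$ being below $\wt\delta$, which is an unusual pinching of the flow lines near the image of $w$. By flow-line continuity, Theorem~\ref{thm:interaction}, and the SLE boundary-length (Bessel) scaling familiar from Section~\ref{subsec:sle} and Section~\ref{subsec:ig}, such pinch events have probability tending to $0$ as $\wt\delta \to 0$, uniformly over reference configurations with bounded harmonic boundary data; combined with the bounded Radon-Nikodym cost from Lemma~\ref{lem:good_scale_radon}, this gives the one-step bound $p(\wt\delta, M) \to 0$.

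The main obstacle is the one-step estimate: quantifying uniformly (in the past conditioning and over good scales) that the harmonic-measure-imbalance event $F_m^c$ has probability tending to $0$ as $\wt\delta \to 0$. The reduction to a reference configuration via conformal invariance plus the $M$-good scale framework streamlines the problem to a question about flow lines in a fixed domain with bounded boundary data, but verifying that $p(\wt\delta, M) \to 0$ uniformly requires carefully combining conformal distortion estimates, the Radon-Nikodym control of Lemma~\ref{lem:good_scale_radon}, and flow-line/SLE techniques to rule out near-pinch configurations quantitatively. Once this step is in hand, the backward Chernoff concentration and the bad-scale reduction via Lemma~\ref{lem:good_dense} are standard.
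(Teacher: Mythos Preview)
Your high-level architecture (density of $M$-good scales via Lemma~\ref{lem:good_dense}, a one-step conditional bound, then binomial/Chernoff concentration) matches the paper's. The genuine difference is in how the one-step estimate is obtained, and this is where your proposal has a gap.

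You condition on $\CG_m = \sigma(\eta_j|_{[0,\tau_{j,m+1}]})$ and claim that on good scales $\p[F_m^c \mid \CG_m] \leq p(\wt\delta,M)$ with $p \to 0$ as $\wt\delta \to 0$, \emph{uniformly over the past configuration}. But the past determines the tip locations $\eta_j(\tau_{j,m+1}) \in \partial B(w,2^{m+1}\epsilon)$, and nothing in the $M$-good condition (which only controls the harmonic part of $h$ in the ball) prevents these tips from being adversarially close or otherwise badly placed. Your uniformization sends $w$ to a fixed point and bounds the modulus and harmonic boundary data, but the images of the tips remain free parameters. The assertion that the resulting ``pinch'' event has probability $\to 0$ as $\wt\delta \to 0$ \emph{uniformly in the tip positions} is not established here, and is not obviously true; the references you invoke (Theorem~\ref{thm:interaction}, Bessel scaling) do not supply this uniformity. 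A related issue is that the good-scale event $E_m$ is $\CF_{w,m}$-measurable, not $\CG_m$-measurable, so it does not slot directly into your telescoping.

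The paper sidesteps this entirely by \emph{not} conditioning on the past of $\eta_1,\eta_2$. Instead it introduces auxiliary flow lines of angles $\pm\pi/2$ from a \emph{dense deterministic grid} in the annulus $B(w,\tfrac{5}{6}2^m\epsilon)\setminus \closure{B(w,\tfrac{3}{4}2^m\epsilon)}$, stopped upon exiting a slightly larger annulus. The event $A_m$ that all strand tips are $\delta_* 2^m\epsilon$-separated depends only on $h|_{B(w,(7/8)2^m\epsilon)}$, so conditioning on $\CF_{w,m}$ and applying Lemma~\ref{lem:good_scale_radon} compares it cleanly to a zero-boundary GFF, giving $\p[A_m^c \mid \CF_{w,m}]\one_{E_m} \leq c(M)p^{1/2}$ with $p$ as small as desired by choice of $\delta_*$. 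The link back to $F_m$ is then deterministic: by flow-line merging (same angle), each $\eta_j$ coalesces with some grid flow line before time $\tau_{j,m}$, so $\eta_j(\tau_{j,m})$ is one of the grid-strand tips; if $A_{m+1}$ holds these tips are well separated, which forces the harmonic-measure lower bound $F_m$. The grid-plus-merging trick is exactly what removes the dependence on the uncontrolled tip positions that your approach has to confront head-on.
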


\begin{proof}
We set $K = \lfloor \delta \log(1/\epsilon) \rfloor$.  For $M>1$ large but fixed (to be chosen),  and $1 \leq m \leq K$, we let $E_m$ be the event that $B(w,2^m \epsilon)$ is $M$-good (recall Section~\ref{subsec:good_annuli}, and consider $\D$ instead of $\h$ as the domain). Let $\CF_{w,m}$ be the $\sigma$-algebra generated by $h|_{\D \setminus B(w,2^m \epsilon)}$.  Let $\wt{h}_{w,m}$ (resp.\ $\Fh_{w,m}$) be a zero-boundary GFF (resp.\ harmonic function) on $B(w,2^m \epsilon)$ such that $h = \wt{h}_{w,m} + \Fh_{w,m}$ and $\Fh_{w,m}$ is $\CF_{w,m}$-measurable and independent of $\wt{h}_{w,m}$.  We also let $N(M,K)$ be the number of $1 \leq m \leq K$ such that $B(w,2^m \epsilon)$ is $M$-good.  Then Lemma~\ref{lem:good_dense} implies that $\p[N(K,M) \leq 3K/4] = O(\epsilon^2)$ as $\epsilon \to 0$ provided we have taken $M$ to be sufficiently large.  We fix a countable, dense subset $(\theta_{\ell})_{\ell \in \N}$ of $[0,2\pi)$ and for every $\ell \in \N$,  $1 \leq m \leq K$,  we let $\eta_j^{m,\ell}$ be the flow line of~$h$ of angle $-\pi/2$ ($j=1$) or $\pi/2$ ($j=2$) emanating from $w + \frac{3}{4}2^m e^{i\theta_{\ell}} \epsilon$.  We stop the flow lines at the first time that they exit $B(w,\tfrac{7}{8} 2^m \epsilon) \setminus \closure{B(w,2^{m-1}\epsilon)}$ and set $S_m =\closure{ \cup_{j=1}^2 \cup _{\ell \in \N} \eta_j^{m,\ell}}$.  Then Lemma~\ref{lem:finite_strands} implies that $S_m \cap \partial (B(w,\tfrac{7}{8} 2^m \epsilon) \setminus \closure{B(w,2^{m-1}\epsilon)})$ is a finite set a.s.  Fix $p \in (0,1)$ small enough (to be chosen).  We let $A_m$ be the event that the following holds.  The tips of the strands of the flow lines which reach $\partial B(w,\tfrac{7}{8} 2^m \epsilon) \cup \partial B(w,2^{m-1}\epsilon)$ are separated by at least $\delta_* 2^m \epsilon$,  and if $x \in \partial B(w,\tfrac{7}{8} 2^m \epsilon)\cup  \partial B(w,2^{m-1}\epsilon)$ is a tip of such a strand,  then there is only one strand in the ball $B(x,\delta_* 2^m \epsilon/100)$.  Here,  $\delta_* \in (0,1)$ is small but fixed and such that $\p[\wt{A}_m] \geq 1-p$,  where $\wt{A}_m$ is the event defined in the same way as $A_m$ but with the field $h$ replaced by $\wt{h}_{w,m}$.  Also \cite[Remark~4.2]{mq2020geodesics} (see Lemma~\ref{lem:good_scale_radon} above) implies that there exists $c(M)>0$ depending only on $M$ such that
\begin{align*}
\p[A_m^c \,  \giv \CF_{w,m}] \one_{E_m} \leq c(M) \p[\wt{A}_m^c]^{1/2} \leq c(M) p^{1/2}
\end{align*}
for every $1 \leq m \leq K$,  since $A_m$ (resp.\ $\wt{A}_m$) is determined by $h|_{B(w,(7/8) 2^m \epsilon)}$ (resp.\ $\wt{h}_{w,m}|_{B(w,(7/8)2^m \epsilon)}$).  This implies that if $N_K$ is the number of $1 \leq m \leq K$ such that $A_m$ occurs, then on $\{N(K,M) > 3K/4\}$,  $N_K$ is stochastically dominated from below by $X \sim \Bin(3K/4,1- q)$, where $q = c(M) p^{1/2}$. Then \cite[Lemma~2.6]{mq2020geodesics} implies that $\p[X\leq K/2] \leq e^{-C_q K}$ where $C_q>0$ depends only on $q$ and $C_q \to \infty$ as $q \to 0$.  It follows that 
\begin{equation}\label{eq:good_scales}
\p[N_K \leq K/2] = O(\epsilon^2) \quad \text{as} \quad \epsilon \to 0
\end{equation}
if we pick $p$ sufficiently small.  Finally,  we note that for $j=1,2$,  $\eta_j$ merges with $\eta_j^{m+1,\ell}$ for some $\ell \in \N$ before time $\tau_{j,m}$ and so $\eta_j(\tau_{j,m})$ corresponds to the tip of the strand of $\eta_j^{m+1,\ell}$.  It follows that there exists $\wt{\delta} \in (0,1)$ depending only on $\delta_*$ such that if $A_{m+1}$ occurs for some $1 \leq m \leq K-1$,  then $F_m$ occurs as well with the above choice of $\wt{\delta}$.  This completes the proof of the lemma.
\end{proof}

Now we are ready to prove Lemma~\ref{lem:gff_flow_lines_hit}.  See Figure~\ref{fig:cut_measure_positive_illustration} for an illustration of the setup of the proof.

\begin{figure}[ht!]
\begin{center}
\includegraphics[scale=0.85]{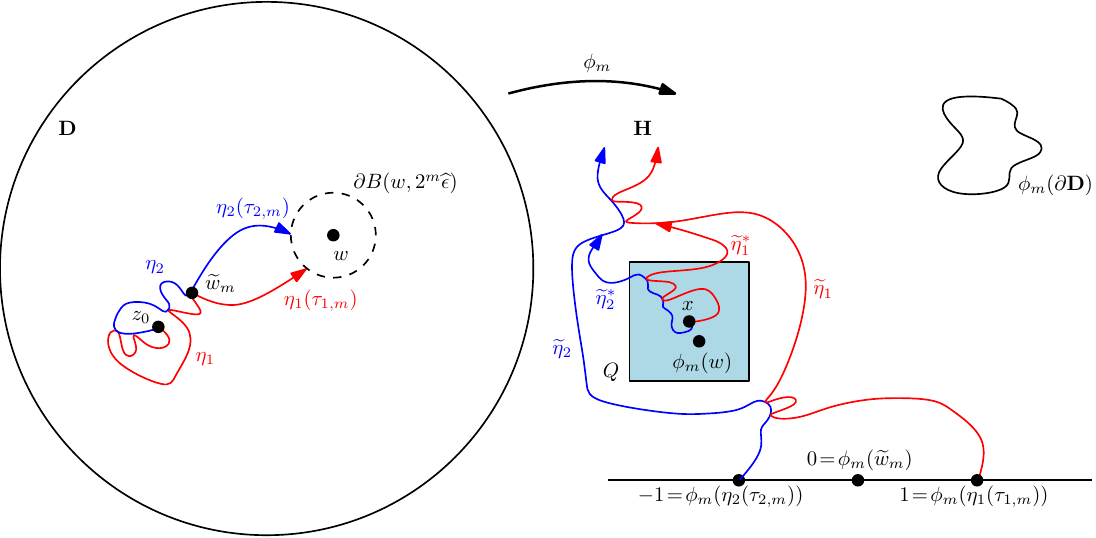}	
\end{center}
\caption{\label{fig:cut_measure_positive_illustration} Illustration of the setup of the proof of Lemma~\ref{lem:gff_flow_lines_hit}.  The map $\phi_m$ takes the component of $\C \setminus (\eta_1([0,\tau_{1,m}]) \cup \eta_2([0,\tau_{2,m}]))$ containing $w$ to $\h$ as shown.  The flow lines on the left side are generated using the field $h$ and the flow lines on the right side are generated using the field $\wt{h}$ on $\h$, which we compare to $h_m = h \circ \phi_m^{-1} - \chi \arg (\phi_m^{-1})'$ in the proof.}
\end{figure}

\begin{proof}[Proof of Lemma~\ref{lem:gff_flow_lines_hit}]
\emph{Step 1. Setup.}
Fix $\delta \in (0,1)$ small and independent of $\epsilon$ and let $\wh{\epsilon} \in (0,1)$ be such that $\epsilon = \wh{\epsilon}^{1-\delta}$.  For $w \in B(0,1/8)$ we let $E_{\wh{\epsilon}}(w)$ be the event that $\eta_j([0,\tau_j]) \cap B(w,\wh{\epsilon}) \neq \emptyset$ for $j=1,2$.  We fix some $w \in B(0,1/8)$,  let $Z = \intmeasure{\eta_1^{\tau_1}}{\eta_2^{\tau_2}}(B(w,\epsilon))$ and note that $Z = 0$ on $E_{\epsilon}(w)^c$.  For $j=1,2$ and $m \in \N$,  we let $\tau_{j,m} = \inf\{t \geq 0 : \eta_j(t) \in B(w,2^m \wh{\epsilon})\} \wedge \tau_j$,  and we also let $\sigma_{1,m}$ (resp.\ $\wt{\sigma}_{1,m}$) be the last time $t$ before time $\tau_{1,m}$ such that at time $t$ the left (resp.\ right) side of~$\eta_1$ hits the right (resp.\ left) side of $\eta_2([0,\tau_{2.m}])$.  Suppose that $\wt{\sigma}_{1,m} < \sigma_{1,m}$.  We let $\wt{U}_m$ be the connected component of $\C \setminus (\cup_{j=1}^2 \eta_j([0,\tau_{j,m}]))$ which contains $w$ and we let $\wt{w}_m$ be the prime end of $\eta_1(\sigma_{1,m})$ in $\wt{U}_m$ which separates the right side of $\eta_2$ from the left side of $\eta_1$.  Then we let $\phi_m$ be the unique conformal transformation mapping $\wt{U}_m$ onto $\h$ satisfying $\phi_m(\eta_2(\tau_{2,m})) = -1$,  $\phi_m(\wt{w}_m) = 0$, and $\phi_m(\eta_1(\tau_{1,m})) = 1$.  Furthermore,  we let $U_m = \wt{U}_m \cap \D$ and $V_m = \phi_m(U_m)$.  Finally,  we let $h_m = h \circ \phi_m^{-1} - \chi \arg(\phi_m^{-1})'$ and let $\Fh_m$ be the function which is harmonic in $V_m$ with the same boundary values as $h_m$ (that is, $\Fh_m$ is the harmonic part of the Markovian decomposition of $h_m$ in $V_m$).  Note that the harmonic function $\arg(\cdot)$ is defined modulo $2\pi n$ for $n \in \Z$,  and so we consider the additive constant such that the boundary values of $\Fh_m$ on $\partial \h$ have the form $\pm \lambda \pm \chi \pi /2$.  

Fix $\wt{\delta} \in (0,1)$ small and $C,\wt{C}>1$ large (to be chosen and independent of $\epsilon$).  For every $1 \leq m \leq \delta \log(1/\wh{\epsilon})$ we let $G_m$ be the event that the following hold.
\begin{enumerate}[(i)]
\item $\tau_{j,m} < \tau_j$ for $j=1,2$.
\item $\sup_{z \in \h \cap B(0,100C \wt{\delta}^{-1})} |\Fh_m(z)| \leq \wt{C}$.
\item\label{it:bound_on_harmonic_measures} The harmonic measure in $\wt{U}_m$ as seen from $w$ of the left and right sides of $\eta_j([0,\tau_{j,m}])$ is at least $\wt{\delta}$ for $j=1,2$.
\end{enumerate}
Let also $G$ be the event that $G_m$ occurs for some $1 \leq m \leq \delta \log(1/\wh{\epsilon})$.

With $G$ at hand, we note that
\begin{equation}
\label{eqn:p_e_bound}
\p[E_{\wh{\epsilon}}(w)] \leq \p[E_{\wh{\epsilon}}(w) \cap G^c] + \p[G]
\end{equation}
so that we are done if we can bound the probabilities on the right side of~\eqref{eqn:p_e_bound}.  We begin by bounding $\p[G]$.

\emph{Step 2. The bound on $\p[G]$.}
On the event $G_m$ we let $\wt{\eta}_{j,m}(t) = \phi_m(\eta_j(t+\tau_{j,m}))$ for $j=1,2$ and $t \in [0,\tau_j - \tau_{j,m}]$.  The key to bounding $\p[G]$ will be to prove the following claim.

\emph{Claim.} There exist $p \in (0,1)$ and $q>0$ depending only on $\wt{\delta}$, $C$, and $\wt{C}$ such that 
\begin{align*}
\p[\intmeasure{\wt{\eta}_{1,m}}{\wt{\eta}_{2,m}}(B(\phi_m(w),\wt{\delta}/40)) \geq q \,  \giv G_m] \geq p.
\end{align*}
In order to deduce the result,  we note that it is indeed true if we replace $\wt{\eta}_{j,m}$,  $j=1,2$ by interior flow lines starting from $\phi_m(w)$.  Indeed,  if we can force~$\wt{\eta}_{1,m}$ and~$\wt{\eta}_{2,m}$ to form a pocket around~$\phi_m(w)$,  then after they exit said pocket,  the flow lines from $\phi_m(w)$ will agree with $\wt{\eta}_{1,m}$ and $\wt{\eta}_{2,m}$ and thus the positive mass of $\intmeasure{\wt{\eta}_{1,m}}{\wt{\eta}_{2,m}}$ follows from that of the corresponding measure for the interior flow lines.  The idea of the proof is that if $\wt{\eta}_{1,m}$ and $\wt{\eta}_{2,m}$ intersect and form a pocket around $B(\phi_m(w),\wt{\delta}/C_1)$ (where $C_1>1$ is a large but fixed universal constant independent of $\wt{\delta}$) and contained in $B(\phi_m(w),\wt{\delta}/50)$,  then this would imply that if we start two flow lines of angles $\pm \pi/2$ from $\phi_m(w)$,  then their intersections outside of the aforementioned pocket are part of $\wt{\eta}_{1,m} \cap \wt{\eta}_{2,m}$,  which thus has a.s.\ positive measure.

\emph{Step 2.1. With positive probability, $\wt{\eta}_{1,m}$ and $\wt{\eta}_{2,m}$ form a pocket around $B(\phi_m(w),\wt{\delta}/C_1)$.}
Suppose that $G_m$ occurs.  By~\eqref{it:bound_on_harmonic_measures} and the conformal invariance of Brownian motion we have that $\im(\phi_m(w)) \geq \wt{\delta}$ and $|\phi_m(w)| \leq C \wt{\delta}^{-1}$ for small enough $\wt{\delta} > 0$ and large enough $C>1$ (but $C$ independent of $\wt{\delta}$).  By the Beurling estimate, we have that  $\dist(\phi_m(w) ,  \phi_m(\C \setminus \closure{\D})) \gtrsim \epsilon^{-1/2}$ as $\epsilon \to 0$.  Note that either $\phi_m(\eta_1(\wt{\sigma}_{1,m})) \in (-\infty,-1)$ or $\phi_m(\eta_1(\wt{\sigma}_{1,m})) \in (1,\infty)$.  We can assume that $\phi_m(\eta_1(\wt{\sigma}_{1,m})) \in (-\infty,-1)$.

Next,  we note that $\wt{\eta}_{1,m}$ (resp.\ $\wt{\eta}_{2,m}$) is the flow line of $h_m$ starting from $1$ (resp.\ $-1$) with angle $-\pi/2$ (resp.\ $\pi/2$).  We will henceforth assume that $\wt{\eta}_{1,m}$, $\wt{\eta}_{2,m}$ are stopped at the first time that they exit $\h \cap B(0,2C\wt{\delta}^{-1})$.  Let $\wh{h}_m$ be a GFF on $\h$ with the same boundary values on $\partial \h$ as $h_m$.  Let also $\wh{\eta}_{1,m}$ (resp.\ $\wh{\eta}_{2,m}$) be the flow line of $\wh{h}_m$ starting from $1$ (resp.\ $-1$) with angle $-\pi/2$ (resp.\ $\pi/2$) stopped at the first time it exits $\h \cap B(0,2C\wt{\delta}^{-1})$.  By possibly taking $\wt{C}$ to be larger,  we can assume that $|\wh{\Fh}_m(z)| \leq \wt{C}$ for every $z \in \h \cap B(0,100 C \wt{\delta}^{-1})$ where $\wh{\Fh}_m$ is the function which is harmonic on $\h$ with the same boundary values as $\wh{h}_m$ on $\partial \h$.  Hence,  it follows from the proofs of \cite[Proposition~3.4]{ms2016imag1} and \cite[Lemma~4.4]{mq2020geodesics} that $\E[\CZ_m^p] < \infty$ for every $p>1$ where $\CZ_m$ is the Radon-Nikodym derivative of the law of $\wh{h}_{m}|_{\h \cap B(0, 2C \wt{\delta}^{-1})}$ with respect to the law of $h_{m}|_{\h \cap B(0,2 C \wt{\delta}^{-1})}$.  This implies that 
\begin{equation}\label{eq:comparing_fields}
\p_{\wh{h}_m}[E]^2 \lesssim \p_{h_m}[E]
\end{equation}
for every event $E$ which is determined by the restriction of the fields to $\h \cap B(0,2C \wt{\delta}^{-1})$ and the implicit constant depends only on $C, \wt{C}, \wt{\delta}$ and $\kappa$,  where $\p_{\wh{h}_m}$ (resp.\ $\p_{h_m}$) denotes the law of $\wh{h}_m$ (resp.\ $h_m$).

Now we fix $C_1 > 1$ large (to be chosen) and let $Q_1,\dots,Q_n$ be the squares with corners in $((\wt{\delta}/C_1) \Z^2) \cap \h$ which intersect $B(0,C \wt{\delta}^{-1}) \cap \{ z \colon \im(z) \geq \wt{\delta} \}$ and let $x_j$ be the center of $Q_j$. Clearly $n$ depends only on $\wt{\delta}$, $C$ and $C_1$. We let $Q$ be a square which contains $\phi_m(w)$ (chosen according to some convention) and $x$ be its center. We note that $Q \subseteq B(x,2\wt{\delta}/C_1)$ and $\dist(Q,\partial \h) \geq \wt{\delta}/4$.  Let $\wt{h}$ be the GFF on $\h$ with boundary values given by $-\lambda - \chi \pi/2$ on $(-\infty,-1]$,  $\lambda - \chi \pi /2$ on $(-1,0]$,  $-\lambda + \chi \pi / 2$ on $(0,1]$ and $\lambda + \chi \pi / 2$ on $(1,\infty)$.  Let $\wt{\eta}_1$ (resp.\ $\wt{\eta}_2$) be the flow line of $\wt{h}$ starting from $1$ (resp.\ $-1$) with angle $-\pi / 2$ (resp.\ $\pi/2$) and targeted at $\infty$.  By Lemma~\ref{lem:pocket_formation_positive_prob} we have that, if $\wt{E}_j$ is the event that~$\wt{\eta}_1$ and~$\wt{\eta}_2$ form a pocket $P$ surrounding $B(x_j,4\wt{\delta}/C_1)$ which is contained in $B(x_j,\wt{\delta}/100)$, then $\p[\wt{E}]>0$. Consequently, since $n$ is finite, it follows that there is a $p_0 > 0$ such that if $\wt{E}$ is the event that $\wt{\eta}_1$ and $\wt{\eta}_2$ form a pocket $P$ surrounding $B(x,4\wt{\delta}/C_1)$ which is contained in $B(x,\wt{\delta}/100)$, then $\p[\wt{E}] \geq p_0$.

\emph{Step 2.2.  Wrapping up the proof of the claim and the bound on $\p[ G]$.}
What is left to do is to deduce that we are likely to have at least a small amount of cut point measure of $\wt{\eta}_1$ and $\wt{\eta}_2$ in $B(x,\wt{\delta}/50)$. Assume that we are working on $\wt{E}$.  If we let $\wt{\eta}_1^*$ (resp.\ $\wt{\eta}_2^*$) denote the flow line of $\wt{h}$ of angle $-\pi/2$ (resp.\ $\pi/2$) starting from $x$, then on the event $\wt{E}$, it follows that once $\wt{\eta}_j^*$ exits $P$, it agrees with $\wt{\eta}_j$. Furthermore, since $\mu_{\wt{\eta}_1^*,\wt{\eta}_2^*}^\cap( B(x,\wt{\delta}/50) \setminus P)$ is a.s.\ positive (since $\wt{\eta}_1^*$ and $\wt{\eta}_2^*$ intersect at the point where they exit $P$),  so that we can find $p^* \in (0,1)$ and $c>0$ such that $\p[ \mu_{\wt{\eta}_1^*,\wt{\eta}_2^*}^\cap( B(x,\wt{\delta}/50) \setminus P) > c \giv \wt{E} ] \geq p^*$.  Consequently, there is a $\wt{p} > 0$ such that $\p[ \mu_{\wt{\eta}_1,\wt{\eta}_2}^\cap(B(x,\wt{\delta}/50)) > c] \geq \wt{p}$. Since the event is determined by $\wt{h}|_{B(x,\wt{\delta}/50)}$ and the Radon-Nikodym derivatives between the law of the latter and that of $\wh{h}_m|_{B(x,\wt{\delta}/50)}$ have finite moments of all orders it follows that there exists $\wh{p} > 0$ such that $\p[ \mu_{\wh{\eta}_{1,m},\wh{\eta}_{2,m}}^\cap(B(x,\wt{\delta}/50)) > c \giv G_m] \geq \wh{p}$. By~\eqref{eq:comparing_fields} and since $B(x,\wt{\delta}/50) \subseteq B(\phi_m(w),\wt{\delta}/40)$, it follows that there exists some $p>0$ such that $\p[ \mu_{\wt{\eta}_{1,m},\wt{\eta}_{2,m}}^\cap(B(\phi_m(w),\wt{\delta}/40)) > c \giv G_m] \geq p$.  Note that by applying a similar argument,  we obtain that the last inequality still holds if we have that $\sigma_{1,m} < \wt{\sigma}_{1,m}$. Suppose that $\intmeasure{\wt{\eta}_{1,m}}{\wt{\eta}_{2,m}}(B(\phi_m(w),\wt{\delta}/40)) \geq c$.  Then \cite[Corollary~3.25]{lawler2008conformally} combined with the Koebe-$1/4$ theorem together imply that $|(\phi_m^{-1})'(z)| \gtrsim 2^m \wh{\epsilon}$ for every $z \in B(\phi_m(w),\wt{\delta}/40)$. Together with the conformal covariance, this implies that $\E[ Z \giv G_m ] \gtrsim \epsilon^{d_{\kappa'}^\cut(1+\delta)}$ for all $1 \leq m \leq M_\delta \coloneqq \lfloor \delta \log(\wh{\epsilon}^{-1}) \rfloor$. Moreover, since $G = \cup_{m=1}^{M_\delta} G_m$, there must exist some $1 \leq m' \leq M_\delta$ such that $\p[G_{m'}] \geq M_\delta^{-1} \p[G]$. Consequently
\begin{align*}
	\E[ Z ] \geq \E[Z \one_{G_{m'}}] = \E[ \E[ Z \giv G_{m'}] \one_{G_{m'}}] \gtrsim \epsilon^{d_{\kappa'}^\cut (1 + \delta)} \p[G_{m'}] \gtrsim \frac{\epsilon^{d_{\kappa'}^\cut (1 + \delta)}}{M_\delta} \p[G] \gtrsim \epsilon^{d_{\kappa'}^\cut (1 + 2\delta)} \p[G],
\end{align*}

as $\epsilon \to 0$ and recalling that $\E[ Z ] = O(\epsilon^2)$ by Lemma~\ref{lem:intersection_small_ball_measure}, it follows that
\begin{align*}
	\p[ G ] \leq C \epsilon^{2 - d_{\kappa'}^\cut(1+2\delta)}.
\end{align*}
Thus, we need only show that $\p[ E_{\wh{\epsilon}}(w) \cap G^c]$ is of at most the same order.

\emph{Step 3.  If $E_{\wh{\epsilon}}(w)$ occurs, $G$ is very likely to occur.}
We shall now prove that $\p[ E_{\wh{\epsilon}}(w) \cap G^c] = O(\epsilon^2)$.

Fix $1 \leq m \leq \delta \log(1/\wh{\epsilon})$,  set $X_m = \eta_1([0,\tau_{1,m}]) \cup \eta_2([0,\tau_{2,m}])$ and suppose that $E_{\wh{\epsilon}}(w)$ occurs.  First we note that a.s.\ on $E_{\wh{\epsilon}}(w)$,  the set $X_m $ does not disconnect $B(w,\wh{\epsilon})$ from $\partial \D$ since otherwise the flow lines $\eta_1$ and $\eta_2$ would have to cross each other and the latter does not occur a.s.\ by \cite[Theorem~1.7]{ms2017ig4}.  Also,  we note that $\sup_{x \in \partial \h}|\Fh_m(x)|  =O(1)$ where the implicit constants depend only on $\kappa$.  We claim that $\Fh_m(z) = Z_m + O(1)$ for $z \in \partial V_m \setminus \partial \h$,  where $Z_m = \chi \arg(\phi_m'(1))$ and the implicit constants in the term $O(1)$ depend only on $\kappa$.  Indeed, we set $u_m(z) = \re(\log(\phi_m'(z))-\log(\phi_m'(1))) = \log\!\left(\frac{|\phi_m'(z)|}{|\phi_m'(1)|}\right)$ and $v_m(z) = \im(\log(\phi_m'(z))-\log(\phi_m'(1))) = \arg(\phi_m'(z)) - \arg(\phi_m'(1))$ for $z \in \wt{U}_m$.  We give upper and lower bounds on $u_m$ first.  To do this,  we note that \cite[Theorem~3.21]{lawler2008conformally} implies that there exists a universal constant $C_2>1$ such that $C_2^{-1} \leq \frac{|\phi_m'(z)|}{|\phi_m'(1)|} \leq C_2$ for every $z \in B(0,7/6) \setminus B(0,5/6)$.  This implies that there exists a universal constant $C_3>0$ such that $|u_m(z)| \leq C_3$ for every $z \in B(0,7/6) \setminus B(0,5/6)$.  Hence,  combining with Lemma~\ref{lem:bound_harmonic_conjugate},  we obtain that $|v_m(z)| \leq C_4$ for every $z \in \partial \D$ for some finite universal constant $C_4$ .  Therefore we have that $\Fh_m(z) = -\chi \arg((\phi_m^{-1})'(z)) = \chi \arg(\phi_m'(\phi_m^{-1}(z)))= Z_m + O(1)$ for every $z \in \partial V_m \setminus \partial \h$.  This proves the claim.  It follows that $\Fh_m(z) = Z_m p(z) + O(1)$ for every $z \in V_m$,  where $p(z)$ is the probability that a Brownian motion starting from $z$ exits $V_m$ on $\partial V_m \setminus \partial \h$.  Note that for $z \in \h \cap B(0,100 C \wt{\delta}^{-1})$ we have that $\dist(z,\partial V_m \setminus \partial \h) \gtrsim \epsilon^{-1/2}$ and so the Beurling estimate implies that $p(z) = O(\epsilon^{1/4})$.  Thus,  if we have that $Z_m = O(\log(1/\epsilon))$,  then it holds that $\Fh_m(z) = O(1)$ for every $z \in \h \cap B(0,100 C \wt{\delta}^{-1})$ and all $\epsilon \in (0,1)$ sufficiently small,  where the implicit constants in the term $O(1)$ depend only on $\kappa$.  

Now we fix an integer $1 \leq m \leq \delta \log(1/\wh{\epsilon})$.  Let $C_m$ be the connected component of $\D \setminus X_m$ whose boundary contains $\partial \D$.  Then we can write $h = h_m^0 + H_m$ where $h_m^0$ (resp.\ $H_m$) is a zero-boundary GFF (resp.\ harmonic function) on $C_m$,  and $h_m^0,H_m$ are conditionally independent given $C_m$.  Similarly,  we can write $h_m^0 = h_m^1 + \Fh_m^1$ where $h_m^1$ (resp.\ $\Fh_m^1$) is a zero-boundary GFF on $A = \D \setminus \closure{B(0,3/4)}$ (resp.\ harmonic function on $A$),  and $h_m^1,\Fh_m^1$ are independent.  Note that $h = h_m^1 + \wt{H}_m$ where $\wt{H}_m = \Fh_m^1 + H_m$.  Fix $\frac{3}{4}<\wt{r}<r<1$.  Then for every $z \in \D \setminus \closure{B(0,\wt{r})}$,  the random variables $\Fh_m^1(z)$ and $\wt{H}_m(z)$ are both mean-zero Gaussians with variances which are bounded from above by a constant which depends only on $\wt{r}$.  Moreover both of $\Fh_m^1$ and $\wt{H}_m$ have zero boundary values on $\partial \D$. Therefore the proof of \cite[Lemma~4.4]{mq2020geodesics} implies that there exists $a>0$ sufficiently small such that both of $\E\left[ \exp\left(a \sup_{z \in \D \setminus \closure{B(0,r)}}|\nabla \Fh_m^1(z)|^2\right) \right]$ and $\E\left[ \exp\left( a \sup_{z \in \D \setminus \closure{B(0,r)}}|\nabla \wt{H}_m(z)|^2\right) \right]$ are finite.  It follows that by possibly taking $a>0$ to be smaller,  we have that 
\begin{equation}\label{eq:bound_on_square_exponential_expectation}
\E\left[ \exp\left( a \sup_{z \in \D \setminus \closure{B(0,r)}}|\nabla H_m(z)|^2 \right) \right] < \infty.
\end{equation}
Note that there exists a universal constant $\wh{C}<\infty$ such that $|v_m(z)| \leq \wh{C}$ for every $z \in B(0,7/6) \setminus B(0,13/16)$,  and so standard estimates for harmonic functions imply that $|\nabla v_m(z)| \leq \wt{C}$ for every $z \in \partial B(0,7/8)$,  where $\wt{C}<\infty$ is a universal constant.  It follows that $H_m - \chi \arg(\phi_m') = D_m + \wt{D}_m$ where $D_m$ is a harmonic function on $C_m$ with boundary values given by $0$ on $X_m$ and $Y_m = -\chi \arg(\phi_m'(1))$ on $\partial \D$, and $\wt{D}_m$ is a harmonic function on $C_m$ whose boundary values are bounded from below and above by constants which depend only on $\kappa$.  Note that there exists a unique $r_m \in (0,1)$ and unique conformal transformation $\psi_m$ mapping $C_m$ onto $A_m = \D \setminus \closure{B(0,r_m)}$ such that $\psi_m(\partial \D) = \partial \D$.  Let $\Phi_m$ be the function which is harmonic on $A_m$ such that $\Phi_m|_{\partial \D} \equiv Y_m$ and $\Phi_m|_{\partial B(0,r_m)} \equiv 0$.  Then $\Phi_m(z) = Y_m(1-\log (|z|)/\log (r_m))$.  Also,  the Cauchy-Riemann equations imply that $|\partial_x D_m|^2 + |\partial_y D_m|^2 = (Y_m/\log(r_m))^2 (|\psi_m'|/|\psi_m|)^2$.  We claim that there exists universal constant $r>0$ such that $r_m\geq r$.  Indeed,  first we note that there exists a universal constant $p>0$ such that for every $z \in \partial B(0,7/8)$ the following holds.  With probability at least $p$,  a Brownian motion starting from $z$ exits $C_m$ on $X_m$.  This implies that $|\psi_m(z)|\leq r_m^p$.  Note that $r_m\leq 3/4$ since $X_m \subseteq B(0,3/4)$.  By possibly taking $p>0$ to be smaller,  we can assume that with probability at least $p$,  a Brownian motion starting from $z \in \partial B(0,7/8)$ exits $C_m$ in $\partial \D$ and so $|\psi_m(z)| \geq r_m^{1-p}$. Thus the Beurling estimate implies that $p \lesssim r_m^{p/2}$ and so the claim follows.  Hence,  combining with the Koebe-$1/4$ theorem,  we obtain that $|\psi_m'(z)| \gtrsim 1$ where the implicit constant depends only on $r$ and $p$,  and so $|\nabla D_m(z)| \gtrsim |Y_m|$ for every $z \in \partial B(0,7/8)$.  Hence,  by taking $\wt{M}>0$ sufficiently small (depending only on $\kappa$), we have that
\begin{align*}
\p[|Y_m| \geq \log(1/\wh{\epsilon}),E_{\wh{\epsilon}}(w)]&\leq \p\left[\sup_{z \in \partial B(0,7/8)}|\nabla D_m(z)| \geq \wt{M}\log(1/\wh{\epsilon}),E_{\wh{\epsilon}}(w)\right]\\
&\leq \p\left[\sup_{z \in \partial B(0,7/8)}|\nabla H_m(z)| \geq \wt{M}\log(1/\wh{\epsilon})/2, E_{\wh{\epsilon}}(w)\right]  \\
&=  \p\left[\exp\!\left(a \sup_{z \in \partial B(0,7/8)}|\nabla H_m(z)|^2 \right) \geq \exp\!\left( \frac{a\wt{M}^2}{4} \log(1/\wh{\epsilon})^2 \right) \! , E_{\wh{\epsilon}}(w)\right] \\
&\lesssim \exp\!\left( - \frac{a\wt{M}^2}{4} \log(1/\wh{\epsilon})^2 \right) \! ,
\end{align*}
for each $1 \leq m \leq \delta \log(1/\wh{\epsilon})$, where we used Markov's inequality and~\eqref{eq:bound_on_square_exponential_expectation} in the last inequality. Taking a union bound implies that on the event $E_{\wh{\epsilon}}(w)$, we have that $|Y_m| \leq \log(1/\wh{\epsilon})$ for all $1 \leq m \leq \delta \log(1/\wh{\epsilon})$ off an event of probability $\delta \log(1/\wh{\epsilon}) \exp( -(a \wt{M}^2 /4) \log(1/\wt{\epsilon})^2) = O(\wh{\epsilon}^2)$ as $\wh{\epsilon} \to 0$. Therefore,  combining with Lemma~\ref{lem:lower_bound_harmonic_measure},  we obtain that on the event $E_{\wh{\epsilon}}(w)$,  off an event with probability $O(\wh{\epsilon}^2)$ as $\wh{\epsilon} \to 0$,  we have that $F_k$ occurs for some $1 \leq k \leq \delta \log(1/\wh{\epsilon})$ and $|Y_m| \leq \log(1/\wh{\epsilon})$ for every $1 \leq m \leq \delta\log(1/\wh{\epsilon})$,  where $F_k$ is defined as in Lemma~\ref{lem:lower_bound_harmonic_measure}. Since on $E_{\wh{\epsilon}}(w)$, $\tau_{j,m} < \tau_j$ for each $1 \leq m \leq \delta \log (1/\wh{\epsilon})$, it follows that $\p[E_{\wh{\epsilon}}(w) \cap G^c] = O(\wh{\epsilon}^2)$ as $\wh{\epsilon} \to 0$ and so 
\begin{align*}
\p[E_{\wh{\epsilon}}(w)] \lesssim \wh{\epsilon}^2 + \epsilon^{2-d_{\kappa'}^{\cut}-2\delta d_{\kappa'}^{\cut}}
\end{align*}
as $\wh{\epsilon} \to 0$.  Since $\delta \in (0,1)$ was arbitrary,  this completes the proof of the lemma.
\end{proof}

We are finally ready to prove the bounds the moments of $\cutmeasure{\eta'}(B(z,\epsilon))$ on an event $E$ whose probability we can make arbitrarily close to $1$ by adjusting some parameters.

\begin{proof}[Proof of Lemma~\ref{lem:moments_finite}]
Fix $K \subseteq \h$ compact,  $p_0 \in (0,1)$, and let $k_0 \in \N$ be so that $\dist(K,\partial \h) \geq 2^{-k_0}$. Let $h$ be a GFF on $\h$ with boundary data $\lambda'$ on $\R_-$ and $-\lambda'$ on $\R_+$, and denote by $\eta'$ its counterflow line from $0$ to $\infty$ (note that $\eta' \sim \SLE_{\kappa'}$) and let $\ol{\eta}'$ be its space-filling variant. Let $\xi \in (5/6,1)$ be the constant of Lemma~\ref{lem:intersection_small_ball_measure} corresponding to the choice $r = 5/6$.  For $M>0$, $z \in \h$ and $r \in (0,\im(z)/2)$,  we say that $B(z,r)$ is $M$-good for $h$ if the following holds.  Let $h = h_{z,r} + \Fh_{z,r}$ where $h_{z,r}$ is a zero-boundary GFF on $B(z,r)$ and $\Fh_{z,r}$ is a distribution which is harmonic on $B(z,r)$ and independent of $h_{z,r}$.  Then we have that $\sup_{w \in B(z,\xi r)}|\Fh_{z,r}(w)-\Fh_{z,r}(z)| \leq M$.  

Fix $\delta \in (0,1/2)$,  and for $M,\wt{M}>1$,  we let $E$ be the event that the following hold.
\begin{enumerate}[(i)]
\item \label{it:filling_a_ball} For every $0 < s < t$ if $\eta'(s),\eta'(t) \in K$ and $|\eta'(s)-\eta'(t)| \leq 2^{-k_0}$ then $\eta'([s,t])$ disconnects from $\infty$ a ball of diameter at least $|\eta'(s)-\eta'(t)|^{1+\delta}$.
\item \label{it:good_condition} For each $k \in \N$ we let $\CD_k$ be the set of $z \in 2^{-k} \Z^2$ such that $\dist(z,K) \leq 2^{-k}$.  For each $k \geq 2k_0+4$ and $z \in \CD_k$ there exists $(1-\delta) k \leq j \leq k$ such that $B(z,2^{-j})$ is $M$-good for $h$ and the following holds.  Suppose that we start flow lines of $h$ with angles $\frac{\pi}{2}$ and $-\frac{\pi}{2}$ from the points in $B(z,\frac{5}{6}2^{-j}) \setminus \closure{B(z,\frac{3}{4}2^{-j})}$ with rational coordinates and stop them upon exiting $B(z,\xi 2^{-j}) \setminus \closure{B(z,\frac{1}{2}2^{-j})}$.  Then,  the number of points in $\partial B(z,\xi 2^{-j}) \cup \partial B(z,2^{-j-1})$ which can be hit by such flow lines is at most $\wt{M}$.
\end{enumerate}

Note that in~\eqref{it:good_condition}, the choice $k \geq 2k_0 + 4$ is made so that each ball $B(z,2^{-j})$ considered has distance at least $2^{-k_0 -1}$ to $\partial \h$. By Lemma~\ref{lem:counterflow_close_fills_ball} the probability that~\eqref{it:filling_a_ball} occurs can be taken to be as close to $1$ as we want, by taking $k_0$ to be larger. We will now describe why~\eqref{it:good_condition} holds with arbitrarily large probability, by picking $M,\wt{M}$ large enough. We let $E_k^1$ be the event that for each $z \in \CD_k$, $B(z,2^{-j})$ is $M$-good for $h$ for no fewer than $\delta k/2$ of the integers $j \in [(1-\delta)k,k]$. We fix $k \geq 2k_0 + 4$ and some $z \in \CD_k$ and note that by Lemma~\ref{lem:good_dense}, with $r = 2^{-\lceil (1-\delta)k \rceil}$, $K = \lceil\delta k \rceil$ and $b = 1/2$, we have that the following holds. For any $a > 0$, there exist $c_0 > 0$ and $M_0 > 0$ such that for all $M \geq M_0$, we have with probability at least $1-c_0 e^{-a\delta k}$, that $B(z,2^{-j})$ is $M$-good for $h$ for no fewer than $\delta k/2$ of the integers $j \in [(1-\delta)k,k]$. Considering $a$ large enough, say, so that $2^{2k} e^{-a \delta k} = O(e^{-k})$ as $k \to \infty$ and taking a union bound over $z \in \CD_k$, we have that $\p[(E_k^1)^c] = O(e^{-k})$. Thus, taking a union bound over $k$, we see that by choosing $M$ and $k_0$ sufficiently large, we have with probability as large as we want, that for all $k \geq 2k_0+4$, $B(z,2^{-j})$ is $M$-good for $h$ for at least $\delta k/2$ integers $j \in [(1-\delta)k,k]$ and all $z \in \CD_k$.

Next, let $\CF_j^z$ be the $\sigma$-algebra generated by $h|_{\h \setminus B(z,2^{-j})}$ and let $h = h_j^z + \Fh_j^z$ be the Markovian decomposition of $h$ into a zero-boundary GFF $h_j^z$ (resp.\ harmonic function $\Fh_j^z$) on $B(z,2^{-j})$. We let $A_j^z$ denote the event that if we start flow lines of $h$ of angles $\pm \tfrac{\pi}{2}$ from all points in $B(z,\tfrac{5}{6} 2^{-j}) \setminus \closure{B(z,\tfrac{3}{4} 2^{-j})}$ with rational coordinates and stop them upon exiting $B(z,\xi 2^{-j}) \setminus \closure{B(z,\tfrac{1}{2} 2^{-j})}$, then the number of points on $\partial B(z,\xi 2^{-j}) \cup \partial B(z,\tfrac{1}{2} 2^{-j})$ which are hit by such flow lines is at most $\wt{M}$. We denote by $\wt{A}_j^z$ the same event but with $h_j^z$ in place of $h$. We fix $p \in (0,1)$ and note that by Lemma~\ref{lem:finite_strands}, we can pick $\wt{M} > 1$ large enough so that $\p[ \wt{A}_j^z ] \geq 1-p$ for all $(1-\delta)k \leq j \leq k$ and $z \in \CD_k$, uniformly in $k \geq 2k_0 + 4$. Moreover, since $A_j^z$ (resp.\ $\wt{A}_j^z$) is determined by $h|_{B(z,\xi 2^{-j})}$ (resp.\ $h_j^z|_{B(z,\xi 2^{-j})}$), we have by Lemma~\ref{lem:good_scale_radon}, that there exists a constant $c(M) > 0$ such that on the event that $B(z,2^{-j})$ is $M$-good, we a.s.\ have that
\begin{align*}
	\p[ (A_j^z)^c \giv \CF_j^z ] \leq c(M) \p[ (\wt{A}_j^z)^c]^{1/2} \leq c(M) p^{1/2}.
\end{align*}
Let $N_k^z$ denote the number of $j \in \Z \cap [(1-\delta)k,k]$ such that $A_j^z$ occurs. On the event that there are at least $\delta k/2$ integers $j \in [(1-\delta)k,k]$ such that $B(z,2^{-j})$ is $M$-good, we have that $N_k^z$ is stochastically dominated from below by $L \sim \Bin(\lceil \delta k/2 \rceil,1-q)$, where $q = c(M) p^{1/2}$. Moreover, by \cite[Lemma~2.6]{mq2020geodesics}, we have that $\p[ L \leq k \delta/4 ] \leq e^{-\wt{c} k}$, where $\wt{c} > 0$ depends only on $q$ and $\delta$, and $\wt{c} \to \infty$ as $q \to 0$. Thus, choosing $p$ small enough, and hence $\wt{M}$ large enough, $q$ small enough and $\wt{c}$ large enough (say, so that $2^{2k} e^{-\wt{c}k} = O(e^{-k})$ as $k \to \infty$), we have by a union bound over $z \in \CD_k$ that, on the event $E_k^1$, the probability that there is a $z \in \CD_k$ such that $N_k^z \leq k \delta/4$ is at most $e^{-k/2}$. Thus, since by the above, $\p[\cup_{k \geq 2k_0 + 4} E_k^1]$ can be taken arbitrarily close to $1$ (by adjusting $M$ and $k_0$), we also have that choosing $\wt{M}$ large enough (and possibly increasing $k_0$ further), that the probability that~\eqref{it:good_condition} holds can be taken to be arbitrarily close to $1$. Hence, for any $p_0 \in (0,1)$, by choosing $M,\wt{M},k_0$ to be large enough, we have that $\p[E] \geq p_0$.

Fix $z_1,\ldots,z_n \in K$ distinct.  Let $n_j$ for $1 \leq j \leq n$ and $s_{j,k},r_{j,k}$ for $1 \leq k \leq n_j$ be as in Lemma~\ref{lem:annuli_algorithm} with $r_0 = 2^{-2k_0 -4}$ and let $m_{j,k} \in \N$ be such that $r_{j,k} = 2^{-m_{j,k}}$ (recall that in Lemma~\ref{lem:annuli_algorithm} we have that $r_{j,k}$ is a negative power of $2$). We let $z_{j,k}$ be the point in $\CD_{\lceil (1+2\delta)m_{j,k} \rceil}$ which is closest to $z_j$ (breaking ties according to some fixed but unspecified convention).  We then let $r_{j,k}^*$ be the largest number of the form $2^{-m}$ in $[2^{-(1+2\delta)m_{j,k}},2^{-(1-\delta)(1+2\delta)m_{j,k}}]$ so that the conditions in~\eqref{it:good_condition} hold for the ball $B(z_{j,k},2^{-m})$. We note that $2^{-(1+2\delta) m_{j,k}} \leq r_{j,k}^* \leq 2^{-(1-\delta)(1+2\delta)m_{j,k}}$, that is, $r_{j,k}^{1+2\delta} \leq r_{j,k}^* \leq r_{j,k}^{(1-\delta)(1+2\delta)}$ and that $|z_j-z_{j,k}| \leq 2^{-1/2} r_{j,k}^{1+2\delta}$ for every $1 \leq j \leq n$ and $1 \leq k \leq n_j$.

\begin{figure}[ht!]
\begin{center}
\includegraphics[scale=0.85]{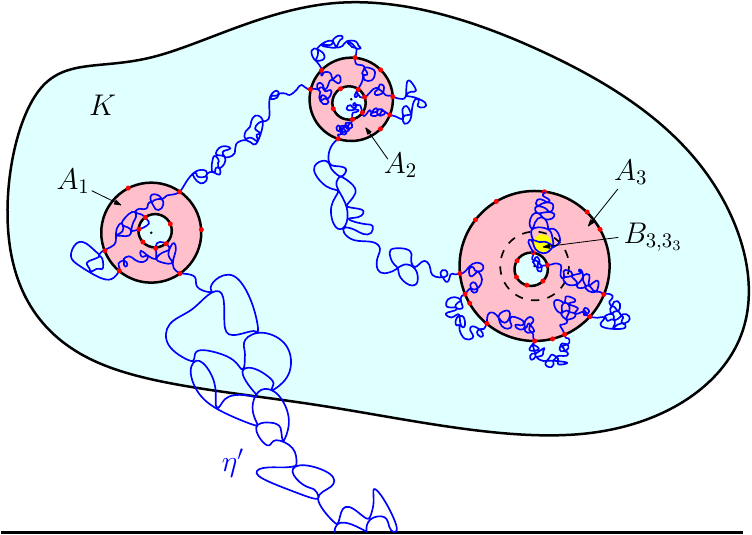}	
\end{center}
\caption{\label{fig:lemma37} Illustration of the proof of Lemma~\ref{lem:moments_finite}. Here, $n = 3$, the pink annuli are $A_j = B(z_{j,3_j},r_{j,3_j}^*) \setminus \closure{B(z_j,r)}$ and the red points on $\partial A_j$ are the points where the interior flow lines of angles $\pm \pi/2$ exit $A_j$ and hence, where $\eta'$ may enter or exit $A_j$. Illustrated in the rightmost annulus, $A_3$, is the yellow ball $B_{3,3_3}$ which the last crossing by $\eta'$ of $B(z_{3,3_3},r_{3,3_3}^*/2) \setminus \closure{B(z_{3,3_3},r_{3,3_3}^*/4)}$ separates from $\infty$. The flow lines started from a point in $B_{3,3_3}$ inevitably will make up part of the outer boundary of $\eta'$, as $\eta'$ separate them from $\infty$ and as such, the cut points of $\eta'$ in $B(z_3,r)$ make up a subset of the intersection of said flow lines.}   
\end{figure}

Next, we shall bound the measure on cut points of $\eta'$ in terms of measures on the intersection of interior flow lines of $h$, as that setting allows us to use Lemma~\ref{lem:intersection_small_ball_measure}. For an illustration, see Figure~\ref{fig:lemma37}. Fix $r > 0$ which is much smaller than $\min_{i \neq j} |z_i - z_j|$; we will eventually take a limit as $r \to 0$. In particular, let $r$ be small so that $B(z_j,r) \subseteq B(z_{j,n_j},\tfrac{1}{2} r_{j,n_j}^*)$. Let~$F$ be the event that $\eta'$ hits $B(z_j,r)$ for each $1 \leq j \leq n$. Moreover,  by~\eqref{it:good_condition},  if $(\wt{z}_\ell^j)_{\ell \geq 1}$ is an enumeration of the points in $A_j' = B(z_{j,n_j},\tfrac{5}{6} r_{j,n_j}^*) \setminus \closure{B(z_{j,n_j},\tfrac{3}{4} r_{j,n_j}^*)}$ with rational coordinates, and $\wt{\eta}_{1}^{j,\ell}$ (resp.\ $\wt{\eta}_{2}^{j,\ell}$) denotes the flow line of $h$ of angle $-\pi/2$ (resp.\ $\pi/2$) emanating from $\wt{z}_\ell^j$ and stopped upon exiting $A_j = B(z_{j,n_j},r_{j,n_j}^*) \setminus \closure{B(z_j,r)}$,  then $\partial A_j \cap \closure{\cup_\ell(\wt{\eta}_{1}^{j,\ell} \cup \wt{\eta}_{2}^{j,\ell})}$ consists of at most $\wt{M}$ points.  Suppose that we are working on the event $E \cap F$. On this event, $\eta'$ makes at least one and at most $\wt{M}$ excursions from $\partial B(z_{j,n_j},r_{j,n_j}^*)$ which intersect $\partial B(z_j,r)$. The lower bound is obvious, so we will explain the upper bound. Upon crossing $A_j'$, $\eta'$ separates from $\infty$ a ball of radius $12^{-(1+\delta)} (r_{j,n_j}^*)^{1+\delta}$ contained in $A_j'$. Each such ball contains at least one $\wt{z}_\ell^j$ (since $(\wt{z}_\ell^j)$ is dense in $A_j'$) and if we run $\wt{\eta}_{k}^{j,\ell}$ for $k \in \{1,2\}$ until exiting $A_j$, their union contains the outer boundary of $\eta'$ in $A_j$, stopped upon separating $\wt{z}_\ell^j$ from $\infty$. Thus, each crossing corresponds to at least one of the points where $\wt{\eta}_{k}^{j,\ell}$ for $k \in \{1,2\}$ exit $A_j$ and hence (since $\eta'$ a.s.\ will not enter or exit $A_j$ through the same point twice) there are at most $\wt{M}$ excursions from $\partial B(z_{j,n_j},r_{j,n_j}^*)$ intersecting $\partial B(z_j,r)$. Next, we note that since there are at most $\wt{M}$ such excursions, say, $\eta'|_{[s_i,t_i]}$, $1 \leq i \leq k$ and $k \leq \wt{M}$, there will be at least one excursion, say $\eta'|_{[s_i,t_i]}$, for which $\cutmeasure{\eta'}(B(z_j,r)) \leq \wt{M} \cutmeasure{\eta'}(\eta'([s_i,t_i]) \cap B(z_j,r))$.  Moreover, on the final crossing of $B(z_{j,n_j},r_{j,n_j}^*/2) \setminus \closure{B(z_{j,n_j},r_{j,n_j}^*/4)}$ by $\eta'|_{[s_i,t_i]}$, a ball $B_{j,n_j}$ of radius at least $4^{-(1+\delta)} (r_{j,n_j}^*)^{1+\delta}$ is separated from $\infty$ by $\eta'|_{[s_i,t_i]}$. Then, for any point $w \in B_{j,n_j}$, if we let $\eta_{1}^w$ and $\eta_{2}^w$ denote the flow lines of $h$ from $w$ of angle $-\pi/2$ and $\pi/2$ respectively, stopped upon exiting $B(z_{j,n_j},\tfrac{3}{4} r_{j,n_j}^*)$, then the cut points of $\eta'|_{[s_i,t_i]}$ in $B(z,r)$ are a subset of $\eta_{1}^w \cap \eta_{2}^w$ and hence $\cutmeasure{\eta'}(\eta'([s_i,t_i]) \cap B(z_j,r)) \leq \intmeasure{\eta_{1}^w}{\eta_{2}^w}(B(z_j,r))$, and consequently
\begin{align}
\label{eq:ub_cut_by_int}
	\cutmeasure{\eta'}(B(z_j,r)) \leq \wt{M} \intmeasure{\eta_{1}^w}{\eta_{2}^w}(B(z_j,r)).
\end{align}
Note further, that if we sample a point, say $w_{j,n_j}$, from Lebesgue measure on $B(z_{j,n_j},r_{j,n_j}^*/2) \setminus \closure{B(z_{j,n_j},r_{j,n_j}^*/4)}$, normalized to be a probability measure, then there is a universal constant $c_0 > 0$ such that with probability at least $c_0 (r_{j,n_j}^*)^{2\delta}$,  $w_{j,n_j} \in B_{j,n_j}$.  Consequently, with conditional probability given $E \cap F$ at least $c_0 (r_{j,n_j}^*)^{2\delta}$, \eqref{eq:ub_cut_by_int} holds with $w$ replaced by $w_{j,n_j}$.

Similarly, for each $1 \leq j \leq n$ such that $n_j \geq 2$ and $1 \leq k \leq n_j -1$, pick $w_{j,k}$ from Lebesgue measure on $B(z_{j,k},r_{j,k}^*/2)\setminus \closure{B(z_{j,k},r_{j,k}^*/4)}$, normalized to be a probability measure independently of everything else. Furthermore, we let $\eta_{1}^{j,k}$ (resp.\ $\eta_{2}^{j,k}$) denote the flow line of $h$ of angle $-\pi/2$ (resp.\ $\pi/2$) emanating from $w_{j,k}$, stopped upon exiting $B(z_{j,k},\tfrac{3}{4}r_{j,k}^*)$. On $E \cap F$, let $H$ be the event that each $w_{j,k}$ lies in a ball $B_{j,k}$ in $B(z_{j,k},r_{j,k}^*/2)\setminus \closure{B(z_{j,k},r_{j,k}^*/4)}$ of radius at least $(r_{j,k}^*/4)^{1+\delta}$ which is separated from $\infty$ by an excursion $\eta'$ makes from $\partial B(z_j,r)$ to $\partial B(z_{j,k},r_{j,k}^*/2)$. With universal implicit constants, we have that
\begin{align}\label{eq:lb_H}
	\p[ H \giv h] \one_{E \cap F} \gtrsim \one_{E \cap F } \prod_{j=1}^n \prod_{k=1}^{n_j} (r_{j,k}^*)^{2\delta} \gtrsim \one_{E \cap F} \prod_{j=1}^n \prod_{k=1}^{n_j} r_{j,k}^{2\delta(1+2\delta)} \quad\text{a.s.}
\end{align}
We let $I_{j,k}$ be the event that $\eta_{1}^{j,k}$ and $\eta_{2}^{j,k}$ hit $\partial B(z_j,s_{j,k})$ if $s_{j,k} \leq r_{j,k}^{1+2\delta}/16$ (and the entire sample space otherwise), $I = \cap_{j=1}^n \cap_{k=1}^{n_j} I_{j,k}$ and note that $(E \cap F  \cap H) \subseteq (E \cap I)$. We note that $I$ is introduced to further localize the event of consideration and enable us to use approximate independence arguments of the GFF (in particular, we need not keep track of the entire curve $\eta'$). Thus,
\begin{align}\label{eq:cut_measure_ball_events}
	\E\!\left[ \prod_{j=1}^n \cutmeasure{\eta'}(B(z_j,r)) \one_{E \cap F} \right] &\lesssim \left( \prod_{j=1}^n \prod_{k=1}^{n_j} r_{j,k}^{-2\delta(1+2\delta)} \right) \E\!\left[ \prod_{j=1}^n \cutmeasure{\eta'}(B(z_j,r)) \one_{E \cap F \cap H} \right] \nonumber \\
	&\lesssim \wt{M}^n \left( \prod_{j=1}^n \prod_{k=1}^{n_j} r_{j,k}^{-2\delta(1+2\delta)} \right) \E\!\left[ \prod_{j=1}^n \intmeasure{\eta_{1}^{w_{j,n_j}}}{\eta_{2}^{w_{j,n_j}}}(B(z_j,r)) \one_{E \cap I} \right]
\end{align}
using~\eqref{eq:lb_H} for the first inequality and~\eqref{eq:ub_cut_by_int} for the second.

We let $\CF_{j,k}$ be the $\sigma$-algebra generated by $h|_{\h \setminus B(z_{j,k},r_{j,k}^*)}$ and note that by Lemmas~\ref{lem:good_scale_radon} and~\ref{lem:gff_flow_lines_hit} and H\"older's inequality, we have for $a \in (0,2-d_{\kappa'}^\cut)$ small that
\begin{align}\label{eq:bound_I_jk}
	\p[I_{j,k} \giv \CF_{j,k}] \one_E \lesssim \Big( \frac{s_{j,k}}{r_{j,k}^{1+2\delta}} \Big)^{2-d_{\kappa'}^\cut-a}
\end{align}
where the implicit constant is universal. Next, we note that we may order the the balls $B(z_{j,k},r_{j,k})$ in a convenient way.  In particular, we let $(j_i,k_i)$ be such that $1 \leq j_i \leq n$, $1 \leq k_i \leq n_{j_i}$ satisfy that either $B(z_{j_i,k_i},r_{j_i,k_i}) \subseteq B(z_{j_{i+1},k_{i+1}},r_{j_{i+1},k_{i+1}})$ or $B(z_{j_i,k_i},r_{j_i,k_i}) \cap B(z_{j_{i+1},k_{i+1}},r_{j_{i+1},k_{i+1}}) = \emptyset$ for each $i$.  For each $i$, we then let $B_i^* = B(z_{j_i,k_i},r_{j_i,k_i}^*)$.  Let $N$ be the number of such indices $i$.  For $m \leq i-1$ we have that $B(z_{j_i,k_i},r_{j_i,k_i}^*) \setminus B(z_{j_i,k_i},s_{j_i,k_i})$ is disjoint from $B_m^*$ and $I_{j_m,k_m} \in \sigma(\CF_{j_m,k_m})$, we have that
\begin{align}
\label{eq:bound_E_I}
	\p[ E \cap I] &= \E[ \one_{I_{j_1,k_1}} \cdots \one_{I_{j_N,k_N}} \one_E] = \E[ \E[ \one_{I_{j_1,k_1}} \giv \CF_{j_1,k_1}] \one_{I_{j_2,k_2}} \cdots \one_{I_{j_N,k_N}} \one_E] \nonumber \\
	&\lesssim \Big( \frac{s_{j_1,k_1}}{r_{j_1,k_1}^{1+2\delta}} \Big)^{2-d_{\kappa'}^\cut-a} \E[ \one_{I_{j_2,k_2}} \cdots \one_{I_{j_N,k_N}} \one_E] \quad\text{(by~\eqref{eq:bound_I_jk})} \nonumber \\
	&\lesssim \dots \lesssim \prod_{j=1}^n \prod_{k=1}^{n_j-1} \Big( \frac{s_{j,k}}{r_{j,k}^{1+2\delta}} \Big)^{2-d_{\kappa'}^\cut-a} \p[E] \quad\text{(by~\eqref{eq:bound_I_jk})} \nonumber \\
	&\lesssim \prod_{j=1}^n \prod_{k=1}^{n_j-1} \Big( \frac{s_{j,k}}{r_{j,k}^{1+2\delta}} \Big)^{2-d_{\kappa'}^\cut-a}.
\end{align}
Moreover, by Lemmas~\ref{lem:good_scale_radon},~\ref{lem:conformal_covariance_cut_measure} (the analogous version for the measure $\intmeasure{\eta_{1}^{w_{j,n_j}}}{\eta_{2}^{w_{j,n_j}}}$) and Lemma~\ref{lem:intersection_small_ball_measure} we have that
\begin{align*}
	\E\!\left[ \intmeasure{\eta_{1}^{w_{j,n_j}}}{\eta_{2}^{w_{j,n_j}}}(B(z_j,r)) \, \middle| \, \CF_{j,n_j} \right] \one_{E \cap I} \lesssim \Big( \frac{r}{r_{j,n_j}^*} \Big)^2 (r_{j,n_j}^*)^{d_{\kappa'}^\cut} \leq r^2 \cdot r_{j,n_j}^{(1+\delta)(d_{\kappa'}^\cut - 2)}.
\end{align*}
Similarly to~\eqref{eq:bound_E_I} (since $\intmeasure{\eta_{1}^{w_{j,n_j}}}{\eta_{2}^{w_{j,n_j}}}(B(z_j,r))$ is $\CF_{i,n_i}$-measurable whenever $i \neq j$) and by the inequality~\eqref{eq:bound_E_I}, the above implies that
\begin{align}\label{eq:product_of_measures}
	\E\!\left[ \prod_{j=1}^n \intmeasure{\eta_{1}^{w_{j,n_j}}}{\eta_{2}^{w_{j,n_j}}}(B(z_j,r)) \one_{E \cap I} \right] \lesssim r^{2n} \prod_{j=1}^n \Bigg( \prod_{k=1}^{n_j-1} \Big( \frac{s_{j,k}}{r_{j,k}^{1+2\delta}} \Big)^{2 - d_{\kappa'}^\cut - a} \Bigg) r_{j,n_j}^{(1+\delta)(d_{\kappa'}^\cut-2)}.
\end{align}
Hence,
\begin{align}
	&\E\!\left[ \prod_{j=1}^n \cutmeasure{\eta'}(B(z_j,r)) \one_{E \cap F} \right] \notag \\
\lesssim& \wt{M}^n \left( \prod_{j=1}^n \prod_{k=1}^{n_j} r_{j,k}^{-2\delta(1+2\delta)} \right) \E\!\left[ \prod_{j=1}^n \intmeasure{\eta_{1}^{w_{j,n_j}}}{\eta_{2}^{w_{j,n_j}}}(B(z_j,r)) \one_{E \cap I} \right] \quad\text{(by~\eqref{eq:cut_measure_ball_events})} \notag\\
\lesssim& \wt{M}^n r^{2n} \left( \prod_{j=1}^n \prod_{k=1}^{n_j} r_{j,k}^{-2\delta(1+2\delta)} \right)  \Bigg(  \prod_{j=1}^n\prod_{k=1}^{n_j-1} \Big( \frac{s_{j,k}}{r_{j,k}^{1+2\delta}} \Big)^{2 - d_{\kappa'}^\cut - a} \Bigg) r_{j,n_j}^{(1+\delta)(d_{\kappa'}^\cut-2)} \quad\text{(by~\eqref{eq:product_of_measures})}. \label{eqn:cut_measure_ubd}
\end{align}
Combining~\eqref{eqn:cut_measure_ubd} with Lemma~\ref{lem:annuli_algorithm} implies that by choosing $a,\delta > 0$ sufficiently small and using that $r_{j,k} \gtrsim \min_{i \neq \ell} |z_i - z_\ell|$, we have that
\begin{align*}
\E\!\left[ \prod_{j=1}^n \cutmeasure{\eta'}(B(z_j,r)) \one_{E \cap F} \right]
&\lesssim r^{2n} \Bigg( \prod_{j=1}^n \prod_{k=1}^{n_j} r_{j,k}^{-c\delta} \Bigg) \prod_{j=1}^n (\min_{i < j}|z_j - z_i|)^{d_{\kappa'}^\cut - 2 - \delta} \\
&\lesssim r^{2n} (\min_{i \neq j} | z_i - z_j|)^{-c_n \delta} \prod_{j=1}^n (\min_{i < j}|z_j - z_i|)^{d_{\kappa'}^\cut - 2 - \delta},
\end{align*}
where $c>0$ is a universal constant and $c_n$ is a constant which depends only on $n$ and the implicit constant depends only on $n$, $K$, $\delta$ and $k_0$,  and $z_0=0$. Hence, dividing both sides by $r^{2n}$ and then taking the limit as $r \to 0$ implies that the measure $\E[ \cutmeasure{\eta'} \otimes \cdots \otimes \cutmeasure{\eta'} \one_E]$ (with the product taken~$n$ times) has a density $g(z_1,\dots,z_n)$ with respect to Lebesgue measure. Moreover, there exists a constant $C=C_{n,K,\delta,k_0}$ such that
\begin{align}\label{eq:density}
	g(z_1,\dots,z_n) \leq C (\min_{i \neq j} |z_i-z_j|)^{-c_n \delta} \prod_{j=1}^n (\min_{i < j}|z_j - z_i|)^{d_{\kappa'}^\cut - 2 - \delta}
\end{align}
Moreover, we note that if $f(z_1,\dots,z_n) = (\min_{i \neq j} | z_i - z_j |)^{-1}$, then $f \in L^a(K^n)$ for all $a > 0$ sufficiently small. Thus, by fixing $p',q'>1$ such that $\frac{1}{p'}+\frac{1}{q'}=1$, integrating~\eqref{eq:density} and applying H\"{o}lder's inequality, we have that
\begin{align*}
	\E[ \cutmeasure{\eta'}(B(z,\epsilon))^n \one_E ] \lesssim \epsilon^{n(d_{\kappa'}^\cut - 2-\delta + 2/q')}
\end{align*}
for each $z \in K$ and $\epsilon \in (0,\dist(K,\partial \h) /2)$ where the implicit constant depends only on $n$, $K$, $\delta$, $p'$ and $k_0$. The result for $p=n$ follows since $p'>1$ was arbitrary and $\delta \in (0,1)$ can be chosen to be arbitrarily small. The result for $p \neq n$ follows by using H\"older's inequality.
\end{proof}

\begin{proof}[Proof of Proposition~\ref{prop:cut_point_measure_bc}]
This follows from Lemma~\ref{lem:moments_finite} and the Borel-Cantelli lemma.
\end{proof}

\section{Harmonic measure estimates}
\label{sec:good_cube_lemmas}

The purpose of this section is to prove two deterministic estimates related to harmonic measure, the hyperbolic metric, and Whitney square decompositions.  They will be applied in Section~\ref{sec:pocket_argument} to show that the range of an $\SLE_{\kappa'}$ with $\kappa' \in \adjcon$ satisfies the conditions of \cite[Theorem~8.1]{kms2022sle4remov}.  For the purpose of the rest of this article, one only needs the statements and can in fact defer reading them until they are needed in Section~\ref{sec:pocket_argument}. For similar estimates of the hyperbolic metric in H\"{o}lder domains, see \cite[Section~4.6]{pommerenke1992boundary} and for an illustration of the situation we rule out, see Figure~\ref{fig:section4}.

\begin{figure}[ht!]
\begin{center}
\includegraphics[scale=0.85,page=1]{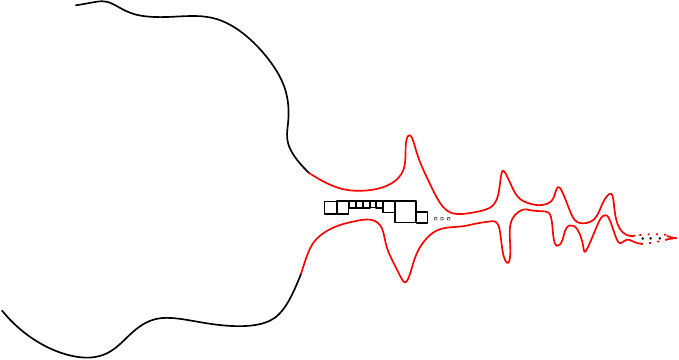}	
\end{center}
\caption{\label{fig:section4} The figure represents the situation that we spend Section~\ref{sec:good_cube_lemmas} ruling out. More precisely, two results of this section, Lemmas~\ref{lem:good_cubes} and~\ref{lem:not_too_many_large_cubes}, describe the same phenomenon, that a part of the boundary that locally looks like that of a H\"{o}lder domain (pictured in red in the figure), is not too ``inaccessible''. It is a priori obvious that such a boundary is somewhat nice on each scale, but the content of said lemmas essentially guarantees that it is uniformly nice across scales and does not end up slowly getting worse and breaking the estimates.}
\end{figure}

Suppose that $D \subseteq \C$ is a simply connected domain.  We let $\disthyp^D$ denote the hyperbolic metric in~$D$ and we fix the normalization in $\D$ to be so that
\begin{align}
\label{eqn:hyperbolic_distance}
	\disthyp^\D(z_1,z_2) = \inf_\gamma \int_\gamma \frac{|dz|}{1-|z|^2},
\end{align}
where the infimum is taken over all curves $\gamma : [0,1] \to \D$ with $\gamma(0) = z_1$ and $\gamma(1) = z_2$. Then, $\disthyp^D(z_1,z_2) = \disthyp^\D(\varphi^{-1}(z_1),\varphi^{-1}(z_2))$ for any conformal map $\varphi: \D \to D$. For a simply connected domain $D$ and two points $z,w \in \closure{D}$ (viewed as prime ends if in $\partial D$), we denote by $\gamma_{z,w}^D$ the hyperbolic geodesic in $D$ from $z$ to $w$ parameterized to have unit speed with respect to $\disthyp^D$, that is, so that the hyperbolic length of $\gamma_{z,w}^{D}|_{[s,t]}$ is $t-s$. With our choice of normalization in~\eqref{eqn:hyperbolic_distance}, we have that
\begin{align}
\label{eq:hyperbolic_geodesic}
	\gamma_{0,e^{i\theta}}^\D(t) = e^{i\theta} \! \left(1 - \frac{2}{1+e^{2t}} \right) \! .
\end{align}
We recall that for any conformal map $\varphi: \D \to D$ we have $\gamma_{z,w}^D(t) = \varphi(\gamma_{\varphi^{-1}(z),\varphi^{-1}(w)}^\D(t))$ for all $t \geq 0$.

Suppose that $\CW$ is a Whitney square decomposition of $D$. For any $Q \in \CW$, we denote by $\cen(Q)$ (resp.\ $\len(Q)$) the center (resp.\ side length) of $Q$. We also denote by $\partial^\UR Q$ the union of the upper and right boundaries of $Q$.  We also note that
\begin{equation}
\label{eqn:whitney_diameter}
\disthyp^D(z,w) \leq 1 \quad\text{for all}\quad z,w \in Q.	
\end{equation}

\begin{figure}[ht!]
\begin{center}
\includegraphics[scale=0.85,page=1]{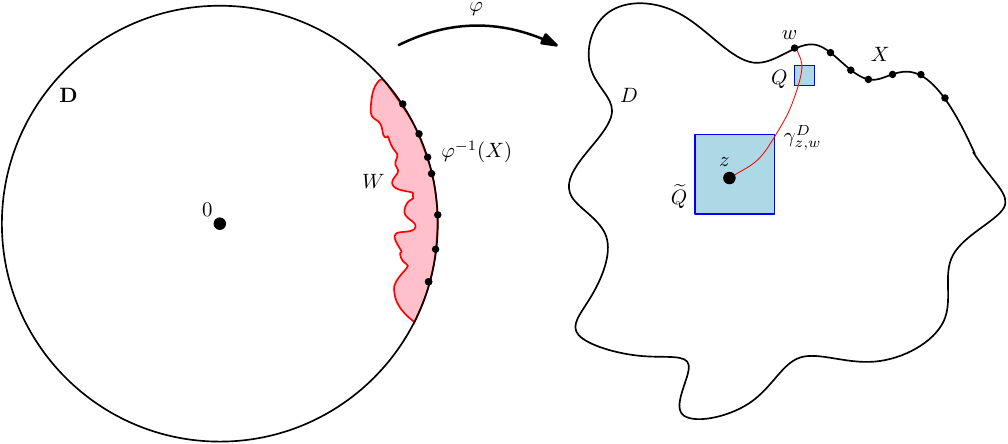}	
\end{center}
\caption{\label{fig:good_cubes_setup}  Illustration of the setup of Lemma~\ref{lem:good_cubes}.  We note that Lemma~\ref{lem:good_cubes} does not assume that $X$ is connected and in fact when we use Lemma~\ref{lem:good_cubes} to prove Theorem~\ref{thm:sle_removable} we will be in the situation that $X$ is totally disconnected as shown.}
\end{figure}

\begin{lemma}
\label{lem:good_cubes}
Let $D \subseteq \C$ be a bounded Jordan domain and let $X \subseteq \partial D$ be non-empty.  Fix $z \in D$ and let $\varphi: \D \to D$ be the conformal map satisfying $\varphi(0) = z$ and $\varphi'(0) > 0$.  Suppose that there exist $\delta, \beta \in (0,1)$ such that with $W = \{w \in \D : \dist(w,\varphi^{-1}(X)) < \delta\}$ we have that $\varphi|_W$ is $\beta$-H\"{o}lder continuous.  Furthermore,  let $\CW$ be a Whitney square decomposition of $D$ and let $\wt{Q}$ be a square in $\CW$ with $z \in \wt{Q}$.  Fix $a \in (0,1)$. Then there exists $M >0$ depending only on $a$, $\beta$, $\delta$, $D$, and $z$ such that if $w \in X$ and $\gamma_{z,w}^D$ passes through $Q \in \CW$ then 
\begin{align}\label{eq:dist_len_bound}
\disthyp^D(\cen(\wt{Q}),\cen(Q)) \leq M \len(Q)^{-a}.
\end{align}
\end{lemma}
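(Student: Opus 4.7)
The plan is to bound $\disthyp^D(\cen(\wt Q),\cen(Q))$ by the hyperbolic parameter $t_Q$ at which $\gamma_{z,w}^D$ first enters $Q$, and then to control $t_Q$ by working in $\D$ via $\varphi$ and exploiting the assumed H\"older bound. Applying the Whitney hyperbolic-diameter bound~\eqref{eqn:whitney_diameter} to both $\wt Q$ and $Q$, together with $\disthyp^D(z,\gamma_{z,w}^D(t_Q)) = t_Q$ and the triangle inequality, gives $\disthyp^D(\cen(\wt Q),\cen(Q)) \leq 2 + t_Q$, so it suffices to show $t_Q \lesssim \len(Q)^{-a}$ with the stated dependencies.

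Since $D$ is a bounded Jordan domain, $\varphi$ extends to a homeomorphism $\closure{\D} \to \closure{D}$, so $\zeta := \varphi^{-1}(w) \in \partial \D$ is well-defined and lies in $\closure{\varphi^{-1}(X)}$. After rotating, we may assume $\zeta = 1$; then by~\eqref{eq:hyperbolic_geodesic} the geodesic in $\D$ is $\gamma(t) = 1 - 2/(1+e^{2t})$, so $\gamma_{z,w}^D = \varphi \circ \gamma$ and $1 - \gamma(t) \leq 2 e^{-2t}$. Since $\gamma_{z,w}^D(t_Q) \in \closure{Q}$ and $\CW$ is a Whitney decomposition, there is a universal $c_0 > 0$ with
\begin{equation*}
c_0 \len(Q) \leq \dist(\gamma_{z,w}^D(t_Q), \partial D) \leq |\varphi(\gamma(t_Q)) - w|,
\end{equation*}
where the second inequality uses $w \in \partial D$.

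I will split into two cases. If $1 - \gamma(t_Q) \geq \delta$, then $\gamma(t_Q) \in [0,1-\delta]$, so $t_Q \leq \frac{1}{2}\log((2-\delta)/\delta)$, a bound depending only on $\delta$. If instead $1 - \gamma(t_Q) < \delta$, then $\gamma(t_Q) \in W$, and the $\beta$-H\"older bound on $\varphi|_W$---extended to allow the boundary argument $\zeta = 1$ by picking $v_n \in W$ with $v_n \to 1$ and passing to the limit using the Carath\'eodory extension of $\varphi$ to $\closure{\D}$---yields
\begin{equation*}
|\varphi(\gamma(t_Q)) - w| \leq C_H (1 - \gamma(t_Q))^\beta \leq C_H 2^\beta e^{-2\beta t_Q}.
\end{equation*}
Combined with the displayed lower bound, this forces $t_Q \leq \frac{1}{2\beta}\log(1/\len(Q)) + C_1$ where $C_1$ depends on $\beta$, $c_0$, and the H\"older constant $C_H = C_H(D,z,\beta,\delta)$.

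In both cases $t_Q \leq C_2 + \frac{1}{2\beta}\max(0,\log(1/\len(Q)))$ for a constant $C_2 = C_2(\delta,\beta,C_H)$. When $\len(Q) \leq 1$ this is absorbed using the elementary inequality $\log(1/x) \leq C_a x^{-a}$ valid for $x \in (0,1]$; when $\len(Q) > 1$ we use $\len(Q) \leq \diam(D)$ so that $\len(Q)^{-a} \geq \diam(D)^{-a}$, absorbing $C_2$ into $M$. Putting the two ranges together gives~\eqref{eq:dist_len_bound}. The only step requiring care is the passage in the H\"older estimate from an interior point $\gamma(t_Q) \in W$ to the boundary point $\zeta = 1 = \varphi^{-1}(w)$; in fact the H\"older hypothesis automatically gives the logarithmic bound $t_Q \lesssim \log(1/\len(Q))$, which is strictly sharper than what is needed, and the main content of the argument is precisely this Whitney-versus-H\"older comparison at the scale of $Q$.
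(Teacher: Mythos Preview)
Your proof is correct and follows the same approach as the paper: bound $\disthyp^D(\cen(\wt Q),\cen(Q))$ by the geodesic parameter $t$, then control $t$ in terms of $\len(Q)$ via the H\"older continuity of $\varphi$ near $\varphi^{-1}(X)$, obtaining $t \lesssim \tfrac{1}{\beta}\log(1/\len(Q))$. Your version is in fact slightly more direct---you bound $\dist(\gamma_{z,w}^D(t),\partial D)\le |\varphi(\gamma(t))-\varphi(\zeta)|$ and apply the H\"older estimate to function values (passing to the boundary point by continuity), whereas the paper detours through the Koebe $1/4$ theorem to estimate $|\varphi'|$ along the radius and then bounds $|\varphi'|$ via the H\"older condition.
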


\begin{proof}
See Figure~\ref{fig:good_cubes_setup} for an illustration of the setup. Fix $w \in X$.  We consider $Q \in \CW$ which is hit by $\gamma_{z,w}^D$ and let $t>0$ be such that $\gamma_{z,w}^D(t) \in Q$.  If $t \in (0,1)$ then~\eqref{eq:dist_len_bound} holds for large enough $M$, depending only on $a$ and $D$, since $\len(Q)^{-a} \geq \diam(D)^{-a} > 0$ (since $\diam(D) < \infty$) and because in this case, $\disthyp^D(\cen(Q),\cen(\wt{Q})) \leq 1 + t < 2$ (recall~\eqref{eqn:whitney_diameter}). Assume that $t \geq 1$.  Then we have that $\disthyp^D(\cen(Q),\cen(\wt{Q})) \leq 1 + t \leq 2t$, and so it suffices to give a similar bound on $t$.  Let $\theta \in [0,2\pi)$ be such that $\varphi(e^{i\theta}) = w$ (such a value exists since by \cite[Theorem~I.3.1]{gm2005harmonic} $\varphi$ extends to a continuous bijective function $\varphi: \closure{\D} \to \closure{D}$) and note that $\gamma_{z,w}^D(t) = \varphi(\gamma_{0,e^{i\theta}}^\D(t))$. Since $\dist(\gamma_{z,w}^D(t), \partial D) \geq \len(Q)$, we have by the Koebe-$1/4$ theorem and~\eqref{eq:hyperbolic_geodesic} that $\dist(\gamma_{z,w}(t),\partial D) \leq 8 e^{-2t} | \varphi'(\gamma_{0,e^{i\theta}}^\D(t))|$, so that
\begin{align}\label{eq:len_upper_bound}
	\len(Q)^{-a} \geq \left(8 e^{-2t} | \varphi'(\gamma_{0,e^{i\theta}}^\D(t))| \right)^{-a}.
\end{align}
We are done if we find an $M = M(a,\beta,\delta,D,z)>0$ such that $M \len(Q)^{-a} \geq 2t$ for $t \geq 1$.  It follows by~\eqref{eq:len_upper_bound} and rearranging that it suffices to find $M > 0$ such that
\begin{align}
\label{eq:sufficient_bound}
	| \varphi'( \gamma_{0,e^{i\theta}}^\D(t)) | \leq M^{1/a} 2^{-3-1/a} t^{-1/a} e^{2t}.
\end{align}
Note that there exists a constant $M_0 = M_0(D,z) > 0$ such that $|\varphi(x)-\varphi(y)| \leq M_0 |x-y|^{\beta}$ for all $x,y \in W$. Moreover, since $\varphi$ is conformal on $\D$, there exists a constant $M_1 = M_1(\delta,D,z) > 0$ such that $| \varphi(x) - \varphi(y)| \leq M_1 |x-y|^\beta$ for all $x,y \in B(0,1-\delta/2)$.  By the Koebe-$1/4$ theorem, there thus exists $M_2$ depending only on $M_0$, $M_1$ so that
\begin{align}
	| \varphi'(e^{i \theta}(1-s)) | \leq M_2 s^{\beta-1} \quad\text{for all}\quad s \in (0,1).
\end{align}
Moreover, there exists $t_0 = t_0(a,\beta)>0$ such that $e^{2(1-\beta)t} \leq t^{-1/a}e^{2t}$ for all $t \geq t_0$. Thus, noting that the minimum of $t \mapsto t^{-1/a} e^{2t}$ is attained at $t = 1/(2a)$, it follows that there exists $M$ depending only on $M_2$, $t_0$ so that
\begin{align}
| \varphi'(\gamma_{0,e^{i\theta}}^\D(t))| \leq M_2 e^{2(1-\beta)t} \leq M^{1/a} 2^{-3-1/a} t^{-1/a} e^{2t} \quad\text{for all}\quad t \geq 1.
\end{align}
Thus choosing $M$ as above and noting that $M$ depends only on $a$, $\beta$, $\delta$, $D$ and $z$, the result follows for $t \geq 1$. Thus, the proof is complete.
\end{proof}

Before stating the next lemma, we recall a useful fact about hyperbolic geodesics. If $D \subseteq \C$ is a simply connected domain, $z \in D$ and $w_1,w_2 \in \partial D$ are distinct points (prime ends), then the function $f(t) = \disthyp^D(\gamma_{z,w_1}^D(t),\gamma_{z,w_2}^D(t))$ is strictly increasing. Moreover, we have that $\disthyp^D(\gamma_{z,w_1}^D(t),\gamma_{z,w_2}^D(t)) \leq \disthyp^D(\gamma_{z,w_1}^D(t) ,  \gamma_{z,w_2}^D(t+s))$ for every $t,s\geq 0$. (These facts are trivial to check by conformally mapping to $\D$ and sending $z$ to $0$.)

\begin{figure}[ht!]
\begin{center}
\includegraphics[scale=0.85,page=2]{figures/whitney_square_hyperbolic_distance.pdf}	
\end{center}
\caption{\label{fig:good_cubes_setup}  Illustration of the setup of Lemma~\ref{lem:not_too_many_large_cubes}, which bounds the number of squares in a Whitney square decomposition of $D$ of side length $2^{-j}$ which are hit by the collection of hyperbolic geodesics in $D$ from $z$ to a point in $X$ (such geodesics are shown in red and the squares of side length $2^{-j}$ which are hit are in blue) in terms of the upper Minkowski dimension of $X$.}
\end{figure}

\begin{lemma}\label{lem:not_too_many_large_cubes}
Suppose that $D \subseteq \C$ is a bounded Jordan domain and fix $z \in D$ and $\beta \in (0,1)$.  Let $\CW$ be a Whitney square decomposition of $D$.  Fix $d \in (0,2)$,  suppose that $X \subseteq \partial D$ is closed and has upper Minkowski dimension at most $d$, and let $\CW_j = \{ Q \in \CW: \len(Q) = 2^{-j},  \gamma_{z,w}^{D}([0,\infty)) \cap Q \neq \emptyset \ \text{for some} \ w \in X \}$.  Let also $\varphi: \D \to D$ be the conformal map satisfying $\varphi(0) = z$ and $\varphi'(0) > 0$ and let $\delta \in (0,1)$ be such that if $W = \{w \in \D : \dist(w,\varphi^{-1}(X)) < \delta\}$ then $\varphi|_W$ is H\"{o}lder continuous with exponent $\beta$.  Then for every $a \in (0,1)$,  there exists $c = c(a,\beta,\delta,X,D,z) > 0$ such that $|\CW_j| \leq c 2^{(d+a)j}$.
\end{lemma}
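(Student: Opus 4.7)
The approach is to associate to each $Q \in \CW_j$ a ``radial shadow'' arc on $\partial \D$, group these dyadically by length, use H\"older continuity of $\varphi$ to transport them to arcs of controlled diameter on $\partial D$ meeting $X$, and count via the upper Minkowski dimension of $X$.

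First I set up the shadows and establish a lower bound on their sizes. For each $Q \in \CW_j$, set $P_Q = \varphi^{-1}(Q)$ and let $I_Q \subseteq \partial \D$ consist of those $\theta$ for which the radial segment $(0,e^{i\theta})$ meets $P_Q$; write $\ell_Q = |I_Q|$. By conformal invariance of the hyperbolic metric and the Whitney property, $P_Q$ has hyperbolic diameter $O(1)$ in $\D$; combining this with the area estimate forced by the Koebe distortion theorem and $(1-|\zeta|)|\varphi'(\zeta)| \asymp 2^{-j}$ on $P_Q$ shows that $P_Q$ is essentially isotropic at Euclidean scale $1-|\zeta_Q|$, so $\ell_Q \asymp 1 - |\zeta_Q|$ for any $\zeta_Q \in P_Q$. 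Now pick $w \in X$ with $\gamma_{z,w}^D \cap Q \neq \emptyset$ and take $\zeta_Q \in P_Q$ on the radial segment to $\varphi^{-1}(w)$; then $|\zeta_Q - \varphi^{-1}(w)| = 1 - |\zeta_Q|$, so $\zeta_Q \in W$ whenever $1 - |\zeta_Q| < \delta$. H\"older continuity on $W$ together with the Koebe quarter theorem yields $|\varphi'(\zeta_Q)| \lesssim (1-|\zeta_Q|)^{\beta-1}$, which combined with $(1-|\zeta_Q|)|\varphi'(\zeta_Q)| \asymp 2^{-j}$ gives $(1-|\zeta_Q|)^\beta \gtrsim 2^{-j}$ and hence $\ell_Q \gtrsim 2^{-j/\beta}$. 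The $O(1)$ residual squares with $1-|\zeta_Q| \geq \delta$ lie at bounded hyperbolic distance from $z$ and contribute only a constant to $|\CW_j|$.

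Partition $\CW_j = \bigsqcup_m H_m$ with $H_m = \{Q : 2^{-m-1} \leq \ell_Q < 2^{-m}\}$; by the bound just established, $H_m = \emptyset$ unless $0 \leq m \leq j/\beta + O(1)$. For $Q \in H_m$ with $j$ large, $I_Q \subset \overline{W}$ and H\"older continuity (extended to $\overline{W}$ by continuity of $\varphi$ on $\overline{\D}$, which holds since $D$ is a Jordan domain) gives $\diam(\varphi(I_Q)) \leq M_0 \ell_Q^\beta \leq M_0 2^{-m\beta}$; moreover $\varphi(I_Q)$ meets $X$. Furthermore the preimages $\{P_Q\}_{Q \in H_m}$ are pairwise disjoint in the annulus $\{\zeta \in \D : 1-|\zeta| \asymp 2^{-m}\}$ with each $P_Q$ of Euclidean area $\asymp 2^{-2m}$, so an elementary area-packing argument shows that the multiplicity of $\{I_Q\}_{Q \in H_m}$ on $\partial \D$, and hence of $\{\varphi(I_Q)\}_{Q \in H_m}$ on $\partial D$, is at most a universal constant $K$.

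Finally, to bound $|H_m|$ via the Minkowski dimension of $X$, for each $Q \in H_m$ choose $\hat{w}_Q \in X \cap \varphi(I_Q)$ and consider the Euclidean ball $B_Q$ of diameter $M_0 2^{-m\beta}$ centered at $\hat{w}_Q$; these balls still have multiplicity bounded by a constant $K'$ on $\partial D$. A Vitali-type greedy argument extracts a sub-collection of $\gtrsim |H_m|/K'$ pairwise disjoint $B_Q$'s, whose centers form a $\gtrsim 2^{-m\beta}$-separated subset of $X$. For any $a' > 0$, the definition of upper Minkowski dimension bounds such a separated subset of $X$ by $C_{a'} 2^{m\beta(d+a')}$, so $|H_m| \lesssim 2^{m\beta(d+a')}$. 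Summing the geometric series over $0 \leq m \leq j/\beta + O(1)$ gives $|\CW_j| \lesssim 2^{j(d+a')}$, and choosing $a' = a$ completes the proof. The subtlest point is the two-sided comparison $\ell_Q \asymp 1 - |\zeta_Q|$: the upper bound is routine from the hyperbolic-diameter bound on $P_Q$, but the matching lower bound relies on the Koebe area estimate and is what both forces $H_m = \emptyset$ for $m > j/\beta + O(1)$ and underpins the bounded-multiplicity step.
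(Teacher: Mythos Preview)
The gap is at the bounded-multiplicity claim for the balls $B_Q$. You correctly show that the shadows $I_Q$ (and hence the image arcs $\varphi(I_Q)$) have bounded multiplicity, but you then assert without justification that the balls $B_Q$ of diameter $M_0 2^{-m\beta}$ centred at $\hat w_Q \in X \cap \varphi(I_Q)$ ``still have multiplicity bounded by a constant $K'$''. This does not follow: the H\"older bound gives only $\diam(\varphi(I_Q)) \leq M_0 2^{-m\beta}$, with no matching lower bound, so the arcs $\varphi(I_Q)$ can be far smaller than the balls $B_Q$. If many $\varphi(I_Q)$'s --- with pairwise disjoint preimages $I_Q$, hence pairwise disjoint on $\partial D$ --- happen to land inside a single ball of radius $M_0 2^{-m\beta}$, the corresponding $B_Q$'s all overlap there and the multiplicity is unbounded. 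Without bounded multiplicity of the $B_Q$'s, your Vitali step cannot produce a $2^{-m\beta}$-separated subset of $X$ of size comparable to $|H_m|$, so the Minkowski-dimension count does not apply. In effect your argument reduces to controlling $N(\varphi^{-1}(X), 2^{-m})$, but the hypotheses only control $N(X,\cdot)$; since only $\varphi$ (not $\varphi^{-1}$) is assumed H\"older, there is no way to transfer the dimension bound in the required direction.

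The paper's proof avoids this obstacle by working directly in $D$ near $X$ rather than through shadows on $\partial \D$. It covers $X$ by $\lesssim 2^{(d+a)j}$ squares of side comparable to $2^{-j}$, shows via a H\"older-versus-hyperbolic-length contradiction that every geodesic to $X$ must hit a Whitney square of side at least $2^{-(1+a)j}$ in a fixed annular shell around one of these covering squares, and then bounds (again via H\"older) the number of squares of side $2^{-j}$ that any such geodesic can visit before reaching one of those large squares. Since all geodesics through a fixed large square stay within bounded hyperbolic distance of one another up to that square, the count is uniform. This uses the Minkowski bound on $X$ only at the single scale $\asymp 2^{-j}$, and the H\"older continuity only to rule out long chains of small Whitney squares --- so it never needs to compare scales on $\partial D$ and $\partial \D$, which is exactly where your shadow argument breaks.
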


\begin{proof}
\emph{Outline and setup.} See Figure~\ref{fig:good_cubes_setup} for an illustration of the setup.  The strategy of the proof is to first consider a small neighborhood of $X$ in $D$ contained in $\varphi(W)$ which contains at most a constant times $2^{(d+3a)j}$ squares of $\CW$ of side length at least $2^{-(1+a)j}$. Next, we show that each geodesic from $z$ to a point in $X$ hits a square of $\CW$ of side length at least $2^{-(1+a)j}$ in said neighborhood and then show that if we fix such a square $Q \in \CW$, then the set of hyperbolic geodesics from $z$ to $X$ which hit $Q$ can together hit at most a constant times $2^{aj}$ squares of side length $2^{-j}$ before $Q$. The way to deduce this is to first consider a single geodesic and show that it cannot intersect more than $2^{aj}$ squares of side length $2^{-j}$ before hitting~$Q$. This follows since if the geodesic were to hit too many squares, then this would violate the H\"{o}lder continuity of $\varphi|_W$.  Then using that any two geodesics from $z$ which intersect $Q$ will stay close to each other (in the sense of $\disthyp^D$) at least until hitting $Q$, we have that the collection of squares of side length $2^{-j}$ hit by any geodesic from $z$ before hitting $Q$ is at most a constant times the number of such squares hit by any of the geodesics.

We now turn to the setup. Fix $j \in \N$, $0 < M_0 < M_1 < M_2 < 1$ small and $M_3 > 1$ large (to be chosen later) and let $Q_1,\dots,Q_n$ denote the closed squares with side length $M_0 2^{-j}$ and corners in $M_0 2^{-j} \Z^2$ such that $X \cap Q_k \neq \emptyset$ for $1 \leq k \leq n$ and we write $Q_k^* = Q_k \setminus \partial^\UR Q_k$. For $1 \leq k \leq n$ and $\ell \in \{1,2,3\}$, we let $Q_k^\ell$ be the square with the same center as $Q_k$ and side length $M_\ell 2^{-j}$. Finally, for $x \in X$, we let $\sigma_x = \sup \{ t \geq 0: \gamma_{z,x}^D(t) \notin Q_k^2 \}$ and $\tau_x = \inf\{ t \geq \sigma_x: \gamma_{z,x}^D(t) \in \partial Q_k^1 \}$, where $k = k(x)$ is such that $x \in Q_k^*$. We shall show that there exists $j_0 = j_0(a,\beta,\delta,X,D,z) \in \N$ such that each of the following steps holds for all $j \geq j_0$. We note that there exists $j_1 = j_1(D,z,\delta,X) \in \N$ such that if $j \geq j_1$, then $\varphi^{-1}( \cup_{m=1}^n Q_m^3 \cap D) \subseteq W$.  We also note that by taking $M_2>0$ sufficiently small,  we have that $Q \cap Q_m^2 = \emptyset$ for every $1 \leq m \leq n$ and every $Q \in \CW$ such that $\len(Q) = 2^{-j}$ for $j \in \N$.

\emph{Step 1. Bound on number of squares with side length at least $2^{-(1+a)j}$ which intersect $\cup_{i=1}^n Q_i^2$.} Note that we can choose $M_3$ large enough so that the number of Whitney squares of side length at least $2^{-(1+a)j}$ that intersect $Q_m^2$ is at most $M_3^2 2^{2aj}$ for every $1 \leq m \leq n$. Moreover, since $\ol{\dim}_\CM X \leq d$ it follows that $n \leq C 2^{(d+a)j}$ where $C = C(a,X,M_0)$. Consequently, there are at most $C M_3^2 2^{(d+3a)j}$ Whitney squares of side length at least $2^{-(1+a)j}$ intersecting $\cup_{m=1}^n Q_m^2$. Thus, it remains to bound the number of squares of side length $2^{-j}$ that a geodesic can hit before hitting a square with side length at least $2^{-(1+a)j}$ intersecting $Q_m^2$ for some $1 \leq m \leq n$.

\emph{Step 2. A fixed hyperbolic geodesic hits a Whitney square of side length at least $2^{-(1+a)j}$.}
Fix $x \in X$ and assume that $j \geq j_1$.  We note that $\gamma_{z,x}^D([\sigma_x,\tau_x)) \subseteq Q_k^2 \setminus Q_k^1$. We assume for the sake of contradiction that $\gamma_{z,x}^D|_{[\sigma_x,\tau_x]}$ only intersects squares in $\CW$ which have side length less than $2^{-(1+a)j}$. We let $\wt{Q}_1,\dots,\wt{Q}_N \in \CW$ be the squares which $\gamma_{z,x}^D|_{[\sigma_x,\tau_x]}$ intersects. It follows that
\begin{align*}
	(M_2 - M_1) 2^{-j} \leq | \gamma_{z,x}^D(\tau_x) - \gamma_{z,x}^D(\sigma_x) | \leq \sum_{m=1}^N \diam(\wt{Q}_m) \leq N 2^{-(1+a) j + 1/2}
\end{align*}
and thus $N \gtrsim 2^{aj}$ where the implicit constant depends only on $M_1$ and $M_2$.  Let $z_1 = \varphi^{-1}(\gamma_{z,x}^D(\sigma_x))$, $z_2 = \varphi^{-1}(\gamma_{z,x}^D(\tau_x))$, and $e^{i\theta_x} = \varphi^{-1}(x)$, and note that by \cite[Lemma~A.4]{kms2022sle4remov} there exists $c_1 = c_1(M_1,M_2) > 0$ such that $\disthyp^\D(z_1,z_2) \geq c_1 2^{aj}$. Moreover, since $\disthyp^\D(z_1,z_2) \leq \disthyp^\D(0,z_2) = \tfrac{1}{2} \log((1+|z_2|)/(1-|z_2|))$, it follows that $1-|z_2| \leq 2 \exp(-c_1 2^{aj+1})$. However, we can find a lower bound which does not match and hence get a contradiction. Indeed, since $z_1,z_2 \in W$, we have that 
\begin{align*}
	(M_1 - M_0) 2^{-j} \leq | \gamma_{z,x}^D(\tau_x) - x| \leq M |z_2 - e^{i\theta_x}|^\beta = M(1-|z_2|)^\beta,
\end{align*}
where $M$ is the $\beta$-H\"{o}lder seminorm of $\varphi|_{W}$.  Thus, $1-|z_2| \geq c_2 2^{-j/\beta}$ for some $c_2 = c_2(\beta,M,M_0,M_1) > 0$. For large enough $j$ (depending only on $a,\beta,M,M_0,M_1$ and $M_2$), this contradicts the upper bound proved above and thus there exists $j_2 = j_2(a,\beta,\delta,M,M_0,M_1,M_2,X,D,z) \geq j_1$ such that if $j \geq j_2$, then $\gamma_{z,x}^D|_{[\sigma_x,\tau_x]}$ intersects a square with side length at least $2^{-(1+a)j}$.

\emph{Step 3. Bound on the number of Whitney squares hit before hitting one with side length at least $2^{-(1+a)j}$.}
Let $Q$ be a Whitney square of side length at least $2^{-(1+a)j}$ hit by $\gamma_{z,\wt{x}}^D|_{[\sigma_{\wt{x}},\tau_{\wt{x}}]}$ for some $\wt{x} \in X \cap Q_k^*$. Let $X_k^Q = \{ w \in X \cap Q_k^* \colon \gamma_{z,w}^D([\sigma_w,\tau_w]) \cap Q \neq \emptyset \}$. For $w \in X_k^Q$, we let $t_w = \inf\{ t \geq \sigma_w \colon  \gamma_{z,w}^D(t) \in Q \}$ and for $j \geq j_2$ we define $\CW_{k,j}^Q = \{ \wt{Q} \in \CW: \len(\wt{Q}) = 2^{-j},  \wt{Q} \cap \gamma_{z,w}^D([0,t_w]) \neq \emptyset \ \text{for some} \ w \in X_k^Q\}$, so that $\CW_{k,j}^Q$ is the set of Whitney squares of side length $2^{-j}$ which are hit before $Q$ by geodesics which hit $Q$ on their last excursion from $\partial Q_k^2$ to $\partial Q_k^1$. We shall bound $|\CW_{k,j}^Q|$ (and hence get a bound on the size of the subset of squares of side length $2^{-j}$). This is done by showing that all geodesics $\gamma_{z,y}^D$ for $y \in X_k^Q$, run up until they hit a fixed square $\wt{Q} \in \CW$ with side length $2^{-j}$ belong to a hyperbolic neighborhood of (either) one of the geodesics run until hitting $Q$.

Fix $x,y \in X_k^Q$ and note that $\disthyp^D(\gamma_{z,x}^D(t_x),\gamma_{z,y}^D(t_y)) \leq 1$.  Let $\wt{Q} \in \CW$ be such that $\len(\wt{Q}) = 2^{-j}$ and $\gamma_{z,y}^D(s) \in \wt{Q}$ for some $s \geq 0$,  where $j \geq j_2$.  Note that $s \leq t_y$.  Suppose that $t_y \leq t_x$.  Then,  by the paragraph above the statement of the lemma,  we have that $\disthyp^D(\gamma_{z,x}^D(s) ,  \gamma_{z,y}^D(s)) \leq \disthyp^D(\gamma_{z,x}^D(t_x) ,  \gamma_{z,y}^D(t_y)) \leq 1$.  Suppose that $t_x < t_y$.  By possibly taking $M_2 > 0$ to be smaller we can assume that $s \leq t_x$.  Indeed,  suppose that $t_x < s$.  Then we have that $t_y \leq \disthyp^D(z,\gamma_{z,x}^D(t_x)) + \disthyp^D(\gamma_{z,x}^D(t_x) ,  \gamma_{z,y}^D(t_y)) \leq t_x +1 < s+1$ and so $\disthyp^D(\gamma_{z,y}^D(s) ,  \gamma_{z,y}^D(t_y)) = t_y - s < 1$.  It follows from \cite[Lemma~A.1]{kms2022sle4remov} that there exists a universal constant $\wh{C}>1$ such that $\len(Q) \geq \wh{C}^{-1} 2^{-j}$.  But since $Q \cap Q_k^2 \neq \emptyset$,  we obtain a contradiction if we take $M_2>0$ sufficiently small (depending only on $\wh{C}$).  Thus we have that $s \leq t_x$ and so $\disthyp^D(\gamma_{z,x}^D(s),\gamma_{z,y}^D(s)) \leq \disthyp^D(\gamma_{z,x}^D(t_x) ,  \gamma_{z,y}^D(t_y)) \leq 1$ in every case which implies that $\gamma_{z,y}^D(s) \in \Bhyp^D(\gamma_{z,x}^D(s),1)$,  where for a set $A \subseteq D$ we let $\Bhyp^D(A,r)$ denote the hyperbolic $r$-neighborhood of $A$ in $D$.  Note that \cite[Lemma~A.1]{kms2022sle4remov} implies that there exists a universal constant $\wt{M}>0$ such that $\Bhyp^D(\gamma_{z,x}^D(s),1)$ can be covered by at most $\wt{M}$ squares in $\CW$ and this implies that there exists a universal constant $\wt{C}>1$ such that $\len(\wh{Q}) \geq \wt{C}^{-1} 2^{-j}$,  where $\wh{Q} \in \CW$ is such that $\gamma_{z,x}^D(s) \in \wh{Q}$.  Combining we obtain that $|\CW_{k,j}^Q| \leq \wt{M} |\CW_{k,j}^{Q,x}|$,  where $\CW_{k,j}^{Q,x}$ is the set of squares $\wt{Q} \in \CW$ that $\gamma_{z,x}^D|_{[0,t_x]}$ intersects and such that $\len(\wt{Q}) \geq \wt{C}^{-1} 2^{-j}$.

\emph{Step 4. Upper bound on the number of squares that $\gamma_{z,x}^D|_{[0,t_x]}$ intersects for $x \in X_k^Q$.} This argument is the same as in Step 2. Assume for the sake of contradiction that $\gamma_{z,x}^D|_{[0,t_x]}$ intersects at least $2^{aj}$ squares in $\CW$.  Then,  as in Step 2,  \cite[Lemma~A.4]{kms2022sle4remov} implies that there exists a constant $\wh{c}>0$ such that $\disthyp^D(z,\gamma_{z,x}^D(t_x)) \geq \wh{c}2^{aj}$. Then, letting $w_x = \varphi^{-1}(\gamma_{z,x}^D(t_x))$, we have (as above) that $|w_x - \exp(i \theta_x)| = 1 - |w_x| \leq 2 \exp(-\wh{c} 2^{aj+1})$. Moreover, by the H\"{o}lder continuity of $\varphi|_{W}$ and the inequality $| \gamma_{z,x}^D(t_x) - x | \geq \dist(Q,\partial D) > 2^{-(1+a)j}$, we have that $|w_x - \exp(i \theta_x)| \geq M^{-1/\beta} | \gamma_{z,x}^D(t_x) - x |^{1/\beta} > M^{-1/\beta} 2^{-j(1+a)/\beta}$. Clearly, there is a $j_3 = j_3(M,\beta,\wh{c}) \in \N$ such that $M^{-1/\beta} 2^{-j(1+a)/\beta} > 2 \exp(-\wh{c}2^{aj+1})$ for all $j \geq j_3$ and hence we have a contradiction. It follows that $| \CW_{k,j}^Q | \leq \wt{M} 2^{aj}$ if $j \geq \max(j_2,j_3)$.

\emph{Step 5. Conclusion.} By the above steps, there exists $j_0 = j_0(a,\beta,\delta,X,D,z) \in \N$ such that the following holds. If $j \geq j_0$, then a hyperbolic geodesic from $z$ to a point in $X$ can hit at most $\wt{M} 2^{aj}$ Whitney squares of side length $2^j$ before hitting a Whitney square of side length at least $2^{-(1+a)j}$ intersecting $\cup_{m=1}^n Q_m^2$.  Moreover, the number of squares of side length at least $2^{-(1+a)j}$ which intersect $\cup_{m=1}^n Q_m^2$ is at most $C M_3^2 2^{(d+3a)j}$. Together, these facts imply that there is a constant $c = c(a,\beta,\delta,X,D,z) > 0$ such that $| \CW_j | \leq c 2^{(d+4a)j}$ for all $j \in \N$. Since $a \in (0,1)$ was arbitrary, the result follows.
\end{proof}

\section{Strong connectivity and good annuli}
\label{sec:pocket_argument}

The purpose of this section is to complete the proof of Theorem~\ref{thm:sle_removable}.  We will start in Section~\ref{subsec:basic_removability_conditions} by upgrading the connectivity statement for the adjacency graph of complementary components of an $\SLE_{\kappa'}$ proved in \cite{gp2020adj} and then establish a local version of it which will be phrased in terms of the flow and counterflow lines of a GFF in an annulus.  We will also use the results from Section~\ref{sec:natural_measure_on_cut_points} to show that there exists a measure on the intersection of the boundary of two adjacent components in the localized setting.  This will put us into a position to apply \cite[Theorem~8.1]{kms2022sle4remov} in Section~\ref{subsec:completion_of_the_proof} to complete the proof of Theorem~\ref{thm:sle_removable}.

\subsection{Localized adjacency graph}
\label{subsec:basic_removability_conditions}

\begin{figure}[ht!]
\begin{center}
\includegraphics[scale=0.85]{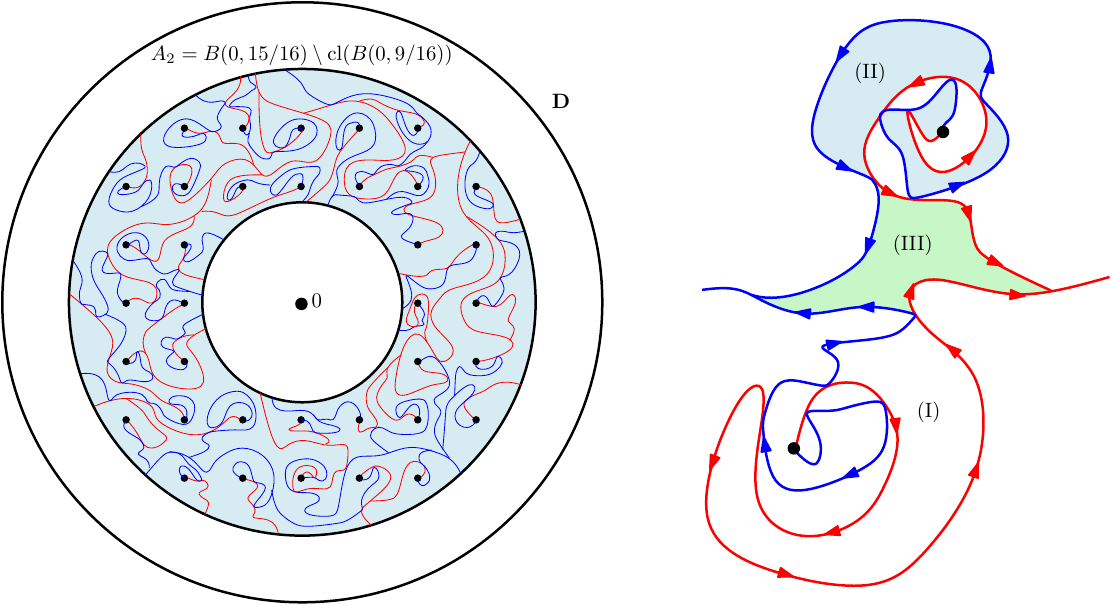}
\end{center}
\caption{\label{fig:xa_ya_illustration} {\bf Left:} Illustration of the set $X_a$.  Shown in red (resp.\ blue) are the flow lines with angle $-\pi/2$ (resp.\ $\pi/2$) starting from the grid of points $A_2 \cap (a \Z)^2$, $a > 0$ small, stopped upon exiting $A_2$.  These paths together make up $X_a$.  {\bf Right:} Illustration of the construction of $Y_a$ from $X_a$.  Shown in light green is a component of $A_2 \setminus X_a$ whose boundary consists of four arcs, given by the left/right sides of flow lines of angle $\pi/2$ and the left/right sides of flow lines of angle $-\pi/2$.  The components in blue have boundary with two marked arcs, consisting of the left side of a flow line with angle $\pi/2$ and the right side of a flow line of angle $-\pi/2$. To build $Y_a$ from $X_a$, we add a counterflow line into each component of $A_2 \setminus X_a$ which is of the same type as the green component or the blue components, from the opening point to the closing point.  In both cases, the law of the counterflow line is that of an $\SLE_{\kappa'}(\kappa'/2-4;\kappa'/2-4)$ from the opening to the closing point of the component.  In the case of a green component, the force points are located at where the boundary arcs with angle $\pi/2$ (resp.\ $-\pi/2$) merge.  In the case of a blue component, the force points are located at the opening point of the component.}
\end{figure}

As mentioned just above, the purpose of this subsection is to upgrade and localize the connectivity statement for the adjacency graph  of complementary components of an $\SLE_{\kappa'}$ established in \cite{gp2020adj}.  In order to do so, we will primarily work in the setting of a GFF on the unit disk and prove results about the behavior of its flow and counterflow lines in an annulus centered at $0$ which is away from both $0$ and $\partial \D$.  We will later use translation, scaling, and the independence properties of the GFF across annuli (Section~\ref{subsec:good_annuli}) to show that the event we define here occurs in a suitably dense collection of annuli at all scales.

We will now make the setup concrete.  See Figure~\ref{fig:xa_ya_illustration} for an illustration of the setup.  Let $A_0 = B(0,3/4) \setminus \closure{B(0,11/16)}$,  $A_1 = B(0,7/8) \setminus \closure{B(0,5/8)}$, and $A_2 = B(0,15/16) \setminus \closure{B(0,9/16)}$.  Note that $A_0 \subseteq A_1 \subseteq A_2$.  Fix $a > 0$ small.  Let $h$ be a GFF on $\D$ with zero boundary conditions and let~$X_a$ be the union of the flow lines of $h$ with angles $\pm \pi/2$ starting from all of the points in $(a \Z)^2 \cap A_2$ and stopped upon exiting $A_2$. We note that each component of $A_2 \setminus X_a$ whose boundary is completely contained in $X_a$ is naturally marked by its opening and closing points.  In other words, the opening (resp.\ closing) point is the first (resp.\ last) point on the component boundary according to the induced ordering from the definition of space-filling $\SLE_{\kappa'}$ (as described at the end of Section~\ref{subsec:ig}). 
Let $C$ be a connected component of $A_2 \setminus X_a$ whose boundary is entirely contained in $X_a$.  Then there are three possibilities:
\begin{enumerate}[(I)]
\item $\partial C$ consists of part of the right side of a flow line with angle $\frac{\pi}{2}$ and the left side of a flow line of angle $-\frac{\pi}{2}$,
\item\label{it:caseII} $\partial C$ consists of the left side of a flow line with angle $\frac{\pi}{2}$ and the right side of a flow line with angle $-\frac{\pi}{2}$, or
\item\label{it:caseIII} $\partial C$ consists of the union of the concatenation of two segments of flow lines with angle $\frac{\pi}{2}$ and the concatenation of two segments of flow lines with angle $-\frac{\pi}{2}$.
\end{enumerate}
Suppose that one of the last two cases holds.  Then,  in each such connected component we start the counterflow line of $h$ from its opening point to its closing point.  In both cases, the conditional law of the counterflow line is that of an $\SLE_{\kappa'}(\kappa'/2-4;\kappa'/2-4)$ from the opening point of the component to the closing point.  In~\eqref{it:caseII}, the force points are located at the opening point of the pocket while in~\eqref{it:caseIII} one is located at the point on the boundary where the flow lines of angle $\pi/2$ merge and the other at the corresponding point for the flow lines of angle $-\pi/2$.     We then let $Y_a$ be the closure of the union of the above counterflow lines of $h$ with the boundaries of the connected components of $A_2 \setminus X_a$ whose boundaries are entirely contained in $X_a$.

\begin{figure}[ht!]
\begin{center}
\includegraphics[scale=0.85]{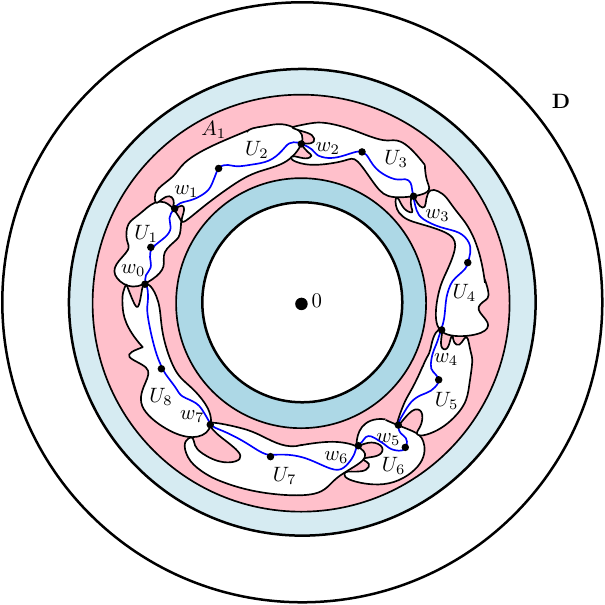}	
\end{center}
\caption{\label{fig:graph_connected} Illustration of the statement of Lemma~\ref{lem:graph_connected}.  Shown in red (resp.\ blue) is $A_1$ (resp.\ $A_2$).  The points $z_j$ and paths $\gamma_j$ are shown but unlabeled in the figure.}
\end{figure}

\begin{lemma}
\label{lem:graph_connected}
Suppose that $\kappa' \in \adjcon$.  Fix $p \in (0,1)$ and $b \in (0,d_{\kappa'}^{\cut})$.  There exists $a_0 \in (0,1)$ and $C_0 > 0$ such that whenever $a \in (0,a_0)$ and $C \geq C_0$ the following holds with probability at least~$p$.  There exist connected components $U_1,\dots,U_m$ of $A_2 \setminus Y_a$ such that $\partial U_{j-1} \cap \partial U_j \neq \emptyset$ for each $1 \leq j \leq m$ (with the convention that $U_0 = U_m$ and $U_1 = U_{m+1}$),  $U_j \cap A_0 \neq \emptyset$ and $U_j \subseteq A_1$ for each $1 \leq j \leq m$,  and the following holds.  Suppose that for each $1 \leq j \leq m$ we have points $z_j \in U_j$ and $w_{j-1} \in \partial U_{j-1} \cap \partial U_j$ and set $w_m = w_0$ and let $\gamma_j$ be a path in $\closure{U_j}$ starting and ending at $w_{j-1}$ and $w_j$ respectively and which passes through $z_j$ and does not otherwise hit $\partial U_j$.  If $\gamma$ is the concatenation of the paths $\gamma_1,\dots,\gamma_m$ then $\gamma$ disconnects $\partial B(0,1/2)$ from $\partial \D$. Moreover,  for each $1 \leq j \leq m$,  there exists a measure $\mu_j$ supported on $\partial U_{j-1} \cap \partial U_j$ and the following hold.
\begin{enumerate}[(i)]
\item \label{it:cut_point_measure_diameter_bound} $\mu_j (X) \leq C \diam(X)^{d_{\kappa'}^{\cut}-b}$ for every $X \subseteq \partial U_{j-1} \cap \partial U_j$ Borel.
\item \label{it:cut_point_measure_local_positive} $\mu_j (B(x,r) \cap \partial U_{j-1} \cap \partial U_j) > 0$ for every $x \in \partial U_{j-1} \cap \partial U_j$ and $r > 0$.
\item \label{it:upper_bound_minkowski}The upper Minkowski dimension of $\partial U_{j-1} \cap \partial U_j$ is at most $d_{\kappa'}^{\cut}$.
\item \label{it:holder_continuity} There exists $\alpha = \alpha_{\kappa'} \in (0,1)$ such that the following holds. For each conformal transformation $\phi: \D \to O$,  mapping $\D$ onto $O$ with $O \in \{U_{j-1},U_j \}$, there is an open set $W \subseteq \D$ which contains a neighborhood of $\phi^{-1}(\partial U_{j-1} \cap \partial U_j)$ in $\D$ such that $\phi|_W$ is H\"{o}lder continuous with exponent $\alpha$.
\end{enumerate}
\end{lemma}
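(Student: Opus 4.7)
The overall plan is to interpret $Y_a$ via imaginary geometry as a finite-scale approximation of a family of counterflow lines of $h$, to use the hypothesis $\kappa' \in \adjcon$ together with a planar topological argument to extract a cyclic chain of adjacent components winding around $B(0,1/2)$, and to take each $\mu_j$ to be the restriction of the cut point measure of Section~\ref{sec:natural_measure_on_cut_points} to the relevant boundary intersection. Specifically, by the merging property \cite[Theorem~1.7]{ms2017ig4}, $X_a$ consists of two finite trees of flow lines of angles $\pm \pi/2$, and the counterflow line we attach in each type~(II) or~(III) pocket is (by imaginary geometry) precisely the corresponding branch of the counterflow line tree of $h$. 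Taking $a = 2^{-n}$ and letting $n \to \infty$, the sets $Y_{2^{-n}}$ increase to the trace $Y_\infty$ of a countable branching family of counterflow lines of $h$ in $A_2$, whose complementary components are the bubbles of a space-filling $\SLE_{\kappa'}$-type structure. By $\kappa' \in \adjcon$, conformal invariance, and local absolute continuity with respect to chordal $\SLE_{\kappa'}$, the adjacency graph of these bubbles is a.s.\ connected; this passes to $Y_a$ since each component of $A_2 \setminus Y_a$ is a union of bubbles of $A_2 \setminus Y_\infty$. A planar topology argument then extracts, with probability at least $p$ provided $a$ is sufficiently small, a cyclic sequence $U_1, \ldots, U_m$ of adjacent components of $A_2 \setminus Y_a$ satisfying the required properties, where the containment $U_j \subseteq A_1$ uses continuity and small-diameter estimates for the flow and counterflow lines.

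For the measures $\mu_j$, note that $\partial U_{j-1} \cap \partial U_j$ is contained in the set of cut points of some counterflow line $\eta'$ used in the construction of $Y_a$: since flow lines of angles $\pm \pi/2$ are simple ($\kappa < 4$), a point on the common boundary of two distinct bubbles must lie on a counterflow line and be visited on both its left and right sides, making it a cut point. I would take $\mu_j$ to be the restriction of $\cutmeasure{\eta'}$ (defined in~\eqref{eqn:cut_measure_definition}) to $\partial U_{j-1} \cap \partial U_j$. Property~\eqref{it:cut_point_measure_diameter_bound} is then Lemma~\ref{lem:cut_point_measure_bc} applied with $K = \closure{A_1}$, with $C_0$ chosen so that the conclusion of that lemma holds with probability at least $p$ for all the relevant counterflow lines simultaneously. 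Property~\eqref{it:cut_point_measure_local_positive} follows from the density of cut points in the common boundary of adjacent bubbles (a consequence of the imaginary geometry description of cut points as intersections of left and right boundaries) combined with the positivity of the conformal radius factor in~\eqref{eqn:cut_measure_definition} and the positivity of $\qcutmeasure{h^0}{\eta'}$ near any of its support points. Property~\eqref{it:upper_bound_minkowski} is an immediate consequence of~\eqref{it:cut_point_measure_diameter_bound} by a standard covering argument.

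Property~\eqref{it:holder_continuity} reduces to a Rohde--Schramm type estimate: each arc of $\partial U_j$ is locally a segment of a flow or counterflow line of $h$, and by local absolute continuity with respect to chordal $\SLE_\kappa$ and $\SLE_{\kappa'}$, the left and right boundaries of the relevant counterflow line are $\SLE_\kappa$-type curves with $\kappa < 4$, whose complementary simply connected domains are H\"older domains by \cite[Theorem~5.2]{rs2005basic}. Gluing the two H\"older extensions across a cut point and using that the cut points lie well inside $A_2$ (so that the conformal map from $\D$ is uniformly bounded away from any boundary pathology) yields H\"older continuity of the conformal map from $\D$ to $U_j$ on a neighborhood of the preimage of $\partial U_{j-1} \cap \partial U_j$, with exponent $\alpha_{\kappa'}$ depending only on $\kappa'$. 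The main obstacle I foresee is assembling all these estimates into a single event of probability at least $p$ with uniform constants $C_0$ and $\alpha$: the adjacency and cycle extraction requires $a$ small, while the cut point measure bound and H\"older estimates must hold with prescribed constants. This can be arranged by first fixing a truncation event based on $h$ restricted to a neighborhood of $\closure{A_1}$ on which all the analytic bounds hold with the desired constants, and then choosing $a_0$ small enough that on this event the desired cycle exists for all $a \in (0,a_0)$.
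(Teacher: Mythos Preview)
Your proposal has the right high-level picture but contains two genuine gaps.

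\textbf{Connectivity.} The hypothesis $\kappa' \in \adjcon$ concerns a \emph{single} chordal $\SLE_{\kappa'}$. The set $Y_a$ is a union of flow line boundaries together with counterflow lines (which are $\SLE_{\kappa'}(\kappa'/2-4;\kappa'/2-4)$, not $\SLE_{\kappa'}$) in many separate pockets, and the required cyclic chain must additionally satisfy $U_j \cap A_0 \neq \emptyset$ for every $j$. Local absolute continuity with chordal $\SLE_{\kappa'}$ does not transfer a global property like adjacency-graph connectivity, and the limit $Y_\infty$ you describe does not obviously inherit it either. The paper works much harder here: it changes boundary data so the counterflow lines become branches of a $\CLE_{\kappa'}$ exploration tree (i.e.\ radial $\SLE_{\kappa'}(\kappa'-6)$), invokes \cite[Corollary~3.7]{gwynne2021conformal} to identify certain $\CLE$ excursions as genuine chordal $\SLE_{\kappa'}$ (to which $\kappa' \in \adjcon$ applies), and then builds up through Lemmas~\ref{lem:nested_cle_away_from_boundary_connected}--\ref{lem:chain_of_components_disconnecting_origin_from_boundary} to obtain the ``strong'' connectivity statement that the chain can be taken inside any prescribed Jordan domain (here $A_0^T$ and $A_0^B$). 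Your ``planar topology argument'' does not supply this; without it you cannot guarantee $U_j \cap A_0 \neq \emptyset$ or that the concatenated loop actually winds around $B(0,1/2)$.

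\textbf{Minkowski dimension.} Your deduction of~\eqref{it:upper_bound_minkowski} from~\eqref{it:cut_point_measure_diameter_bound} is incorrect: a bound $\mu_j(X) \leq C\,\diam(X)^{d_{\kappa'}^\cut - b}$ is a Frostman-type condition that yields a \emph{lower} bound on the dimension of the support of $\mu_j$, not an upper bound, and says nothing about $\partial U_{j-1} \cap \partial U_j$ outside that support. The paper obtains~\eqref{it:upper_bound_minkowski} separately (Lemma~\ref{lem:good_boundaries_maps}) by recognizing $\partial U_{j-1} \cap \partial U_j$ as a subset of the cut set of an $\SLE_{\kappa'}$ segment and applying the upper Minkowski bound of \cite[Theorem~1.2]{mw2017intersections}. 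Similarly,~\eqref{it:cut_point_measure_local_positive} is not automatic from ``density of cut points'': it requires Lemma~\ref{lem:cut_point_measure_local_positive}, proved via positivity of Bessel local time near every zero. Finally, the measure construction itself (Lemma~\ref{lem:good_measures_chordal_case}) restricts the cut measure of a carefully chosen \emph{segment} $\eta'|_{[T_M,S_M]}$, because points in $\partial U_{j-1} \cap \partial U_j$ are cut points of that segment but need not be cut points of the full curve.
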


See Figure~\ref{fig:graph_connected} for an illustration of the statement of Lemma~\ref{lem:graph_connected}.  We will prove the assertions of Lemma~\ref{lem:graph_connected} related to the connectivity in Section~\ref{subsubsec:connectivity} using the following strategy.  First, in Lemma~\ref{lem:radial_connected} we will prove a strong version of the statement that the adjacency graph of components associated with the complement of a radial $\SLE_{\kappa'}(\kappa'-6)$ is a.s.\ connected.  We will then consider a GFF $h$ on $\D$ whose boundary conditions are compatible with a coupling with a $\CLE_{\kappa'}$ instead of having zero boundary conditions and prove the result in this setting.  We note that the law of the restriction of such a field to $A_2$ is absolutely continuous with respect to the case of having zero boundary conditions, so it suffices to consider this setting.  Each of the counterflow lines considered in the definition of $Y_a$ then corresponds to part of the branch of the $\CLE_{\kappa'}$ exploration tree targeted at the corresponding point in $(a \Z)^2 \cap A_2$.  Working in this setting will allow us to use some of the resampling properties of $\CLE_{\kappa'}$ to reduce Lemma~\ref{lem:graph_connected} to the connectivity of the adjacency graph of the complementary components of an $\SLE_{\kappa'}$ and an $\SLE_{\kappa'}(\kappa'-6)$.  In Sections~\ref{subsubsec:cut_point_support} and~\ref{subsubsec:measure_on_the_intersection}, we will turn to establish the assertions of Lemma~\ref{lem:graph_connected} related to the measure on the intersections of the boundaries of the components.  This will draw on the results of Section~\ref{sec:natural_measure_on_cut_points} and the setup from Section~\ref{subsubsec:connectivity}.  We will combine all of the ingredients to complete the proof of Lemma~\ref{lem:graph_connected} in Section~\ref{subsubsec:proof_of_graph_connected}.

\subsubsection{Connectivity}
\label{subsubsec:connectivity}

\begin{lemma}
\label{lem:radial_connected}
Suppose that $\kappa' \in \adjcon$ and that $\eta'$ is a radial $\SLE_{\kappa'}(\kappa'-6)$ in $\D$ from $-i$ to $0$.  Then a.s.\ the following holds.  Let $W \subseteq \D$ be a Jordan domain and $z,w \in W \cap \Q^2$. Let $U$ (resp.\ $V$) be the connected component of $\D \setminus \eta'$ containing $z$ (resp.\ $w$).  Then there exist connected components $U_1,\ldots,U_n$ of $\D \setminus \eta'$ such that $U = U_1$, $V=U_n$ and $U_i \cap W \neq \emptyset$ for every $1 \leq i \leq n$ and $\partial U_i \cap \partial U_{i+1} \neq \emptyset$ for every $1 \leq i \leq n-1$.  Moreover,  there exist simple paths $\gamma_1,\dots,\gamma_n$ parameterized by $[0,1]$ such that $\gamma_i \subseteq W$ and $\gamma_i((0,1)) \subseteq U_i$ for every $1 \leq i \leq n$,  $\gamma_{i-1}(1) = \gamma_i(0) \in \partial U_{i-1} \cap \partial U_i$ for every $2 \leq i \leq n$ and $\gamma_1(0) = z$,  $\gamma_n(1) = w$.
\end{lemma}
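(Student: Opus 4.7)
The plan is to deduce the lemma from the chordal $\SLE_{\kappa'}$ adjacency hypothesis $\kappa' \in \adjcon$ in three stages: first transfer global adjacency to the radial $\SLE_{\kappa'}(\kappa'-6)$ process, then localize the resulting chain of adjacent components to $W$, and finally upgrade the chain to a concatenation of simple paths with prescribed endpoints and waypoints.

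For the first stage, I would use that $\eta'$ is a branch of the $\CLE_{\kappa'}$ exploration tree targeted at $0$ (Section~\ref{subsec:cle}), and that via the imaginary geometry coupling of Section~\ref{subsec:ig} and the Radon--Nikodym derivative estimates of Remark~\ref{rmk:RN_derivative}, the law of $\eta'$ restricted to any compact set $K \subseteq \D \setminus \{0\}$ (stopped upon first exiting $K$) is absolutely continuous with respect to that of a chordal $\SLE_{\kappa'}$ in a suitable domain. Since the adjacency of any two complementary components $U, V$ of $\D \setminus \eta'$ is witnessed in a compact set containing $U \cup V$, the hypothesis $\kappa' \in \adjcon$ transfers to yield that, a.s., the adjacency graph of complementary components of $\D \setminus \eta'$ is connected. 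This supplies, for any two components $U$ and $V$, a finite chain $U = U_1, \ldots, U_n = V$ with $\partial U_i \cap \partial U_{i+1} \neq \emptyset$ for every $i$, but with no a priori control over where the $U_i$ lie.

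For the second stage, I would localize the chain to $W$ by a planarity argument. Fixing a countable dense family of Jordan domains with rational polygonal boundary together with countably many rational pairs $(z, w)$, I work simultaneously for all triples on a full-probability event. Consider the subgraph $G_W$ of the global adjacency graph consisting of components that intersect $W$. If the components $U \ni z$ and $V \ni w$ lay in distinct connected pieces of $G_W$, then a topological argument would give a contradiction: the closures of the complementary components outside the $G_W$-piece containing $U$ would, together with the boundary arcs of $W$ and the pieces of $\eta' \cap \closure{W}$, yield a separator between $z$ and $w$ in $W$, contradicting the connectedness of $W$. Here one uses that each complementary component of $\eta'$ is a.s.\ a Jordan domain --- a consequence of the imaginary geometry description in Section~\ref{subsec:ig}, since each component boundary is a finite concatenation of merging flow- and counterflow-line segments --- and that $\eta'$ has empty interior, so any separator in $W$ must be assembled from boundary arcs of complementary components. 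A further finite subdivision of the chain then ensures that each intersection $\partial U_i \cap \partial U_{i+1}$ contains a point accessible from within $W \cap U_i$ and $W \cap U_{i+1}$.

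For the third stage, I would construct the simple paths $\gamma_i$. Since each $U_i$ is a Jordan domain, by the Carath\'eodory extension theorem the Riemann map from $\D$ to $U_i$ extends to a homeomorphism $\closure{\D} \to \closure{U_i}$, so all boundary points of $U_i$ are accessible. For each $2 \leq i \leq n$ I choose $w_{i-1} \in \partial U_{i-1} \cap \partial U_i$ accessible from $W \cap U_i$ and a rational waypoint $z_i \in U_i \cap W$, and use the Jordan structure of $U_i$ together with the openness of $U_i \cap W$ to build a simple arc in $\closure{U_i} \cap W$ from $w_{i-1}$ to $w_i$ passing through $z_i$, with interior contained in $U_i$. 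Setting $\gamma_1(0) = z$ and $\gamma_n(1) = w$ and concatenating the arcs yields the desired sequence. The main obstacle I expect is the second stage: the planarity argument is delicate because the boundaries of complementary components are fractal, so topological separation in $W$ must be handled without appeal to tame curves, and the accessibility of points in $\partial U_i \cap \partial U_{i+1}$ from inside $W \cap U_i$ must be arranged simultaneously for both neighbors to enable the path construction in stage three.
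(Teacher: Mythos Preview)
Your proposal has a genuine gap in the second stage, which is where the entire difficulty of the lemma lies. The planarity argument you sketch does not work as stated: the adjacency relation between complementary components is \emph{boundary intersection}, not topological separation, and there is no reason a priori that $U,V$ lying in different $G_W$-pieces forces a ``separator between $z$ and $w$ in $W$.'' Concretely, a global adjacency chain $U=U_1,\ldots,U_n=V$ may leave $W$ (some $U_i$ disjoint from $W$) and return, with the intermediate components wrapping around outside $W$; nothing in your argument rules this out, and $W$ remains perfectly connected throughout. Your first stage is also problematic: adjacency-graph connectivity is a global property of the entire curve, and radial $\SLE_{\kappa'}(\kappa'-6)$ spirals into $0$, so a local absolute-continuity comparison with chordal $\SLE_{\kappa'}$ on a compact set does not transfer it.

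The paper takes a different route that avoids any such topological argument. It first proves the \emph{localized} statement directly for the loops of a nested $\CLE_{\kappa'}$ that hit $\partial W$ (Lemma~\ref{lem:nested_cle_away_from_boundary_connected}), by invoking a resampling result \cite[Corollary~3.7]{gwynne2021conformal} which identifies certain $\CLE$ excursions into $W$ as chordal $\SLE_{\kappa'}$ curves; the hypothesis $\kappa'\in\adjcon$ then applies to those excursions already inside $W$. This is pushed down to the boundary-touching loops of a non-nested $\CLE_{\kappa'}$ (Lemma~\ref{lem:cle_away_from_boundary_connected}), to chordal $\SLE_{\kappa'}(\kappa'-6)$ via the exploration tree (Lemma~\ref{lem:chordal_connected}), and finally to the radial process by target invariance and an induction over successive disconnection times $\sigma_j$. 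The simple paths are built at the $\CLE$ step and then concatenated and pruned at each reduction; no Jordan-domain or accessibility assertion about general complementary components is needed.
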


The main step in the proof of Lemma~\ref{lem:radial_connected} is Lemma~\ref{lem:cle_away_from_boundary_connected}, which states that the graph of complementary components of the boundary touching loops of a $\CLE_{\kappa'}$ satisfy a stronger version of the connectivity of its adjacency graph.  Roughly speaking, this states that if we have any fixed Jordan domain~$W$ and two complementary components $U,V$ which intersect $W$ then we can take the chain of complementary components connecting $U$ and $V$ to all intersect $W$.  This will allow us to show that the adjacency graph of the complementary components of a chordal $\SLE_{\kappa'}(\kappa'-6)$ is a.s.\ connected in a strong sense from which Lemma~\ref{lem:radial_connected} will follow by using the target invariance of radial $\SLE_{\kappa'}(\kappa'-6)$.

More precisely, as described in Step~1 of the proof of Lemma~\ref{lem:radial_connected}, we can decompose the radial $\SLE_{\kappa'}(\kappa'-6)$ curve $\eta'$ in terms of a collection of chordal $\SLE_{\kappa'}(\kappa'-6)$ curves $\eta_j'$ from $-i$ to $i$ in $\D$, stopped upon disconnecting $0$ from $i$, so that there is a sequence $\sigma_j$ of random times for $\eta'$, with $\sigma_j \to \infty$ as $j \to \infty$, such that $\eta'([0,\sigma_j])$ is given in terms of $\eta_1',\dots,\eta_j'$. This is done so that if the result of Lemma~\ref{lem:radial_connected} holds with chordal $\SLE_{\kappa'}(\kappa'-6)$ --- which is the content of Lemma~\ref{lem:chordal_connected} --- in place of radial $\SLE_{\kappa'}(\kappa'-6)$, then the result holds for $\eta'([0,\sigma_j])$ for each $j$ and thus Lemma~\ref{lem:radial_connected} follows by an inductive argument. In order to achieve the result for chordal $\SLE_{\kappa'}(\kappa'-6)$, it is useful to first prove the corresponding result for the boundary-touching loops of a non-nested $\CLE_{\kappa'}$ (Lemma~\ref{lem:cle_away_from_boundary_connected}), as the chordal $\SLE_{\kappa'}(\kappa'-6)$ can be coupled as a subset of those loops. The convenient route to that result is similarly, via the corresponding result for boundary-touching loops of a nested $\CLE_{\kappa'}$. The reason for this being convenient, is that one can use the exploration of such loops described in Section~\ref{subsubsec:exploration}.

Thus, expanding a bit, the route to Lemma~\ref{lem:radial_connected} is as follows. First, we consider a ``denser'' gasket consisting of the loops of a nested $\CLE_{\kappa'}$ which intersect the boundary of a Jordan domain $W$. It follows from \cite[Corollary~3.7]{gwynne2021conformal} that this gasket has a connected adjacency graph, even when restricting to components intersecting $W$. This is the content of Lemma~\ref{lem:nested_cle_away_from_boundary_connected}. We then extend this in Lemma~\ref{lem:cle_away_from_boundary_connected} to the case of the less dense gasket consisting of the loops which touch $\partial \h$. This is done in the natural way: any component in the complement of the former gasket will be contained in a component in the complement of the latter gasket, hence we may just consider the sequence of components of the latter gasket which contains the sequence that is provided by Lemma~\ref{lem:nested_cle_away_from_boundary_connected}. Moreover, we will also extract a family of simple curves which together can be concatenated into one continuous curve passing through finitely many components (of the components just considered) which stays in $W$. After that, the same argument is then used to extend the result to the components of the complement of a chordal $\SLE_{\kappa'}(\kappa'-6)$ and the curves in the complement of this $\SLE_{\kappa'}(\kappa'-6)$ are constructed by concatenating the loops which arise in Lemma~\ref{lem:cle_away_from_boundary_connected}. This then proves Lemma~\ref{lem:chordal_connected} and from this we can deduce Lemma~\ref{lem:radial_connected}.

We will now recall a certain exploration defined in \cite{gwynne2021conformal}, which we described in Section~\ref{subsubsec:exploration}.  Let $\Gamma$ be a nested $\CLE_{\kappa'}$ in $\h$.  We fix an open set $U \subseteq \h$ and a path $P : [0,1] \rightarrow \closure{\h}$ which is either deterministic or it is random and independent of $\Gamma$ and such that $P(0) \in \partial \h$,  $P \cap \h \subseteq U$.  We let $\Gamma(P)$ be the set of loops in $\Gamma$ which intersect $P$ and let $\Gamma(P;U)$ be the set of loops in $\Gamma(P)$ which are contained in $U$.  Also,  we say that an arc $\alpha$ of a loop $\gamma$ in $\Gamma$ is a $P$-excursion of $\gamma$ into $U$ if $\alpha \subseteq \closure{U}$,  $\alpha \cap P \neq \emptyset$,  and $\alpha$ is not properly contained in any larger arc of $\gamma$ with these properties.  Moreover,  we let $S_{\Gamma}(P;U)$ be the set of $P$-excursions into $U$ of loops in $\Gamma$. Finally, we abuse notation slightly and write $\cup \Gamma$ or $\cup \Gamma(P)$ to denote the union of the loops in $\Gamma$ or $\Gamma(P)$ and use the same notation even if $P$ is a path in $\closure{\h}$ which does not start at $0$.

\begin{lemma}
\label{lem:nested_cle_away_from_boundary_connected}
Fix $\kappa' \in \CK$ and let $\Gamma$ be a nested $\CLE_{\kappa'}$ in $\h$.  Let $W \subseteq \h$ be a Jordan domain such that $\closure{W} \subseteq \h$.  Then a.s.\ the following holds.  Let $U,V$ be connected components of $\h \setminus \closure{\cup \Gamma(\partial W)}$ which both intersect $W$.  Then we can find connected components $U_1,\dots,U_n$ of $\h \setminus \closure{\cup \Gamma(\partial W)}$ such that $U=U_1$, $V=U_n$, $\partial U_i \cap \partial U_{i+1} \neq \emptyset$ for every $1 \leq i \leq n-1$,  and $U_j \cap W \neq \emptyset$ for every $1 \leq j \leq n$.
\end{lemma}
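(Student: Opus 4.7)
The plan is to deduce Lemma~\ref{lem:nested_cle_away_from_boundary_connected} from the assumption $\kappa' \in \CK$ together with a local argument exploiting that the set of loops $\Gamma(\partial W)$ has a favorable geometry: every such loop must touch $\partial W$, so any loop of $\Gamma(\partial W)$ intersecting a compact set $K \subseteq W$ at positive distance $\eta$ from $\partial W$ must have diameter at least $\eta$. Combined with the local finiteness of $\CLE_{\kappa'}$ (which follows from the continuity of the associated space-filling $\SLE_{\kappa'}$ established in \cite{ms2017ig4}), this yields a.s.\ only finitely many loops of $\Gamma(\partial W)$ intersecting any compact subset of $W$ bounded away from $\partial W$.

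First I would fix $z \in U \cap W$ and $w \in V \cap W$; using that $U$ and $V$ are open and $W$ is a Jordan domain, I choose a simple smooth path $\alpha \colon [0,1] \to W$ with $\alpha(0) = z$, $\alpha(1) = w$, and $\dist(\alpha([0,1]), \partial W) \geq \eta$ for some $\eta > 0$ (by pushing $\alpha$ slightly inward if necessary, which is possible since $U \cap W$ and $V \cap W$ are open subsets of $W$). By the above remark, only finitely many loops of $\Gamma(\partial W)$ can intersect $\alpha$. These loops, together with their intersections with $\alpha$, partition $\alpha$ into finitely many maximal open sub-arcs, each of which lies in a single component of $\h \setminus \closure{\cup \Gamma(\partial W)}$.

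Next I would enumerate these components in the order they are visited by $\alpha$, obtaining a finite sequence $U_1, \ldots, U_n$ with $U_1 = U$, $U_n = V$, and $U_i \cap W \neq \emptyset$ (as $U_i \supseteq \alpha \cap U_i \subseteq W$) for every $i$. At the boundary between two consecutive sub-arcs of $\alpha$, the corresponding transition point lies in $\closure{\cup \Gamma(\partial W)}$, and hence lies in $\partial U_i \cap \partial U_{i+1}$, establishing the required adjacency. The hypothesis $\kappa' \in \CK$ enters implicitly via the following structural input: in order for two sub-arcs of $\alpha$ separated by a single transition point to correspond to distinct components that are genuinely adjacent (rather than separated by intervening components of complicated topology), one needs the underlying $\CLE_{\kappa'}$ adjacency structure to behave nicely, which is guaranteed by the connectivity hypothesis applied to the branches of the $\CLE_{\kappa'}$ exploration tree (Section~\ref{subsec:cle}).

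The main obstacle is handling the fact that the components of $\h \setminus \closure{\cup \Gamma(\partial W)}$ can have fractal boundaries containing accumulations of loops from $\Gamma(\partial W)$ of arbitrarily small diameter (attached to $\partial W$ at points far from $\alpha$, but whose boundaries approach $\alpha$). While such small loops do not intersect $\alpha$ directly, they do contribute to the boundary of the components visited, so one needs to argue that the transition from $U_i$ to $U_{i+1}$ along $\alpha$ is clean, i.e.\ that $\alpha$ does not intersect $\closure{\cup \Gamma(\partial W)}$ in a complicated set between two consecutive macroscopic loop crossings. This is handled by choosing $\alpha$ generically (not depending on $\Gamma$), so that with probability one $\alpha$ intersects $\closure{\cup \Gamma(\partial W)}$ in a set whose connected components correspond exactly to crossings of the finitely many large loops, while the small loops form a set of zero linear measure along $\alpha$ that does not disconnect it; formalizing this step is where the argument is most delicate and where an appeal to the precise construction of \cite[Corollary~3.7]{gwynne2021conformal} is most valuable.
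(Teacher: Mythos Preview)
Your approach has a genuine gap that you sense in the last paragraph but do not resolve. You are correct that only finitely many loops of $\Gamma(\partial W)$ meet $\alpha$, since every such loop touches $\partial W$ and therefore has diameter at least $\eta$. The obstruction is not accumulation of small loops: it is that the intersection of a \emph{single} $\CLE_{\kappa'}$ loop with a fixed smooth curve is, for $\kappa' \in (4,8)$, almost surely an uncountable, totally disconnected set (locally the loop looks like an $\SLE_{\kappa'}$, whose intersection with a deterministic segment has positive Hausdorff dimension). Hence $\alpha \setminus \closure{\cup \Gamma(\partial W)}$ has countably infinitely many connected sub-arcs, not finitely many, and there is no well-defined notion of ``consecutive'' arc. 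Even granting some ordering, there is no reason the component containing one arc shares a boundary point with the component containing a nearby arc: between them sits a Cantor-like portion of a loop, and on the two sides of that portion one can land in many different complementary pockets. Choosing $\alpha$ generically does not help, since the uncountable intersection occurs for every fixed smooth $\alpha$. More tellingly, your argument as written never actually invokes $\kappa' \in \CK$; the sentence explaining how it ``enters implicitly'' is not an argument, and an approach that does not use the hypothesis cannot succeed.

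The paper proceeds very differently and applies $\kappa' \in \CK$ directly. By the resampling result \cite[Corollary~3.7]{gwynne2021conformal}, each excursion $\eta_{m,\ell}$ that a loop of $\Gamma$ makes from a neighbourhood of $\partial W$ into $W$ is, conditionally on the rest of the configuration, a chordal $\SLE_{\kappa'}$ in an explicit simply connected domain $D_{m,\ell}$. Because $\kappa' \in \CK$, the adjacency graph of components of $D_{m,\ell} \setminus \eta_{m,\ell}$ is connected for every $m,\ell$; moreover $\eta_{m,\ell} \subseteq \closure{W}$, so all these components meet $W$. The chain from $U$ to $V$ is then assembled by concatenating chains within successive domains $D_{m_i,\ell_i}$ whose boundaries touch. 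The essential point you are missing is that the connectivity hypothesis must be applied to the excursion curves themselves, which are genuine chordal $\SLE_{\kappa'}$'s, rather than extracted from the geometry along an auxiliary transversal.
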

We will deduce this lemma from \cite[Corollary~3.7]{gwynne2021conformal} using, roughly,  a result that tells us that the parts in $W$ of the loops of $\Gamma$ which intersect $W$ can be realized as chordal $\SLE_{\kappa'}$ curves for which the connectedness of the adjacency graph of connected components follows by the choice $\kappa' \in \CK$. In the proof, we also recall the definition of the exploration considered in \cite[Corollary~3.7]{gwynne2021conformal}.
\begin{proof}[Proof of Lemma~\ref{lem:nested_cle_away_from_boundary_connected}]
Fix $w_0 \in W$ and for every $n \in \N$,  we set $\wt{W}_n = \{z \in W : \dist(z,\partial W) > 1/n \}$. If~$n$ is such that $\dist(w_0,\partial W) > 1/n$, then we let $W_n$ be the connected component of $\wt{W}_n$ which contains $w_0$ and otherwise we let $W_n = \emptyset$.  Then we have that $W = \cup_{n\geq 1}W_n$.  Let also $\wt{\gamma} : [0,1] \rightarrow \partial W$ be a parameterization of $\partial W$ such that $\wt{\gamma}(0) = \wt{\gamma}(1) = z_0$ and $z_0$ is the point in $\partial W$ with the smallest imaginary part and ties broken according to the smallest real part.  We parameterize $\wt{\gamma}$ in the clockwise direction and let $P: [0,1] \to \closure{\h}$ be the concatenation of the segment $[0,z_0]$ and $\wt{\gamma}$,  where the parameterization of $P$ along $\wt{\gamma}$ is in the clockwise direction.  For every $k \in \N$,  we let $P_k:[0,1] \to \closure{\h}$ be given by $P_k(t) = P(t(1-1/k))$, and note that $P_k$ is simple. 
Furthermore, we set $O_n = \h \setminus \closure{W_n}$ and note that $P_k \cap \h \subseteq O_n$.

For $k,\ell,m,n \in \N$, we recall the $(m,\ell)$-exploration process described in Section~\ref{subsubsec:exploration}. Denote by $(\alpha_{m,\ell}^{k,n},\Gamma_m^{k,n})$ the $(m,\ell)$-exploration process along $P_k$ relative to $O_n$ and let $\eta_{m,\ell}^{k,n}$ be the curve that arises in the definition of the exploration (in Section~\ref{subsubsec:exploration} denoted as $\eta_{m,\ell}$). Let $\CE_{m,\ell}^{k,n} = \sigma(\Gamma(P_k;O_n),S_{\Gamma}(P_k;O_n),\CC_\Gamma(P_k;O_n) \setminus \eta_{m,\ell}^{k,n})$ and let $x_{m,\ell}^{k,n}$ (resp.\ $x_{m,\ell}^{*,k,n}$) be the starting (resp.\ ending) point of $\eta_{m,\ell}^{k,n}$,  and let $D_{m,\ell}^{k,n}$ be the connected component of $\h \setminus \closure{\cup \Gamma(P_k) \setminus \eta_{m,\ell}^{k,n}}$ with $x_{m,\ell}^{k,n}$ and $x_{m,\ell}^{*,k,n}$ on its boundary.   Note that the $P_k$-excursion of $\gamma$ into $O_n$ whose terminal endpoint is given by $x_{m,\ell}^{k,n}$ is measurable with respect to $\sigma(\Gamma(P_k;O_n),S_{\Gamma}(P_k;O_n))$.

Since $\Gamma$ satisfies the Markov property with respect to $(P_k,O_n)$ (by \cite[Corollary~3.7]{gwynne2021conformal}, described in Section~\ref{subsubsec:exploration}) we have that conditionally on $\CE_{m,\ell}^{k,n}$ and on the event that $S_{\Gamma}(P_k;O_n) \neq \emptyset$,  the curve $\eta_{m,\ell}^{k,n}$ has the law of a chordal $\SLE_{\kappa'}$ in $D_{m,\ell}^{k,n}$ from $x_{m,\ell}^{k,n}$ to $x_{m,\ell}^{*,k,n}$.  Note that $\{\eta_{m,\ell}^{k,n} : m,\ell \in \N\}$ is the set of all complementary $P_k$-excursions of loops in $\Gamma$ out of $O_n$.  Since $\kappa' \in \CK$,  it follows that a.s.,  for every $k,\ell,m \in \N$,  the adjacency graph of the connected components of $D_{m,\ell}^{k,n} \setminus \eta_{m,\ell}^{k,n}$ is connected.  Note also that a.s.\ $z_0$ is surrounded by a loop in $\Gamma$,  which implies that a.s.\ there exists $k_0 \in \N$ such that for every $k \geq k_0$ and $\ell, m \in \N$,  we have that $D_{m,\ell}^{k,n} = D_{m,\ell}^n$ and $\eta_{m,\ell}^{k,n} = \eta_{m,\ell}^n$,  where $D_{m,\ell}^n$ and $\eta_{m,\ell}^n$ are defined in the same way as $D_{m,\ell}^{k,n}$ and $\eta_{m,\ell}^{k,n}$, respectively, but with $P_k$ replaced by $P$.  It follows that a.s.\ on the event that $S_{\Gamma}(P;O_n) \neq \emptyset$,  for every $m,\ell \in \N$ such that $\eta_{m,\ell}^n$ is non-trivial,  the adjacency graph of connected components of $D_{m,\ell}^n \setminus \eta_{m,\ell}^n$ is connected.  Note also that the definition of the $\eta_{m,\ell}^n$'s implies that $\eta_{m,\ell}^n \subseteq \closure{W}$ and so every connected component of $D_{m,\ell}^n \setminus \eta_{m,\ell}^n$ intersects $W$.

\begin{figure}[ht!]
\begin{center}
\includegraphics[scale=0.85]{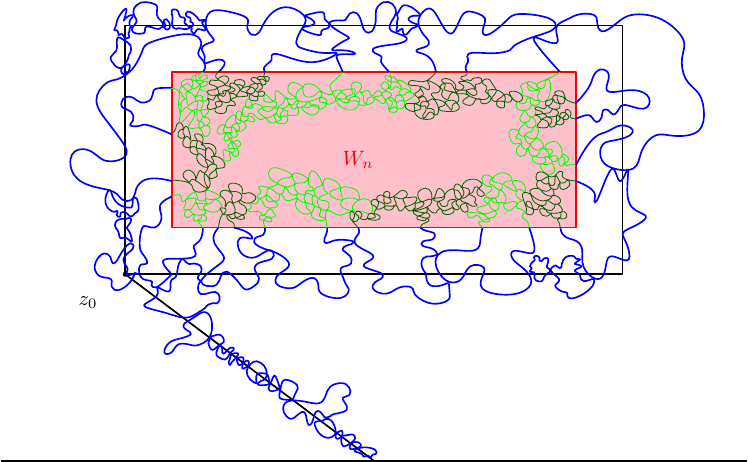}	
\end{center}
\caption{\label{fig:lemma53} Illustration of the proof of Lemma~\ref{lem:nested_cle_away_from_boundary_connected}. The curves $\eta_{m,\ell}^n$ are drawn in green (two shades of green, to make them easier to distinguish). The laws of these curves are those of chordal $\SLE_\kappa$. Since $\kappa \in \CK$, the adjacency graphs of bubbles of the $\eta_{m,\ell}^n$ curves are connected, hence for any two different connected components, one can find a finite chain of components whose boundaries intersect, connecting said components. Since that holds for each $\eta_{m,\ell}^n$, two bubbles in complements of different $\eta_{m,\ell}^n$ can be connected in the same way, if their boundaries intersect. Similarly, one can connect bubbles cut out by different $\eta_{m,\ell}^n$ farther apart. The blue loops and parts of loops are also self-intersecting (and locally look like the green ones), but are drawn as simple for simplicity.}  
\end{figure}

Let $U,V$ be distinct connected components of $\h \setminus \closure{\cup \Gamma(P)}$ which both intersect $W_n$.  Note that this implies that $S_{\Gamma}(P;O_n) \neq \emptyset$.  Also there exist $m_1,\ell_1,\ldots,m_j,\ell_j \in \N$ such that $U$ (resp.\ $V$) is a connected component of $D_{m_1,\ell_1}^n \setminus \eta_{m_1,\ell_1}^n$ (resp.\ $D_{m_j,\ell_j}^n \setminus \eta_{m_j,\ell_j}^n$) and $\partial D_{m_i,\ell_i}^n \cap \partial D_{m_{i+1},\ell_{i+1}}^n \neq \emptyset$ for each $1 \leq i \leq j-1$.  Moreover, for each $1 \leq i \leq j-1$ there exist connected components $\wt{U}_i$ and $\wt{V}_i$ of $D_{m_i,\ell_i}^n \setminus \eta_{m_i,\ell_i}^n$ and $D_{m_{i+1},\ell_{i+1}}^n \setminus \eta_{m_{i+1},\ell_{i+1}}^n$, respectively, such that $\partial \wt{U}_i \cap \partial \wt{V}_i \neq \emptyset$.  Since the adjacency graph of components of $D_{m_i,\ell_i}^n \setminus \eta_{m_i,\ell_i}^n$ for each $1 \leq i \leq j$ is connected,  we obtain that there exist connected components $U_{i,1}^n,\dots,U_{i,q_i}^n$ of $D_{m_i,\ell_i}^n \setminus \eta_{m_i,\ell_i}^n$ for each $1 \leq i \leq j$ such that $U = U_{1,1}^n$, $\wt{U}_1 = U_{1,q_1}^n$, $\wt{V}_i = U_{i+1,1}^n$ for each $1 \leq i \leq j-1$, $\wt{U}_i = U_{i,q_i}^n$ for each $1 \leq i \leq j-1$, $U_{j,q_j}^n = V$, and $\partial U_{i,k}^n \cap \partial U_{i,k+1}^n \neq \emptyset$ for each $1 \leq k \leq q_i-1$ and $1 \leq i \leq j$. See Figure~\ref{fig:lemma53} for an illustration.

Since every connected component of $\h \setminus \closure{\cup \Gamma(P)}$ which intersects $W$ must intersect $W_n$ for some $n \in \N$,  we obtain that a.s.\ the following is true.  Let $U,V$ be connected components of $\h \setminus \closure{\cup \Gamma(P)}$ which both intersect $W$.  Then there exist connected components $U_1,\ldots,U_N$ of $\h \setminus \closure{\cup \Gamma(P)}$ such that $\partial U_i \cap \partial U_{i+1} \neq \emptyset$ for every $1 \leq i \leq N-1$,  $U_i \cap W \neq \emptyset$ for every $1 \leq i \leq N$,  $U=U_1$ and $V = U_N$.  To finish the proof,  we note that if we fix a (deterministic) countable and dense subset of $\partial W$, $(a_j)_{j\geq 1}$,  then a.s.\ for every $j \in \N$,  we can find a loop in $\Gamma$ surrounding $a_j$.  This implies that a.s.\ every connected component of $\h \setminus \closure{\cup \Gamma(P)}$ intersecting $W$ is also a connected component of $\h \setminus \closure{\cup \Gamma(\partial W)}$ intersecting $W$ and vice-versa.  This completes the proof of the lemma.
\end{proof}

We now turn to the proof of the (strong form of the) connectivity of the adjacency graph of the boundary intersecting loops of a $\CLE_{\kappa'}$.
\begin{lemma}
\label{lem:cle_away_from_boundary_connected}
Suppose that $\kappa' \in \adjcon$.  Let $W \subseteq \h$ be a Jordan domain such that $\closure{W} \subseteq \h$ and fix $z,w \in W \cap \Q^2$.  Let $\Gamma$ be a non-nested $\CLE_{\kappa'}$ in $\h$ and let~$\Gamma'$ be the loops of~$\Gamma$ which intersect~$\partial \h$.  Furthermore, let $U$ (resp.\ $V$) be the connected component of $\h \setminus \closure{\cup \Gamma'}$ containing $z$ (resp.\ $w$). Then a.s.\ the following holds.  There exist connected components $U_1,\dots,U_n$ of $\h \setminus \closure{\cup \Gamma'}$ with $U_1 = U$, $U_n = V$, $\partial U_i \cap \partial U_{i+1} \neq \emptyset$ for each $1 \leq i \leq n-1$, and $U_i \cap W \neq \emptyset$ for each $1 \leq i \leq n$.  Also,  there exist simple paths $\gamma_1,\dots,\gamma_n$ parameterized by $[0,1]$ such that $\gamma_i \subseteq W$ and $\gamma_i((0,1)) \subseteq U_i$ for every $1 \leq i \leq n$,  $\gamma_1(0) = z,  \gamma_n(1) = w$,  and $\gamma_i(0) = \gamma_{i-1}(1) \in \partial U_{i-1} \cap \partial U_i$ for every $2 \leq i \leq n$.  In particular,  the adjacency graph of connected components of $\h \setminus \closure{\cup \Gamma'}$ is connected a.s.
\end{lemma}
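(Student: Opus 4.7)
The plan is to reduce to Lemma~\ref{lem:nested_cle_away_from_boundary_connected} via a nested extension of $\Gamma$.  We sample an independent $\CLE_{\kappa'}$ inside each complementary component of $\h \setminus \closure{\cup \Gamma}$ and iterate, producing a nested $\CLE_{\kappa'}$ $\wt{\Gamma}$ with $\Gamma' \subseteq \Gamma \subseteq \wt{\Gamma}$.  The key geometric observation is that any loop $\ell \in \Gamma'$ meeting $W$ must cross $\partial W$: since $\ell$ is connected and meets both $\partial \h \subseteq \closure{\h} \setminus \closure{W}$ and $W$, it must meet $\partial W$, so $\ell \in \wt{\Gamma}(\partial W)$.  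Conversely, any $\ell \in \Gamma' \setminus \wt{\Gamma}(\partial W)$ is connected, disjoint from $\partial W$, and meets $\partial \h$, so $\ell \cap \h \subseteq \h \setminus \closure{W}$.

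Let $\wt{U}$ and $\wt{V}$ be the components of $\h \setminus \closure{\cup \wt{\Gamma}(\partial W)}$ containing $z$ and $w$ respectively.  Applying Lemma~\ref{lem:nested_cle_away_from_boundary_connected} to $\wt{\Gamma}$ with the Jordan domain $W$ yields a finite chain $V_1 = \wt{U}, V_2, \ldots, V_m = \wt{V}$ of components of $\h \setminus \closure{\cup \wt{\Gamma}(\partial W)}$, each meeting $W$, with $\partial V_i \cap \partial V_{i+1} \neq \emptyset$.  Inspecting the proof of that lemma (in which the chordal $\SLE_{\kappa'}$ interfaces $\eta_{m,\ell}^n$ are contained in $\closure{W}$), one can further arrange that the shared boundary point $q_i \in \partial V_i \cap \partial V_{i+1}$ lies in $\closure{W}$ and that a simple path $\pi_i$ from $q_{i-1}$ to $q_i$ (with $q_0 = z$, $q_m = w$) is available with its interior contained in $V_i \cap W$.

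By the geometric observation, each $\pi_i$ avoids every loop of $\Gamma'$: loops in $\Gamma' \cap \wt{\Gamma}(\partial W)$ are avoided because $\pi_i \subseteq V_i$, and loops in $\Gamma' \setminus \wt{\Gamma}(\partial W)$ are avoided because $\pi_i \subseteq W$.  After a small perturbation of $\pi_i$ (using that $\closure{\cup \Gamma'} \setminus \cup \Gamma'$ is a null set that a generic simple curve avoids), its interior lies in a single component $U^{(i)}$ of $\h \setminus \closure{\cup \Gamma'}$, which therefore satisfies $U^{(i)} \cap W \neq \emptyset$.  Deleting consecutive repeats in $(U^{(1)}, \ldots, U^{(m)})$ produces the desired chain $U_1 = U, \ldots, U_n = V$: at each transition the common endpoint $q_i$ lies on a loop of $\Gamma'$ separating $U_i$ from $U_{i+1}$, and hence in $\partial U_i \cap \partial U_{i+1}$.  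Concatenating the $\pi_i$'s and applying standard loop-removal within each $U_i$ yields the required simple curves $\gamma_1, \ldots, \gamma_n$.

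The main obstacle is the technical refinement of Lemma~\ref{lem:nested_cle_away_from_boundary_connected} ensuring that $q_i \in \closure{W}$ and that each $\pi_i$ can be drawn inside $V_i \cap W$.  This requires a careful reading of the $(m,\ell)$-exploration along $P$ in the proof of Lemma~\ref{lem:nested_cle_away_from_boundary_connected}: because the exploration interfaces lie in $\closure{W}$ and adjacency between components is witnessed by complementary components of chordal $\SLE_{\kappa'}$'s inside $\closure{W}$, both the accessibility of the shared boundary points from within $W$ and the existence of the paths $\pi_i$ follow, at the cost of additional bookkeeping.
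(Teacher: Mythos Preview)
Your overall strategy---extend $\Gamma$ to a nested $\CLE_{\kappa'}$ $\wt{\Gamma}$, apply Lemma~\ref{lem:nested_cle_away_from_boundary_connected}, and then pass from components of $\h\setminus\closure{\cup\wt{\Gamma}(\partial W)}$ to the coarser components of $\h\setminus\closure{\cup\Gamma'}$---is exactly the paper's approach. However, you have misidentified where the work lies, and the ``main obstacle'' you describe is in fact not an obstacle at all.

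The key observation you are missing is that a.s.\ $\partial W \subseteq \closure{\cup \wt{\Gamma}(\partial W)}$. Indeed, fix a countable dense set $(a_j)\subseteq\partial W$; for each $j$ the nested $\CLE_{\kappa'}$ a.s.\ contains a loop surrounding $a_j$, and any such loop intersects $\partial W$, so $a_j\in\closure{\cup\wt{\Gamma}(\partial W)}$. It follows that every component $V_i$ of $\h\setminus\closure{\cup\wt{\Gamma}(\partial W)}$ that meets $W$ is \emph{entirely contained} in $W$. Once you know $V_i\subseteq W$, there is no need to refine Lemma~\ref{lem:nested_cle_away_from_boundary_connected} or to arrange that $q_i\in\closure{W}$ or that $\pi_i\subseteq V_i\cap W$: any simple path in $\closure{V_i}$ is automatically in $\closure{W}$. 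Moreover, the containment $V_i\subseteq\wh{U}_i$ for some component $\wh{U}_i$ of $\h\setminus\closure{\cup\Gamma'}$ is then immediate: if $p\in V_i\cap\closure{\cup\Gamma'}$, points on loops of $\Gamma'$ accumulate at $p\in W$, so those loops eventually meet $W$ and hence $\partial W$ (by your own geometric observation), placing them in $\wt{\Gamma}(\partial W)$ and forcing $p\in\closure{\cup\wt{\Gamma}(\partial W)}$, a contradiction. No perturbation argument is needed, and your appeal to ``$\closure{\cup\Gamma'}\setminus\cup\Gamma'$ being null'' is both unnecessary and not obviously sufficient as stated. With this one observation in hand, the remainder of your argument (collapse repeats, concatenate and clean up paths) goes through exactly as in the paper.
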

\begin{proof}
Let $U,V$ be as in the statement of the lemma and let $\wt{\Gamma}$ be a nested $\CLE_{\kappa'}$ constructed by sampling a nested $\CLE_{\kappa'}$ in each loop of $\Gamma$. Since $U \cap W \neq \emptyset$ and $V \cap W \neq \emptyset$, there exist connected components $\wt{U},\wt{V}$ of $\h \setminus \closure{\cup \wt{\Gamma}(\partial W)}$ such that $\wt{U} \subseteq U \cap W$ and $\wt{V} \subseteq V \cap W$.  Lemma~\ref{lem:nested_cle_away_from_boundary_connected} implies that a.s.\ there exists connected components $\wt{U}_1,\dots,\wt{U}_n$ of $\h \setminus \closure{\cup \wt{\Gamma}(\partial W)}$ such that $\partial \wt{U}_i \cap \partial \wt{U}_{i+1} \neq \emptyset$ for every $1 \leq i \leq n-1$,  $\wt{U}_i \cap W \neq \emptyset$ for every $1 \leq i \leq n$,  $\wt{U} = \wt{U}_1$ and $\wt{V} = \wt{U}_n$.  Note that $\wt{U}_i \subseteq W$ for every $1 \leq i \leq n$,  since $\partial W \subseteq \closure{\cup \wt{\Gamma}(\partial W)}$.  We now construct simple paths in $\wt{U}_i$. We fix a point $x_i \in \partial \wt{U}_i \cap \partial \wt{U}_{i+1}$ for $1 \leq i \leq n-1$ and for $2 \leq i \leq n-1$ we let $\wt{\gamma}_i$ be a simple path in $\closure{\wt{U}_i}$ parameterized by $[0,1]$ such that $\wt{\gamma}_i((0,1)) \subseteq \wt{U}_i$,  $\wt{\gamma}_i(0) = x_{i-1}$ and $\wt{\gamma}_i(1) = x_i$.  We also fix $z_1 \in \wt{U}_1,  z_n \in \wt{U}_n$ and let $\wt{\gamma}_1$ (resp.\ $\wt{\gamma}_n$) be a path in $\closure{\wt{U}_1}$ (resp.\ $\closure{\wt{U}_n}$) parameterized by $[0,1]$ such that $\wt{\gamma}_1((0,1)) \subseteq \wt{U}_1$ (resp.\ $\wt{\gamma}_n((0,1)) \subseteq \wt{U}_n$),  $\wt{\gamma}_1(0) = z_1$ (resp.\ $\wt{\gamma}_n(0) = x_{n-1}$) and $\wt{\gamma}_1(1) = x_1$ (resp.\ $\wt{\gamma}_n(1) = z_n$).  

Next, we deduce the corresponding properties, but with connected components of $\h \setminus \closure{ \cup \Gamma'}$ in place of $\wt{U}_1,\dots,\wt{U}_n$. Note that for every $1 \leq i \leq n$,  there exists a connected component $\wh{U}_i$ of $\h \setminus \closure{\cup \Gamma'}$ such that $\wt{U}_i \subseteq \wh{U}_i$.  Note also that $U = \wh{U}_1,  V = \wh{U}_n$.  We define $1= i_1 < \dots <i_m \leq n$ as follows.  We set $i_1 = 1$.  Assuming that we have defined $i_1,\dots,i_k$ and $\wh{U}_{i_k} \neq \wh{U}_n$, we let $i_{k+1} = \min\{ i > i_k: \wh{U}_i \neq \wh{U}_{i_k} \}$ and we let $m = \min\{ k \in \N: \wh{U}_{i_k} = \wh{U}_n \}$. We then set $U_j = \wh{U}_{i_j}$ for each $1 \leq j \leq m$. Moreover,  we define the paths $\wh{\gamma}_1,\dots,\wh{\gamma}_m$ as follows.  For every $1 \leq j \leq m-1$,  we let $\wh{\gamma}_j$ be the concatenation of the paths $\wt{\gamma}_k$, $i_j \leq k \leq i_{j+1}-1$ (in the order of increasing $k$),  and we parameterize $\wh{\gamma}_j$ by $[0,1]$.  Furthermore, since $U_j = \wh{U}_k$ for all $i_j \leq k \leq i_{j+1}-1$, it follows that $\wh{\gamma}_j \subseteq \closure{U_j} \cap W$. For $j=m$,  we let $\wh{\gamma}_m$ be the concatenation of the paths $\wt{\gamma}_{i_m},\dots,\wt{\gamma}_n$ in this order,  where we parameterize $\wh{\gamma}_m$ by $[0,1]$.  Similarly,  we have that $\wh{\gamma}_m \subseteq \closure{U_m} \cap W$. Finally, by possibly extending $\wh{\gamma}_1$ (resp.\ $\wh{\gamma}_m$) to start at $z$ (resp.\ end at $w$), removing any loops made by the loops $\wh{\gamma}_j$, for $1 \leq j \leq m$ (recall here that while the paths $\wt{\gamma}_i$ are simple, their concatenations might not be) and adjusting the curve slightly near the boundary (as to avoid boundary intersections outside of the starting and ending points), we have collection of simple paths $\gamma_j$ for $1 \leq j \leq m$, with $\gamma_1(0) = z$, $\gamma_m(1) = w$, $\gamma_j(0) = \wh{\gamma}_j(0)$ for $2 \leq j \leq m$, $\gamma_j(1) = \wh{\gamma}_j(1)$ for $1 \leq j \leq m-1$, so that in particular, $\gamma_j(0) = \gamma_{j-1}(1) \in \partial U_{j-1} \cap \partial U_j$ for each $2 \leq j \leq m$ and $\gamma_j((0,1)) \subseteq U_j$ for each $1 \leq j \leq m$. Hence, $U_1,\dots,U_m$ and $\gamma_1,\dots,\gamma_m$ are as in the statement of the lemma and the proof of the first part is complete.

In order to deduce that $\h \setminus \closure{ \cup \Gamma'}$ is connected,  we set $W_n = (-n,n) \times (1/n,n)$ for each $n \in \N$.  Note that $W_n$ is a Jordan domain and that for every connected component $U$ of $\h \setminus \closure{\cup \Gamma'}$,  there exists $n \in \N$ such that $U \cap W_n \neq \emptyset$.  Therefore, the second part of the lemma follows from the first part.
\end{proof}

Next, we prove the (strong form of the) connectivity of the adjacency graph of a chordal $\SLE_{\kappa'}(\kappa'-6)$.

\begin{lemma}
\label{lem:chordal_connected}
Suppose that $\kappa' \in \adjcon$ and let $\eta'$ be an $\SLE_{\kappa'}(\kappa'-6)$ in $\h$ from $0$ to $\infty$.  Then the adjacency graph of components of $\h \setminus \eta'$ is a.s.\ connected. Moreover, if $W \subseteq \h$ is a Jordan domain and $z,w \in W \cap \Q^2$, then a.s.\ the following holds.  Let $U$ (resp.\ $V$) be the connected component of $\h \setminus \eta'$ which contains $z$ (resp.\ $w$). Then there exist connected components $U_1,\ldots,U_n$ of $\h \setminus \eta'$ such that $U_1 = U$, $U_n = V$, $\partial U_i \cap \partial U_{i+1} \neq \emptyset$ for each $1 \leq i \leq n-1$, and $U_i \cap W \neq \emptyset$ for each $1 \leq i \leq n$.  Also,  there exist simple paths $\gamma_1,\dots,\gamma_n$ parameterized by $[0,1]$ such that $\gamma_i \subseteq W$ and $\gamma_i((0,1)) \subseteq U_i$ for every $1 \leq i \leq n$,  $\gamma_{i-1}(1) = \gamma_i(0) \in \partial U_{i-1} \cap \partial U_i$ for every $2 \leq i \leq n$,  $\gamma_1(0) = z$ and $\gamma_n(1) = w$.
\end{lemma}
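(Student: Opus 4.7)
My plan is to parallel the proof of Lemma~\ref{lem:cle_away_from_boundary_connected}. I would first couple $\eta'$ with a non-nested $\CLE_{\kappa'}$ $\Gamma$ in $\h$ so that $\eta'$ is realized as the branch of the CLE exploration tree from $0$ targeted at $\infty$, obtained via target invariance as a limit of branches targeted at boundary points $y_n \to \infty$ in the construction recalled in Section~\ref{subsec:cle}. In this coupling, the trace of $\eta'$ is contained in $\closure{\cup \Gamma'}$, where $\Gamma'$ denotes the set of boundary-touching loops of $\Gamma$; hence every component of $\h \setminus \closure{\cup \Gamma'}$ is contained in a unique component of $\h \setminus \eta'$.

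Given the Jordan domain $W$ and points $z,w \in W \cap \Q^2$, with $U,V$ the components of $\h \setminus \eta'$ containing $z$ and $w$ respectively, I would pick components $\wt{U}, \wt{V}$ of $\h \setminus \closure{\cup \Gamma'}$ contained in $U \cap W$ and $V \cap W$ that contain $z$ and $w$. Applying Lemma~\ref{lem:cle_away_from_boundary_connected} then produces a chain $\wt{U}_1,\dots,\wt{U}_n$ of components of $\h \setminus \closure{\cup \Gamma'}$ with $\wt{U}_1=\wt{U}$, $\wt{U}_n=\wt{V}$, $\partial \wt{U}_i \cap \partial \wt{U}_{i+1}\neq \emptyset$ and $\wt{U}_i \cap W \neq \emptyset$ for each $i$, together with simple paths $\wt{\gamma}_i \subseteq \closure{\wt{U}_i} \cap W$ with $\wt{\gamma}_i((0,1)) \subseteq \wt{U}_i$ and matching endpoints.

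To lift this to a chain in $\h \setminus \eta'$, I would let $\wh{U}_i$ be the unique component of $\h \setminus \eta'$ containing $\wt{U}_i$ and then extract the subsequence $1=i_1 < i_2 < \dots < i_m$ by deleting consecutive duplicates, setting $U_k := \wh{U}_{i_k}$. The key observation is that whenever $\wh{U}_{i_k} \neq \wh{U}_{i_{k+1}}$, the common boundary point $x \in \partial \wt{U}_{i_{k+1}-1} \cap \partial \wt{U}_{i_{k+1}}$ must lie on $\eta'$: otherwise $x \in \closure{\cup \Gamma'} \setminus \eta'$ would have a neighborhood in $\h \setminus \eta'$ witnessing that $\wt{U}_{i_{k+1}-1}$ and $\wt{U}_{i_{k+1}}$ belong to the same component of $\h \setminus \eta'$, contradicting $\wh{U}_{i_{k+1}-1} = \wh{U}_{i_k} \neq \wh{U}_{i_{k+1}}$. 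Thus $x \in \partial U_k \cap \partial U_{k+1}$. The paths $\gamma_j$ are then built by concatenating the relevant $\wt{\gamma}_i$'s for $i_j \leq i < i_{j+1}$, removing backtracking, adjusting the endpoints of $\gamma_1, \gamma_m$ to $z,w$, and perturbing slightly so that the paths are simple and lie in $U_j$ in the interior.

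The connectivity statement for $\h \setminus \eta'$ without the $W$ constraint will then follow by exhausting $\h$ with Jordan domains $W_n = (-n,n) \times (1/n,n)$ and applying the local statement to pairs of components that both intersect some $W_n$, just as in the last paragraph of the proof of Lemma~\ref{lem:cle_away_from_boundary_connected}. The main technical point will be setting up the CLE coupling carefully in the half-plane with target $\infty$ (and identifying the trace of $\eta'$ as a subset of $\closure{\cup \Gamma'}$ that separates components of $\h \setminus \eta'$ consistently with the separation by $\closure{\cup \Gamma'}$), together with verifying the key topological claim that adjacency points between distinct consecutive SLE components in the extracted subsequence must lie on $\eta'$ itself.
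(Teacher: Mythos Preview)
Your proposal is correct and follows essentially the same approach as the paper's proof: couple $\eta'$ with the $\CLE_{\kappa'}$ exploration tree so that $\eta' \subseteq \closure{\cup \Gamma'}$, apply Lemma~\ref{lem:cle_away_from_boundary_connected} to obtain a chain and paths at the level of $\h \setminus \closure{\cup \Gamma'}$, and then collapse consecutive duplicates after passing to the containing components of $\h \setminus \eta'$. Two minor remarks: the components $\wt{U},\wt{V}$ need only be contained in $U,V$ and intersect $W$ (not lie inside $U\cap W$, $V\cap W$), and your ``key observation'' that the transition point lies on $\eta'$ is more than you need---it already follows from $\wt{U}_{i_{k+1}-1}\subseteq U_k$, $\wt{U}_{i_{k+1}}\subseteq U_{k+1}$ with $U_k\neq U_{k+1}$ that the shared boundary point lies in $\partial U_k\cap\partial U_{k+1}$.
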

\begin{proof}
It suffices to prove the claim of the lemma when $\closure{W} \subseteq \h$.  Let $\Gamma,\Gamma'$ be as in Lemma~\ref{lem:cle_away_from_boundary_connected} and suppose that $\eta'$ is the branch of the associated exploration tree from $0$ to $\infty$.  Then we have that $\eta' \subseteq \closure{\cup \Gamma'}$.  It follows that a.s.\ every connected component of $\h \setminus \closure{\cup \Gamma'}$ is contained in some connected component of $\h \setminus \eta'$.  Suppose that $\wt{U},\wt{V}$ are the connected components of $\h \setminus \closure{\cup \Gamma'}$ which contain $z$ and $w$ respectively and let $U$ (resp.\ $V$) be the connected component of $\h \setminus \eta'$ such that $\wt{U} \subseteq U$ (resp.\ $\wt{V} \subseteq V$).  Then Lemma~\ref{lem:cle_away_from_boundary_connected} implies that we can find connected components $\wt{U}_1,\dots,\wt{U}_n$ of $\h \setminus \closure{\cup \Gamma'}$ such that $\wt{U} = \wt{U}_1$,  $\wt{V} = \wt{U}_n$,  and $\partial \wt{U}_i \cap \partial \wt{U}_{i+1} \neq \emptyset$ for every $1 \leq i \leq n-1$, as well as a family of simple paths $\wt{\gamma}_1,\dots,\wt{\gamma}_n$, contained in $W$ and such that $\wt{\gamma}_i((0,1)) \subseteq \wt{U}_i$ for each $1 \leq i \leq n$, $\wt{\gamma}_1(0) = z$, $\wt{\gamma}_n(1) = w$ and $\wt{\gamma}_i(0) = \wt{\gamma}_{i-1}(1) \in \partial \wt{U}_{i-1} \cap \partial \wt{U}_i$ for each $2 \leq i \leq n$. We proceed as in the proof of Lemma~\ref{lem:cle_away_from_boundary_connected}. Let $\wh{U}_i$ be the connected component of $\h \setminus \eta'$ containing $\wt{U}_i$, let $i_1 = 1$ and given $i_1,\dots,i_k$ define $i_{k+1} = \min\{ i > i_k: \wh{U}_i \neq \wh{U}_{i_k} \}$ and let $m = \min\{k \in \N: \wh{U}_{i_k} = \wh{U}_n \}$. Then, for each $1 \leq j \leq m$ letting $U_j = \wh{U}_{i_j}$ and defining the curve $\gamma_j$ to be the concatenation of $\gamma_k$ for $i_j \leq k \leq i_{j+1} - 1$, then removing any loops and adjusting the curves near the boundary if necessary, and finally parameterizing by $[0,1]$, we have that $U_1,\dots,U_m$ and $\gamma_1,\dots,\gamma_m$ are as in the statement of the lemma. This completes the proof.
\end{proof}

Finally, we use Lemma~\ref{lem:chordal_connected} to prove Lemma~\ref{lem:radial_connected}.

\begin{proof}[Proof of Lemma~\ref{lem:radial_connected}]
\emph{Step 1. Setup.} We will prove the claim of the lemma by iteratively applying Lemma~\ref{lem:chordal_connected} and the target invariance of $\SLE_{\kappa'}(\kappa'-6)$.  To this end, we let $\eta_1' = \eta'$, $\tau_1$ be the first time $t$ that $\eta_1'$ disconnects $0$ from $i$, and let $\varphi_1$ be the unique conformal transformation mapping the connected component of $\D \setminus \eta'([0,\tau_1])$ containing~$0$ onto~$\D$ which fixes $0$ and sends $\eta_1'(\tau_1)$ to $-i$.  Given that we have defined $\eta_j'$, $\tau_j$, $\varphi_j$ for $1 \leq j \leq k$, we let $\eta_{k+1}' = \varphi_k(\eta_k'|_{[\tau_k,\infty)})$, $\tau_{k+1}$ be the first time that $\eta_{k+1}'$ disconnects $i$ from $0$, and $\varphi_{k+1}$ be the unique conformal transformation mapping the connected component of $\D \setminus \eta_{k+1}'([0,\tau_{k+1}])$ containing $0$ onto $\D$ which fixes $0$ and sends $\eta_{k+1}'(\tau_{k+1})$ to $-i$.  For each $j$, we let $\sigma_j$ be the time for $\eta'$ which corresponds to $\tau_j$ (when $\eta'$ is parameterized by log-conformal radius as seen from $0$).  Then we have that $\sigma_j \to \infty$ as $j \to \infty$. For each $k \in \N$ we denote by $G_k$ the connected component of $\D \setminus \eta'([0,\sigma_k])$ containing $0$ and we write $\psi_k = \varphi_k \circ \dots \circ \varphi_1$, so that $\psi_k$ conformally maps $G_k$ onto $\D$, fixing $0$ and sending $\eta'(\sigma_k)$ to $-i$.

\emph{Step 2. Proof with $\eta'([0,\sigma_j])$ in place of $\eta'([0,\infty))$.} First we will show that the claim of the lemma holds with $\eta'$ replaced by $\eta'|_{[0,\sigma_j]}$ for every $j \in \N$ a.s.  We will show this using induction on $j$.  We treat the $j=1$ case first.  It suffices to prove the claim when $W \subseteq \D$ is a fixed Jordan domain and $z,w \in W \cap \Q^2$ are fixed.

\emph{Case 1. $j=1$.} Note that we can extend $\eta'|_{[0,\sigma_1]}$ after time $\sigma_1$ to a curve $\wt{\eta}'$ which terminates at $i$ and such that $\wt{\eta}'$ has the law of a chordal $\SLE_{\kappa'}(\kappa'-6)$ in $\D$ from $-i$ to $i$.  Let $\wt{U}^1$ (resp.\  $\wt{V}^1$) be the connected component of $\D \setminus \wt{\eta}'$ containing $z$ (resp.\ $w$).  Then Lemma~\ref{lem:chordal_connected} implies that a.s.\ there exist connected components $\wt{U}_1^1,\dots,\wt{U}_{\wt{n}_{1,1}}^1$ of $\D \setminus \wt{\eta}'$ and simple paths $\wt{\gamma}_1^1,\dots,\wt{\gamma}_{\wt{n}_{1,1}}^1$ with the same properties as in the statement of Lemma~\ref{lem:chordal_connected}.  For each $1 \leq \ell \leq \wt{n}_{1,1}$, let $\wh{U}_\ell^1$ be the connected component of $\D \setminus \eta'([0,\sigma_1])$ such that $\wt{U}_\ell^1 \subseteq \wh{U}_\ell^1$.  Note that $U^1 = \wh{U}_1^1$ and $V^1 = \wh{U}_{\wt{n}_{1,1}}^1$ where $U^1$ (resp.\ $V^1$) is the connected component of $\D \setminus \eta'([0,\sigma_1])$ containing $z$ (resp.\ $w$).  We define $1 = \ell_1 < \dots < \ell_{n_1} \leq \wt{n}_{1,1}$ as follows.  We set $\ell_1 = 1$ and for $k \geq 2$, we inductively define $\ell_k = \min\{ \ell > \ell_{k-1}: \wh{U}_\ell \neq \wh{U}_{\ell_{k-1}}\}$ and we define $n_1 = \min\{ m \in \N: \wh{U}_{\ell_m} = \wh{U}_{\wt{n}_{1,1}} \}$. Then, for $1 \leq j \leq n_1$, we let $U_j^1 = \wh{U}_{\ell_j}^1$. Furthermore, for each $1 \leq j \leq n_1-1$, we let $\wh{\gamma}_j^1$ be the concatenation of the paths $\wt{\gamma}_{\ell_j}^1,\dots,\wt{\gamma}_{\ell_{j+1}-1}^1$ and we let $\wh{\gamma}_{n_1}^1$ be concatenation of $\wt{\gamma}_{n_1}^1, \dots, \wt{\gamma}_{\wt{n}_{1,1}}^1$.  For all $j$ we parameterize $\wh{\gamma}_j^1$ by $[0,1]$. For each $1 \leq j \leq n_1$,  we can form the simple path $\gamma_j^1$ by removing from $\wh{\gamma}_j^1$ the loops, parameterizing it by $[0,1]$, and modifying it slightly by shifting it away from the boundary, if the concatenation hits the boundary other than at the start and endpoints (this can only happen at a finite number of points). Then, $\gamma_j^1((0,1)) \subseteq U_j^1$ and $\gamma_j^1(0) = \wh{\gamma}_j^1(0)$,  $\gamma_j^1(1) = \wh{\gamma}_j^1(1)$ and clearly the paths $\gamma_1^1,\dots,\gamma_{n_1}^1$ satisfy the desired properties and so this completes the proof of the case $j=1$.  

\emph{Case 2. $j \geq 2$.} Suppose that the claim is true for $j$ a.s. Let $U_1^j,\dots,U_{n_j}^j$ be the chain of connected components corresponding to $\D \setminus \eta'([0,\sigma_j])$ such that $U_1^j$ (resp.\ $U_{n_j}^j$) is the connected component of $\D \setminus \eta'([0,\sigma_j])$ containing $z$ (resp.\ $w$) and let $\gamma_1^j,\dots,\gamma_{n_j}^j$ be the simple paths corresponding to $W$. If $G_j \neq U_m^j$ for every $1 \leq m \leq n_j$, then each connected component $U_m^j$ is also a connected component of $\D \setminus \eta'([0,\sigma_{j+1}])$ and hence we are done by setting $n_{j+1} = n_j$,  $\gamma_\ell^{j+1} = \gamma_\ell^j$, and $U_\ell^{j+1} = U_\ell^j$ for every $1 \leq \ell \leq n_j$. Thus, we assume that $G_j = U_m^j$ for some $1 \leq m \leq n_j$ and let $1 \leq i_1 < \dots <i_{k_j} \leq n_j$ be such that $\{i_1,\dots,i_{k_j}\} = \{1\leq i \leq n_j :  U_i^j=G_j\}$. We now need to show that we can replace $\gamma_{i_1}^j$ with a sequence of simple curves passing through finitely many connected components of $\D \setminus \eta'([0,\sigma_{j+1}])$ which starts at $\gamma_{i_1}^j(0)$ and ends at $\gamma_{i_1}^j(1)$. We note that $\eta_{j+1}'|_{[0,\tau_{j+1}]} = \psi_j(\eta'|_{[\sigma_j,\sigma_{j+1}]})$ has the law of a radial $\SLE_{\kappa'}(\kappa'-6)$ in $\D$ from $-i$ to $0$, with force point $(-i)^+$, stopped upon disconnecting $i$ from~$0$ and hence we can extend $\eta_{j+1}'$ as we extended $\eta'|_{[0,\sigma_1]}$ in the $j=1$ case to a chordal $\SLE_{\kappa'}(\kappa'-6)$ targeted at $i$. Denote this curve by $\wt{\eta}_{j+1}'$. If $W_1^j$ is the connected component of $W \cap G_j$ containing $\gamma_{i_1}^j((0,1))$, then $W_1^j$ is a Jordan domain and hence, so is $\wt{W}_1^j = \psi_j(W_1^j)$. Thus, if $\wt{U}_1^j$ (resp.\ $\wt{V}_1^j$) is the connected component of $\D \setminus \wt{\eta}_{j+1}$ which has $x_1^j = \psi_j(\gamma_{i_1}^j(0))$ (resp.\ $y_1^j =\psi_j(\gamma_{i_1}^j(1))$) on its boundary, then picking points $z_1^j \in \wt{U}_1^j$ and $w_1^j \in \wt{V}_1^j$ close to $x_1^j$ and $y_1^j$, respectively, we can use Lemma~\ref{lem:chordal_connected} and reason as in the case $j=1$ to find sequences of connected components $\wt{U}_1^{j,1},\dots,\wt{U}_{n_{j,1}}^{j,1}$ of $\D \setminus \wt{\eta}'_{j+1}$ and simple curves $\wt{\gamma}_1^{j,1},\dots,\wt{\gamma}_{n_{j,1}}^{j,1}$ with $\wt{U}_1^j = \wt{U}_1^{j,1}$, $\wt{V}_1^j = \wt{U}_{n_{j,1}}^{j,1}$, $\wt{U}_m^{j,1} \cap \wt{W}_1^j \neq \emptyset$, $\wt{\gamma}_m^{j,1}((0,1)) \subseteq \wt{U}_m^{j,1}$ and $\wt{\gamma}_m^{j,1} \subseteq \wt{W}_1^j$ for $1 \leq m \leq n_{j,1}$, $\wt{\gamma}_1^{j,1}(0) = z_1^j$, $\wt{\gamma}_{n_{j,1}}^{j,1}(1) = w_1^j$, $\wt{\gamma}_m^{j,1}(0) = \wt{\gamma}_{m-1}^{j,1}(1) \in \partial \wt{U}_m^{j,1} \cap \partial \wt{U}_{m-1}^{j,1}$ for $2 \leq m \leq n_{j,1}$. Moreover, we extend (and reparameterizing by $[0,1]$) $\wt{\gamma}_1^{j,1}$ (resp.\ $\wt{\gamma}_{n_{j,1}}^{j,1}$) to start at $x_1^j$ (resp.\ end at $y_1^j$). Repeating this procedure for $i_k$, $2 \leq k \leq k_j$, we get for each such $k$ families of connected components and simple curves as above. Furthermore, the preimage of each such connected component under $\psi_j^{-1}$ is a connected component of $\D \setminus (\eta'([0,\sigma_{j}]) \cup \psi_j^{-1}(\wt{\eta}'_{j+1}))$ 
and the preimage of each such simple curve under $\psi_j^{-1}$ is a simple curve in the aforementioned connected component. Thus, by repeating the last argument from the proof of the case $j=1$ we get a finite sequence of connected components and simple curves as in the statement of the lemma (but with $\eta'([0,\sigma_j])$ in place of $\eta'([0,\infty))$). This concludes the induction step.

\emph{Step 3. Wrapping up the proof for $\eta'$.}
Now,  we prove the claim for $\eta'$.  Fix a Jordan domain $W \subseteq \D$ such that $0 \notin \closure{W}$ and let $z,w \in W \cap \Q^2$.  Let $U$ (resp.\  $V$) be the connected component of $\D \setminus \eta'$ containing $z$ (resp.\  $w$).  Then a.s.\ there exists $j \in \N$ such that $G_j \cap \closure{W} = \emptyset$ and let $U_1^j,\dots,U_{n_j}^j$ be the corresponding connected components of $\D \setminus \eta'([0,\sigma_j])$ and $\gamma_1^j,\dots,\gamma_{n_j}^j$ be the corresponding simple paths.  Since $U_m^j \cap W \neq \emptyset$ for every $1 \leq m \leq n_j$,  it follows that $U_1^j,\dots,U_{n_j}^j$ are connected components of $\D \setminus \eta'$ with $U = U_1^j$ and $V = U_{n_j}^j$.  This completes the proof of the lemma since $W$, $z$, and $w$ were arbitrary.
\end{proof}

We let $\wh{h}$ be a GFF on $\h$ with boundary conditions $\lambda'$ on $\R_-$ and $\lambda'-2\pi\chi$ on $\R_+$ and let $\varphi: \D \to \h$ be the conformal transformation such that $\varphi(-i) = 0$, $\varphi(i) = \infty$ and, say, $\varphi(1) = 1$. We then let $\wt{h} = \wh{h} \circ \varphi - \chi \arg \varphi'$ and note that $\wt{h}$ is a GFF in $\D$ with boundary conditions chosen so that the counterflow line of $\wt{h}$ from $-i$ to $i$ is a chordal $\SLE_{\kappa'}(\kappa'-6)$ with force point located at $(-i)^+$. For each $z \in \D$, we let $\eta_z'$ denote the counterflow line of $\wt{h}$ from $-i$ and targeted at $z$ (and recall that this is equivalent to first sampling interior flow lines $\eta_z^{1},\eta_z^{2}$ of angle difference $\pi$ from $z$ and then sampling the corresponding counterflow lines of $\wt{h}$ restricted to each connected component of $\D \setminus (\eta_z^{1} \cup \eta_z^{2})$ which is bounded by the left side of $\eta_z^{2}$, the right side of $\eta_z^{1}$ and possibly $\partial \D$ --- see \cite[Theorem~4.1 and Remark~4.4]{ms2017ig4}). Then $\eta_z'$ has the law of a radial $\SLE_{\kappa'}(\kappa'-6)$ in $\D$ from $-i$ to $z$. Moreover, we let $(z_j)_{j=1}^n = (a\Z)^2 \cap A_2$ and let $\wt{X}_a$ (resp.\ $\wt{Y}_a$) be defined in the same way as $X_a$ (resp.\ $Y_a$) but with the field $\wt{h}$ in place of $h$.

Above, we have proved the connectivity results for the connected components of the complement of a single radial $\SLE_{\kappa'}(\kappa'-6)$ and we wanted each of the components in our path to intersect a given Jordan domain. In what follows, however, we will consider the setting of many radial $\SLE_{\kappa'}(\kappa'-6)$ processes and need to construct a chain of components which disconnect the inside and the outside of the annulus.  Thus, we prove that we can connect the top and the bottom halves of an annulus, each of which is a Jordan domain, so that our previous results apply to them.  To this end, we let $A_0^T = A_0 \cap \h$ and $A_0^B = A_0 \cap (\C \setminus \closure{\h})$ and let $x$ (resp.\ $y$) be the midpoint of $\partial A_0^T \cap \R_-$ (resp.\ $\partial A_0^T \cap \R_+$).

\begin{lemma}\label{lem:chain_of_components_radial_curves}
Suppose that we have the setup of the above paragraph.  Then the following holds for each $1 \leq j \leq n$ and $q \in \{T,B\}$ a.s.  Let $z,w \in A_0^q \cap \Q^2$.  Let $U^{j,q}$ (resp.\ $V^{j,q}$) be the connected component of $\D \setminus \cup_{i=1}^j \eta_{z_i}'$ containing $z$ (resp.\ $w$).  There exist connected components $U_1^{j,q},\ldots,U_{n_j}^{j,q}$ of $\D \setminus \cup_{i=1}^j \eta_{z_{i}}'$ such that $U^{j,q} = U_1^{j,q}$,  $V^{j,q} = U_{n_j}^{j,q}$,  $\partial U_\ell^{j,q} \cap \partial U_{\ell+1}^{j,q} \neq \emptyset$ for every $1 \leq \ell \leq n_j-1$,  and $U_\ell^{j,q} \cap A_0^q \neq \emptyset$ for every $1 \leq \ell \leq n_j$.  Moreover,  there exist simple paths $\gamma_1^{j,q},\dots,\gamma_{n_j}^{j,q}$ parameterized by $[0,1]$ such that $\gamma_1^{j,q}(0) = z,  \gamma_{n_j}^{j,q}(1) = w,  \gamma_i^{j,q} \subseteq A_0^q$ and $\gamma_\ell^{j,q}((0,1)) \subseteq U_\ell^{j,q}$ for every $1 \leq \ell \leq n_j$,  and $\gamma_{\ell-1}^{j,q}(1) = \gamma_\ell^{j,q}(0) \in \partial U_{\ell-1}^{j,q} \cap \partial U_\ell^{j,q}$ for every $2 \leq \ell \leq n_j$.
\end{lemma}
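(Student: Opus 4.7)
The plan is to prove the statement by induction on $j$, in the same spirit as the proof of Lemma~\ref{lem:radial_connected}. For the base case $j=1$, let $\psi_1\colon\D\to\D$ be the Möbius transformation fixing $-i$ and sending $z_1$ to $0$, so that $\psi_1(\eta_{z_1}')$ has the law of a radial $\SLE_{\kappa'}(\kappa'-6)$ from $-i$ to $0$. Choose a Jordan domain $W$ with $z,w\in W\subseteq A_0^q$ and $z_1\notin\closure{W}$ (possible by a slight perturbation for the at most finitely many rational pairs $(z,w)$ which coincide with $z_1$), so that $\psi_1(W)$ is a Jordan domain in $\D$ bounded away from $0$. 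Applying Lemma~\ref{lem:radial_connected} to $\psi_1(\eta_{z_1}')$ with Jordan domain $\psi_1(W)$ and rational approximations of $\psi_1(z),\psi_1(w)$ produces a chain of components and simple paths; pulling back by $\psi_1^{-1}$ and adjusting the first and last paths to terminate exactly at $z$ and $w$ gives the desired chain in $A_0^q$.

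For the induction step, assume the conclusion holds for $j-1$, yielding a chain $U_1^{j-1,q},\ldots,U_m^{j-1,q}$ and paths $\gamma_1^{j-1,q},\ldots,\gamma_m^{j-1,q}$. By the imaginary geometry branching structure for counterflow lines emanating from a common root $-i$, the curve $\eta_{z_j}'$ coincides with some $\eta_{z_i}'$, $i<j$, up to a random time $\sigma$, and $\eta_{z_j}'|_{[\sigma,\cdot)}$ is, conditionally on $\cup_{i=1}^{j-1}\eta_{z_i}'$, a radial $\SLE_{\kappa'}(\kappa'-6)$ from $\eta_{z_j}'(\sigma)\in\partial C$ to $z_j$ inside the component $C$ of $\D\setminus\cup_{i=1}^{j-1}\eta_{z_i}'$ containing $z_j$. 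Hence only the component $C$ is affected when passing from $\cup_{i=1}^{j-1}\eta_{z_i}'$ to $\cup_{i=1}^j\eta_{z_i}'$. If $C\notin\{U_1^{j-1,q},\ldots,U_m^{j-1,q}\}$, the existing chain remains valid and we are done; otherwise $C=U_\ell^{j-1,q}$ for a unique $\ell$, and we must replace $(U_\ell^{j-1,q},\gamma_\ell^{j-1,q})$ with a sub-chain of components of $C\setminus\eta_{z_j}'$ joined by simple paths in $A_0^q\cap\closure{C}$.

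To build the replacement, fix a conformal map $\varphi_C\colon\D\to C$ with $\varphi_C(-i)=\eta_{z_j}'(\sigma)$ and $\varphi_C(0)=z_j$, so that $\varphi_C^{-1}(\eta_{z_j}'|_{[\sigma,\cdot)})$ has the law of a radial $\SLE_{\kappa'}(\kappa'-6)$ in $\D$ from $-i$ to $0$. Since $\gamma_\ell^{j-1,q}((0,1))\subseteq C\cap A_0^q$ is a compact subset of the open set $C\cap A_0^q$, cover it by finitely many Jordan subdomains $W_1,\ldots,W_K\subseteq C\cap A_0^q$ and pick rational sample points $q_0,\ldots,q_K$ along $\gamma_\ell^{j-1,q}$ so that consecutive points $q_{k-1},q_k$ both lie in $W_k$. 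In each $W_k$, apply Lemma~\ref{lem:radial_connected} to $\varphi_C^{-1}(\eta_{z_j}'|_{[\sigma,\cdot)})$ with Jordan domain $\varphi_C^{-1}(W_k)$ (which avoids $0$ because $z_j\notin\closure{W_k}$) and rational endpoints close to $\varphi_C^{-1}(q_{k-1}),\varphi_C^{-1}(q_k)$ to produce a sub-chain of components of $C\setminus\eta_{z_j}'$ and simple paths in $W_k$; concatenate these sub-chains over $k$, handle the endpoints $x_\ell,y_\ell\in\partial C$ of $\gamma_\ell^{j-1,q}$ by short arcs into $C\cap A_0^q$ lying in the sub-components of $C\setminus\eta_{z_j}'$ touching $x_\ell$ and $y_\ell$, and perform the loop-removal/boundary-adjustment step used in the proof of Lemma~\ref{lem:cle_away_from_boundary_connected} to obtain simple paths. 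Inserting the resulting sub-chain in place of $(U_\ell^{j-1,q},\gamma_\ell^{j-1,q})$ and relabeling completes the induction.

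The main obstacle will be the endpoint handling at $x_\ell,y_\ell\in\partial C$: the sub-chain paths must begin and end at exactly these two points, and the short connecting arcs in $A_0^q\cap\closure{C}$ must lie inside the correct sub-components of $C\setminus\eta_{z_j}'$ so that they are consistent with the adjacent paths $\gamma_{\ell-1}^{j-1,q},\gamma_{\ell+1}^{j-1,q}$. This is a local issue at $\partial C$ that is handled using the continuity of $\eta_{z_j}'|_{[\sigma,\cdot)}$ up to $\partial C$ and the fact that a radial $\SLE_{\kappa'}(\kappa'-6)$ almost surely avoids any fixed boundary point.
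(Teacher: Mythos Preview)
Your overall strategy is the same as the paper's: induction on $j$, with the base case coming from Lemma~\ref{lem:radial_connected} applied to the Jordan domain $A_0^q$, and the induction step handled by noting that only the component $G_{j-1}$ of $\D\setminus\cup_{i<j}\eta_{z_i}'$ containing $z_j$ is subdivided, then invoking Lemma~\ref{lem:radial_connected} inside $G_{j-1}$ exactly as in the inductive part of the proof of that lemma. The paper's write-up is terse (``by applying Lemma~\ref{lem:radial_connected} and arguing as in the proof of Lemma~\ref{lem:radial_connected}''), while you spell out more of the details.

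Two points deserve correction. First, you assert that $C=G_{j-1}$ equals $U_\ell^{j-1,q}$ for a \emph{unique} $\ell$. This need not hold: the same component $G_{j-1}$ can occur at several indices $i_1<\cdots<i_k$ in the chain, and each occurrence must be replaced by its own sub-chain. The paper handles this by pointing to the proof of Lemma~\ref{lem:radial_connected}, where precisely this multi-index situation is worked out. Second, you write that $\gamma_\ell^{j-1,q}((0,1))$ is a compact subset of $C\cap A_0^q$; it is not compact, since the endpoints $\gamma_\ell^{j-1,q}(0),\gamma_\ell^{j-1,q}(1)$ lie on $\partial C$. Your own endpoint-handling step accounts for this, but the covering argument must be applied to $\gamma_\ell^{j-1,q}([\epsilon,1-\epsilon])$ rather than to the full open arc.

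One further simplification: your covering of $\gamma_\ell^{j-1,q}((0,1))$ by several Jordan subdomains $W_1,\ldots,W_K$ followed by concatenation is unnecessary. The paper (in the analogous step of Lemma~\ref{lem:radial_connected}) takes the single connected component of $A_0^q\cap G_{j-1}$ containing $\gamma_\ell^{j-1,q}((0,1))$, observes that it is a Jordan domain, and applies Lemma~\ref{lem:radial_connected} once. This avoids the bookkeeping of stitching together $K$ sub-chains and the attendant loop-removal at each interface.
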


\begin{proof}
We will prove the claim of the lemma using induction on $j$.  The case $j=1$ follows from Lemma~\ref{lem:radial_connected} since both of $A_0^T$ and $A_0^B$ are Jordan domains.  Suppose that the claim is true a.s.\ for $1 \leq j \leq n-1$ and fix $q \in \{T,B\}$.  Fix points $z,w \in A_0^q \cap \Q^2$ and let $U^{j+1,q}$ (resp.\  $V^{j+1,q}$) be the connected component of $\D \setminus \cup_{\ell=1}^{j+1}\eta_{z_\ell}'$ containing $z$ (resp.\  $w$).  Let $U_1^{j,q},\dots,U_{n_j}^{j,q}$ be the corresponding connected components of $\D \setminus \cup_{\ell=1}^j \eta_{z_\ell}'$ and $\gamma_1^{j,q},\dots,\gamma_{n_j}^{j,q}$ be the corresponding simple paths satisfying the desired properties.  Suppose that $z_{j+1} \notin U_\ell^{j,q}$ for every $1 \leq \ell \leq n_j$.  Then all of the $U_1^{j,q},\dots,U_{n_j}^{j,q}$ are connected components of $\D \setminus \cup_{i=1}^{j+1}\eta_{z_i}'$.  Then we set $U_i^{j+1,q} = U_i^{j,q}$ and $\gamma_\ell^{j+1,q} = \gamma_\ell^{j,q}$ for every $1 \leq \ell \leq n_j$.  Suppose now instead that there exists $1 \leq \ell_0 \leq n_j$ such that $U_{\ell_0}^{j,q} = G_j$, where $G_j$ is the connected component of $\D \setminus \cup_{\ell=1}^j \eta_{z_\ell}'$ containing $z_{j+1}$.  Note that the law of $\eta_{z_{j+1}}'$ restricted to $G_j$ is that of a radial $\SLE_{\kappa'}(\kappa'-6)$ in $G_j$ targeted at $z_{j+1}$.  Therefore by applying Lemma~\ref{lem:radial_connected} and arguing as in the proof of Lemma~\ref{lem:radial_connected},  we obtain the connected components $U_1^{j+1,q},\dots,U_{n_{j+1}}^{j+1,q}$ of $\D \setminus \cup_{\ell=1}^{j+1}\eta_{z_\ell}'$ and the simple paths $\gamma_1^{j+1,q},\dots,\gamma_{n_{j+1}}^{j+1,q}$ satisfying the desired properties.  This completes the proof of the induction step and hence the proof of the lemma.
\end{proof}

Next, we will need to know that with high probability, there is no large component of $A_2 \setminus \wt{X}_a$.  The following lemma is an immediate consequence of \cite[Lemma~3.6]{ghm2020almost}, but we include the proof for the convenience of the reader.

\begin{lemma}\label{lem:small_components}
Fix $p \in (0,1)$.  Then there exists $a \in (0,1)$ small enough such that with probability at least $p$,  we have that there is no connected component of $A_2 \setminus \wt{X}_a$ which intersects both $A_0$ and $\partial A_1$.
\end{lemma}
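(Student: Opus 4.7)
The plan is to reduce the claim to a diameter estimate: for every $\epsilon>0$, it suffices to show that for $a$ small enough, with probability at least $p$ every connected component of $A_2\setminus\wt X_a$ has diameter strictly less than $\epsilon$. Any component intersecting both $A_0$ and $\partial A_1$ has diameter at least $\dist(A_0,\partial A_1)=1/16$, so taking $\epsilon<1/16$ would complete the proof.

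To control the diameters of the components of $A_2\setminus\wt X_a$, I would invoke the same underlying input as Lemma~\ref{lem:counterflow_close_fills_ball}, namely \cite[Lemma~3.6]{ghm2020almost}. The bridge from $\wt X_a$ to this input is the identification recalled at the end of Section~\ref{subsec:ig}: for each interior point $z\in\D$, the flow lines of $\wt h$ from $z$ of angles $-\pi/2$ and $\pi/2$ are, respectively, the left and right outer boundaries of the counterflow line $\eta'_z$ of $\wt h$ targeted at $z$. Consequently, for every $z_i\in(a\Z)^2\cap A_2$, the set $\wt X_a$ contains the outer boundaries of $\eta'_{z_i}$, truncated at $\partial A_2$. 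These counterflow lines fit together as branches of the continuous space-filling $\SLE_{\kappa'}$ associated with $\wt h$, which I will denote $\ol\eta'$.

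Applying \cite[Lemma~3.6]{ghm2020almost} to $\ol\eta'$ (or to the chordal extensions $\wt\eta'_{z_i}$ from $-i$ to $i$ appearing in the proof of Lemma~\ref{lem:chain_of_components_radial_curves}), I obtain for each $\delta>0$ an event $F_{\epsilon,\delta}$ of probability tending to $1$ as $\epsilon\to 0$, on which every pair of times $s<t$ with $\ol\eta'(s),\ol\eta'(t)\in A_2$ and $|\ol\eta'(s)-\ol\eta'(t)|\le\epsilon$ is such that $\ol\eta'([s,t])$ disconnects a ball of diameter at least $\epsilon^{1+\delta}$ from its target. Fix $\delta$ small, then $\epsilon<1/16$ with $\p[F_{\epsilon,\delta}]\ge p$, and finally take $a\ll\epsilon^{1+\delta}$. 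Suppose on $F_{\epsilon,\delta}$ that some component $C$ of $A_2\setminus\wt X_a$ has diameter at least $1/16$, and cover a path in $C$ realizing this diameter by finitely many balls of radius $\epsilon$. On each such ball one finds a grid point $z_i\in(a\Z)^2$ and times at which $\ol\eta'$ has an approximate self-intersection; the disconnecting pocket of diameter at least $\epsilon^{1+\delta}$ produced by $F_{\epsilon,\delta}$ is bounded by pieces of the $\pm\pi/2$ flow lines from $z_i$, which lie in $\wt X_a$. At least one such pocket boundary must therefore cross the path, contradicting the connectedness of $C$ in $A_2\setminus\wt X_a$.

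The main obstacle will be the final combinatorial step: for each approximate self-intersection of $\ol\eta'$ along the hypothetical path in $C$, locating a concrete grid point $z_i\in(a\Z)^2$ whose flow line boundaries of $\eta'_{z_i}$ reproduce the disconnecting pocket from \cite[Lemma~3.6]{ghm2020almost} at the required scale. The merging property of flow lines of equal angle (Theorem~\ref{thm:interaction}), together with a union bound over the $O(a^{-2})$ grid points, should allow this identification to be made cleanly, after which the rest is routine.
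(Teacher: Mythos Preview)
Your approach is essentially the same as the paper's: both invoke \cite[Lemma~3.6]{ghm2020almost} for the space-filling $\SLE_{\kappa'}$ coupled with $\wt h$, and both derive a contradiction by locating a grid point of $(a\Z)^2$ inside a hypothetically large component of $A_2\setminus\wt X_a$. The difference is in how the space-filling estimate is invoked and how the ``combinatorial step'' you flag is handled.

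You quote the estimate in the form ``$|\ol\eta'(s)-\ol\eta'(t)|\le\epsilon\Rightarrow$ a pocket of diameter $\ge\epsilon^{1+\delta}$ is disconnected'', and then try to manufacture approximate self-intersections along a path in $C$. This is the part that is underspecified: a path in $C$ avoids $\wt X_a$, and there is no direct mechanism producing times $s<t$ with $\ol\eta'(s),\ol\eta'(t)$ $\epsilon$-close \emph{and} with the resulting pocket boundary lying in $\wt X_a$. Your suggestion to repair this via flow-line merging and a union bound over grid points does not obviously identify, for a given pocket produced by $\ol\eta'([s,t])$, a grid point $z_i$ whose $\pm\pi/2$ flow lines actually trace that pocket's boundary.

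The paper sidesteps this entirely. It uses the estimate in the contrapositive form ``$|\ol\eta'(s)-\ol\eta'(t)|\ge d_0/100\Rightarrow\ol\eta'([s,t])$ fills a ball of radius $2a$'', with $a$ chosen small. The key geometric step is then to pick $s,t$ so that the filled ball automatically lies in the bad component $P$. Concretely: pass from $\wt X_a$ to $\wt Z_a$ (the flow lines run all the way to $\partial\D$), take the cell $Q$ of $\D\setminus\wt Z_a$ inside $P$ whose boundary contains a long flow-line arc, and find $u,v\in\partial Q$ on that arc at distinct radii with $||u|-|v||\ge d_0/10$. The last radial excursion of $\ol\eta'$ from $\partial B(0,|u|)$ to $v$ stays in $A_2$ and has endpoints $\ge d_0/10$ apart, so it fills a ball of radius $2a$; this ball sits inside $P$. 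Since a ball of radius $2a$ contains a point of $(a\Z)^2\cap A_2$, and every such point lies in $\wt X_a$, this contradicts $P\cap\wt X_a=\emptyset$. No identification of ``which grid point bounds the pocket'' is needed; one only needs that the filled ball contains \emph{some} grid point.
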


\begin{proof}
Fix $p \in (0,1)$, let $d_0 = \dist(\partial A_0, \partial A_1)$ and let $\ol{\eta}'$ be the space-filling $\SLE_{\kappa'}$ starting from $-i$ coupled with $\wt{h}$. By \cite[Lemma~3.6]{ghm2020almost} and absolute continuity between $\wt{h}$ and a whole-plane GFF, there exists $a \in (0,1)$ small such that with probability at least $p$, we have that whenever $s<t$ are such that $\{\ol{\eta}'(s),\ol{\eta}'(t)\} \subseteq A_2$ and $|\ol{\eta}'(s)-\ol{\eta}'(t)|\geq d_0/100$, then $\ol{\eta}'|_{[s,t]}$ fills in a ball of radius $2a$.  Suppose that we are working on this event.  Let $\wt{Z}_a$ be the closure of the union of the flow lines of $\wt{h}$ starting from points in $(a\Z)^2 \cap A_2$ with angles $-\frac{\pi}{2}$ and $\frac{\pi}{2}$ and run up until they hit $\partial \D$ so that $\wt{X}_a \subseteq \wt{Z}_a$.  Suppose that there exists a connected component $P$ of $A_2 \setminus \wt{X}_a$ which intersects both of $A_0$ and $\partial A_1$.  In particular,  we have that $\diam(P) \geq d_0$.  Therefore there exists a flow line $\eta_P$ with angle either $-\frac{\pi}{2}$ or $\frac{\pi}{2}$ such that the part of it which is contained in $A_2 \cap \partial P$ has diameter at least $d_0 / 4$.  Let $Q$ be the connected component of $\D \setminus \wt{Z}_a$ which is contained in $P$ and such that $\partial Q$ contains the part of $\eta_P$ which is contained in $A_2 \cap \partial P$.  In particular, we have that $\diam(Q) \geq d_0/4$ and moreover there exist points $u,v \in \partial Q \cap \eta_P$ whose distance to $\partial A_0 \cup \partial A_1$ is at least $d_0/10$ and $||u|-|v|| \geq d_0/10$.  Suppose that $u$ comes before $v$ in the space-filling ordering.  Suppose that $|u| < |v|$ and let $\ol{\eta}'|_{[s,t]}$ be the last excursion made by $\ol{\eta}'$ from $\partial B(0,|u|)$ into $\partial B(0,|v|)$ before it hits $v$ for the first time at time $t$.  Then we have that $\ol{\eta}'([s,t]) \subseteq A_2$ and $|\ol{\eta}'(s)-\ol{\eta}'(t)|\geq ||v|-|u||\geq d_0/10$.  Hence $\ol{\eta}'|_{[s,t]}$ fills a ball of radius $2a$.  A similar argument works when $|v|<|u|$.  Therefore the above ball must be contained in $P$.  But in that case,  the former must contain a point in $(a\Z)^2 \cap A_2$ and so $P \cap (a\Z)^2 \cap A_2 \neq \emptyset$.  But that is a contradiction by the definition of $\wt{X}_a$.  Hence,  there is no such component $P$ and so this completes the proof of the lemma.
\end{proof}

We now deduce the connectivity of $A \setminus \wt{Y}_a$.

\begin{lemma}\label{lem:chain_of_components_disconnecting_origin_from_boundary}
Fix $p \in (0,1)$.  Then there exists $a \in (0,1)$ sufficiently small such that with probability at least $p$ the following holds.  There exist connected components $U_1,\ldots,U_m$ of $A_2 \setminus \wt{Y}_a$ such that $\partial U_{\ell-1} \cap \partial U_\ell \neq \emptyset$ for every $1 \leq \ell \leq m$ (with the convention that $U_0 = U_m$ and $U_1 = U_{m+1}$),  $U_\ell \cap A_0 \neq \emptyset$ and $U_\ell \subseteq A_1$ for every $1 \leq \ell \leq m$, and the following holds.  Suppose that for each $1 \leq j \leq n$ we have points $z_j \in U_j$ and $w_{j-1} \in \partial U_{j-1} \cap \partial U_j$ and set $w_m = w_0$ and let $\gamma_j$ be a path in $\closure{U_j}$ starting and ending at $w_{j-1}$ and $w_j$ respectively and which passes through $z_j$ and does not otherwise hit $\partial U_j$.  If $\gamma$ is the concatenation of the paths $\gamma_1,\dots,\gamma_m$ then $\gamma$ disconnects $\partial B(0,1/2)$ from $\partial \D$.
\end{lemma}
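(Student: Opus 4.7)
The plan is to apply Lemma~\ref{lem:chain_of_components_radial_curves} twice, once with $q = T$ and once with $q = B$, concatenate the two chains into a cyclic chain of components surrounding the origin, and then use Lemma~\ref{lem:small_components} to ensure that each component in the chain is contained in $A_1$, so that it is in fact a component of $A_2 \setminus \wt{Y}_a$ rather than merely of $\D \setminus \cup_{i=1}^n \eta'_{z_i}$.

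Given $p \in (0,1)$, first choose $a \in (0,1)$ so that the event $E$ of Lemma~\ref{lem:small_components} (no component of $A_2 \setminus \wt{X}_a$ meets both $A_0$ and $\partial A_1$) has probability at least $p$. By the imaginary geometry coupling, the flow lines of $\wt{h}$ of angles $\pm \pi/2$ emanating from $z_i$ are exactly the left and right outer boundaries of $\eta'_{z_i}$, and are therefore contained in the range of $\eta'_{z_i}$; hence $\wt{X}_a \subseteq \cup_{i=1}^n \eta'_{z_i}$ inside $A_2$. Every connected component of $\D \setminus \cup_i \eta'_{z_i}$ is thus contained in some component of $A_2 \setminus \wt{X}_a$, and on $E$ every such component that meets $A_0$ is contained in $A_1$.

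Next, since $\cup_i \eta'_{z_i}$ is a.s.\ a closed set of empty interior, one can choose rational points $\xi_T \in A_0^T$ and $\xi_B \in A_0^B$, both close to $x$, that lie in a common component of $\D \setminus \cup_i \eta'_{z_i}$, and analogously $\zeta_T \in A_0^T$, $\zeta_B \in A_0^B$ close to $y$ in a common component. Apply Lemma~\ref{lem:chain_of_components_radial_curves} with $j = n$, $q = T$ from $\xi_T$ to $\zeta_T$ to produce components $U_1^T, \ldots, U_{k_T}^T$ of $\D \setminus \cup_i \eta'_{z_i}$ together with simple paths $\gamma_1^T, \ldots, \gamma_{k_T}^T$ in $A_0^T$, and similarly with $q = B$ from $\zeta_B$ to $\xi_B$ producing $U_1^B, \ldots, U_{k_B}^B$ and $\gamma_1^B, \ldots, \gamma_{k_B}^B$ in $A_0^B$. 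By the choice of endpoints one has $U_{k_T}^T = U_1^B$ and $U_{k_B}^B = U_1^T$, so concatenation yields a cyclic chain of components with consecutive boundary intersections, each meeting $A_0$, and a closed path in $A_0$ through them which surrounds the origin.

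On $E$, each $U_\ell^q$ lies in $A_1$, so its boundary consists of arcs of $\cup_i \eta'_{z_i}$ strictly inside $A_2$; the enclosing flow-line arcs bound a pocket of $A_2 \setminus \wt{X}_a$ in which the counterflow line added in the construction of $\wt{Y}_a$ is exactly the corresponding segment of some $\eta'_{z_i}$. Hence $U_\ell^q$ is a component of $A_2 \setminus \wt{Y}_a$, giving the desired chain $U_1, \ldots, U_m$. The final statement about an arbitrary curve $\gamma$ assembled from paths $\gamma_j \subseteq \closure{U_j}$ through $z_j$ and meeting only at $w_{j-1}, w_j \in \partial U_{j-1} \cap \partial U_j$ follows because any such $\gamma$ is homotopic in $A_1 \setminus \{0\}$ to the concrete closed path built above (both loops visit the same cyclic sequence of components and are confined to $A_1$), and the latter surrounds $0$, hence so does $\gamma$. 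The main technical point is the clean identification in the previous paragraph between components of $A_2 \setminus \wt{Y}_a$ meeting $A_0$ and components of $\D \setminus \cup_i \eta'_{z_i}$ meeting $A_0$, which rests on the flow-line/counterflow-line correspondence from imaginary geometry and the fact that, on $E$, the relevant components do not reach $\partial A_2$.
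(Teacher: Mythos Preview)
Your overall strategy matches the paper's: apply Lemma~\ref{lem:chain_of_components_radial_curves} in $A_0^T$ and $A_0^B$, glue at the components containing $x$ and $y$, and use Lemma~\ref{lem:small_components} together with the flow/counterflow-line correspondence to identify the resulting components of $\D\setminus\cup_i\eta'_{z_i}$ with components of $A_2\setminus\wt Y_a$. That part is fine and is essentially how the paper proceeds.

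The gap is in your last paragraph. You assert that any admissible $\gamma$ is homotopic in $A_1$ to the concrete loop you built, on the grounds that ``both loops visit the same cyclic sequence of components and are confined to $A_1$.'' This does not follow. The sets $\partial U_{j-1}\cap\partial U_j$ need not be connected, so the arbitrary transition points $w_j$ may lie in a different component of $\partial U_{j-1}\cap\partial U_j$ than the specific points used by your concrete loop. The difference loop obtained by going from $w_j$ to $w_j'$ in $\closure{U_{j-1}}$ and back in $\closure{U_j}$ lives in $\closure{U_{j-1}}\cup\closure{U_j}$, and without further control this set could itself separate $0$ from $\partial\D$, giving that difference loop nontrivial winding. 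In that case $\gamma$ and your concrete loop would have different winding numbers around $0$, and your homotopy conclusion fails.

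The paper closes exactly this gap by an extra step you omit: arguing as in the proof of Lemma~\ref{lem:small_components}, one may shrink $a$ further so that every $U_i$ in the chain has $\diam(U_i)\le 1/100$. Since each $U_i$ in the top chain meets $A_0^T$, the top half of $\gamma$ is then forced to lie in the $1/100$-neighborhood of $A_0^T$, and similarly the bottom half lies in the $1/100$-neighborhood of $A_0^B$. These two neighborhoods are each simply connected and meet only near the real axis, so any such $\gamma$ has winding number $\pm 1$ about $0$ and hence disconnects $\partial B(0,1/2)$ from $\partial\D$. You should add this diameter bound; with it, the homotopy argument becomes unnecessary.
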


\begin{proof}
Let $a \in (0,1)$ be as in Lemma~\ref{lem:small_components} and suppose that we are working on the event that every connected component of $A_2 \setminus \wt{X}_a$ intersecting $A_0$ is contained in $A_1$.  First we note that $\wt{X}_a \subseteq \cup_{j=1}^n \eta_{z_j}'$ since the flow line with angle $\frac{\pi}{2}$ (resp.\ $-\frac{\pi}{2}$) of $\wt{h}$ starting from $z_j$ is the left (resp.\ right) outer boundary of $\eta_{z_j}'$. Recalling that one may sample $\eta_{z_j}'$ by first sampling $\eta_{z_j}^1$, $\eta_{z_j}^2$ and then independently a counterflow line in each of the connected components of $\D \setminus (\eta_{z_j}^{1} \cup \eta_{z_j}^{2})$ lying between $\eta_{z_j}^{1}$ and $\eta_{z_j}^{2}$ as above, it follows that $\wt{Y}_a \subseteq \cup_{i=1}^n \eta_{z_i}'$.

Next,  we let $U$ (resp.\ $V$) be the connected component of $\D \setminus \cup_{j=1}^n \eta_{z_j}'$ containing $y$ (resp.\ $x$) and let $z^q,w^q$ be points in $U \cap A_0^q,  V \cap A_0^q$ respectively with rational coordinates for $q \in \{L,R\}$.  Then Lemma~\ref{lem:chain_of_components_radial_curves} implies that for the above choice of points,  a.s.\ we can find connected components $U_1,\dots,U_k,V_1,\dots,V_m$ of $\D \setminus \cup_{j=1}^n \eta_{z_j}'$ such that the following hold:  $U = U_1 = V_1$, $V = U_k = V_m$, $U_i \cap A_0^T \neq \emptyset$ for every $1 \leq i \leq k$,  $\partial U_j \cap \partial U_{j+1} \neq \emptyset$ for every $1 \leq j \leq k-1$,  $V_j \cap A_0^B \neq \emptyset$ for every $1 \leq j \leq m$,  and $\partial V_j \cap \partial V_{j+1} \neq \emptyset$ for every $1 \leq j \leq m-1$.  We claim that all of $U_1,\dots,U_k,V_1,\dots,V_m$ are connected components of $A_2 \setminus \wt{Y}_a$ which are contained in $A_1$.  Indeed,  fix $1 \leq j \leq k$.  Since $\wt{X}_a \subseteq \cup_{j=1}^n \eta_{z_j}'$,  it follows that there exists a connected component $P$ of $\D \setminus \wt{X}_a$ such that $U_j \subseteq P$.  Then,  $P \cap A_0 \neq \emptyset$ and so we must have that $P \subseteq A_1$ and hence $U_j \subseteq A_1$.  Also,  $\partial P$ consists of parts of flow lines with angles either $\frac{\pi}{2}$ or $-\frac{\pi}{2}$ starting from points in $(a\Z)^2 \cap A_2$ and so we obtain that $U_j$ is a connected component of the complement in $P$ of a counterflow line of $\wt{h}$ in $P$ starting and ending at the opening and closing point of $P$ respectively (with respect to the ordering induced by the space-filling $\SLE_{\kappa'}(\kappa'-6)$).  It follows that $U_j$ is a connected component of $A_2 \setminus \wt{Y}_a$ for every $1 \leq j \leq k$ and a similar argument works for the $V_j$'s. Moreover,  arguing as in the proof of Lemma~\ref{lem:small_components},  we can assume in addition that $\diam(U_i) \leq \frac{1}{100}$ for every $1 \leq i \leq k$ and $\diam(V_j) \leq \frac{1}{100}$ for every $1 \leq j \leq m$,  possibly by taking $a \in (0,1)$ to be smaller.  Then,  we consider the connected components $U_1,\dots,U_k,V_1,\dots,V_m$.  Let $z_j,w_{j-1},w_j,\gamma_j$ and $\gamma$ be as in the statement of the lemma.  We parameterize $\gamma$ by $[0,1]$ in the counterclockwise way such that $\gamma(0) = \gamma(1) = z_1 \in U_1$ and let $t \in (0,1)$ be such that $\gamma(t) = z_k \in U_k$. Then,  the construction of $\gamma$ implies that $\gamma|_{[0,t]}$ (resp.\ $\gamma|_{[t,1]}$) lies in the $\frac{1}{100}$-neighborhood of $A_0^T$ (resp.\ $A_0^B$) and hence standard topological considerations imply that $\gamma$ disconnects $\partial B(0,1/2)$ from $\partial \D$.  This completes the proof of the lemma.
\end{proof}

\subsubsection{Support of the cut point measure}
\label{subsubsec:cut_point_support}

We will now show that a.s.\ each small ball around each cut point of an $\SLE_{\kappa'}$ process $\eta'$ has positive cut point measure.  This will be used to show that part~\eqref{it:cut_point_measure_local_positive} of Lemma~\ref{lem:graph_connected} holds.

\begin{lemma}
\label{lem:cut_point_measure_local_positive}
Let $\eta'$ be an $\SLE_{\kappa'}$ in $\h$ from $0$ to $\infty$ for $\kappa' \in (4,8)$.  Then a.s.\ we have that $\cutmeasure{\eta'}(B(p,r)) > 0$ for every $r>0$ and every cut point $p$ of $\eta'$.
\end{lemma}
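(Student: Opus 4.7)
My plan is to identify $\qcutmeasure{h}{\eta'}$ with the pushforward of the local time at $0$ of a Bessel process under the quantum wedge coupling, use that Bessel local time has full support on its zero set, and then transfer the resulting support statement to $\cutmeasure{\eta'}$.  By the conformal covariance of $\cutmeasure{\eta'}$ (Lemma~\ref{lem:conformal_covariance_cut_measure}) it suffices to prove the statement for $\eta'$ an $\SLE_{\kappa'}$ in $\h$ from $0$ to $\infty$, which I fix throughout.

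The core step uses the setup of Section~\ref{subsec:measure_construction}: take $\CW=(\h,h,0,\infty)$ an independent quantum wedge of weight $3\gamma^2/2-2$, parameterize $\eta'$ by quantum natural time, and let $X\sim\BES^\delta$ with $\delta=\kappa'/4\in(1,2)$ be the Bessel process encoding the weight $2-\gamma^2/2$ wedge $\CW_2$ between the left and right outer boundaries of $\eta'$.  The map $t\mapsto\eta'(t)$ restricted to the zero set of $X$ is then a continuous bijection onto the cut point set of $\eta'$ (injectivity uses that $\eta'$ visits each cut point exactly once for $\kappa'\in(4,8)$), and by construction $\qcutmeasure{h}{\eta'}$ is the pushforward under this map of the Bessel local time $L_X^0$.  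Since $\delta\in(0,2)$, the zero set of $X$ is a.s.\ perfect and $L_X^0$ strictly increases at every zero, so $L_X^0((t_0-\epsilon,t_0+\epsilon))>0$ for every zero $t_0$ of $X$ and every $\epsilon>0$.  Given any cut point $p=\eta'(t_0)$ and any $r>0$, the continuity of $\eta'$ yields $\epsilon>0$ with $\eta'((t_0-\epsilon,t_0+\epsilon))\subseteq B(p,r)$, and hence
\[ \qcutmeasure{h}{\eta'}(B(p,r)) \;\geq\; L_X^0((t_0-\epsilon,t_0+\epsilon)) \;>\; 0. \]
Thus a.s.\ $\qcutmeasure{h}{\eta'}$ has support equal to the full cut point set of $\eta'$.

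It then remains to transfer positivity from $\qcutmeasure{h}{\eta'}$ to $\cutmeasure{\eta'}$.  On each compact $K\subseteq\h$, a standard absolute continuity argument (comparing $h|_K$ to a zero-boundary GFF $h^0|_K$ on events of positive probability where the harmonic correction between them is bounded on $K$) shows that the rescaled bubble-counting measures defining $\qcutmeasure{h}{\eta'}$ and $\qcutmeasure{h^0}{\eta'}$ differ by bounded multiplicative factors on such events, and in particular their vague limits a.s.\ have the same support on $K$.  Letting $K$ exhaust $\h$ gives that a.s.\ $\qcutmeasure{h^0}{\eta'}(B(p,r))>0$ simultaneously for every cut point $p$ of $\eta'$ and every $r>0$.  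Since this random variable is integrable by Lemma~\ref{lem:local_finiteness_cut_measure}, its conditional expectation given $\eta'$ is a.s.\ strictly positive, and integrating against the strictly positive weight $r_\h$ yields $\cutmeasure{\eta'}(B(p,r))>0$.  The main subtlety, I expect, will be ensuring that the positivity holds \emph{simultaneously} for all (uncountably many) cut points $p$ and all radii $r>0$; this is precisely why I phrase the Bessel step as a support statement for the full measure $\qcutmeasure{h}{\eta'}$ rather than as pointwise positivity, after which the absolute continuity transfer and the conditional-expectation step automatically preserve the ``for all'' quantifier.
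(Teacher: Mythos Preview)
Your overall strategy is the same as the paper's --- identify $\qcutmeasure{h}{\eta'}$ with the pushforward of Bessel local time and use that Bessel local time charges every neighborhood of every zero (the paper isolates this as Lemma~\ref{lem:local_time_positive}) --- and your reduction from $\cutmeasure{\eta'}$ to $\qcutmeasure{h}{\eta'}$ via absolute continuity and conditional expectation is fine.

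The gap is in the middle step, where you write $p=\eta'(t_0)$, use continuity of $\eta'$ to get $\eta'((t_0-\epsilon,t_0+\epsilon))\subseteq B(p,r)$, and then conclude $\qcutmeasure{h}{\eta'}(B(p,r))\geq L_X^0((t_0-\epsilon,t_0+\epsilon))$.  Here $t_0$ is being used simultaneously as a quantum-natural-time parameter for $\eta'$ and as a Bessel-time parameter for $X$, but these are \emph{different} parameterizations: Bessel time records accumulated quantum \emph{area} of the beads of $\CW_2$, not quantum natural time.  So the displayed inequality is not justified as written.  What you actually need is continuity of the map $\Phi$ from Bessel zeros to cut points, i.e.\ that a short Bessel-time interval around a zero maps into a small Euclidean ball around the corresponding cut point.

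The paper establishes exactly this, but by a rather different and more involved route: it passes to the space-filling curve $\ol{\eta}'$ parameterized by quantum area, and argues that on a high-probability event (cut points in a fixed time window bounded away from $\partial\h$, plus a modulus-of-continuity control on $\ol{\eta}'$) the space-filling curve between two nearby cut points cannot escape the region between $\eta_L$ and $\eta_R$ --- because such an escape would require hitting $\partial\h$, which is too far away.  On that event the $\ol{\eta}'$-time between the cut points equals the Bessel time, and the modulus of continuity gives the Euclidean bound.

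A shorter fix in the spirit of your argument: the excursion/bead correspondence induces an order-preserving bijection between the Bessel zero set and the set of $\eta'$-times at which $\eta'$ visits a cut point; both are closed nowhere-dense subsets of $[0,\infty)$, and any order-preserving bijection between two closed subsets of $\R$ is automatically a homeomorphism.  Composing with the continuous $\eta'$ gives the continuity of $\Phi$ that your argument needs.
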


In order to prove Lemma~\ref{lem:cut_point_measure_local_positive}, we first need to collect the the following result which gives that a Bessel process with dimension $\delta \in (0,2)$ accumulates a positive amount of local time in every neighborhood of every time where it hits $0$.
\begin{lemma}\label{lem:local_time_positive}
Let $X$ be a Bessel process of dimension $\delta \in (0,2)$.  Then a.s.\ the following holds.  Fix $\epsilon > 0$ and let $t > 0$ be such that $X_t = 0$.  Then the amount of local time at $0$ that $X$ accumulates in $(t-\epsilon,t+\epsilon)$ is  positive.
\end{lemma}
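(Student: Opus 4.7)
The plan is to show that the Stieltjes measure $dL$ associated to the local time $L$ of $X$ at $0$ has full support on the closed zero set $Z = \{s \geq 0 : X_s = 0\}$; the claim is equivalent to this. The key input is the \emph{regularity} of $0$ as a boundary point of $\BES^\delta$ for $\delta \in (0,2)$: when $X$ starts from $0$, one has $L_s > 0$ a.s.\ for every $s > 0$. This is classical: $L^{-1}$ is a stable subordinator of index $1 - \delta/2 \in (0,1)$ with no drift starting from $0$, hence $\lim_{u \downarrow 0} L^{-1}_u = 0$ a.s., which rearranges to $L_s > 0$ for all $s > 0$. Equivalently, $0$ is an accumulation point of the zero set of $X$ from the right a.s.

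Next I would localize the problem to a countable family of stopping times. Fix an enumeration $\{q_n\}$ of the positive rationals and for each $n$ set $\tau_n = \inf\{s \geq q_n : X_s = 0\}$. By the recurrence of $0$ for $\delta \in (0,2)$, $\tau_n < \infty$ and $X_{\tau_n} = 0$ a.s. Applying the strong Markov property at the stopping time $\tau_n$, the process $(X_{\tau_n + s})_{s \geq 0}$ is itself a $\BES^\delta$ started from $0$ whose local time at $0$ is $L_{\tau_n + \cdot} - L_{\tau_n}$ (by additivity of local time); combining with the first paragraph, the event
\[
E_n = \{L_{\tau_n + s} > L_{\tau_n} \text{ for every } s > 0\}
\]
has probability $1$, and hence so does $E = \bigcap_n E_n$.

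To conclude, on $E$ fix any $t > 0$ with $X_t = 0$ and any $\epsilon > 0$, and choose $q_n$ in the nonempty set $(\max(t-\epsilon,0), t) \cap \Q$. Then $\tau_n \leq t$ (since $t$ is itself a zero with $t \geq q_n$) and $\tau_n \geq q_n > t - \epsilon$, so $s := t + \epsilon - \tau_n > 0$. Applying $E_n$ and the monotonicity of $L$ gives
\[
L_{t+\epsilon} = L_{\tau_n + s} > L_{\tau_n} \geq L_{t-\epsilon},
\]
so that $L_{t+\epsilon} - L_{t-\epsilon} > 0$. Since $L$ is continuous, this equals the $dL$-mass of $(t-\epsilon, t+\epsilon)$, which is the local time accumulated there. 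The only mildly delicate point is the regularity statement used at the start, but it is standard for Bessel processes of sub-critical dimension.
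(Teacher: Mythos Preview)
Your proof is correct and follows essentially the same approach as the paper's: both reduce via the strong Markov property at a countable family of first-zero-after-a-given-time stopping times to the regularity statement that a $\BES^\delta$ started from $0$ accumulates positive local time in every interval $[0,s]$. The only minor differences are in packaging: you index your stopping times by positive rationals $q_n$ rather than by a grid $[k\epsilon,(k+1)\epsilon]$, and you deduce regularity from the fact that the inverse local time is a stable subordinator, whereas the paper argues it via scaling invariance (the probability that $L_s > 0$ does not depend on $s$) combined with $L_s \to \infty$.
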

\begin{proof}
Fix $\epsilon > 0$,  $k \in \N$, and set $\tau_{\epsilon,k} = \inf\{t \in [k\epsilon,(k+1)\epsilon] : X_t = 0\}$.  It suffices to show that on the event that $\tau_{\epsilon,k} \in [k\epsilon,(k+1)\epsilon)$,  the amount of local time at $0$ that $X$ accumulates in $[\tau_{\epsilon,k},(k+1)\epsilon)$ is positive a.s. 
By the strong Markov property,  it suffices to show that the amount of local time at $0$ accumulated by $X|_{[0,s]}$ is positive for every $s>0$ a.s.  This follows because scaling implies that the probability of the event that the $X|_{[0,s]}$ accumulates a positive amount of local time at $0$ does not depend on $s$ and the local time that $X|_{[0,s]}$ accumulates at $0$ a.s.\ tends to $\infty$ as $s \to \infty$.
\end{proof}

We recall that since the left (resp.\ right) boundary $\eta_L$ (resp.\ $\eta_R$) of an $\SLE_{\kappa'}$ process in $\h$ from~$0$ to~$\infty$ is an $\SLE_\kappa(\kappa-4;\tfrac{\kappa}{2}-2)$ (resp.\ $\SLE_\kappa(\tfrac{\kappa}{2}-2;\kappa-4)$) in $\h$ from $0$ to $\infty$ with force points at~$0^-$ and~$0^+$ (see Remark~\ref{rem:sle_kp_divide}), it follows that $\eta_L$ (resp.\ $\eta_R$) a.s.\ does not hit $\R_+$ (resp.\ $\R_-$). This is because the weight of the force point has to be smaller than $\tfrac{\kappa}{2}-2$ in order for an $\SLE_{\kappa}(\rho)$ to hit the interval following it. This in turn implies that a.s., all of the cut points of $\eta'$ lie in $\h$.

\begin{proof}[Proof of Lemma~\ref{lem:cut_point_measure_local_positive}]
Let $\eta_L$ (resp.\ $\eta_R$) be the left (resp.\ right) outer boundary of $\eta'$. Let $h^0$ be a zero-boundary GFF on $\h$ and $(\h,h,0,\infty)$ be a weight $3\gamma^2/2 - 2$ quantum wedge, both independent of~$\eta'$.  By the comment just above, it suffices to prove the result for the measure $\qcutmeasure{h^0}{\eta'}$.  Note also that for every compact set $K \subseteq \h$ (hence $\dist(K,\partial \h) > 0$),  the laws of the restrictions of $h$ and $h^0$ to $K$ are mutually absolutely continuous and so it suffices to prove the claim for the measure~$\qcutmeasure{h}{\eta'}$.

Let $X$ be the Bessel process of dimension $\kappa'/4$ which encodes the weight $2-\gamma^2/2$ wedge parameterized by the regions between $\eta_L$ and $\eta_R$ and is such that the length of every excursion made by $X$ from $0$ is equal to the quantum area of the bead that it encodes.  Fix $0<t<T$ and let $\tau$ (resp.\ $\sigma$) be the time for $\eta'$ which corresponds to the opening (resp.\ closing) point of the bead encoded by the excursion that $X$ makes away from $0$ at time $t$ (resp.\ $T$).  As we remarked just before the proof of the lemma, the distance from $\partial \h$ of the set of cut points of $\eta'$ which are visited by $\eta'$ in the time interval $[\tau,\sigma]$ is positive a.s.  Let $E_\delta$ be the event that each cut point of $\eta'$ which lies in $\eta'([\tau,\sigma])$ has distance at least $\delta$ to $\partial \h$. We fix $p \in (0,1)$ and let $\delta_0>0$ be such that $\p[ E_{\delta_0}] \geq 1-p/2$.  Fix $\delta \in (0,\delta_0)$ small and let $\ol{\eta}'$ be a space-filling $\SLE_{\kappa'}$ in~$\h$ from~$0$ to~$\infty$ so that parameterized by capacity as seen from~$\infty$ is equal to~$\eta'$.  We assume that $\ol{\eta}'$ is parameterized by quantum area with respect to $h$.  Note that $\ol{\eta}'$ is a.s.\ continuous under the above parameterization.  Let $\ol{\tau}$ (resp.\ $\ol{\sigma}$) be the time for $\ol{\eta}'$ which corresponds to $\tau$ (resp.\ $\sigma$).  For $\epsilon \in (0,1)$ we let $\omega(\epsilon) = \sup_{\ol{\tau}\leq s_1 \leq s_2 \leq \ol{\sigma},|s_2-s_1|\leq \epsilon}|\ol{\eta}'(s_2)-\ol{\eta}'(s_1)|$ and note that $\omega(\epsilon) \to 0$ as $\epsilon \to 0$ a.s.  Thus, if we define the event $F_{\epsilon_*}^\delta = \{ \omega(\epsilon) \leq \delta/100 \ \text{for each} \ \epsilon \in (0,\epsilon_*) \}$, then there exists $\epsilon_0 \in (0,1)$ such that $\p[F_{\epsilon_0}^\delta] \geq 1-p/2$.  From now on, we suppose that we are working on $E_{\delta_0} \cap F_{\epsilon_0}^\delta$ and note that $\p[ E_{\delta_0} \cap F_{\epsilon_0}^\delta] \geq 1-p$.

Let $q$ be a cut point of $\eta'$ which is part of a bead encoded by an excursion made by $X$ from $0$ in $(t,T)$ and let $s \in (t,T)$ be the time for $X$ which corresponds to $q$.  Let $\epsilon \in (0,\epsilon_0)$ be small such that $[k\epsilon,(k+1)\epsilon] \subseteq [t,T]$ and $s \in (k\epsilon,(k+1)\epsilon)$ for some $k \in \N$.  Let $a$ (resp.\ $b$) be the first (resp.\ last) time in $[k\epsilon,(k+1)\epsilon]$ that $X$ hits $0$ (by Lemma~\ref{lem:local_time_positive}, the local time at $0$ that $X$ accumulates in any open neighborhood of $s$ is positive a.s.\ and thus $a$ and $b$ are a.s.\ distinct) Let $A$ (resp.\ $B$) be the time for $\eta'$ which corresponds to the cut point associated with the zero of $X$ at $a$ (resp.\ $b$).  Let also $\ol{A}$ (resp.\ $\ol{B}$) be the time for $\ol{\eta}'$ which corresponds to $A$ (resp.\ $B$).  Note that in order for $\ol{\eta}'|_{[\ol{A},\ol{B}]}$ to disconnect from $\infty$ any of the cut points of $\eta'$ hit in the time interval $[A,B]$,  it must first traverse distance at least $\delta_0$.

We shall now show that $\eta'([A,B]) \subseteq B(q,\delta/100)$. This follows if we show that $\ol{B} - \ol{A} \leq \epsilon$, since as we are working on $E_{\delta_0} \cap F_{\epsilon_0}^\delta$, this implies that $\diam(\eta'([A,B])) \leq \diam(\ol{\eta}'([\ol{A},\ol{B}])) \leq \delta/100$ and hence that $\eta'([A,B]) \subseteq B(q,\delta/100)$. Thus, we turn to showing that $\ol{B}-\ol{A} \leq \epsilon$.  We note that $\ol{\eta}'(\ol{A})$ lies between $\eta_L$ and $\eta_R$ and hence $\ol{\eta}'|_{[\ol{A},\ol{B}]}$ can only leave the closure of the union of the components bounded between $\eta_L$ and $\eta_R$ by hitting $\partial \h$.  If $\ol{\eta}'|_{[\ol{A},\ol{B}]}$ does not hit $\partial \h$, it is thus between $\eta_L$ and $\eta_R$, so that $\ol{B} - \ol{A} = b -a \leq \epsilon$ (recall that the quantum area of the components between $\eta_L$, $\eta_R$ encoded by $X|_{[a,b]}$ is equal to $b-a$). Let $\ol{C} = \inf\{ t \geq \ol{A}: \ol{\eta}'(t) \in \partial \h\}$. We assume by that $\ol{C} \leq \ol{B}$ (in order to reach a contradiction).  Note that $\ol{\eta}'([\ol{A},\ol{C}])$ lies between $\eta_L$ and $\eta_R$. Then $\mu_h(\ol{\eta}'([\ol{A},\ol{C}])) = \ol{C}-\ol{A} \leq b-a \leq \epsilon$. Thus, since we are working on the event $E_{\delta_0} \cap F_{\epsilon_0}^\delta$, it follows that $|\ol{\eta}'(\ol{C})- \ol{\eta}'(\ol{A})| \leq \omega(\epsilon) \leq \delta/100$. Since $\dist(\ol{\eta}'(\ol{A}),\partial \h) \geq \delta_0 > \delta/100$, this is a contradiction. As stated above, we thus have that $\eta'([A,B]) \subseteq B(q,\delta/100)$.

Finally,  since $\qcutmeasure{h}{\eta'}(\eta'([A,B]))$ is the amount of local time accumulated by $X$ in the time interval $[a,b]$, this is a.s.\ positive by Lemma~\ref{lem:local_time_positive}. Thus $\qcutmeasure{h}{\eta'}(B(q,\delta/100)) > 0$ a.s.\ as well.  Hence, we have that for every $\delta \in (0,\delta_0 /100)$,  with probability at least $1-p$ we have that $\qcutmeasure{h}{\eta'}(B(q,\delta)) > 0$ for every cut point $q$ of $\eta'$ which corresponds to a bead lying between $\eta_L$ and $\eta_R$,  and said bead is encoded by an excursion made by $X|_{[t,T]}$ away from $0$.  The claim of the lemma then follows since $p \in (0,1), \delta \in (0,\delta_0)$ and $0<t < T<\infty$ were arbitrary.
\end{proof}

\subsubsection{Measure on the intersection of adjacent component boundaries}
\label{subsubsec:measure_on_the_intersection}

Next, we introduce a number of conditions on the intersection of the boundaries of two connected components of the complement of a closed set $K \subseteq \closure{\D}$.  We will  consider distinct connected components $U$ and $V$ of $\D \setminus K$ with $\partial U \cap \partial V \neq 0$ which satisfy the following for some $d>0$ and $\alpha \in (0,1)$.
\begin{enumerate}
	\item[(B1)]\label{it:upper_bound_minkowski_b} The upper Minkowski dimension of $\partial U \cap \partial V$ is at most $d$.
	\item[(B2)]\label{it:holder_continuity_b} For each conformal transformation $\phi: \D \to O$, $O \in \{U,V\}$ mapping $\D$ onto $O$,  there exists $\delta \in (0,1)$ such that if $W = \{z \in \D : \dist(z,\partial U \cap \partial V) < \delta\}$,  then $\phi|_W$ is H\"{o}lder continuous with exponent $\alpha$.
\end{enumerate}
If $U,V$ are two distinct connected components of $\D \setminus K$, then for some $d^* > 0$, we will consider a measure $\mu = \mu^{U,V}$ that satisfies the following.
\begin{enumerate}
	\item[(M1)]\label{it:measure_support_m} $\mu$ is supported on $\partial U \cap \partial V$.
	\item[(M2)]\label{it:diameter_bound_m} There exists a constant $C > 0$ such that for each Borel set $X \subseteq \partial U \cap \partial V$, we have $\mu(X) \leq C \diam(X)^{d^*}$.
	\item[(M3)]\label{it:measure_local_positive_m} $\mu(B(p,r)) > 0$ for each $p \in \partial U \cap \partial V$ and $r > 0$.
\end{enumerate}

\begin{figure}
\begin{center}
\includegraphics[scale=0.85]{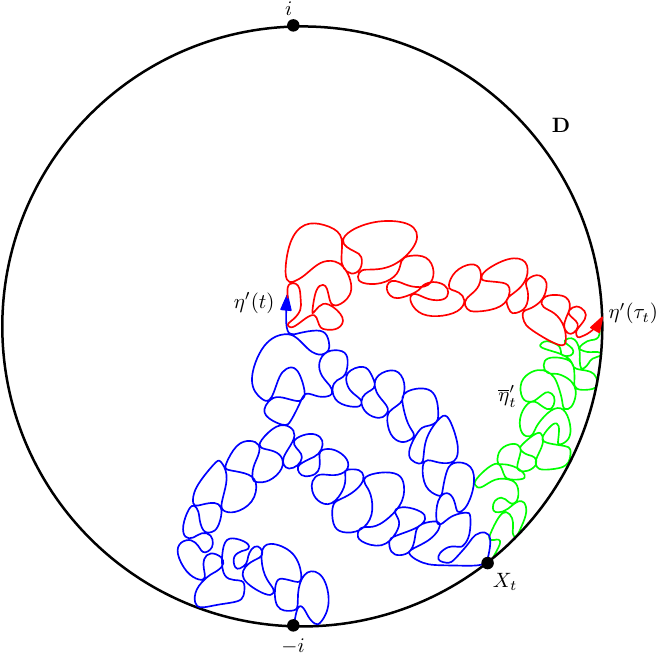}
\end{center}
\caption{\label{fig:eta_etabar_illustration} Illustration of the setup of the proof of Lemma~\ref{lem:good_measures_chordal_case}.  Shown in blue is a chordal $\SLE_{\kappa'}(\kappa'-6)$ process in $\D$ from $-i$ to $i$ up to some time $t$.  The point $X_t$ is the most recent intersection of $\eta'|_{[0,t]}$ with the counterclockwise arc of $\partial \D$ from $-i$ to $i$.  Shown in red is the part of $\eta'$ starting from time $t$ up until $\tau_t$, which is the first time that it disconnects~$X_t$ from $i$.  The law of $\eta'|_{[t,\tau_t]}$ is equal to that of a chordal $\SLE_{\kappa'}$ from $\eta'(t)$ to $X_t$ stopped at the corresponding time; the rest of this chordal $\SLE_{\kappa'}$ is shown in green.  In the proof of Lemma~\ref{lem:good_measures_chordal_case}, the time-reversal of this $\SLE_{\kappa'}$ (going from $X_t$ to $\eta'(t)$) is called $\ol{\eta}_t'$.}
\end{figure}

\begin{lemma}
\label{lem:good_measures_chordal_case}
Fix $\kappa' \in (4,8)$, $b \in (0,d_{\kappa'}^{\cut})$ and let $\eta'$ be a chordal $\SLE_{\kappa'}(\kappa'-6)$ in $\D$ from $-i$ to $i$ with force point located at $(-i)^+$.  Fix points $z,w \in \D \cap \Q^2$ and let $U$ (resp.\ $V$) be the connected component of $\D \setminus \eta'$ containing $z$ (resp.\ $w$).  Then a.s.\ on the event $\{\partial U \cap \partial V \neq \emptyset,  U \neq V\}$,  there exists a measure $\mu_{\eta'}^{z,w}$ satisfying \hyperref[it:measure_support_m]{(M1)}, \hyperref[it:diameter_bound_m]{(M2)}, and \hyperref[it:measure_local_positive_m]{(M3)} with $d^* = d_{\kappa'}^\cut - b$.
\end{lemma}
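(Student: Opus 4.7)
The plan is to construct $\mu_{\eta'}^{z,w}$ by restricting to $\partial U \cap \partial V$ the cut-point measure $\cutmeasure{\eta'}$ obtained by extending the $\SLE_{\kappa'}$ cut-point measure of Section~\ref{sec:natural_measure_on_cut_points} to $\eta'$ via local absolute continuity. Since $\eta'$ is a chordal $\SLE_{\kappa'}(\kappa'-6)$, hence a counterflow line of a GFF in the imaginary geometry coupling of Section~\ref{subsec:ig}, its boundary data differs from that associated with a chordal $\SLE_{\kappa'}$ only by a bounded harmonic function on each compact $K \subseteq \D$ with $\dist(K,\partial \D) > 0$. Consequently, by Remark~\ref{rmk:RN_derivative}, the restrictions of the two GFFs to $K$ are mutually absolutely continuous with Radon-Nikodym derivatives having finite moments of all orders. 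This transfers Lemmas~\ref{lem:local_finiteness_cut_measure},~\ref{lem:conformal_covariance_cut_measure},~\ref{lem:cut_point_measure_bc}, and~\ref{lem:cut_point_measure_local_positive} to $\eta'$, producing a locally finite, conformally covariant measure $\cutmeasure{\eta'}$ supported on the cut points of $\eta'$, a.s.\ satisfying $\cutmeasure{\eta'}(A) \leq C \diam(A)^{d_{\kappa'}^\cut - b}$ for all Borel $A$ in a fixed compact $K \subseteq \D$, and assigning positive mass to every ball centered at a cut point.

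On the event $\{U \neq V,\, \partial U \cap \partial V \neq \emptyset\}$, each $p \in \partial U \cap \partial V$ is a cut point of $\eta'$: any sufficiently small ball around $p$ meets both $U$ and $V$, so $\eta'$ locally disconnects $p$, and for $\kappa' \in (4,8)$ such a point is visited by $\eta'$ exactly once. Thus $\partial U \cap \partial V$ is a Borel subset of the cut point set of $\eta'$, and we set
\[
\mu_{\eta'}^{z,w}(A) = \cutmeasure{\eta'}(A \cap \partial U \cap \partial V) \quad \text{for } A \subseteq \D \text{ Borel.}
\]
Property~\hyperref[it:measure_support_m]{(M1)} is immediate from this definition, and~\hyperref[it:diameter_bound_m]{(M2)} (with $d^* = d_{\kappa'}^\cut - b$) follows from the diameter bound on $\cutmeasure{\eta'}$.

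The hard part is~\hyperref[it:measure_local_positive_m]{(M3)}, which requires $\cutmeasure{\eta'}(B(p,r) \cap \partial U \cap \partial V) > 0$ for every $p \in \partial U \cap \partial V$ and $r > 0$. Lemma~\ref{lem:cut_point_measure_local_positive} only yields $\cutmeasure{\eta'}(B(p,r)) > 0$, which does not suffice: near $p$ the cut points of $\eta'$ form a fractal set, only some of whose points separate $U$ from $V$ (others separate different pairs of components). The plan is to localize near $p$: conformally map a small Jordan neighborhood of $p$ in $\D$ to $\h$, sending $p$ to $0$ and the two sides of the pinch at $p$ to $\R_-$ and $\R_+$, and identify the sub-arc of $\eta'$ whose trace in the image produces precisely the pinch points between $U$ and $V$ near $p$. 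By the conformal Markov and target-independence properties of $\SLE_{\kappa'}(\kappa'-6)$ together with the imaginary geometry coupling, this sub-arc is locally absolutely continuous with respect to a chordal $\SLE_{\kappa'}$ in $\h$ from $0$ to $\infty$, so Lemma~\ref{lem:cut_point_measure_local_positive} applied to the sub-arc and transferred back via Lemma~\ref{lem:conformal_covariance_cut_measure} gives the required positivity.

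The principal obstacle is this last identification: precisely extracting the pinch sub-arc of $\eta'$ near $p$ and showing that, in the image domain, its cut points coincide (up to a $\cutmeasure{\cdot}$-null set) with the image of $\partial U \cap \partial V$. This requires careful control of the prime-end structure of $\partial U$ and $\partial V$ at $p$, and a separate treatment of the exceptional set of points where three or more complementary components of $\D \setminus \eta'$ meet; such exceptional points should be $\cutmeasure{\eta'}$-null and so can be neglected, but this negligibility must itself be checked via the imaginary geometry description of $\eta'$.
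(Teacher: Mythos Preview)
Your central claim that every $p \in \partial U \cap \partial V$ is a cut point of the full curve $\eta'$ is false, and this breaks the whole construction: restricting the global cut point measure to $\partial U \cap \partial V$ can give the zero measure, so (M3) is unrecoverable. Already for ordinary chordal $\SLE_{\kappa'}$ in $\h$, take $U$ to be a component to the left of the left boundary $\eta_L$ (a ``bay'' between two successive hits of $\eta_L$ on $\R_-$) and $V$ an adjacent bubble lying between $\eta_L$ and $\eta_R$. Then $\partial U \cap \partial V$ is an arc of $\eta_L \setminus \eta_R$; the cut point set is $\eta_L \cap \eta_R$, so $\cutmeasure{\eta'}(\partial U \cap \partial V)=0$. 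Your justification conflates local separation of $U$ and $V$ with global disconnection of the range: a point on the outer boundary separates two complementary components without being a cut point. For $\SLE_{\kappa'}(\kappa'-6)$ the picture is worse, since (as the paper notes explicitly) $\eta'$ can revisit points at later times, so points of $\partial U \cap \partial V$ can be genuine double points of $\eta'$. There is also a secondary issue in your first paragraph: the cut point measure of Section~\ref{sec:natural_measure_on_cut_points} is a global object built from the bubbles between $\eta_L$ and $\eta_R$, not a local functional of the field, so local absolute continuity of the GFFs on a compact $K$ does not by itself produce a cut point measure for $\eta'$ on $K$.

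The paper's route is precisely to avoid the global cut point set and use instead the cut point measure of a carefully chosen \emph{segment} of $\eta'$ that is a genuine chordal $\SLE_{\kappa'}$ in a subdomain. For rational $t$ one retargets $\eta'$ at its most recent boundary hit $X_t$ to obtain a chordal $\SLE_{\kappa'}$ piece, and conditioning further on an initial portion $\ol\eta_t'|_{[0,\ol t]}$ of its time-reversal leaves an $\SLE_{\kappa'}$ in $\D \setminus (K_t \cup \ol K_{\ol t})$, to which Section~\ref{sec:natural_measure_on_cut_points} applies directly and yields $\mu_{t,\ol t}^\cut$. One then selects (from countably many choices) times $T_M,\ol T_M$ bracketing the first and last visits of $\eta'$ to $\partial U \cap \partial V$ and preceding the completion of $\partial U$; for this segment $U$ lies on one side and $V$ on the other, so every point of $\partial U \cap \partial V$ \emph{is} a cut point of $\eta'|_{[T_M,S_M]}$ in the subdomain. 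Restricting $\mu_{T_M,\ol T_M}^\cut$ then gives the desired measure, with (M2) and (M3) supplied by Lemmas~\ref{lem:cut_point_measure_bc} and~\ref{lem:cut_point_measure_local_positive} applied to this honest $\SLE_{\kappa'}$. Your localisation sketch for (M3) is groping toward this, but the crucial point is that the measure itself must be built from the segment, not restricted from a global object.
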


\begin{proof}
See Figure~\ref{fig:eta_etabar_illustration} for an illustration of the setup of the proof.  For each $t\geq 0$ we let $X_t$ be the place where $\eta'$ last intersected the counterclockwise arc of $\partial \D$ from $-i$ to $i$ before time $t$ and $(K_t)$ be the family of hulls corresponding to $\eta'$. If $\eta'(t) \neq X_t$ and $\tau_t$ is the first time after $t$ that $\eta'$ disconnects $X_t$ from $i$, then the law of $\eta'|_{[t,\tau_t]}$ is that of a chordal $\SLE_{\kappa'}$ from $\eta'(t)$ to $X_t$ in $\D \setminus K_t$, stopped upon disconnecting $X_t$ from $i$. We can then continue the curve past time $\tau_t$, targeting $X_t$ instead. We denote the resulting process by $\eta_t'$ and out of convenience we choose the parameterization so that $\eta_t'(s) = \eta'(s)$ for all $s \leq \tau_t$ and in a fixed (but unspecified) way for $s > \tau_t$. We let $\ol{\eta}_t'$ be the time-reversal of $\eta_t'$, $(\ol{K}_{\ol{t}})_{\ol{t}\geq 0}$ the hulls of $\ol{\eta}_t'$ and define $\zeta_t(\ol{t}) = \inf\{ s > 0: \eta_t'(s) = \ol{\eta}_t'(\ol{t}) \}$.  For every $t,\ol{t}\geq 0$,  the conditional law of $\eta_t'|_{[t,\zeta_t(\ol{t})]}$ given $\eta|_{[0,t]}$ and $\ol{\eta}_t'|_{[0,\ol{t}]}$ is that of a chordal $\SLE_{\kappa'}$ from $\eta'(t)$ to $\ol{\eta}_t'(\ol{t})$ in $\D \setminus (K_t \cup \ol{K}_{\ol{t}})$ by \cite[Theorem~1.1]{ms2016imag3}. Thus, we can define the cut point measure $\mu_{t,\ol{t}}^\cut$ of $\eta_t'|_{[t,\zeta_t(\ol{t})]}$ for all $t, \ol{t} \in \Q_+$ simultaneously off a common set of zero probability. We note that if $\ol{\tau}_t = \inf\{ s \geq 0: \ol{\eta}_t'(s) = \eta'(\tau_t)\}$, then for each $\ol{t} > \ol{\tau}_t$, the measure $\mu_{t,\ol{t}}^\cut$ is supported on $\eta'$. We emphasize that $\mu_{t,\ol{t}}^\cut$ (for $\ol{t} > \ol{\tau}_t$) is a measure on the cut points of the curve $\eta'|_{[t,\zeta_t(\ol{t})]}$ and not on the cut points of $\eta'$. Indeed, the curve $\eta'$ might revisit points $\eta'(s)$ for $s < t$ at a time after $\zeta_t(\ol{t})$.

Fix $z,w \in \D \cap \Q^2$ distinct and let $U$ (resp.\ $V$) be the connected component of $\D \setminus \eta'$ containing~$z$ (resp.\ $w$). 
Let $\tau$ (resp.\ $\sigma$) be the first (resp.\ last) time that $\eta'$ hits a point in $\partial U \cap \partial V$.  Let also $\wt{\tau}$ be the time at which $\eta'$ finishes tracing $\partial U$ and suppose that $\eta'$ finishes tracing $\partial V$ after time $\wt{\tau}$. Note that $\dist(\eta'([\tau , \sigma])  ,  \partial \D)>0$ and $\sigma < \wt{\tau}$ a.s.\ on the event that $\partial U \cap \partial V \neq \emptyset$ and $U \neq V$.
For every $n \in \N$,  we let $T_n$ be the largest number of the form $k 2^{-n}$ for $k \in \N$,  so that $k2^{-n} < \tau$ and $\eta'(k2^{-n}) \notin \partial \D$.  Let also $\ol{T}_n$ be the largest number of the form $\ell 2^{-n}$ for $\ell \in \N$,  so that $\zeta_{T_n} (\ell 2^{-n}) \in (\sigma ,  \wt{\tau})$ and $\dist(\eta'([T_n ,  \zeta_{T_n}(\ell 2^{-n})]) ,  \partial \D) > 0$.  We then set $S_n = \zeta_{T_n}(\ol{T}_n)$ and note that $\ol{T}_n > \ol{\tau}_{T_n}$. Then the measure $\mu_{T_n,\ol{T}_n}^{\cut}$ is well-defined for all $n \in \N$ sufficiently large a.s. We note that a.s.\ on the event that $\partial U \cap \partial V \neq \emptyset$ and $U \neq V$,  we have that $\diam(\partial U \cap \partial V) > 0$ and that there are uncountably many points in $\partial U \cap \partial V$.  Note also that as $n \to \infty$,  we have that $T_n$ increases to $\tau$ and $S_n$ decreases to $\sigma$,  and so the support of $\mu_{T_n,\ol{T}_n}^{\cut}$ contains $\partial U \cap \partial V$ for all sufficiently large $n \in \N$.  We also let $M$ be the smallest $n \in \N$ such that both of $T_n$ and $\ol{T}_n$ are finite.  We note that on $\{\partial U \cap \partial V \neq \emptyset,  U \neq V\}$, we a.s.\ have that $M < \infty$ and that the support of $\mu_{T_M,\ol{T}_M}^{\cut}$ contains $\partial U \cap \partial V$.  Then we let $\mu_{\eta'}^{z,w}$ be the measure $\mu_{T_M,\ol{T}_M}^{\cut}$ restricted to $\partial U \cap \partial V$.  Note that Proposition~\ref{prop:cut_point_measure_bc} implies that \hyperref[it:diameter_bound_m]{(M2)} holds a.s.\ on the event that $\partial U \cap \partial V \neq \emptyset$ and $U \neq V$.

We claim that \hyperref[it:measure_local_positive_m]{(M3)} holds a.s.\ on the event that $\partial U \cap \partial V \neq \emptyset$ and $U \neq V$.  Indeed,  suppose that $\partial U \cap \partial V \neq \emptyset$ and fix $x \in \partial U \cap \partial V$ and $r > 0$.  First,  we note that all of the points of $\partial U \cap \partial V$ are cut points of the curve $\eta'|_{[T_M,S_M]}$ when the latter is considered in the connected component of $\D \setminus (\eta'([0,T_M]) \cup \ol{\eta}_{T_M}'([0,\ol{T}_M]))$ whose boundary contains $\eta'(T_M)$ and $\ol{\eta}_{T_M}'(\ol{T}_M)$.  Suppose that $x \notin \{\eta'(\tau),\eta'(\sigma)\}$.  Then there exist $0<s<r$ such that the cut points of $\eta'|_{[T_M,S_M]}$ contained in $B(x,s)$,  are points of $\partial U \cap \partial V$.  Since the cut point measure gives positive mass to every neighborhood of every cut point a.s.\ (Lemma~\ref{lem:cut_point_measure_local_positive}),  it follows that $\mu_{\eta'}^{z,w}(B(x,r)) \geq \mu_{T_M,\ol{T}_M}^{\cut}(B(x,s)) > 0$.  Suppose that $x \in \{\eta'(\tau),\eta'(\sigma)\}$.  Then there exists $y \in \partial U \cap \partial V,  s>0$ such that $y \notin \{\eta'(\tau),\eta'(\sigma)\}$ and $B(y,s) \subseteq B(x,r)$.  It follows that $\mu_{\eta'}^{z,w}(B(x,r)) \geq \mu_{\eta'}^{z,w}(B(y,s)) > 0$.  This proves the claim.
\end{proof}

\begin{lemma}\label{lem:good_boundaries_maps}
Fix $\kappa' \in (4,8)$ and let $\eta'$ be a chordal $\SLE_{\kappa'}(\kappa'-6)$ in $\D$ from $-i$ to $i$ with the force point located at $(-i)^+$.  Fix points $z,w \in \D \cap \Q^2$ and let $U$ (resp.\ $V$) be the connected component of $\D \setminus \eta'$ containing $z$ (resp.\ $w$).  Then a.s.\ on the event $\{\partial U \cap \partial V \neq \emptyset,  U \neq V\}$,  we have that \hyperref[it:upper_bound_minkowski_b]{(B1)} holds with $d = d_{\kappa'}^{\cut}$ and that \hyperref[it:holder_continuity_b]{(B2)} holds for some constant $\alpha \in (0,1)$ depending only on $\kappa'$.
\end{lemma}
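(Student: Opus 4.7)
The plan is to work on the event $E = \{\partial U \cap \partial V \neq \emptyset,\ U \neq V\}$ throughout.  Following the reasoning in the proof of Lemma~\ref{lem:good_measures_chordal_case}, the set $\partial U \cap \partial V$ is contained in the range of $\eta'|_{[\tau,\sigma]}$, where $\tau$ (resp.\ $\sigma$) is the first (resp.\ last) time that $\eta'$ visits a point of $\partial U \cap \partial V$; moreover each point of $\partial U \cap \partial V$ is a cut point of $\eta'|_{[\tau,\sigma]}$ relative to the appropriate containing component.  In particular, such points are separation times of the curve and therefore admit GFF-flow-line descriptions of their two sides via imaginary geometry (Section~\ref{subsec:ig}).

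For~(B1), the strategy is to leverage the one-point estimate for cut points from Lemma~\ref{lem:gff_flow_lines_hit}.  Compactly contain the situation inside some $K \subseteq \D$ bounded away from $\partial \D$ so that absolute continuity of the GFF coupling allows comparison to the zero-boundary case on $K$.  For each $w \in K$ and $\epsilon > 0$, the event that both an angle $-\pi/2$ and an angle $+\pi/2$ flow line from nearby grid points reach $B(w,\epsilon)$ has probability at most $c \epsilon^{2-d_{\kappa'}^\cut - a}$ for arbitrary fixed $a > 0$.  Since every point in $\partial U \cap \partial V$ is a separating cut point, any $\epsilon$-grid square that contains such a point must be hit by such a flow line pair.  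Thus the expected number $\E[N(\epsilon)]$ of $\epsilon$-grid squares intersecting $\partial U \cap \partial V$ is $O(\epsilon^{-d_{\kappa'}^\cut - a})$.  Markov's inequality along the dyadic scales $\epsilon_n = 2^{-n}$ combined with Borel--Cantelli then yields that, for any $a > 0$, a.s.\ $N(\epsilon_n) \leq \epsilon_n^{-d_{\kappa'}^\cut - 2a}$ for all large $n$, giving upper Minkowski dimension at most $d_{\kappa'}^\cut + 2a$; taking $a \downarrow 0$ along a countable sequence gives~(B1).

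For~(B2), the goal is to show that on a neighborhood $W$ of $\phi^{-1}(\partial U \cap \partial V)$ in $\D$, the Riemann map $\phi \colon \D \to O$ with $O \in \{U,V\}$ is H\"older continuous with some exponent $\alpha = \alpha_{\kappa'} \in (0,1)$.  The key local fact is that a point $p \in \partial U \cap \partial V$ is a separation point of $\eta'$, so there is a neighborhood of $p$ in which $\partial O$ consists of two simple arcs of $\eta'$ (traced just before and just after $p$), which meet only at $p$.  Locally each such arc coincides with an $\SLE_{\kappa'}$-type trace, so one can transplant the quantitative H\"older estimates of Rohde--Schramm \cite[Theorem~5.2]{rs2005basic} (as in the argument for the simple/boundary of an $\SLE_{\kappa'}$ pocket) to obtain a deterministic H\"older exponent $\alpha_{\kappa'}$ for the conformal map in a small neighborhood of each point of $\partial U \cap \partial V$.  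A covering argument using~(B1) (which provides a uniform bound on the number of balls needed to cover $\partial U \cap \partial V$ at each scale) together with the local H\"older estimates promotes these to a single H\"older estimate on $W = \{z \in \D : \dist(z,\phi^{-1}(\partial U \cap \partial V)) < \delta\}$ for some random $\delta > 0$.

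The main obstacle will be carrying out the H\"older estimate in~(B2): although each cut point locally looks like the tip of a simple curve, the Rohde--Schramm argument depends on controlling the Loewner driving function of $\partial O$ near~$p$, and one must rule out that nearby double points of $\eta'$ (where the curve revisits itself) spoil the martingale estimates.  This will be handled by localizing the GFF/flow-line coupling in a neighborhood of each cut point, so that on a suitable good event the uniformizing map of the pocket near $p$ is dominated by a chordal $\SLE_{\kappa'}$ Loewner flow to which the standard H\"older estimate applies; a Borel--Cantelli argument over dyadic scales, combined with~(B1) to bound the number of cut points to be simultaneously handled at each scale, then yields the uniform H\"older bound on $W$.
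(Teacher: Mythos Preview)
Your overall reduction---working on the event $E$ and using the setup of Lemma~\ref{lem:good_measures_chordal_case} to view $\partial U \cap \partial V$ as a subset of the cut points of the curve segment $\eta'|_{[T_M,S_M]}$---is the same as the paper's.  But from that point on, you take a much harder path and one of the two branches has a real gap.

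\textbf{On (B1).}  The paper simply notes that, conditionally on $\eta'|_{[0,T_M]}$ and $\ol\eta'_{T_M}|_{[0,\ol T_M]}$, the middle piece $\eta'|_{[T_M,S_M]}$ is a chordal $\SLE_{\kappa'}$ in a simply connected domain, and then cites \cite[Theorem~1.2]{mw2017intersections}, which already gives upper Minkowski dimension $d_{\kappa'}^\cut$ for its cut set.  Since $\partial U \cap \partial V$ is a subset of that cut set, (B1) follows in one line.  Your route via Lemma~\ref{lem:gff_flow_lines_hit} and a first-moment/Borel--Cantelli argument is a legitimate alternative in spirit, but as written it does not match the hypotheses of that lemma: Lemma~\ref{lem:gff_flow_lines_hit} concerns flow lines started from a \emph{single} Lebesgue-random point $z_0$, not ``flow lines from nearby grid points,'' and the identification of $\partial U \cap \partial V$ with intersections of such flow lines needs to be made precise (this is essentially what the setup in the proof of Lemma~\ref{lem:moments_finite} does).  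So your (B1) argument is recoverable, but it re-derives a result that is already available off the shelf.

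\textbf{On (B2).}  Here the paper again uses the chordal-$\SLE_{\kappa'}$ identification and cites \cite[Theorem~5.2]{rs2005basic} directly: the complementary components $U$ and $V$ of $\eta'|_{[T_M,S_M]}$ are H\"older domains with exponent depending only on $\kappa'$, so the Riemann map $\phi\colon\D\to O$ is globally H\"older on $\D$, which certainly implies (B2).  Your proposed route---obtain local H\"older estimates near each cut point and then ``promote'' them via a covering argument using (B1)---does not work as stated.  H\"older continuity of a conformal map does not patch together from local estimates without uniform control on the local H\"older constants and neighborhood sizes, and nothing in your sketch provides that uniformity; (B1) controls how many balls are needed to cover $\partial U\cap\partial V$ at each scale but says nothing about the constants in the local Rohde--Schramm estimates.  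Your own ``main obstacle'' paragraph identifies exactly this problem (controlling the Loewner driving near double points) and then does not resolve it.  The paper's approach sidesteps the issue entirely by applying the global H\"older result once to the whole complementary component, rather than locally near each cut point.
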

\begin{proof}
Assume that we are on the event $\{\partial U \cap \partial V \neq \emptyset,  U \neq V\}$ and recall the notation from Lemma~\ref{lem:good_measures_chordal_case}. This lemma follows by similar arguments to those of Lemma~\ref{lem:good_measures_chordal_case}. Indeed, since the conditional law of $\eta'$ given $\eta'|_{[0,t]}$ and $\ol{\eta}_t'|_{[0,\ol{t}]}$ is that of a chordal $\SLE_{\kappa'}$ from $\eta'(t)$ to $\ol{\eta}_t'(\ol{t})$ in the connected component of $\D \setminus (\eta'([0,t]) \cup \ol{\eta}_t'([0,\ol{t}]))$ with $\eta'(t)$ and $\ol{\eta}_t'(\ol{t})$ on its boundary, it follows by applying \cite[Theorem~1.2]{mw2017intersections} to all $t, \ol{t} \in \Q_+$ (which includes $T_M, \ol{T}_M$) that the upper Minkowski dimension of the set of cut points of $\eta'|_{[T_M,S_M]}$ is a.s.\ at most $d_{\kappa'}^\cut$. Moreover, since $\partial U \cap \partial V$ is a subset of the cut points of $\eta'|_{[T_M,S_M]}$ we have that~\hyperref[it:upper_bound_minkowski_b]{(B1)} holds with $d = d_{\kappa'}^{\cut}$. Similarly,  \cite[Theorem~5.2]{rs2005basic} implies that \hyperref[it:holder_continuity_b]{(B2)} holds for some constant $\alpha \in (0,1)$ depending only on $\kappa'$.
\end{proof}

\begin{lemma}\label{lem:good_measures_radial_case_up_to_disconnection_time}
Fix $\kappa' \in (4,8)$, $b \in (0,d_{\kappa'}^\cut)$ and let $\eta'$ be a radial $\SLE_{\kappa'}(\kappa'-6)$ in $\D$ from $-i$ to $0$ with the force point located at $(-i)^+$.  Let also $\sigma_1$ be the time at which $\eta'$ disconnects $0$ from $i$.  Fix $z,w \in \D \cap \Q^2$ and let $U$ (resp.\ $V$) be the connected component of $\D \setminus \eta'([0,\sigma_1])$ containing $z$ (resp.\ $w$).  Then a.s.\  on the event $\{\partial U \cap \partial V \neq \emptyset,  U \neq V\}$, there exists a measure $\mu_1^{z,w}$ satisfying \hyperref[it:measure_support_m]{(M1)}, \hyperref[it:diameter_bound_m]{(M2)}, and \hyperref[it:measure_local_positive_m]{(M3)} with $d^* = d_{\kappa'}^\cut - b$.
\end{lemma}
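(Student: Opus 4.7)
The plan is to reduce the radial setting to the chordal setting of Lemma~\ref{lem:good_measures_chordal_case} via the target invariance of $\SLE_{\kappa'}(\kappa'-6)$.  First, I would couple $\eta'$ with a chordal $\SLE_{\kappa'}(\kappa'-6)$ process $\tilde{\eta}'$ in $\D$ from $-i$ to $i$ with force point at $(-i)^+$ so that $\eta'|_{[0,\sigma_1]} = \tilde{\eta}'|_{[0,\sigma_1]}$; this is valid because $\sigma_1$ coincides with the first time $\tilde{\eta}'$ separates $0$ from $i$.  Let $C_0$ (resp.\ $C_i$) denote the component of $\D \setminus \eta'([0,\sigma_1])$ containing $0$ (resp.\ $i$).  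Since $\tilde{\eta}'|_{[\sigma_1,\infty)} \subseteq \closure{C_i}$, the connected components of $\D \setminus \tilde{\eta}'$ are exactly $C_0$ together with the countable family $(V_k)_{k \in \N}$ of subcomponents of $C_i$ cut out by $\tilde{\eta}'|_{[\sigma_1,\infty)}$.  All constructions below are carried out on the full probability event on which Lemma~\ref{lem:good_measures_chordal_case} applies to $(\tilde{\eta}', z', w')$ simultaneously for every pair $z', w' \in \D \cap \Q^2$.

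I would then distinguish two cases based on whether $C_i \in \{U,V\}$ (at most one is, since $U \neq V$).  In Case~A, $C_i \notin \{U,V\}$, so $U$ and $V$ are themselves components of $\D \setminus \tilde{\eta}'$; applying Lemma~\ref{lem:good_measures_chordal_case} to $\tilde{\eta}'$ with the pair $(z,w)$ directly produces a measure satisfying~\hyperref[it:measure_support_m]{(M1)},~\hyperref[it:diameter_bound_m]{(M2)}, and~\hyperref[it:measure_local_positive_m]{(M3)} with $d^* = d_{\kappa'}^\cut - b$, which I take as $\mu_1^{z,w}$.

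In Case~B, which is the main obstacle, by symmetry I may assume $V = C_i$ (and hence $U \neq C_i$ is unaltered by $\tilde{\eta}'|_{[\sigma_1,\infty)}$).  I fix rational points $w_k \in V_k$ for each $k$; Lemma~\ref{lem:good_measures_chordal_case} applied to $(\tilde{\eta}', z, w_k)$ on the event $\partial U \cap \partial V_k \neq \emptyset$ yields a measure $\nu_k$ supported on $\partial U \cap \partial V_k \subseteq \partial U \cap \partial V$ with some finite random constant $C_k$ in~\hyperref[it:diameter_bound_m]{(M2)}.  I would then set
\[
\mu_1^{z,w} \;=\; \sum_{k \in \N} \frac{2^{-k}}{1+C_k}\, \nu_k,
\]
so that~\hyperref[it:measure_support_m]{(M1)} is immediate and for every Borel $X \subseteq \partial U \cap \partial V$,
\[
\mu_1^{z,w}(X) \;\leq\; \sum_{k} \frac{2^{-k}}{1+C_k}\, C_k \diam(X)^{d_{\kappa'}^\cut-b} \;\leq\; \diam(X)^{d_{\kappa'}^\cut-b},
\]
giving~\hyperref[it:diameter_bound_m]{(M2)}.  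The main technical point to verify is the set identity $\partial U \cap \partial V = \bigcup_{k}(\partial U \cap \partial V_k)$: the inclusion $\partial U \cap \partial V_k \subseteq \partial U \cap \partial V$ holds because $\tilde{\eta}'|_{[\sigma_1,\infty)}$ is disjoint from $\partial U$, while conversely every $p \in \partial V = \partial C_i$ is accessible from the interior of $C_i$, which is partitioned by $\tilde{\eta}'|_{[\sigma_1,\infty)}$ into the $V_k$, so at least one $V_k$ has $p$ on its boundary.  The positivity of $\nu_k$ in neighborhoods of $p \in \partial U \cap \partial V_k$ from~\hyperref[it:measure_local_positive_m]{(M3)} then propagates to~\hyperref[it:measure_local_positive_m]{(M3)} for $\mu_1^{z,w}$.
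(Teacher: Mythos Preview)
Your overall strategy---extend $\eta'|_{[0,\sigma_1]}$ to a chordal $\SLE_{\kappa'}(\kappa'-6)$ curve $\tilde\eta'$ and invoke Lemma~\ref{lem:good_measures_chordal_case}---matches the paper, and your Case~A is exactly the paper's first case.  In Case~B the paper takes a different route: rather than decomposing $C_i$ into the subcomponents $V_k$, it works directly with the cut point measures $\mu_{t,\bar t}^{\cut}$ from the proof of Lemma~\ref{lem:good_measures_chordal_case}, further splitting according to whether $U=C_0$.  When $U\neq C_0$ it simply reruns the $T_M,\bar T_M$ construction with $S_M<\sigma_1$; when $U=C_0$ it takes a weighted sum $\sum_n 2^{-n}C_n^{-1}\mu_{T_n,\bar T_n}^{\cut}$ over dyadic scales, so that the cut points of $\eta'|_{[T_n,S_n]}$ accumulate at $\eta'(\sigma_1)$ as $n\to\infty$ and (M3) holds there.

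Your Case~B argument has a genuine gap in (M3).  First, the claim that $\tilde\eta'|_{[\sigma_1,\infty)}$ is disjoint from $\partial U$ is false: when $U=C_0$ the point $\eta'(\sigma_1)$ lies on both.  (The inclusion $\partial U\cap\partial V_k\subseteq\partial U\cap\partial V$ does hold, but for the simpler reason that $\partial U\cap C_i=\emptyset$.)  Second, and more seriously, the accessibility argument does not establish the reverse inclusion.  If $p\in(\partial U\cap\partial V)\cap\tilde\eta'([\sigma_1,\infty))$, an arc in $C_i$ approaching $p$ may cross $\tilde\eta'$ at times accumulating at its endpoint, so its tail need not stay in a single $V_k$; nothing you wrote forces $p\in\partial V_k$ for some $k$.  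For (M3) you would only need the weaker statement that $\bigcup_k(\partial U\cap\partial V_k)$ is \emph{dense} in $\partial U\cap\partial V$, equivalently that $\tilde\eta'([\sigma_1,\infty))\cap(\partial U\cap\partial V)$ has empty interior in $\partial U\cap\partial V$.  This is a nontrivial assertion about the interaction of the boundary-touching set of the conditionally independent $\SLE_{\kappa'}(\kappa'-6)$ in $C_i$ with the fixed Cantor-type set $\partial U\cap\partial V\subseteq\partial C_i$; it does not follow from accessibility alone.  The paper's sum-over-scales construction for $U=C_0$ is exactly what sidesteps this issue.
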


\begin{proof}
Fix $z,w \in \D \cap \Q^2$ and let $U$ (resp.\ $V$) be the connected component of $\D \setminus \eta'([0,\sigma_1])$ containing~$z$ (resp.\ $w$).  Suppose that we are working on the event that $U \neq V$ and $\partial U \cap \partial V \neq \emptyset$.  We extend $\eta'|_{[0,\sigma_1]}$ after time $\sigma_1$ to a curve $\wt{\eta}'$ which terminates at $i$ and such that $\wt{\eta}'$ has the law of a chordal $\SLE_{\kappa'}(\kappa'-6)$ in $\D$ from $-i$ to $i$ with the force point located at $(-i)^+$.  Then the measures $\mu^{\cut}_{t,\ol{t}}$ defined in Lemma~\ref{lem:good_measures_chordal_case} which correspond to $\wt{\eta}'$ are well-defined for all $t,\ol{t} \in \Q_+$ simultaneously a.s.  Let $G$ be the connected component of $\D \setminus \eta'([0,\sigma_1])$ whose boundary contains $i$.  If both of $U$ and $V$ are disjoint from $G$,  then they are both connected components of $\D \setminus \wt{\eta}'$.  In this case,  we set $\mu_1^{z,w}$ to be the measure $\mu_{\wt{\eta}'}^{z,w}$ corresponding to $\wt{\eta}'$.

Suppose now that exactly one of $U$ and $V$ is equal to $G$.  We can assume that $V = G$ and let $\tau$ (resp.\ $\sigma$) be the first (resp.\ last) time that $\wt{\eta}'$ hits a point in $\partial U \cap \partial V$ before time $\sigma_1$.  Suppose first that $U \neq G_1$,  where $G_1$ is the connected component of $\D \setminus \eta'([0,\sigma_1])$ which contains $0$.  Then we have that $\dist(\eta'([\tau,\sigma]) ,  \partial \D) > 0$ a.s.  As in Lemma~\ref{lem:good_measures_chordal_case},  for $n \in \N$,  we let $T_n$ be the largest number of the form $k2^{-n}$ for $k \in \N$  so that $k2^{-n} < \tau$ and $\eta'(k2^{-n}) \notin \partial \D$.  We also let $\ol{T}_n$ be the largest number of the form $\ell 2^{-n}$ for $\ell \in \N$  so that $\zeta_{T_n}(\ell 2^{-n}) \in (\sigma ,  \wt{\tau})$ and $\dist(\eta'([T_n ,  \zeta_{T_n}(\ell 2^{-n})]) ,  \partial \D) >0$,  where $\wt{\tau}$ is the time at which $\eta'$ finishes tracing $\partial U$.  We then set $S_n = \zeta_{T_n}(\ol{T}_n)$ and note that $S_n < \tau_{T_n}$.  It follows that $T_n$ and $\ol{T}_n$ are well-defined for all $n$ sufficiently large a.s.  Then we let $M$ be the smallest $n \in \N$ such that both of $T_n$ and $\ol{T}_n$ are finite.  As in Lemma~\ref{lem:good_measures_chordal_case},  it follows that the points in $\partial U \cap \partial V$ are cut points of the curve $\eta'|_{[T_M ,  S_M]}$ and $\dist(\partial U \cap \partial V ,  \eta'([0,T_M]) \cup \ol{\eta}_M'([0,\ol{T}_M]) \cup \partial \D)>0$ a.s.  Then we let $\mu_1^{z,w}$ be the measure $\mu_{T_M ,  \ol{T}_M}^{\cut}$ restricted to $\partial U \cap \partial V$.  Therefore,  as in Lemma~\ref{lem:good_measures_chordal_case},  we have that the measure $\mu_1^{z,w}$ satisfies the desired properties in that case a.s.

Finally, it remains to treat the case when $U = G_1$.  Then for all $n \in \N$, we let $T_n$ be as before but we let $\ol{T}_n$ be the smallest number of the form $\ell 2^{-n}$ for $\ell \in \N$,  so that $\zeta_{T_n}(\ell 2^{-n}) < \sigma_1$.  For each $n \in \N$, we let $C_n$ be the maximum between $\mu_{T_n, \ol{T}_n}^\cut(\partial U \cap \partial V)$ and the smallest constant $C > 0$ so that~\hyperref[it:diameter_bound_m]{(M2)} holds for the restriction of $\mu_{T_n, \ol{T}_n}^\cut$ to $\partial U \cap \partial V$.  Let 
\[ \mu_1^{z,w} = \sum_{n=1}^\infty \frac{1}{2^n C_n} \mu_{T_n, \ol{T}_n}^\cut.\]
Then $\partial U \cap \partial V$ is contained in the support of $\mu_1^{z,w}$ and it is clear that $\mu_1^{z,w}$ satisfies \hyperref[it:diameter_bound_m]{(M2)} with $d^* = d_{\kappa'}^\cut - b$.  Since $\eta'|_{[T_n,S_n]}$ has cut points which get arbitrarily close to~$\eta'(\sigma_1)$ as $n \to \infty$, it follows that~$\mu_1^{z,w}$ restricted to $\partial U \cap \partial V$ satisfies \hyperref[it:measure_local_positive_m]{(M3)} as well.  This completes the proof of the lemma.
\end{proof}

\begin{lemma}\label{lem:good_measures_radial_case}
Fix $\kappa' \in \adjcon$,  $b \in (0,d_{\kappa'}^{\cut})$ and let $\eta'$ be a radial $\SLE_{\kappa'}(\kappa'-6)$ in $\D$ from $-i$ to $0$ with the force point located at $(-i)^+$.  Then a.s.\  the following holds.  Let $W \subseteq \D$ be a Jordan domain and $z,w \in W \cap \Q^2$.  Let $U$ (resp.\ $V$) be the connected component of $\D \setminus \eta'$ containing $z$ (resp.\ $w$).  Then, there exist connected components $U_1,\dots,U_n$ of $\D \setminus \eta'$ such that $U = U_1$,  $V = U_n$, and $U_i \cap W \neq \emptyset$ for every $1 \leq i \leq n$ and $\partial U_i \cap \partial U_{i+1} \neq \emptyset$ for every $1 \leq i \leq n-1$.  Moreover,  there exist simple paths $\gamma_1,\dots,\gamma_n$ parameterized by $[0,1]$ such that $\gamma_i \subseteq W$ and $\gamma_i((0,1)) \subseteq U_i$ for every $1 \leq i \leq n$,  $\gamma_{i-1}(1) = \gamma_i(0) \in \partial U_{i-1} \cap \partial U_i$ for every $2 \leq i \leq n$,  and $\gamma_1(0) = z,  \gamma_n(1) = w$.  Furthermore,  on the event that $U \neq V$,  we can find measures $\mu_1,\dots,\mu_n$ such that for every $1 \leq j \leq n-1$,  the measure $\mu_j$ is supported on $\partial U_j \cap \partial U_{j+1}$ and satisfies \hyperref[it: diameter_bound_m]{(M2)} with $d^* = d_{\kappa'}^{\cut} - b$,  and $\mu_j(B(\gamma_j(1),s)) > 0$ for every $s>0$.
\end{lemma}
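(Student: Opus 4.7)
My plan is to combine Lemma~\ref{lem:radial_connected}, which already delivers the chain of components $U_1, \ldots, U_n$ and the simple paths $\gamma_1, \ldots, \gamma_n$, with an iterative application of Lemma~\ref{lem:good_measures_radial_case_up_to_disconnection_time} to construct the measures $\mu_j$ on $\partial U_j \cap \partial U_{j+1}$. The idea is to unfold the full radial $\SLE_{\kappa'}(\kappa'-6)$ into the sequence of radial $\SLE_{\kappa'}(\kappa'-6)$ processes between the successive disconnection times $\sigma_k$ used in the proof of Lemma~\ref{lem:radial_connected}, and then transfer the measures produced by Lemma~\ref{lem:good_measures_radial_case_up_to_disconnection_time} in the image domains back to $\D$ using conformal covariance.

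Using the notation $(\sigma_k)$, $(G_k)$, $(\psi_k)$, and $(\eta_k')$ from Step~1 of the proof of Lemma~\ref{lem:radial_connected}, for each $1 \leq j \leq n-1$ I would let $k_j$ be the smallest $k \geq 1$ such that $U_j$ and $U_{j+1}$ are distinct complementary components of $\D \setminus \eta'([0, \sigma_k])$.  On the event $U \neq V$ (so that $U_j \neq U_{j+1}$) the set $\partial U_j \cap \partial U_{j+1}$ has positive diameter and $\eta'$ traces each of its points in finite time, so $k_j$ is a.s.\ finite.  By construction, $\psi_{k_j-1}(U_j)$ and $\psi_{k_j-1}(U_{j+1})$ are distinct complementary components of $\eta_{k_j}'$ run up to its first disconnection time, and since $\eta_{k_j}'$ has the law of a radial $\SLE_{\kappa'}(\kappa'-6)$ from $-i$ to $0$, Lemma~\ref{lem:good_measures_radial_case_up_to_disconnection_time} a.s.\ produces a measure $\wt{\mu}_j$ supported on $\psi_{k_j-1}(\partial U_j \cap \partial U_{j+1})$ which satisfies the three properties with exponent $d^* = d_{\kappa'}^\cut - b/2$.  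I would then define $\mu_j$ as the pullback of $\wt{\mu}_j$ under $\psi_{k_j-1}$ weighted by $|\psi_{k_j-1}'|^{-d_{\kappa'}^\cut}$, as dictated by the conformal covariance of the cut point measure (Lemma~\ref{lem:conformal_covariance_cut_measure}).

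Verification of the three properties is then routine.  Property~\hyperref[it:measure_support_m]{(M1)} holds by construction.  Since $\partial U_j \cap \partial U_{j+1}$ lies at positive distance from $\partial G_{k_j-1}$, the Koebe distortion theorem shows that $|\psi_{k_j-1}'|$ is bounded above and below by positive constants on an open neighborhood of this intersection, so $\psi_{k_j-1}$ is bi-Lipschitz there.  Consequently~\hyperref[it:measure_local_positive_m]{(M3)} (including the positivity $\mu_j(B(\gamma_j(1), s)) > 0$ for every $s > 0$) transfers directly from $\wt{\mu}_j$, and for~\hyperref[it:diameter_bound_m]{(M2)} the comparison $\diam(\psi_{k_j-1}(X)) \asymp \diam(X)$ combined with the bounded conformal factor trades the exponent $d_{\kappa'}^\cut - b/2$ for $\wt{\mu}_j$ into the required exponent $d_{\kappa'}^\cut - b$ for $\mu_j$.

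The main obstacle I anticipate is the quantifier swap between ``almost surely'' and ``for every Jordan domain $W$ and all $z, w \in W \cap \Q^2$'': Lemma~\ref{lem:good_measures_radial_case_up_to_disconnection_time} is phrased for each fixed pair $(z, w)$, whereas the present lemma demands a single almost-sure event on which the conclusion holds for all rational choices simultaneously.  I plan to handle this by taking a countable union over $(z, w) \in (\D \cap \Q^2)^2$, over the countably many possible values of $k_j$, and over the countably many complementary components of each $\D \setminus \eta'([0, \sigma_k])$, together with a compatibility check that the measure $\mu_j$ depends only on the unordered pair $(U_j, U_{j+1})$ and not on the particular rational representatives used to identify them via Lemma~\ref{lem:radial_connected}.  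Once this is in place, uniformity in the Jordan domain $W$ is inherited directly from the corresponding uniformity already built into Lemma~\ref{lem:radial_connected}.
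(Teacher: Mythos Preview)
Your overall strategy is sound and mirrors the paper's: unfold $\eta'$ into the successive pieces between the disconnection times $\sigma_k$ and appeal to Lemma~\ref{lem:good_measures_radial_case_up_to_disconnection_time} at each stage. However, there is a genuine gap in the step where you pass from the components at level $k_j$ to the final components $U_j,U_{j+1}$.

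You assert that $\psi_{k_j-1}(U_j)$ and $\psi_{k_j-1}(U_{j+1})$ are \emph{distinct complementary components} of $\eta_{k_j}'|_{[0,\tau_{k_j}]}$. This is not correct: they are merely \emph{contained in} distinct components $C_j,C_{j+1}$ of $\D\setminus\eta'([0,\sigma_{k_j}])$, and if one of these, say $C_j$, equals $G_{k_j}$ (the component containing $0$), then the inclusion $U_j\subseteq C_j$ is strict. Lemma~\ref{lem:good_measures_radial_case_up_to_disconnection_time} hands you a measure on $\partial C_j\cap\partial C_{j+1}$, and you must restrict it to the smaller set $\partial U_j\cap\partial U_{j+1}$. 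Properties \hyperref[it:measure_support_m]{(M1)} and \hyperref[it:diameter_bound_m]{(M2)} survive restriction, but the positivity $\mu_j(B(\gamma_j(1),s))>0$ does not follow automatically: you would need a neighborhood of $\gamma_j(1)$ in $\partial C_j\cap\partial C_{j+1}$ to lie entirely in $\partial U_j\cap\partial U_{j+1}$, i.e.\ that $\eta'|_{[\sigma_{k_j},\infty)}$ does not hit $\gamma_j(1)$. Since the path $\gamma_j$ you import from Lemma~\ref{lem:radial_connected} depends on the \emph{entire} curve $\eta'$, the endpoint $\gamma_j(1)$ is not measurable with respect to $\eta'|_{[0,\sigma_{k_j}]}$, and you cannot invoke the ``$\SLE$ does not hit fixed points'' argument.

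The paper resolves this by running the construction of the measures \emph{in parallel with} the inductive construction of the chains and paths in the proof of Lemma~\ref{lem:radial_connected}, rather than taking the final chain as a black box. At stage $j$ one has components $U_i^j$, paths $\gamma_i^j$, and measures $\mu_i^j$ for $\D\setminus\eta'([0,\sigma_j])$. When some $U_{i_1}^j=G_j$ must be subdivided at stage $j+1$, the endpoints $\gamma_{i_1}^j(0),\gamma_{i_1}^j(1)$ are $\sigma(\eta'|_{[0,\sigma_j]})$-measurable, so the conditionally chordal $\SLE_{\kappa'}(\kappa'-6)$ in $\psi_j(G_j)$ a.s.\ avoids their images; this is exactly what guarantees that the restriction of $\mu_{i_1-1}^j$ to $\partial U_{i_1-1}^j\cap\partial\psi_j^{-1}(\wt U_1^j)$ remains nontrivial near $\gamma_{i_1}^j(0)$. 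Your shortcut of jumping straight to the final components loses precisely this filtration structure.
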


\begin{proof}
The strategy of this proof is as follows.  The existence of the chain of connected components and the simple paths will follow from the proof of Lemma~\ref{lem:radial_connected}.  As for the construction of the measures,  a key observation here is that at the time $\eta'$ first separates $0$ from $i$, we may continue growing the curve in the connected component with $i$ on its boundary, so that the law of the resulting curve is that of a chordal $\SLE_{\kappa'}(\kappa'-6)$ from $-i$ to $i$. Thus, for this new curve, we may use Lemma~\ref{lem:good_measures_chordal_case} to find suitable measures. Moreover, by continuing the original curve in the domain containing $0$, we again have a radial $\SLE_{\kappa'}(\kappa'-6)$. Thus, we can repeat this procedure (after applying a suitable conformal map) to get a family of measures on an increasingly large portion of the (local) cut points of the various parts of $\eta'$. If at some step, neither of the two points lies in the connected component containing $0$, then this means that both $U$ and $V$ are cut out by $\eta'$ at that step and hence we can stop updating the chains of measures for the pair $z,w$, as the measures that were chosen from Lemma~\ref{lem:good_measures_radial_case_up_to_disconnection_time} satisfy the conditions of the lemma. The procedure continues for other pairs $z,w \in \D \cap \Q^2\setminus \{0\}$.

We assume that we have the setup of the proof of Lemma~\ref{lem:radial_connected} and for $z \in \D \cap \Q^2$, we let $U_j^z$ be the connected component of $\D \setminus \eta'([0,\sigma_j])$ containing $z$ and for each $j \in \N$, we let $G_j = U_j^0$.  First,  we will prove the claim of the lemma with $\eta'([0,\sigma_j])$ in place of $\eta'([0,\infty))$ using induction on $j \in \N$ a.s.\  on the event that $U_j^z \neq U_j^w$.

\emph{Step 1. The case $j=1$.}  The case $j=1$ follows by combining Lemma~\ref{lem:good_measures_radial_case_up_to_disconnection_time} with the proof of Lemma~\ref{lem:radial_connected}.

\emph{Step 2. When $j \geq 2$.} Suppose that the claim of the lemma is true $j$ a.s.  Let $U_1^j,\dots,U_{n_j}^j$ be the chain of connected components corresponding to $\D \setminus \eta'([0,\sigma_j])$ such that $U_1^j = U_j^z$ (resp.\ $U_{n_j}^j = U_j^w$) and let $\gamma_1^j,\dots,\gamma_{n_j}^j$ be the simple paths corresponding to $W$.  Let also $\mu_1^j,\dots,\mu_{n_j}^j$ be the corresponding measures.  In particular,  we will show that there exists connected components $U_1^{j+1},\dots,U_{n_{j+1}}^{j+1}$ of $\D \setminus \eta'([0,\sigma_{j+1}])$,  simple paths $\gamma_1^{j+1},\dots,\gamma_{n_{j+1}}^{j+1}$ and measures $\mu_1^{j+1},\dots,\mu_{n_{j+1}}^{j+1}$ satisfying the desired properties.

\emph{Case 1.  $G_j \neq U_m^j$ for every $1 \leq m \leq n_j$}.  Then each connected component $U_m^j$ is also a connected component of $\D \setminus \eta'([0,\sigma_{j+1}])$ and hence the claim follows by setting $n_{j+1} = n_j,  \gamma_{\ell}^{j+1} = \gamma_{\ell}^j,U_{\ell}^{j+1} = U_{\ell}^j$ and $\mu_{\ell}^{j+1} = \mu_{\ell}^j$ for every $1 \leq \ell \leq n_j$.

\emph{Case 2.  $G_j = U_m^j$ for some $1 \leq m \leq n_j$.} We let $1 \leq i_1 < \dots < i_{k_j} \leq n_j$ be such that $\{i_1,\dots,i_{k_j}\} = \{1 \leq i \leq n_j : U_i^j = G_j \}$.  As in the proof of Lemma~\ref{lem:radial_connected},  we have that $\eta_{j+1}'|_{[0,\tau_{j+1}]} = \psi_j(\eta'|_{[\sigma_j,\sigma_{j+1}]})$ has the law of a radial $\SLE_{\kappa'}(\kappa'-6)$ in $\D$ from $-i$ to $0$,  with the force point located at $(-i)^+$ and stopped upon disconnecting $i$ from $0$.  If $W_1^j$ is the connected component of $W \cap G_j$ containing $\gamma_{i_1}^j((0,1))$,  then $W_1^j$ is a Jordan domain and hence,  so is $\wt{W}_1^j = \psi_j(W_1^j)$.  Thus,  if $\wt{U}_1^j$ (resp.\ $\wt{V}_1^j$) is the connected component of $\D \setminus \eta'_{j+1}([0,\tau_{j+1}])$ which has $x_1^j = \psi_j(\gamma_{i_1}^j(0))$ (resp.\ $y_1^j = \psi_j(\gamma_{i_1}^j(1))$) on its boundary,  then picking points $z_1^j \in \wt{U}_1^j$ and $w_1 \in \wt{V}_1^j$ close to $x_1^j$ and $y_1^j$ respectively,  by applying the $j=1$ case to find sequences of connected components $\wt{U}_1^{j,1},\dots,\wt{U}_{n_{j,1}}^{j,1}$ of $\D \setminus \eta'_{j+1}([0,\tau_{j+1}])$ and simple paths $\wt{\gamma}_1^{j,1},\dots,\wt{\gamma}_{n_{j,1}}^{j,1}$ with $\wt{U}_1^j = \wt{U}_1^{j,1}$,  $\wt{V}_1^j = \wt{U}_{n_{j,1}}^{j,1}$,  $\wt{U}_m^{j,1} \cap \wt{W}_1^j \neq \emptyset$, $\wt{\gamma}_m^{j,1}((0,1)) \subseteq \wt{U}_m^{j,1}$, and $\wt{\gamma}_m^{j,1} \subseteq \wt{W}_1^j$ for every $1 \leq m \leq n_{j,1}$,  $\wt{\gamma}_1^{j,1}(0) = z_1^j$, $\wt{\gamma}_{n_{j,1}}^{j,1}(1) = w_1^j$,  $\wt{\gamma}_m^{j,1}(0) = \wt{\gamma}_{m-1}^j(1) \in \partial \wt{U}_m^{j,1} \cap \partial \wt{U}_{m-1}^{j,1}$ for every $2 \leq m \leq n_{j,1}$.  Moreover,  we extend (and reparameterize by $[0,1]$) $\wt{\gamma}_1^{j,1}$ (resp.\ $\wt{\gamma}_{n_{j,1}}^{j,1}$) to start at $x_1^j$ (resp.\ end at $y_1^j$).  Furthermore,  we can find measures $\wt{\mu}_1^j,\dots,\wt{\mu}_{n_{j,1}-1}^j$ such that for every $1 \leq i \leq n_{j,1}-1$,  the measure $\wt{\mu}_i^j$ is supported on $\partial \wt{U}_i^{j,1} \cap \partial \wt{U}_{i+1}^{j,1}$,  it satisfies \hyperref[it:diameter_bound_m]{(M2)} and $\wt{\mu}_i^j(B(\wt{\gamma}_i^{j,1}(1),s)) > 0$ for every $s>0$.  Note that $\dist(\partial \wt{U}_i^{j,1} \cap \partial \wt{U}_{i+1}^{j,1} ,  \partial \D) >0$ for every $1 \leq i \leq n_{j,1}-1$ a.s.\  and so the measures $|\psi_j'|^{d_{\kappa'}^{\cut}} \wt{\mu}_1^{j,1} \circ \psi_j ,  \dots,  |\psi_j'|^{d_{\kappa'}^{\cut}} \wt{\mu}_{n_{j,1}-1}^j \circ \psi_j$ satisfy the desired properties,  i.e.,  for every $1 \leq i \leq n_{j,1}-1$,  the measure $|\psi_j'|^{d_{\kappa'}^{\cut}} \wt{\mu}_i^j \circ \psi_j$ is supported on $\partial \psi_j^{-1}(\wt{U}_i^{j,1}) \cap \partial \psi_j^{-1}(\wt{U}_{i+1}^{j,1})$,  it satisfies \hyperref[it:diameter_bound_m]{(M2)} and $\int_{B(\psi_j^{-1}(\wt{\gamma}_i^{j,1}(1)),s)} |\psi_j'(z)|^{d_{\kappa'}^{\cut}}(\wt{\mu}_i^j \circ \psi_j)(dz) > 0$ for every $s>0$.  Note that the preimage of each such connected component under $\psi_j^{-1}$ is a connected component of $\D \setminus (\eta'([0,\sigma_j]) \cup \psi_j^{-1}(\eta_{j+1}'([0,\tau_{j+1}])))$ and the preimage of each such simple path under $\psi_j^{-1}$ is a simple path in the aforementioned connected component.  Note also that chordal $\SLE_{\kappa'}(\kappa'-6)$ does not hit fixed points a.s.\  and so we can find open intervals $I_1^j$ and $J_1^j$ of $\partial G_j$ such that $I_1^j \subseteq \partial \psi_j^{-1}(\wt{U}_1^j)$ (resp.\ $J_1^j \subseteq \partial \psi_j^{-1}(\wt{V}_1^j)$) and $\gamma_{i_1}^j(0) \in I_1^j$ (resp.\ $\gamma_{i_1}^j(1) \in J_1^j$).  Thus,  there exists $r>0$ sufficiently small such that $B(\gamma_{i_1}^j(0) ,  r) \cap \partial U_{i_1-1}^j \cap \partial U_{i_1}^j \subseteq I_1^j$ (resp.\ $B(\gamma_{i_1}^j(1) ,  r) \cap \partial U_{i_1}^j \cap \partial U_{i_1+1}^j \subseteq J_1^j$) and so the restriction of $\mu_{i_1-1}^j$ (resp.\ $\mu_{i_1}^j$) to $\partial U_{i_1-1}^j \cap \partial \psi_j^{-1}(\wt{U}_{i_1}^j)$ (resp.\ $\partial U_{i_1}^j \cap \partial \psi_j^{-1}(\wt{V}_{i_1}^j)$) is a non-trivial measure.  Repeating this procedure for $i_k,  2 \leq k \leq k_j$,  we obtain for each such $k$ families of connected components,  simple paths and measures as above.

\emph{Step 3.  The claim for $\eta'([0,\infty))$.} It follows from the same argument as in Step 3 of the proof of Lemma~\ref{lem:radial_connected}. This completes the proof of the lemma.
\end{proof}

\begin{lemma}\label{lem:good_boundaries_maps_radial_case}
Fix $\kappa' \in (4,8)$ and let $\eta'$ be a radial $\SLE_{\kappa'}(\kappa'-6)$ in $\D$ from $-i$ to $0$ with the force point located at $(-i)^+$. Fix $z,w \in \D \cap \Q^2 \setminus \{0\}$ and let $U$ (resp.\ $V$) be the connected component of $\D \setminus \eta'$ containing $z$ (resp.\ $w$).  Then a.s.\ on the event $\{\partial U \cap \partial V \neq \emptyset,  U \neq V\}$,  we have that \hyperref[it:upper_bound_minkowski_b]{(B1)} holds with $d = d_{\kappa'}^{\cut}$ and that \hyperref[it:holder_continuity_b]{(B2)} holds for some constant $a \in (0,1)$ depending only on $\kappa'$.
\end{lemma}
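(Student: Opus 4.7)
The plan is to adapt the iterative target-switching argument used in the proof of Lemma~\ref{lem:good_measures_radial_case_up_to_disconnection_time} so as to reduce the statement to Lemma~\ref{lem:good_boundaries_maps}. Adopt the notation from the proof of Lemma~\ref{lem:radial_connected}: let $\sigma_j$ denote the $j$-th time at which $\eta'$ disconnects $0$ from $i$, let $G_j$ be the connected component of $\D \setminus \eta'([0,\sigma_j])$ containing $0$, and let $\psi_j : G_j \to \D$ be the conformal transformation fixing $0$ and sending $\eta'(\sigma_j)$ to $-i$, with the convention $\sigma_0 = 0$ and $\psi_0 = \mathrm{id}$. As in the proof of Lemma~\ref{lem:good_measures_radial_case_up_to_disconnection_time}, for each $j \in \N$ the curve $\psi_{j-1}(\eta'|_{[\sigma_{j-1},\sigma_j]})$ can be extended beyond its first disconnection time to a chordal $\SLE_{\kappa'}(\kappa'-6)$ $\wt{\eta}^j$ in $\D$ from $-i$ to $i$ with force point at $(-i)^+$.

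Working on the event $\{U \neq V,\ \partial U \cap \partial V \neq \emptyset\}$, let $j_z$ (resp.\ $j_w$) be the smallest $j \in \N$ so that $z$ (resp.\ $w$) is disconnected from $0$ by $\eta'([0,\sigma_j])$; these indices are a.s.\ finite. I will split into two cases. If $j_z = j_w =: j^*$, then $U$ and $V$ are two distinct components of $\D \setminus \eta'([0,\sigma_{j^*}])$ contained in $G_{j^*-1}$ and different from $G_{j^*}$, hence $\psi_{j^*-1}(U)$ and $\psi_{j^*-1}(V)$ are two distinct components of $\D \setminus \wt{\eta}^{j^*}$, and Lemma~\ref{lem:good_boundaries_maps} applied to $\wt{\eta}^{j^*}$ and $(\psi_{j^*-1}(z),\psi_{j^*-1}(w))$ yields \hyperref[it:upper_bound_minkowski_b]{(B1)} with $d = d_{\kappa'}^{\cut}$ and \hyperref[it:holder_continuity_b]{(B2)} with exponent $\alpha = \alpha_{\kappa'}$ for the image $\psi_{j^*-1}(\partial U \cap \partial V)$. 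In the asymmetric case $j_z < j_w$ (the opposite being symmetric), one has $U \subseteq G_{j_z-1} \setminus G_{j_z}$ while $V \subseteq G_{j_z}$, so that $\partial U \cap \partial V \subseteq \partial G_{j_z} \cap \eta'([\sigma_{j_z-1},\sigma_{j_z}])$; after applying $\psi_{j_z-1}$, $\psi_{j_z-1}(U)$ is a component of $\D \setminus \wt{\eta}^{j_z}$ distinct from the component $\psi_{j_z-1}(G_{j_z})$ containing $0$ (the latter being untouched by the extension, which evolves into the component of the stopped radial piece containing $i$), and $\psi_{j_z-1}(\partial U \cap \partial V)$ lies in the boundary intersection of these two components. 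Choosing auxiliary points $z' \in \psi_{j_z-1}(U) \cap \Q^2$ and $w' \in \psi_{j_z-1}(G_{j_z}) \cap \Q^2$ and invoking Lemma~\ref{lem:good_boundaries_maps} for $\wt{\eta}^{j_z}$ and $(z',w')$ provides \hyperref[it:upper_bound_minkowski_b]{(B1)} for a superset of $\psi_{j_z-1}(\partial U \cap \partial V)$ along with the half of \hyperref[it:holder_continuity_b]{(B2)} concerning the conformal map onto $U$; the half concerning the conformal map onto $V$ is obtained symmetrically by pushing forward via $\psi_{j_w-1}$ instead.

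The last step is to transfer \hyperref[it:upper_bound_minkowski_b]{(B1)} and \hyperref[it:holder_continuity_b]{(B2)} back through the inverse conformal maps. Since radial $\SLE_{\kappa'}(\kappa'-6)$ a.s.\ disconnects $0$ from $i$ strictly inside $\D$, the portion of the curve realizing the disconnection stays at positive distance from $\partial \D$, so the image $\psi_{j_z-1}(\partial U \cap \partial V)$ lies in a compact subset of $\D$ on which $\psi_{j_z-1}^{-1}$ is bi-Lipschitz. Upper Minkowski dimension is invariant under bi-Lipschitz maps, and post-composing a H\"older conformal map with a bi-Lipschitz map preserves the H\"older exponent, so \hyperref[it:upper_bound_minkowski_b]{(B1)} and \hyperref[it:holder_continuity_b]{(B2)} carry over to $\partial U \cap \partial V$ with the stated parameters. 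The main obstacle will be the asymmetric case $j_z < j_w$: one must verify that the image $\psi_{j_w-1}(\partial U \cap \partial V)$, which under the second push-forward ends up on $\partial \D$ inside the image of $\partial G_{j_w-1}$, still falls within the scope of Lemma~\ref{lem:good_boundaries_maps}, and then that the H\"older regularity of the conformal map onto the component of $\D \setminus \wt{\eta}^{j_w}$ containing $\psi_{j_w-1}(V)$ can be composed with $\psi_{j_w-1}^{-1}$ (smooth on the relevant arcs of $\partial \D$) to yield \hyperref[it:holder_continuity_b]{(B2)} for the conformal map onto $V$.
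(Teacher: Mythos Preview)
Your approach is the same one the paper points to (reduce to the chordal result Lemma~\ref{lem:good_boundaries_maps} via the iterative conformal maps $\psi_j$), but the case analysis you give is not fine enough, and in two places the argument does not close.

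In the symmetric case $j_z=j_w=:j^*$ you assert that $\psi_{j^*-1}(U)$ and $\psi_{j^*-1}(V)$ are components of $\D\setminus\wt\eta^{j^*}$. This fails precisely when one of $U,V$ is the component of $G_{j^*-1}\setminus\eta'([\sigma_{j^*-1},\sigma_{j^*}])$ whose $\psi_{j^*-1}$-image has $i$ on its boundary: the chordal extension $\wt\eta^{j^*}$ continues into that component and cuts it further, so its push-forward is \emph{not} a single component of $\D\setminus\wt\eta^{j^*}$. This is exactly the case singled out in the proof of Lemma~\ref{lem:good_measures_radial_case_up_to_disconnection_time} (``exactly one of $U$ and $V$ is equal to $G$'') and it requires the $T_M,\ol T_M$ construction rather than a direct call to Lemma~\ref{lem:good_boundaries_maps}.

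In the asymmetric case you correctly flag the obstacle but do not resolve it. Pushing $V$ forward by $\psi_{j_w-1}$ sends $\partial U\cap\partial V$ to $\partial\D$, which lies outside the scope of Lemma~\ref{lem:good_boundaries_maps}; that lemma concerns two components whose common boundary sits on the $\SLE$ curve, not on $\partial\D$. Your proposed remedy---use that $\psi_{j_w-1}^{-1}$ is smooth on the relevant arc of $\partial\D$---only helps once you know the conformal map onto $\psi_{j_w-1}(V)$ is H\"older near those $\partial\D$-points, and this is not automatic: $\wt\eta^{j_w}$ may accumulate there, and $\psi_{j_w-1}(\partial U\cap\partial V)$ is a random uncountable set, so a fixed-point argument does not apply.

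Both gaps disappear if you follow the proof of Lemma~\ref{lem:good_measures_radial_case_up_to_disconnection_time} more literally, as the paper intends: instead of the chordal \emph{extension} $\wt\eta^{j}$, use the dyadic two-sided stopping times $T_M,\ol T_M$ to isolate a chordal $\SLE_{\kappa'}$ segment $\eta'|_{[T_M,S_M]}$ in a sub-domain so that $\partial U\cap\partial V$ lies among its cut points and, near those points, $U$ and $V$ coincide with complementary components of this segment. Then invoke the ingredients inside the proof of Lemma~\ref{lem:good_boundaries_maps} (namely \cite[Theorem~1.2]{mw2017intersections} for (B1) and \cite[Theorem~5.2]{rs2005basic} for (B2)) directly, and iterate as in Lemma~\ref{lem:good_measures_radial_case} to pass from $\eta'|_{[0,\sigma_1]}$ to the full curve.
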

\begin{proof}
This follows by the arguments in the proofs of Lemmas~\ref{lem:good_measures_radial_case_up_to_disconnection_time} and~\ref{lem:good_measures_radial_case} combined with Lemma~\ref{lem:good_boundaries_maps}.
\end{proof}

\begin{lemma}\label{lem:good_measures_multiple_radial_case}
Fix $\kappa' \in \adjcon$ and $b \in (0,d_{\kappa'}^{\cut})$,  $n \in \N$ and $z_1,\dots,z_n \in \D$.  For $1 \leq j \leq n$,  we let $\eta_{z_j}'$ be a radial $\SLE_{\kappa'}(\kappa'-6)$ in $\D$ from $-i$ to $z_j$ with the force point located at $(-i)^+$,  and suppose that $\eta_{z_1}',\dots,\eta'_{z_n}$ are coupled so that they are branches of the exploration tree corresponding to a nested $\CLE_{\kappa'}$ in $\D$.  Let also $A_0^q$ be as in Lemma~\ref{lem:chain_of_components_radial_curves} for $q \in \{T,B\}$.  Then for every $1 \leq j \leq n$ and $q \in \{T,B\}$,  the following is true.  Fix $z,w \in A_0^q \cap \Q^2$ and let $U^{j,q}$ (resp.\ $V^{j,q}$) be the connected component of $\D \setminus \cup_{i=1}^j \eta_{z_i}'$ containing $z$ (resp.\ $w$).  Then,  a.s.\  on the event that $U^{j,q} \neq V^{j,q}$ the following holds. There exist connected components $U_1^{j,q},\dots,U_{n_j}^{j,q}$ of $\D \setminus \cup_{i=1}^j \eta_{z_i}'$ such that $U^{j,q} = U_1^{j,q},  V^{j,q} = U_{n_j}^{j,q},  \partial U_{\ell}^{j,q} \cap \partial U_{\ell + 1}^{j,q} \neq \emptyset$ for every $1 \leq \ell \leq n_j-1$,  and $U_{\ell}^{j,q} \cap A_0^q \neq \emptyset$ for every $1 \leq \ell \leq n_j$.  Moreover,  there exist simple paths $\gamma_1^{j,q},\dots,\gamma_{n_j}^{j,q}$ parameterized by $[0,1]$ such that $\gamma_1^{j,q}(0) = z$,  $\gamma_{n_j}^{j,q}(1) = w$, $\gamma_{\ell}^{j,q} \subseteq A_0^q$, and $\gamma_{\ell}^{j,q}((0,1)) \subseteq U_{\ell}^{j,q}$ for every $1 \leq \ell \leq n_j$,  and $\gamma_{\ell-1}^{j,q}(1) = \gamma_{\ell}^{j,q}(0) \in \partial U_{\ell-1}^{j,q} \cap \partial U_{\ell}^{j,q}$ for every $2 \leq \ell \leq n_j$.  Furthermore,  there exist measures $\mu_1^{j,q},\dots,\mu_{n_j}^{j,q}$ such that for every $1 \leq \ell \leq n_j-1$,  the measure $\mu_{\ell}^{j,q}$ is supported on $\partial U_{\ell}^{j,q} \cap \partial U_{\ell+1}^{j,q}$,  it satisfies \hyperref[it:diameter_bound_m]{(M2)} with $d^* = d_{\kappa'}^{\cut}-b$,  and $\mu_{\ell}^{j,q}(B(\gamma_{\ell}^{j,q}(1),s)) > 0$ for every $s>0$.
\end{lemma}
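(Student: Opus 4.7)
The plan is to proceed by induction on $j$, closely following the structure of the proofs of Lemmas~\ref{lem:chain_of_components_radial_curves} and~\ref{lem:good_measures_radial_case} and exploiting the fact that the curves $\eta'_{z_1},\ldots,\eta'_{z_n}$, being branches of the exploration tree of a nested $\CLE_{\kappa'}$, satisfy a target-coupling property: any two such branches agree up until they disconnect their target points, and thereafter evolve conditionally independently given the history. In particular, given $\eta'_{z_1},\ldots,\eta'_{z_j}$ and the connected component $G_{j+1}$ of $\D \setminus \cup_{i=1}^{j}\eta'_{z_i}$ containing $z_{j+1}$, the part of $\eta'_{z_{j+1}}$ inside $G_{j+1}$ has the conditional law of a radial $\SLE_{\kappa'}(\kappa'-6)$ in $G_{j+1}$ from the entry point to $z_{j+1}$, with force point immediately to the appropriate side.

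For the base case $j=1$, after applying a conformal automorphism of $\D$ sending $z_1$ to $0$ (which sends radial $\SLE_{\kappa'}(\kappa'-6)$ with force point at $(-i)^+$ to another such process, up to an appropriate relabeling), the result is exactly Lemma~\ref{lem:good_measures_radial_case} applied with $W = A_0^q$, combined with Lemma~\ref{lem:good_boundaries_maps_radial_case} to ensure that all arising measures satisfy \hyperref[it:diameter_bound_m]{(M2)} with $d^* = d_{\kappa'}^\cut - b$.

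For the inductive step, suppose the statement holds for $j$, with chain $U_1^{j,q},\ldots,U_{n_j}^{j,q}$, paths $\gamma_1^{j,q},\ldots,\gamma_{n_j}^{j,q}$, and measures $\mu_1^{j,q},\ldots,\mu_{n_j-1}^{j,q}$. If $G_{j+1}$ is disjoint from every $U_i^{j,q}$ (which in particular happens if $z_{j+1}$ lies in a component already cut off by the first $j$ curves away from the chain), then the chain, paths, and measures are all unchanged by the addition of $\eta'_{z_{j+1}}$, so the claim follows immediately. Otherwise, let $i_1<\cdots<i_K$ be the indices with $U_{i_k}^{j,q} = G_{j+1}$. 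For each such $k$, let $\psi_k \colon G_{j+1}\to \D$ be a conformal map which sends $z_{j+1}$ to $0$ and the entry point of $\eta'_{z_{j+1}}$ into $G_{j+1}$ to $-i$; under $\psi_k$, the restriction of $\eta'_{z_{j+1}}$ to $G_{j+1}$ becomes a radial $\SLE_{\kappa'}(\kappa'-6)$ in $\D$ from $-i$ to $0$. Applying Lemma~\ref{lem:good_measures_radial_case} in $\D$ to this process with Jordan domain equal to a connected Jordan subdomain of $\psi_k(A_0^q\cap G_{j+1})$ containing $\psi_k(\gamma_{i_k}^{j,q}((0,1)))$, and with rational points approximating $\psi_k(\gamma_{i_k}^{j,q}(0))$ and $\psi_k(\gamma_{i_k}^{j,q}(1))$, produces a sub-chain of components, sub-paths, and measures on the intersections of adjacent sub-component boundaries. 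Pulling back by $\psi_k^{-1}$ (which is bi-H\"older up to the boundary away from the target by Lemma~\ref{lem:good_boundaries_maps_radial_case} applied to $\eta'_{z_1},\ldots,\eta'_{z_j}$), using the conformal covariance formula for the cut-point measure established in Section~\ref{subsec:measure_conformally_covariant} and the fact that $|\psi_k'|$ is bounded above and away from zero on compact subsets of $G_{j+1}$, we obtain a sub-chain inside $G_{j+1}$ with measures still satisfying \hyperref[it:diameter_bound_m]{(M2)}. Concatenating the old chain with each such sub-chain replacing $U_{i_k}^{j,q}$, and correspondingly concatenating sub-paths inside $A_0^q$ after a local adjustment to remove self-intersections at the gluing points, yields the desired chain and paths for $j+1$.

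The main obstacle lies in verifying property \hyperref[it:measure_local_positive_m]{(M3)} (the positive-mass condition around $\gamma^{j+1,q}_\ell(1)$) at the two new interfaces where the subdivision meets the preexisting chain, i.e., $\partial U_{i_k-1}^{j,q}\cap \partial U_{i_k}^{j,q}$ and $\partial U_{i_k}^{j,q}\cap \partial U_{i_k+1}^{j,q}$. Following the argument at the very end of the proof of Lemma~\ref{lem:good_measures_radial_case}, the key point is that a radial $\SLE_{\kappa'}(\kappa'-6)$ in $G_{j+1}$ a.s.\ does not hit any given deterministic point on $\partial G_{j+1}$; consequently, there exist open arcs $I_k, J_k\subseteq \partial G_{j+1}$ containing $\gamma_{i_k}^{j,q}(0), \gamma_{i_k}^{j,q}(1)$ respectively on which $\eta'_{z_{j+1}}$ does not accumulate, so the first (resp.\ last) sub-component of the subdivision of $G_{j+1}$ inherits an open neighborhood in $\partial G_{j+1}$ of the old endpoint. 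Restricting the old measures $\mu_{i_k-1}^{j,q}$ and $\mu_{i_k}^{j,q}$ to these neighborhoods then yields nontrivial measures assigning positive mass to every ball around the corresponding endpoint, by the inductive \hyperref[it:measure_local_positive_m]{(M3)}. This closes the induction and completes the proof.
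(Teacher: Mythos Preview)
Your proposal is correct and follows essentially the same inductive approach as the paper: the paper's own proof is a two-sentence sketch that says the chains and paths come from Lemma~\ref{lem:chain_of_components_radial_curves} and the measures from combining the arguments of Lemmas~\ref{lem:chain_of_components_radial_curves} and~\ref{lem:good_measures_radial_case}, and you have simply spelled out those arguments in detail. One minor point: your appeal to Lemma~\ref{lem:good_boundaries_maps_radial_case} for the bi-H\"older property of $\psi_k^{-1}$ is not needed (and that lemma is for a single radial curve, not a union); all you actually use is that $|\psi_k'|$ is bounded above and below on compact subsets of $G_{j+1}$, which is elementary, together with the fact (used in the paper's proof of Lemma~\ref{lem:good_measures_radial_case}) that the relevant boundary intersections have positive distance to $\partial G_{j+1}$.
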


\begin{proof}
The claim of the lemma follows by induction on $1 \leq j \leq n$.  The existence of the chains of connected components and simple paths follows from Lemma~\ref{lem:chain_of_components_radial_curves}.  The existence of the measures with the desired properties follows by combining the arguments in the proofs of Lemmas~\ref{lem:chain_of_components_radial_curves} and~\ref{lem:good_measures_radial_case}.
\end{proof}

\begin{lemma}\label{lem:good_boundary_maps_multiple_radial_case}
Fix $\kappa' \in (4,8)$, $n \in \N$ and $z_1,\dots,z_n \in \D$.  For $1 \leq j \leq n$,  we let $\eta_{z_j}'$ be a radial $\SLE_{\kappa'}(\kappa'-6)$ in $\D$ from $-i$ to $z_j$ and suppose that $\eta_{z_1}',\dots,\eta_{z_n}'$ are coupled such that they are branches of the exploration tree corresponding to a nested $\CLE_{\kappa'}$ in $\D$.  Fix $z,w \in \D \cap \Q^2 \setminus \cup_{i=1}^n \{z_i\}$ and let $U$ (resp.\ $V$) be the connected component of $\D \setminus \cup_{i=1}^n \eta_{z_i}'$ containing $z$ (resp.\ $w$).  Then a.s.\ on the event $\{\partial U \cap \partial V \neq \emptyset,  U \neq V\}$,  we have that \hyperref[it:upper_bound_minkowski_b]{(B1)} holds for $d = d_{\kappa'}^{\cut}$ and that \hyperref[it:holder_continuity_b]{(B2)} holds for some constant $a \in (0,1)$ depending only on $\kappa'$. 
\end{lemma}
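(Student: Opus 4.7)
The plan is to use the tree structure of the $\CLE_{\kappa'}$ exploration coupling to reduce to the single-curve results established in Lemmas~\ref{lem:good_boundaries_maps} and~\ref{lem:good_boundaries_maps_radial_case}. All statements below are understood on the event $\{U \neq V,\ \partial U \cap \partial V \neq \emptyset\}$. For each $1 \leq i \leq n$ let $U_i$ (resp.\ $V_i$) denote the connected component of $\D \setminus \eta_{z_i}'$ containing $z$ (resp.\ $w$). Since Lemma~\ref{lem:good_boundaries_maps_radial_case} is stated for radial $\SLE_{\kappa'}(\kappa'-6)$ from $-i$ to $0$, while each $\eta_{z_j}'$ is targeted at an arbitrary interior point, one first applies the M\"obius automorphism of $\D$ fixing $-i$ and sending $z_j$ to $0$; both \hyperref[it:upper_bound_minkowski_b]{(B1)} and \hyperref[it:holder_continuity_b]{(B2)} are preserved under any such bijective conformal map.

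First I would observe that $\partial U \cap \partial V \subseteq \cup_{i=1}^n \eta_{z_i}'$, and that if $\partial U \cap \partial V \cap \eta_{z_i}' \neq \emptyset$ then $z$ and $w$ lie in distinct components of $\D \setminus \eta_{z_i}'$, so $U_i \neq V_i$, $U \subseteq U_i$, $V \subseteq V_i$, and $\partial U \cap \partial V \cap \eta_{z_i}' \subseteq \partial U_i \cap \partial V_i$. Therefore
\[
\partial U \cap \partial V \;\subseteq\; \bigcup_{\substack{1 \leq i \leq n \\ U_i \neq V_i}} (\partial U_i \cap \partial V_i).
\]
Since the upper Minkowski dimension of a finite union equals the maximum over the pieces, Lemma~\ref{lem:good_boundaries_maps_radial_case} (after the M\"obius normalization above) applied to each $\eta_{z_i}'$ with $U_i \neq V_i$ immediately yields \hyperref[it:upper_bound_minkowski_b]{(B1)} with $d = d_{\kappa'}^\cut$.

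For \hyperref[it:holder_continuity_b]{(B2)}, the strategy is to factor $\phi \colon \D \to U$ through the single-curve uniformizations. For each $i$ with $U_i \neq V_i$ choose a conformal $\phi_i \colon \D \to U_i$ and write $\phi = \phi_i \circ \psi_i$ with $\psi_i = \phi_i^{-1} \circ \phi \colon \D \to \phi_i^{-1}(U)$ conformal. By the construction of the exploration tree, two distinct branches $\eta_{z_j}'$ and $\eta_{z_k}'$ agree on an initial arc and then continue in disjoint components, so they meet only along this common arc, whose terminal point is their unique tree branching point. Hence the set $\CT$ of tree branching points in $\D$ has at most $n-1$ elements, and each $p \in (\partial U \cap \partial V) \setminus \CT$ is traversed by a unique $\eta_{z_i}'$. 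For such $p$ and small $r > 0$ one has $B(p,r) \cap \partial U = B(p,r) \cap \partial U_i$, so $\phi_i^{-1}(U)$ has an open arc of $\partial \D$ in its boundary around $\phi_i^{-1}(p)$. By the Schwarz reflection principle $\psi_i$ extends analytically across that arc, hence is bilipschitz on a $\closure{\D}$-neighborhood of $\psi_i^{-1}(\phi_i^{-1}(p)) = \phi^{-1}(p)$. Combined with the $\alpha$-H\"older continuity of $\phi_i$ at $\phi_i^{-1}(p)$ provided by Lemma~\ref{lem:good_boundaries_maps_radial_case}, this yields $\alpha$-H\"older continuity of $\phi$ on a $\D$-neighborhood of $\phi^{-1}(p)$.

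The main obstacle will be treating the finitely many tree branching points $\CT \cap \partial U$. Near such a point $p$ the boundary of $U$ is formed by two arcs meeting at $p$, namely the common initial segment of two branches together with the outgoing continuation of one of them. I would argue that $U$ is locally enclosed in a Jordan subdomain whose boundary is formed by a chordal $\SLE_{\kappa'}(\kappa'-6)$ curve, obtained by continuing the relevant branches past their disconnection times in the spirit of the proof of Lemma~\ref{lem:good_measures_radial_case_up_to_disconnection_time}, and then invoke Lemma~\ref{lem:good_boundaries_maps} after conformally uniformizing this subdomain. This produces H\"older continuity of $\phi$ at $\phi^{-1}(p)$ with an exponent $\alpha' \in (0,1)$ depending only on $\kappa'$. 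Taking the minimum of $\alpha$ and these finitely many $\alpha'$, together with a standard compactness/covering argument to produce a uniform $\delta > 0$ valid across the whole of $\phi^{-1}(\partial U \cap \partial V)$, then establishes \hyperref[it:holder_continuity_b]{(B2)} as stated.
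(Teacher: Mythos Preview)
Your approach is genuinely different from the paper's. The paper proceeds by induction on the number of branches, mirroring the proof of Lemma~\ref{lem:good_measures_multiple_radial_case}: at step $j\to j+1$ one uses that the restriction of $\eta_{z_{j+1}}'$ to the component $G_j$ containing $z_{j+1}$ is again a radial $\SLE_{\kappa'}(\kappa'-6)$ in $G_j$, and applies Lemmas~\ref{lem:good_boundaries_maps} and~\ref{lem:good_boundaries_maps_radial_case} within $G_j$ after a conformal change of coordinates. You instead try a direct decomposition of $\partial U\cap\partial V$ in terms of the single-curve sets $\partial U_i\cap\partial V_i$.

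For \hyperref[it:upper_bound_minkowski_b]{(B1)} your idea works, though the justification needs a small repair. The claim ``if $\partial U\cap\partial V\cap\eta_{z_i}'\neq\emptyset$ then $U_i\neq V_i$'' is not true for every such $i$; what you actually need (and what holds) is that for each $p\in\partial U\cap\partial V\cap\D$ there \emph{exists} $i$ with $p\in\eta_{z_i}'$ and $U_i\neq V_i$. This follows since $U=\cap_j U_j$, $V=\cap_j V_j$ forces $U_{i_0}\neq V_{i_0}$ for some $i_0$, and if $p\notin\eta_{z_{i_0}}'$ then $p$ would lie in both open disjoint sets $U_{i_0}$ and $V_{i_0}$.

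For \hyperref[it:holder_continuity_b]{(B2)} there is a genuine gap. The assertion that each $p\in(\partial U\cap\partial V)\setminus\CT$ is traversed by a \emph{unique} $\eta_{z_i}'$ is false: any point on the common initial arc of several branches lies on all of them. More seriously, even choosing $i$ with $U_i\neq V_i$ and $p\in\eta_{z_i}'$, the local identity $B(p,r)\cap\partial U=B(p,r)\cap\partial U_i$ need not hold. A branch $\eta_{z_j}'$ that has already diverged from $\eta_{z_i}'$ can revisit a neighborhood of $p$ (radial $\SLE_{\kappa'}(\kappa'-6)$ hits the boundary of its domain), so $\bigl(\cup_j\eta_{z_j}'\bigr)\cap B(p,r)$ may strictly contain $\eta_{z_i}'\cap B(p,r)$ for every single $i$. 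Then $\phi_i^{-1}(U)$ does not have an arc of $\partial\D$ as its boundary near $\phi_i^{-1}(p)$, and the Schwarz reflection step fails. Your treatment of the branching points is likewise only sketched. The paper's inductive route avoids these issues entirely, since at each step only one new curve in one component is added.
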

\begin{proof}
This follows by combining Lemmas~\ref{lem:good_boundaries_maps} and~\ref{lem:good_boundaries_maps_radial_case} with the arguments in the proof of Lemma~\ref{lem:good_measures_multiple_radial_case}
\end{proof}

\subsubsection{Proof of Lemma~\ref{lem:graph_connected}}
\label{subsubsec:proof_of_graph_connected}
Suppose that we have the setup of the proofs of Lemmas~\ref{lem:chain_of_components_disconnecting_origin_from_boundary},~\ref{lem:good_measures_multiple_radial_case} and~\ref{lem:good_boundary_maps_multiple_radial_case}.  First we note that the event in the statement of the lemma is determined by $h|_{A_1}$.  Also,  the laws of $h|_{A_1}$ and $\wt{h}|_{A_1}$ are mutually absolutely continuous and the corresponding Radon-Nikodym derivatives have finite moments of all orders.  Hence,  it suffices to prove the claim when~$Y_a$ is replaced by~$\wt{Y}_a$.  The latter follows by combining Lemmas~\ref{lem:chain_of_components_disconnecting_origin_from_boundary},~\ref{lem:good_measures_multiple_radial_case} and~\ref{lem:good_boundary_maps_multiple_radial_case} and since the connected components 
$U_1,\dots,U_m$ of $A_2 \setminus \wt{Y}_a$ constructed in the proof of Lemma~\ref{lem:chain_of_components_disconnecting_origin_from_boundary} are also connected components of $\D \setminus \cup_{j=1}^n \eta_{z_j}'$. \qed

\subsection{Completion of the proof}
\label{subsec:completion_of_the_proof}

We now turn to complete the proof of Theorem~\ref{thm:sle_removable} by combining the previous estimates to show that the hypotheses of \cite[Theorem~8.1]{kms2022sle4remov} are satisfied.  For each $z \in \h$ and $k \in \N$ such that $B(z,2^{-k+1}) \subseteq \h$,  we let $A_{z,k} = B(z,2^{-k}) \setminus \closure{B(z,2^{-k-1})}$.  We also let $A_{z,k}^0 = B(z,(3/4) 2^{-k}) \setminus \closure{B(z,(11/16) 2^{-k})}$,  $A_{z,k}^1 = B(z,(7/8) 2^{-k}) \setminus \closure{B(z, (5/8) 2^{-k})}$ and $A_{z,k}^2 = B(z,(15/16)2^{-k}) \setminus \closure{B(z,(9/16)2^{-k})}$ so that $A_{z,k}^0 \subseteq A_{z,k}^1 \subseteq A_{z,k}^2 \subseteq A_{z,k}$.  Suppose that $h$ is a GFF on $\h$ with boundary conditions given by $-\lambda'$ (resp.\ $\lambda'$) on $\R_+$ (resp.\ $\R_-$) so that $h$ is compatible with a coupling with an $\SLE_{\kappa'}$ process in $\h$ from $0$ to $\infty$ as its counterflow line.

Fix $\wt{a}, M > 0$.  Let $X_{z,k}^{\wt{a}}$ be the union of the flow lines of $h$ with angles $\pm \pi/2$ starting from all of the points in $A_{z,k}^2 \cap (\wt{a} 2^{-k} \Z)^2$ and stopped upon exiting $A_{z,k}^2$.  Let $C$ be a connected component of $A_{z,k}^2 \setminus X_{z,k}^{\wt{a}}$ such that $\partial C$ consists of either the left side of a flow line with angle $\frac{\pi}{2}$ and the right side of a flow line of angle $-\frac{\pi}{2}$,  or it consists of the union of the concatenation of two segments of flow lines with angle $\frac{\pi}{2}$ and the concatenation of two segments of flow lines with angle $-\frac{\pi}{2}$.  Then,  in each such connected component,  we start the counterflow line of $h$ from its opening point to its closing point.  We then let $Y_{z,k}^{\wt{a}}$ be the closure of the union of the above counterflow lines with the boundaries of the connected components of $A_{z,k}^2 \setminus X_{z,k}^{\wt{a}}$ whose boundary is entirely contained in $X_{z,k}^{\wt{a}}$.

\begin{definition}
\label{def:good_annulus}
Fix $a,\wt{a},M>0$.  We say that $A_{z,k}$ is $(a,\wt{a},M)$-good if there exists $1 \leq n \leq M$ so that the following is true.  There are connected components $U_1,\ldots,U_n$ of $A_{z,k}^2 \setminus Y_{z,k}^{\wt{a}}$ whose boundary is entirely contained in $Y_{z,k}^{\wt{a}}$ so that $\partial U_i \cap \partial U_{i+1} \neq \emptyset$,  $U_i \cap A_{z,k}^0 \neq \emptyset$ and $U_i \subseteq A_{z,k}^1$ for every $1 \leq i \leq n$ (with the convention $U_{n+1} = U_1$ and $U_0 = U_n$) and the following holds.  Suppose that for each $1 \leq j \leq n$ we have points $z_j \in U_j$ and $w_{j-1} \in \partial U_{j-1} \cap U_j$ and set $w_m = w_0$ and let $\gamma_j$ be a path in $\closure{U_j}$ starting and ending at $w_{j-1}$ and $w_j$ respectively and which passes through $z_j$ and does not otherwise hit $\partial U_j$.  If $\gamma$ is the concatenation of the paths $\gamma_1,\dots,\gamma_n$ then $\gamma$ disconnects $\partial B(z,2^{-k-1})$ from $\partial B(z,2^{-k})$. Moreover,  for every $1 \leq j \leq n$,  there exists a measure $\mu_j$ such that the following hold.
\begin{enumerate}[(i)]
\item \label{it:cut_support} $\mu_j$ is supported on $\partial U_{j-1} \cap \partial U_j$.
\item \label{it:cut_below} $\mu_j(\partial U_{j-1} \cap \partial U_j) \geq M^{-1} 2^{-d_{\kappa'}^{\cut}k}$
\item  \label{it:cut_above} $\mu_j (X) \leq M \diam(X)^{d_{\kappa'}^{\cut}-a}$ for every $X\subseteq \partial U_{j-1} \cap \partial U_j$ Borel.
\item \label{it:good_fraction} We let $\CW_j$ be a Whitney square decomposition of $U_j$.  There exists $Q_j \in \CW_j$ such that the following is true.  Fix $w \in \partial U_{j-1} \cap \partial U_j$ (resp.\ $w \in \partial U_j \cap \partial U_{j+1}$) and let $\gamma_{j,w}$ be the hyperbolic geodesic in $U_j$ from $\cen(Q_j)$ to $w$.  Then for every square $Q \in \CW_j$ that $\gamma_{j,w}$ intersects,  we have that the hyperbolic distance in $U_j$ between $\cen(Q_j)$ and $\cen(Q)$ is at most $M(2^k \len(Q))^{-a}$.
\item \label{it:good_cubes_hit} For every $\ell \in \N_0$,  the number of elements of $\CW_j$ with side length $2^{-\ell}$ which are hit by a hyperbolic geodesic in $U_j$ from $\cen(Q_j)$ to a point in $\partial U_{j-1} \cap \partial U_j$ (resp.\ $\partial U_j \cap \partial U_{j+1}$) is at most $M2^{(d_{\kappa'}^{\cut}+a)(\ell-k)}$.
\end{enumerate}

\end{definition}

Let $E_{z,k}^{a,\wt{a},M}$ be the event that $A_{z,k}$ is $(a,\wt{a},M)$-good.  Let $\CF_{z,k}$ be the $\sigma$-algebra generated by the restriction of $h$ to $\h \setminus B(z,2^{-k})$, and let $\Fh_{z,k}$ be the $\CF_{z,k}$-measurable distribution which is harmonic in $B(z,2^{-k})$ so that we can write $h = h_{z,k} + \Fh_{z,k}$ where $h_{z,k}$ is a GFF in $B(z,2^{-k})$ with zero boundary conditions independent of $\CF_{z,k}$.  For each $N > 0$ we let
\[ G_{z,k}^N = \left\{ \sup_{w \in B(z,(31/32) 2^{-k})} |\Fh_{z,k}(w) - \Fh_{z,k}(z)| \leq N \right\}.\]
Note that $G_{z,k}^N \in \CF_{z,k}$.

\begin{proposition}
\label{prop:good_probability_lbd}
There exists $b \in (0,1)$ such that the following holds. For each $p \in (0,1)$,  $N > 0$ and $a \in (0,b)$ there exist $M > 0$ and $\wt{a} \in (0,1)$ so that for every $z \in \h$ and $k \in \N$ with $B(z,2^{-k+1}) \subseteq \h$ we have
\[ \p[ E_{z,k}^{a,\wt{a},M} \giv \CF_{z,k} ] \one_{G_{z,k}^N} \geq p \one_{G_{z,k}^N}.\]
\end{proposition}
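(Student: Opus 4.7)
The plan is to reduce the statement to Lemma~\ref{lem:graph_connected} together with the Whitney-square estimates of Lemmas~\ref{lem:good_cubes} and~\ref{lem:not_too_many_large_cubes}, via a scaling, conditioning, and absolute continuity argument. Take $b \in (0, d_{\kappa'}^{\cut}) \cap (0,1)$ so that $d_{\kappa'}^{\cut} - a > 0$ throughout. First, by scale and translation invariance of the GFF modulo additive constant, I pull $h$ back under $w \mapsto 2^{-k} w + z$ to obtain a field on $\D$. Under this map, $A_{z,k}^\ell$ becomes $A_\ell$ for $\ell = 0,1,2$, and the annular good event, the measures, the Whitney squares, and the hyperbolic distances all have the scalings built into Definition~\ref{def:good_annulus}, via the conformal covariance of $\cutmeasure{\cdot}$ with exponent $d_{\kappa'}^{\cut}$ (Lemma~\ref{lem:conformal_covariance_cut_measure}) and the scale invariance of $\disthyp$.

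Next, conditioning on $\CF_{z,k}$ and working on $G_{z,k}^N$, the pulled-back field can be written, after subtracting the deterministic constant $\Fh_{z,k}(z)$ (which merely shifts the angles of all flow lines uniformly and hence leaves $Y_{z,k}^{\wt a}$, the chains, and the measures unchanged), as $h^0 + F$, where $h^0$ is a zero-boundary GFF on $\D$ independent of $\CF_{z,k}$ and $F$ is an $\CF_{z,k}$-measurable harmonic function on $\D$ with $\sup_{B(0,31/32)} |F| \leq N$. Multiplying $F$ by a fixed smooth cutoff $\chi \in C_0^\infty(B(0, 31/32))$ with $\chi \equiv 1$ on a neighborhood of $A_2$ produces $\wt F \in H_0(\D)$ with $\| \wt F \|_\nabla \leq C_0 N$ for some $C_0$ depending only on $\chi$, and $(h^0 + F)|_{A_2}$ has the same law as $(h^0 + \wt F)|_{A_2}$. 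By Remark~\ref{rmk:RN_derivative}, the Radon-Nikodym derivative $\CD_{h^0, \wt F}$ has finite moments of all orders depending only on $N$.

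I then apply Lemma~\ref{lem:graph_connected} to $h^0$ with $b$ replaced by $a$ and $p$ replaced by some $p' \in (p,1)$ to be chosen, with the output constant $C$. This produces, on an event $E'$ with $\p[E'] \geq p'$, a cyclic chain of components $U_1, \dots, U_m$ of $A_2 \setminus Y_{\wt a}$ satisfying the connectivity and disconnection properties verbatim as required by Definition~\ref{def:good_annulus}, together with measures $\mu_j$ satisfying \eqref{it:cut_point_measure_diameter_bound}, \eqref{it:cut_point_measure_local_positive}, \eqref{it:upper_bound_minkowski} and \eqref{it:holder_continuity} of Lemma~\ref{lem:graph_connected}. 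The verifications to carry out are:
\begin{itemize}
\item $m \leq M$ and $U_i \cap A_0 \neq \emptyset$, $U_i \subseteq A_1$: the latter two are given by Lemma~\ref{lem:graph_connected}, and since each $U_j$ sits in a sector determined by the $\wt a$-grid based flow lines, $m \lesssim \wt a^{-2}$, which is absorbed into $M$ once $\wt a$ is fixed.
\item Measure upper bound \eqref{it:cut_above}: immediate from \eqref{it:cut_point_measure_diameter_bound} of Lemma~\ref{lem:graph_connected} combined with the $d_{\kappa'}^{\cut}$-conformal covariance under the rescaling.
\item Measure lower bound \eqref{it:cut_below}: by \eqref{it:cut_point_measure_local_positive} of Lemma~\ref{lem:graph_connected}, a.s.\ on $E'$ each $\mu_j(\partial U_{j-1} \cap \partial U_j) > 0$; since $m \leq M$, the minimum $L = \min_j \mu_j(\partial U_{j-1} \cap \partial U_j)$ is a.s.\ positive on $E'$, so we can choose $M$ large enough that $\p[ L \geq M^{-1},\, E'] \geq p''$ with $p''$ arbitrarily close to $p'$. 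Rescaling back gives the bound $M^{-1} 2^{-d_{\kappa'}^{\cut} k}$.
\item The hyperbolic geodesic control \eqref{it:good_fraction}: applying Lemma~\ref{lem:good_cubes} inside each $U_j$, with $X$ equal to $\partial U_{j-1} \cap \partial U_j$ or $\partial U_j \cap \partial U_{j+1}$, choosing $Q_j$ to contain $\varphi_j(0)$ for the normalized Riemann map $\varphi_j \colon \D \to U_j$, and using the H\"older continuity \eqref{it:holder_continuity} with some uniform exponent $\alpha$ on $E'$ (after possibly shrinking $E'$ by an arbitrarily small amount to make the H\"older constant and neighborhood radius deterministic), Lemma~\ref{lem:good_cubes} yields \eqref{it:good_fraction}.
\item Few cubes at each scale \eqref{it:good_cubes_hit}: Lemma~\ref{lem:not_too_many_large_cubes} applied with $d = d_{\kappa'}^{\cut}$, using \eqref{it:upper_bound_minkowski} and \eqref{it:holder_continuity} of Lemma~\ref{lem:graph_connected}, with the same deterministic H\"older data as above.
\end{itemize}
Let $E^*$ be the intersection of $E'$ with the above truncation events. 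By choosing $p'$ close enough to $1$ and $M$ large enough, $\p[ E^*] \geq 1 - \epsilon$ for any prescribed $\epsilon > 0$. Since $E^* \in \sigma((h^0 + \wt F)|_{A_2})$, H\"older's inequality together with the moment bound on $\CD_{h^0, \wt F}$ gives
\[
\p[ E^* \mid \CF_{z,k}] \one_{G_{z,k}^N} \; \geq \; \bigl( \p[E^*] \bigr)^{q} \, C_1(N,q)^{-1} \, \one_{G_{z,k}^N}
\]
for an exponent $q > 1$ and constant $C_1(N,q) < \infty$ depending only on $N$ and $q$. Choosing $\epsilon$ sufficiently small makes the right side at least $p$, which is the desired bound.

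\textbf{Main obstacle.} The principal technical difficulty is the quantitative lower mass bound \eqref{it:cut_below}, since Lemma~\ref{lem:graph_connected} supplies only qualitative positivity \eqref{it:cut_point_measure_local_positive}; this is handled by the truncation described above, which is legitimate because $m$ is bounded by $M$. A closely related subtlety is that the H\"older exponent $\alpha$ and the neighborhood radius $\delta$ produced by Lemma~\ref{lem:graph_connected}\eqref{it:holder_continuity} are a priori random, whereas Lemmas~\ref{lem:good_cubes} and~\ref{lem:not_too_many_large_cubes} require deterministic $(\alpha, \delta)$ for a single universal constant in their conclusions; one deals with this by adding to $E^*$ the event that $(\alpha, \delta)$ lie in a fixed deterministic range and the associated H\"older norms are bounded, which has probability close to $1$ under the law of $h^0$ by the quantitative boundary regularity of the relevant $\SLE$-type boundaries together with the compactness of the indexing set.
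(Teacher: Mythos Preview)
Your overall strategy is the same as the paper's: reduce to the zero-boundary GFF on $\D$ via scaling and absolute continuity, then invoke Lemma~\ref{lem:graph_connected} for the chain and measures and Lemmas~\ref{lem:good_cubes}--\ref{lem:not_too_many_large_cubes} for the Whitney-square conditions, finally truncating to make the a.s.\ positive quantities (lower mass, H\"older data) quantitative. The scaling of the measures and the final H\"older/Radon--Nikodym step are handled correctly.

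There is, however, a genuine error in the absolute-continuity reduction. You subtract the constant $\Fh_{z,k}(z)$ and assert that this ``merely shifts the angles of all flow lines uniformly and hence leaves $Y_{z,k}^{\wt a}$, the chains, and the measures unchanged.'' This is false: a flow line of $h$ with angle $\theta$ is a flow line of $h-c$ with angle $\theta+c/\chi$, so subtracting a generic constant $c$ replaces the paths forming $X_{z,k}^{\wt a}$ (which are the $\pm\pi/2$ flow lines of $h$) by \emph{different} paths (the $\pm\pi/2$ flow lines of $h-c$). The pocket decomposition, the counterflow lines, and hence $Y_{z,k}^{\wt a}$ all change. Moreover, there is no angle-shift symmetry to fall back on: for a zero-boundary GFF $h^0$ the field $h^0+c$ does not have the same law as $h^0$, so the event for shifted angles need not have the same probability. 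Consequently the event you bound at the end is not the event $E_{z,k}^{a,\wt a,M}$ you started with.

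The fix is exactly what the paper does: subtract instead the $\CF_{z,k}$-measurable integer multiple $2\pi\chi R$ with $R=\lfloor \Fh_{z,k}(z)/(2\pi\chi)\rfloor$. Interior flow lines are defined via local absolute continuity with the whole-plane GFF viewed modulo $2\pi\chi$, so adding an integer multiple of $2\pi\chi$ genuinely leaves all flow lines (and hence $X_{z,k}^{\wt a}$, $Y_{z,k}^{\wt a}$, and the event $E_{z,k}^{a,\wt a,M}$) unchanged. On $G_{z,k}^N$ the residual harmonic part then satisfies $\sup_{B(z,(31/32)2^{-k})}|\Fh_{z,k}-2\pi\chi R|\le N+2\pi\chi$, so your cutoff and moment bound on the Radon--Nikodym derivative go through with $N$ replaced by $N+2\pi\chi$. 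With this correction your argument matches the paper's proof.
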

\begin{proof}
Fix $z \in \h$ and $k \in \N$ so that $B(z,2^{-k+1}) \subseteq \h$.  Note that $h_{2^{-k}}(z) = \Fh_{z,k}(z)$.  Let $R = \lfloor h_{2^{-k}}(z)/(2\pi \chi) \rfloor$ and note that $R$ is $\CF_{z,k}$-measurable.  On $G_{z,k}^N$,  the proof of Lemma~\ref{lem:good_scale_radon} implies that the law of the restriction of $h-2\pi \chi R$ to $A_{z,k}^2$ is absolutely continuous with respect to the law of a zero-boundary GFF on $B(z,2^{-k})$ restricted to $A_{z,k}^2$ where the Radon-Nikodym derivative has finite moments of all orders which can each be bounded by a constant that depends only on $N$.  Since the flow lines of $h-2\pi \chi R$ in $A_{z,k}^2$ are the same as the flow lines of $h$ in $A_{z,k}^2$, we obtain that the conditional law given $\CF_{z,k}$ and on $G_{z,k}^N$ of the flow lines of $h$ in $A_{z,k}^2$ stopped at the first time that they exit $A_{z,k}^2$ is absolutely continuous with respect to the law of the flow lines of a zero-boundary GFF on $B(z,2^{-k})$ restricted to $A_{z,k}^2$ where the Radon-Nikodym derivative has finite moments of all orders, each of which can be bounded by a constant that depends only on $N$.  Hence,  since the event $E_{z,k}^{a,\wt{a},M}$ is determined by the restriction of $h$ to $A_{z,k}^2$,  it suffices to prove the claim of the lemma when $E_{z,k}^{a,\wt{a},M}$ is replaced by $\wt{E}_{z,k}^{a,\wt{a},M}$,  where $\wt{E}_{z,k}^{a,\wt{a},M}$ is defined in the same way as $E_{z,k}^{a,\wt{a},M}$ but with the field $h$ replaced by $h_{z,k}$.  Similarly,  we define $\wt{X}_{z,k}^{\wt{a}}$ and $\wt{Y}_{z,k}^{\wt{a}}$,  where $\wt{a} \in (0,1)$ is sufficiently small (to be chosen).

We are going to deduce the result from Lemma~\ref{lem:graph_connected} and the results of Sections~\ref{sec:natural_measure_on_cut_points} and~\ref{sec:good_cube_lemmas}.  Fix $q \in (p,1)$.  We consider the field $\wt{h}_{z,k} = h_{z,k} \circ \phi_{z,k}^{-1}$ where $\phi_{z,k}(w) = 2^k(w-z)$ for every $w \in B(z,2^{-k})$ and consider the sets $X_{\wt{a}}$ and $Y_{\wt{a}}$ as in Section~\ref{subsec:basic_removability_conditions}.  We note that $\wt{h}_{z,k}$ has the law of a zero-boundary GFF on $\D$ and that $\phi_{z,k}(A_{z,k}^j) = A_j$ for every $j=0,1,2$ where $A_0,A_1,A_2$ are defined as in Section~\ref{subsec:basic_removability_conditions}.  Note that $\eta$ is a flow line of $h_{z,k}$ if and only if $\phi_{z,k}(\eta)$ is a flow line of $\wt{h}_{z,k}$ and so we obtain that $X_{\wt{a}} = \phi_{z,k}(\wt{X}_{z,k}^{\wt{a}})$ and $Y_{\wt{a}} = \phi_{z,k}(\wt{Y}_{z,k}^{\wt{a}})$.  Then Lemma~\ref{lem:graph_connected} implies that if we take $b$ sufficiently small (depending only on $\kappa'$) and $a \in (0,b)$,  then we can find $\wt{a} \in (0,1)$ sufficiently small and $C>0$ sufficiently large such that with probability at least $q$ the following holds.  We can find connected components $\wt{U}_1,\dots,\wt{U}_n$ of $A_2 \setminus Y_{\wt{a}}$ and measures $\wt{\mu}_1,\dots,\wt{\mu}_n$ such that~\eqref{it:cut_point_measure_diameter_bound}--\eqref{it:holder_continuity} of Lemma~\ref{lem:graph_connected} hold.  Moreover,  we have that $\wt{U}_j \cap A_0 \neq \emptyset$ and $\wt{U}_j \subseteq A_1$ for every $1 \leq j \leq n$,  and the following holds.  Suppose that for each $1 \leq j \leq n$ we have points $\wt{z}_j \in \wt{U}_j$ and $\wt{w}_{j-1} \in \partial \wt{U}_{j-1} \cap \partial \wt{U}_j$ and set $\wt{w}_{n} = \wt{w}_0$ and let $\wt{\gamma}_j$ be a path in $\closure{\wt{U}_j}$ starting and ending at $\wt{w}_{j-1}$ and $\wt{w}_j$ respectively and which passes through $\wt{z}_j$ and does not hit $\partial \wt{U}_j$ otherwise.  If $\wt{\gamma}$ is the concatenation of the paths $\wt{\gamma}_1,\dots,\wt{\gamma}_n$ then $\wt{\gamma}$ disconnects $\partial B(0,1/2)$ from $\partial \D$.  Note that $\gamma_j = \phi_{z,k}^{-1}(\wt{\gamma}_j)$ is a path in the closure of $U_j = \phi_{z,k}^{-1}(\wt{U}_j)$ starting and ending at $w_{j-1} = \phi_{z,k}^{-1}(\wt{w}_{j-1})$ and $w_j = \phi_{z,k}^{-1}(\wt{w}_j)$ respectively and which passes through $z_j = \phi_{z,k}^{-1}(\wt{z}_j)$ and does not otherwise hit $\partial U_j$.  It follows that the path $\gamma = \phi_{z,k}^{-1}(\wt{\gamma})$ is the concatenation of the paths $\gamma_1,\dots,\gamma_n$ and it disconnects $\partial B(z,2^{-k-1})$ from $\partial B(z,2^{-k})$. Furthermore, by~\eqref{it:cut_point_measure_local_positive} and Lemmas~\ref{lem:good_cubes} and~\ref{lem:not_too_many_large_cubes},  we can find $M>0$ large such that with probability at least $p$,  we have in addition that $n\leq M$,  $\wt{\mu}_j(\partial \wt{U}_{j-1} \cap \partial \wt{U}_j) \geq M^{-1}$, and $\wt{\mu}_j(X) \leq M \diam(X)^{d_{\kappa'}^{\cut}-a}$ for every $X \subseteq \partial \wt{U}_{j-1} \cap \partial \wt{U}_j$ Borel and every $1 \leq j \leq n$,  and the following hold.
\begin{itemize}
\item Fix $1 \leq j \leq n$ and let $\wt{\CW}_j$ be a Whitney square decomposition of $\wt{U}_j$ chosen in a measurable way.  Pick $\wt{z}_j \in \wt{U}_j$ according to Lebesgue measure in $\wt{U}_j$.  Let also $\wt{Q}_j$ be a square in $\wt{\CW}_j$ which contains $\wt{z}_j$ (chosen in some arbitrary but fixed measurable way).  Then we have that $\disthyp^{\wt{U}_j}(\cen(\wt{Q}_j),\cen(\wt{Q})) \leq M\len(\wt{Q})^{-a}$ for every square $\wt{Q}$ in $\wt{\CW}_j$ that $\wt{\gamma}_{j,\wt{w}}$ intersects, where $\wt{\gamma}_{j,\wt{w}}$ is the hyperbolic geodesic in $\wt{U}_j$ from $\cen(\wt{Q}_j)$ to $\wt{w}$.
\item For every $i \in \N_0$,  we let $\wt{\CN}_{i,j}$ be the set of squares $\wt{Q}$ in $\wt{\CW}_j$ such that $\len(\wt{Q}) = 2^{-i}$ and $\wt{\gamma}_{j,\wt{w}} \cap \wt{Q} \neq \emptyset$ for some $\wt{w} \in (\partial \wt{U}_{j-1} \cap \partial \wt{U}_j) \cup (\partial \wt{U}_j \cap \partial \wt{U}_{j+1})$.  Then we have that $|\wt{\CN}_{i,j}| \leq M 2^{(d_{\kappa'}^{\cut}+a)i}$.
\end{itemize}
Suppose that we are working on the event that the above hold.  Then for every $1 \leq j \leq n$, we consider the collection of squares $\CW_j$ defined by the relation $Q \in \CW_j$ if and only if $Q = \phi_{z,k}^{-1}(\wt{Q})$ for some $\wt{Q} \in \wt{\CW}_j$, and we consider the measure $\mu_j$ defined by $\mu_j(A) = 2^{-d_{\kappa'}^{\cut}k} \wt{\mu}_j(\phi_{z,k}(A))$ for every $A \subseteq B(z,2^{-k})$ Borel.  It is clear that the measures $(\mu_j)_j$ satisfy~\eqref{it:cut_support},~\eqref{it:cut_below}, and~\eqref{it:cut_above}.  Note that $\gamma_{j,w} = \phi_{z,k}^{-1}(\wt{\gamma}_{j,\wt{w}})$ for every $1 \leq j \leq n$ and every $\wt{w} \in \partial \wt{U}_j$,  where $w = \phi_{z,k}^{-1}(\wt{w})$.  Therefore, we have that both of~\eqref{it:good_fraction} and~\eqref{it:good_cubes_hit} hold as well since $\CW_j$ is a Whitney square decomposition of~$U_j$ and $\len(\phi_{z,k}^{-1}(\wt{Q})) = 2^{-k} \len(\wt{Q})$ for every $\wt{Q} \in \wt{\CW}_j$,  where~$Q_j$ is a square in~$\CW_j$ which contains~$z_j = \phi_{z,k}^{-1}(\wt{z}_j)$.  This completes the proof of the proposition.
\end{proof}

\begin{proof}[Proof of Theorem~\ref{thm:sle_removable}]
It suffices to show that the conditions of  \cite[Theorem~8.1]{kms2022sle4remov} are satisfied for $\eta'$ a.s.  Fix $K \subseteq \h$ compact and such that $\dist(K,\partial \h) > 0$.  Let $b>0$ be as in Proposition~\ref{prop:good_probability_lbd}.  Note that the upper Minkowski dimension of $\eta' \cap K$ is at most $d_{\kappa'} = 1+ \frac{\kappa'}{8}$ and so following  \cite[Theorem~8.1]{kms2022sle4remov},  we fix $a \in (0,1)$ small enough such that $a< \min(b, (2-d_{\kappa'})/5)$ and $d_{\kappa'}^\cut \in (10a ,  2 - 10a)$.  For each $k \in \N$,  we set $\CA_k = \{B(z,2^{-k}) \setminus \closure{B(z,2^{-k-1})},  z \in \h\}$ and we let $\CD_k$ be the set of $z \in 2^{-k} \Z^2$ with $\dist(z,K) \leq 2^{-k}$.  Let $k_0 \in \N$ be such that $2^{-(1-a^2) k_0} \leq \dist(K, \partial \h)/2$.  The proof of Lemma~\ref{lem:good_dense} implies that we can choose $N$ sufficiently large so that if $J_k = \{ (1-a^2)k \leq j \leq k: G_{z,j}^N \ \text{occurs}\}$, then for every $k \geq k_0$ and $z \in \CD_k$ we have $\p[ |J_k| \geq a^2 k /2] = 1-O(2^{-3k})$.  Moreover,  combining with Proposition~\ref{prop:good_probability_lbd},  we obtain that we can choose $\wt{a} \in (0,1)$ sufficiently small and $M>0$ sufficiently large such that the probability that there does not exist $(1-a^2)k\leq j \leq k-3$ such that $E_{z,j}^{a,\wt{a},M}$ occurs is $O(2^{-3k})$ for every $k\geq k_0$ uniformly in $z \in \CD_k$. Thus by applying a union bound and the Borel-Cantelli lemma, we see that there a.s.\ exists $n_0 \in \N$ with $n_0 \geq k_0$ so that if $n \geq n_0$ then for every $z \in \CD_n$ there exists  $(1-a^2) n \leq k \leq n-3$ so that $E_{z,k}^{a,\wt{a},M}$ occurs.  Suppose that we are working on that event.  Fix $z \in \eta' \cap K$ and let $w \in \CD_n$ be such that $|z-w| \leq 2^{-n}$ for $n \geq n_0$.  Let $(1-a^2)n \leq k \leq n-3$ be such that $E_{z,k}^{a,\wt{a},M}$ occurs and set $A = B(w,2^{-k}) \setminus \closure{B(w,2^{-k-1})} \in \CA_k$.  Then $B(z,2^{-n})$ is contained in the bounded connected component of $\C \setminus A$.  Let $U_1,\dots,U_m$ be the connected components in the definition of $E_{z,k}^{a,\wt{a},M}$.  It is immediate from the definition of $E_{z,k}^{a,\wt{a},M}$ that all the conditions of \cite[Theorem~8.1]{kms2022sle4remov} --- except possibly that $U_1,\dots,U_m$ are subsets of $\h \setminus \eta'$ --- are satisfied. Therefore,  in order to complete the proof of the theorem,  we need to show that the connected components $U_1,\dots,U_m$ are contained in $\h \setminus \eta'$.  To prove this,  we argue as follows.  Let $U$ be a connected component of $A_{z,k}^2 \setminus Y_{z,k}^{\wt{a}}$ such that $U \subseteq A_{z,k}^1$.  Then there exists a unique connected component $C$ of $A_{z,k}^2 \setminus X_{z,k}^{\wt{a}}$ such that $U \subseteq C$.  Then,  there are three possibilities.
\begin{enumerate}[(i)]
\item $\partial C$ consists of part of the right side of a flow line with angle $\frac{\pi}{2}$ and the left side of a flow line with angle $-\frac{\pi}{2}$.  Then,   $\eta'$ cannot enter $U$ due to the flow lines interaction rules \cite[Theorem~1.7]{ms2017ig4}.
\item $\partial C$ consists of the left side of a flow line with angle $\frac{\pi}{2}$ and the right side of a flow line with angle $-\frac{\pi}{2}$.  Then,  by the definition of $Y_{z,k}^{\wt{a}}$,  we have added the counterflow line of $h$ in $C$ from its opening to its closing point.  Then $U$ must be a connected component of $C \setminus \eta_C'$,  where $\eta_C'$ is the counterflow line of $h$ in $C$.  Let $w$ be the closing point of $P$ and let $\eta_w'$ be the counterflow line targeted at $w$.  Since $\eta_C'$ is part of $\eta_w'$ and the counterflow line from $0$ to $\infty$ cannot enter into any connected component disconnected from $\infty$ by $\eta_w'$,  it follows that it cannot enter $U$ in that case.
\item $\partial C$ consists of four arcs of flow lines.  Since the counterflow line visits points according to the space-filling ordering,  the only place that it can enter $C$ is at its opening point.  Once the counterflow line enters $C$,  it has to respect the space-filling ordering and therefore it has to agree with the counterflow line from the opening to the closing point of $C$.  Therefore $\eta'$ cannot enter $U$.
\end{enumerate}
This completes the proof of the theorem.
\end{proof}

\appendix

\section{General estimates}

In this appendix we collect a few estimates needed in Sections~\ref{sec:natural_measure_on_cut_points} and~\ref{sec:pocket_argument}. Appendix~\ref{app:deterministic} will focus on two deterministic estimates, Appendix~\ref{app:flow_lines} will prove an estimate for GFF flow lines, and Appendix~\ref{app:wedges} will prove several estimates for quantum wedges.

\subsection{Deterministic estimates}
\label{app:deterministic}

We begin by bounding the harmonic conjugate of a bounded harmonic function on $\D$.

\begin{lemma}
\label{lem:bound_harmonic_conjugate}
Let $u$ be harmonic on $\D$ with $u(0) = 0$ and assume that there exists a constant $c > 0$ such that $|u(z)| \leq c$ for all $z \in \D$. Let $v$ be the harmonic conjugate of $u$ with $v(0) = 0$. Then, for each $r \in (0,1)$ we have that $|v(z)| \leq 2 c/(1-r)$ for all $z \in \closure{B(0,r)}$.
\end{lemma}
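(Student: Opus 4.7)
The plan is to reduce this to the classical Borel--Carathéodory theorem, which bounds the modulus of a holomorphic function in terms of the supremum of its real part on a slightly larger disk.

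More concretely, I would form the holomorphic function $f = u + iv$ on $\D$, which satisfies $f(0) = 0$ and $\re f = u$, so $\sup_{z \in \D} \re f(z) \leq c$. The Borel--Carathéodory inequality, applied on the disk $B(0,R)$ for any $R \in (r,1)$, asserts that
\begin{equation*}
|f(z)| \;\leq\; \frac{2r}{R-r}\, \sup_{|w|=R} \re f(w) \;\leq\; \frac{2cr}{R-r} \qquad \text{for } |z|\leq r.
\end{equation*}
Sending $R \uparrow 1$ yields $|f(z)| \leq 2cr/(1-r) \leq 2c/(1-r)$ for $|z| \leq r$, and in particular $|v(z)| \leq |f(z)| \leq 2c/(1-r)$, which is the bound claimed.

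If a self-contained derivation of Borel--Carathéodory is wanted, the standard short argument is to set $M = c$ (the case $M=0$ being trivial since then $f \equiv 0$ by the maximum principle) and consider the auxiliary map
\begin{equation*}
g(z) = \frac{f(Rz)}{2M - f(Rz)} \quad\text{on}\quad \D.
\end{equation*}
The condition $\re f \leq M$ makes $2M - f$ non-vanishing and ensures $|2M - f(Rz)|^2 - |f(Rz)|^2 = 4M(M - \re f(Rz)) \geq 0$, so $|g| \leq 1$ on $\D$. Since $g(0) = 0$, Schwarz's lemma gives $|g(z)| \leq |z|$. Inverting the Möbius relation yields $|f(Rz)| \leq 2M|g(z)|/(1 - |g(z)|) \leq 2M|z|/(1-|z|)$, and rescaling $z \mapsto z/R$ gives the Borel--Carathéodory bound used above.

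The only thing that needs any care is the application of Schwarz to $g$, which requires $2M - f$ to avoid $0$; this is immediate when $M > 0$, and the $M = 0$ case is handled separately. There is no real obstacle here; the lemma is essentially a direct corollary of Borel--Carathéodory.
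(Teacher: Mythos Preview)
Your argument is correct. Forming $f = u + iv$ with $f(0)=0$ and $\re f \leq c$, then invoking Borel--Carath\'eodory on $B(0,R)$ and letting $R\uparrow 1$, gives $|f(z)| \leq 2cr/(1-r) \leq 2c/(1-r)$ on $\closure{B(0,r)}$, exactly as needed. The auxiliary derivation via the M\"obius map $g(z)=f(Rz)/(2M-f(Rz))$ and Schwarz's lemma is also fine.

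The paper takes a different, equally short route: it writes down the Schwarz integral representation
\[
u(z)+iv(z)=\int_{-\pi}^{\pi} u(e^{i\theta})\,\frac{e^{i\theta}+z}{e^{i\theta}-z}\,\frac{d\theta}{2\pi},
\]
and then bounds the kernel by $|e^{i\theta}+z|/|e^{i\theta}-z|\leq 2/(1-r)$ for $|z|\leq r$, immediately yielding $|v(z)|\leq 2c/(1-r)$. The paper's argument uses the two-sided bound $|u|\leq c$ together with the explicit Herglotz kernel, while yours only needs the one-sided bound $\re f \leq c$ and packages the complex-analytic content into Schwarz's lemma. Both are standard classical tools; your approach even gives the slightly stronger conclusion $|f(z)|\leq 2c/(1-r)$, though only the bound on $v$ is used downstream.
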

\begin{proof}
Note that
\begin{align*}
	u(z) + iv(z) = \int_{-\pi}^\pi u(e^{i\theta}) \frac{e^{i\theta}+z}{e^{i\theta}-z} \frac{d\theta}{2\pi}.
\end{align*}
If $|z| \leq r$ then $|e^{i\theta} + z| / |e^{i\theta} - z| \leq 2/(1-r)$ and we thus we have that
\begin{align*}
	|v(z)| \leq \int_{-\pi}^\pi |u(e^{i\theta})| \frac{|e^{i\theta}+z|}{|e^{i\theta}-z|} \frac{d\theta}{2\pi} \leq \frac{2 c}{1-r}.
\end{align*}
\end{proof}

Next, we prove the following lemma, stating that we can choose a family of annuli separating any $n$ distinct points in a compact set $K$ in a controlled manner.  This estimate is key in bounding the moments of~$\cutmeasure{\eta'}$ for $\eta' \sim \SLE_{\kappa'}$.
\begin{lemma}
\label{lem:annuli_algorithm}
Fix $n \in \N$, $r_0>0$ and $K \subseteq \h$ compact.  There exists a constant $C=C(n,K,r_0)$ so that the following is true.  Suppose that $z_1,\ldots,z_n \in K$ are distinct.  For each $1 \leq j \leq n$ there exists $n_j \in \N$ and $s_{j,k},r_{j,k} \in (0,r_0)$ such that $0 < s_{j,k} < r_{j,k}$ for $1 \leq k \leq n_j-1$ and $0=s_{j,n_j} < r_{j,n_j}$ if $n_j \geq 2$ and $0=s_{j,1}<r_{j,1}$ if $n_j=1$ so that the annuli $B(z_j,r_{j,k}) \setminus B(z_j,s_{j,k})$ are pairwise disjoint,  $B(z_j,4r_{j,1}) \subseteq \h$ for all $1 \leq j \leq n$ and
\begin{align*}
\prod_{j=1}^n \frac{\prod_{k=1}^{n_j-1} s_{j,k}}{ \prod_{k=1}^{n_j} r_{j,k}} \leq C \prod_{j=1}^n \frac{1}{\min_{i<j} |z_j-z_i|}
\end{align*}
with the convention $\prod_{k=1}^{n_j-1}s_{j,k} = 1$ if $n_j=1$.
\end{lemma}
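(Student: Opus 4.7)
The plan is to build the annuli by an iterative procedure organized around the nearest-predecessor tree. For $j \geq 1$, let $p(j) \in \{0,1,\ldots,j-1\}$ denote an index achieving $d_j = \min_{0 \leq i < j}|z_j - z_i|$ (with $z_0 = 0$); the edges $j \mapsto p(j)$ then make $\{0,1,\ldots,n\}$ into a rooted tree. The algebraic identity at the heart of the bound is a telescoping: if the annuli around a single center $z_j$ are perfectly stacked, meaning $s_{j,k} = r_{j,k+1}$ for $1 \le k \le n_j-1$, then
\begin{align*}
\frac{\prod_{k=1}^{n_j-1} s_{j,k}}{\prod_{k=1}^{n_j} r_{j,k}} = \frac{r_{j,2}\cdots r_{j,n_j}}{r_{j,1}r_{j,2}\cdots r_{j,n_j}} = \frac{1}{r_{j,1}}.
\end{align*}
More generally, writing $s_{j,k} = \rho_{j,k}\,r_{j,k+1}$ with gap ratios $\rho_{j,k} \ge 1$, the per-center factor becomes $(\prod_k \rho_{j,k})/r_{j,1}$. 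It therefore suffices to arrange $r_{j,1} \asymp d_j$ and $\prod_k \rho_{j,k}$ bounded by a constant depending only on $n$, while keeping the entire family pairwise disjoint.

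Concretely I would fix a large constant $C_0 > 1$ depending on $n$ and set $r_{j,1} = \min(d_j/C_0,\,r_0/2)$. Inside $B(z_j, r_{j,1})$ I would lay down a stack of dyadic annuli $r_{j,k+1} = r_{j,k}/2$ with $s_{j,k} = r_{j,k+1}$, terminating at a small innermost scale $r_{j,n_j}$ with $s_{j,n_j}=0$. Since $r_{j,1} < d_j \le |z_j - z_i|$ for all $i < j$, the ball $B(z_j, r_{j,1})$ contains no predecessor; it can contain successor centers $z_i$ with $i > j$, but necessarily then $d_i \le |z_i - z_j| < d_j/C_0$, placing $z_i$ at a strictly finer scale in the tree. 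Whenever such a $z_i$ lies inside an annulus of the stack at scale $r_{j,k}$, I would widen the inner radius of that annulus, replacing $s_{j,k}$ and $r_{j,k+1}$ by values so that the enlarged hole $B(z_j, s_{j,k}) \setminus B(z_j, r_{j,k+1})$ is a thin annulus containing $z_i$ together with the entire sub-cluster of tree-descendants of $z_i$. The geometric-series bound $\sum_{m \ge 1} C_0^{-m} < 1/(C_0-1)$ shows that this sub-cluster has diameter at most $d_i/(C_0-1) \ll r_{j,k}$ for $C_0$ large, so it fits, and each widening introduces a gap ratio $\rho_{j,k}$ bounded in terms of $C_0$.

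Disjointness of the global family then follows by induction along the tree: for $i \ne j$, either $B(z_i, r_{i,1})$ and $B(z_j, r_{j,1})$ are already disjoint, or one of them lies inside a widened hole of the other by the construction above. The main technical obstacle, in my view, is the combinatorial bookkeeping when several successor centers of $z_j$ lie within $B(z_j, r_{j,1})$ at comparable distances from $z_j$, so that their widenings would overlap at the same dyadic scale of the stack; this is handled by performing the widenings in a carefully chosen order (e.g., in order of decreasing $|z_i - z_j|$) and by enlarging $C_0$ if necessary so that at each scale the widened holes nest properly inside one another without interference. Because at most $n$ widenings occur per center and each contributes a bounded multiplicative penalty, the product $\prod_k \rho_{j,k}$ is bounded uniformly in the configuration. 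Multiplying the per-center factors then yields the required bound $C \prod_j 1/d_j$, with $C$ depending only on $n$, $\diam(K)$ (entering through the cap $r_{j,1} \le r_0/2$ when $d_j$ exceeds $r_0$), and $r_0$.
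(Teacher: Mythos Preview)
Your telescoping reduction to $r_{j,1} \asymp d_j$ with $\prod_k \rho_{j,k}$ bounded in terms of $n$ is exactly the right target, and matches what the paper ultimately proves. The gap is in the disjointness step. The dichotomy you assert---that for $a < b$ either $B(z_a, r_{a,1})$ and $B(z_b, r_{b,1})$ are disjoint or the latter sits in a widened hole of the former---fails when $z_b$ lies in the shell $r_{a,1} \le |z_b - z_a| < r_{a,1}\,C_0/(C_0-1)$. Then $z_b \notin B(z_a, r_{a,1})$, so your procedure creates no hole for it, yet (taking $z_a$ to be the nearest predecessor so that $r_{b,1} = |z_b-z_a|/C_0$) one checks that $B(z_b, r_{b,1})$ overlaps the outermost annulus of $z_a$'s stack. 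Enlarging $C_0$ shrinks this shell but never removes it. A separate issue: the geometric-series bound on the sub-cluster diameter is unjustified, since nothing forces $d_l < d_i/C_0$ for a tree-child $z_l$ of $z_i$; but this is inessential, as one could simply widen once for each successor inside $B(z_j, r_{j,1})$ rather than grouping by subtree. All of this is repairable (e.g.\ by reserving an unused buffer shell at the top of every stack), but as written the induction does not close, and the bookkeeping you flag as the main obstacle remains genuinely unsettled.

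The paper's construction is organized differently and sidesteps both difficulties. It processes $z_1, \ldots, z_n$ in order and, for each $z_j$, uses pigeonhole over at most $n$ consecutive dyadic scales to select $r_{j,1}$ so that the fat annulus $B(z_j, 4r_{j,1}) \setminus B(z_j, r_{j,1}/4)$ contains no later $z_\ell$ \emph{and} is already disjoint from every previously built annulus; it then recurses inward the same way to choose $s_{j,1}, r_{j,2}, \ldots$. Disjointness is thus enforced at the moment each radius is chosen rather than verified after the fact, and the pigeonhole directly yields $r_{j,1} \gtrsim \min_{i<j}|z_j - z_i|$ and $s_{j,k}/r_{j,k+1} \lesssim 1$ with implicit constants depending only on $n$, $K$, $r_0$. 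Your tree picture is more conceptual and would make the ``at most $n$ holes per center'' count transparent, but the paper's explicit scale selection avoids the boundary cases altogether.
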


\begin{remark}
Note that the order of the points is important in the above inequality. Indeed, if the minimum had been over $i \neq j$, then the result would trivially hold true by choosing $n_j = 1$ and $r_{j,1} = \tfrac{1}{2}(\tfrac{\dist(K,\partial \h)}{4} \wedge \min_{i \neq j} |z_j - z_i|)$ for all $j$. However, since the right-hand side of the inequality has the minimum over $i < j$, a pattern of annuli being disjoint appears when points with larger indices are closer together than some with smaller indices.
\end{remark}

\begin{proof}[Proof of Lemma~\ref{lem:annuli_algorithm}]
Let $r_{1,1}$ be the largest number $r > 0$ of the form $2^{-m}$ for $m \in \N$ so that $r\leq r_0$,  $B(z_1,4r) \subseteq \h$, and there are no points $z_j$ for $2 \leq j \leq n$ in $B(z_1,4r) \setminus B(z_1,r/4)$.  By the pigeon hole principle, we note that $r_{1,1} \asymp \dist(z_1,\partial \h)$ where the implicit constants depend only on $n$,  $r_0$, and $K$.  There are two possibilities to consider.
\begin{itemize}
\item $B(z_1,r_{1,1})$ contains one of the points $z_j$ for $2 \leq j \leq n$.  We let $\wt{s}_{1,1}$ be the smallest number $r \in (0,r_{1,1})$ of the form $2^{-m}$ for $m \in \N$ so that $B(z_1,r_{1,1}) \setminus B(z_1,r)$ does not contain any of the points $z_j$ for $2 \leq j \leq n$ and then we let $s_{1,1} = 2 \wt{s}_{1,1}$.
\item $B(z_1,r_{1,1})$ does not contain one of the points $z_j$ for $2 \leq j \leq n$.  In this case, we set $s_{1,1} = \wt{s}_{1,1} = 0$.
\end{itemize}

If $s_{1,1} > 0$, we then let $r_{1,2}$ be the largest number $r \in (0,s_{1,1})$ of the form $2^{-m}$ for $m \in \N$ so that there are no points $z_j$ for $2 \leq j \leq n$ in $B(z_1,4r) \setminus B(z_1,r/4)$.  We then define $s_{1,2}$ in the same manner as above and continue to define $r_{1,k}$, $s_{1,k}$ until the first $k$ that $s_{1,k} = 0$.  We then set $n_1=k$.  Assume that for some $1 \leq j \leq n-2$, we have defined radii $r_{\ell,k},s_{\ell,k}$ for each pair $(\ell,k)$ such that $1 \leq \ell \leq j$ and $1 \leq k \leq n_\ell$.  We now define the radii $r_{j+1,k},s_{j+1,k}$ as follows.  We let $r_{j+1,1}$ be the largest number $r > 0$ of the form $2^{-m}$ for $m \in \N$ so that $r \leq r_0$,  $B(z_{j+1},4r) \subseteq \h$,  $B(z_{j+1},4r)$ is disjoint from the annuli $B(z_\ell,r_{\ell,k}) \setminus B(z_\ell,s_{\ell,k})$ for $1 \leq \ell \leq j$ and $1\leq k \leq n_{\ell}$ and there are no points $z_\ell$ for $j+2 \leq \ell \leq n$ in $B(z_{j+1},4r) \setminus B(z_{j+1},r/4)$. As before, there are two possibilities to consider.
\begin{itemize}
\item $B(z_{j+1},r_{j+1,1})$ contains one of the points $z_\ell$ for $j+2 \leq \ell \leq n$.  We let $\wt{s}_{j+1,1}$ be the smallest number $r \in (0,r_{j+1,1})$ of the form $2^{-m}$ for $m \in \N$ so that $B(z_{j+1},r_{j+1,1}) \setminus B(z_{j+1},r)$ does not contain any of the points $z_\ell$ for $j+2 \leq \ell \leq n$ and then we let $s_{j+1,1} = 2 \wt{s}_{j+1,1}$.
\item $B(z_{j+1},r_{j+1,1})$ does not contain one of the points $z_\ell$ for $j+2 \leq \ell \leq n$.  In this case, we set $s_{j+1,1} = \wt{s}_{j+1,1} = 0$.
\end{itemize}

If $s_{j+1,1} > 0$, we then let $r_{j+1,2}$ be the largest number $r \in (0,s_{j+1,1})$ of the form $2^{-m}$ for $m \in \N$ so that there are no points $z_\ell$ for $j+2 \leq \ell \leq n$ in $B(z_{j+1},4r) \setminus B(z_{j+1},r/4)$.  We then define $s_{j+1,2}$ in the same manner as above and continue defining $r_{j+1,k}$, $s_{j+1,k}$ until the first $k$ so that $s_{j+1,k} = 0$ and set $n_{j+1}=k$. Finally, assuming that $r_{j,k}$ and $s_{j,k}$ have been defined for $1 \leq j \leq n-1$ and $1 \leq k \leq n_j$ we let $r_{n,1}$ be the largest number $r$ of the form $2^{-m}$ for $m \in \N$ such that $r \leq r_0$, $B(z_n, 4r) \subseteq \h$ and $B(z_n,4r)$ is disjoint from $B(z_\ell,r_{\ell,k}) \setminus B(z_\ell,s_{\ell,k})$ for all $1 \leq \ell \leq n-1$ and $1 \leq k \leq n_\ell$.  Then, since $B(z_n,r_{n,1})$ does not contain $z_\ell$ for any $1 \leq \ell \leq n-1$, we let $s_{n,1} = \wt{s}_{n,1} = 0$.

Next we claim that $s_{j,k}/r_{j,k+1} \lesssim 1$ for every $1 \leq j \leq n$ and every $1 \leq k \leq n_j-1$ such that $n_j \geq 2$,  where the implicit constant depends only on $n$.  Indeed,  fix $j,k$ as above and let $m_0 \in \N$ be such that $s_{j,k} = 2^{-m_0}$.  Set $m_{\ell} = m_0 + 4\ell$ for $\ell \in \N_0$.  Then the annuli $B(z_j,4 \cdot 2^{-m_{\ell}}) \setminus B(z_j ,  2^{-m_{\ell}}/4)$ for $\ell = 0,\dots,n-1$ are pairwise disjoint and so there exists $0 \leq \ell \leq n-1$ such that $B(z_j ,  4\cdot 2^{-m_{\ell}}) \setminus B(z_j  ,2^{-m_{\ell}}/4)$ does not contain any of the points $z_{b}$ for $1 \leq b \leq n$.  Therefore in every case we have that $r_{j,k+1} \geq 2^{-m_\ell}\geq s_{j,k} 2^{-4(n-1)}$ and so this proves the claim.  

Next, we shall prove that $r_{j,1} \gtrsim \min_{i < j} |z_j - z_i|$ for $1 \leq j \leq n$ where the implicit constant depends only on $n$, $r_0$, and $K$,  which together with the above implies the result of the lemma.  Indeed,  first we note that we have already shown the claim when $j=1$.  Suppose that $2\leq j \leq n$. We first consider the case that $z_j \notin B(z_\ell,r_{\ell,1})$ for all $1 \leq \ell < j$ and note that then $z_j \notin B(z_\ell,4 r_{\ell,1})$ and hence $r_{\ell,1} < |z_j - z_\ell|/4$ for all $1 \leq \ell < j$. Set $\wh{r}_j = \min_{i < j} |z_j - z_i|/8$ and note that then $B(z_j, 4 \wh{r}_j) \cap B(z_\ell, r_{\ell,1}) = \emptyset$ for all $1 \leq \ell < j$. Moreover, writing 
\begin{align*}
	\ol{r}_j = \wh{r}_j \times \frac{r_0 \wedge 1}{\diam(K) \vee 1} \times \frac{\dist(K,\partial \h) \wedge 1}{4},
\end{align*}
letting $m_{j,0} \in \N$ be such that $\wt{r}_j \coloneqq 2^{-m_{j,0}} \in (\ol{r}_j/2,\ol{r}_j]$, it follows that $B(z_j,4 \wt{r}_j) \cap B(z_\ell,r_{\ell,1}) = \emptyset$ for all $1 \leq \ell < j$, $\wt{r}_j \leq r_0$, $B(z_j, 4 \wt{r}_j) \subseteq \h$ -- and these continue to hold if we replace $\wt{r}_j$ by something smaller -- and that $\wt{r}_j \geq c \min_{i < j} | z_j - z_i|$ for some constant $c = c(r_0,K)$. Finally, letting $m_{j,\ell} = m_{j,0} + 4 \ell$ we have similarly to the above that there is at least one $0 \leq \ell \leq n-j$ such that the annulus $B(z_j,4 \cdot 2^{-m_{j,\ell}}) \setminus B(z_j, 2^{- m_{j,\ell}}/4)$, does not contain any of the points $z_{j+1},\dots,z_n$. Let $\ell^*$ be the largest such $\ell$. Then $r_{j,1} \geq 2^{-m_{j,\ell^*}} \geq c 2^{-4(n-j)} \min_{i < j} | z_j - z_i|$, as was to be shown. In the case of $r_{n,1}$, note that $r_{n,1} \geq \wt{r}_n$.

Next, we consider the case where there is some $1 \leq \ell < j$ such that $z_j \in B(z_\ell,r_{\ell,1})$.  Then there exist $1 \leq \ell' < j$ and $1 \leq k' \leq n_{\ell'}-1$ such that $z_j \in B(z_{\ell'},s_{\ell',k'}) \setminus B(z_{\ell'},r_{\ell',k'+1})$ and we let $\ell$ and $k$ be chosen so that $B(z_\ell,s_{\ell,k}) \setminus B(z_\ell,r_{\ell,k+1})$ is the smallest such annulus. We have by the choice of $r_{\ell,k+1}$ that $z_j \notin B(z_\ell,4 r_{\ell,k+1})$ so that $B(z_j,r_{\ell,k+1}) \cap B(z_\ell,r_{\ell,k+1}) = \emptyset$.  Moreover, by the choice of $s_{\ell,k}$, $z_j \in B(z_\ell,s_{\ell,k}/2)$ so that $B(z_j,s_{\ell,k}/2) \subseteq B(z_\ell,s_{\ell,k})$. Since $r_{\ell,k+1} \leq s_{\ell,k}/2$,  it follows that $B(z_j,r_{\ell,k+1}) \subseteq B(z_\ell, s_{\ell,k}) \setminus B(z_\ell,r_{\ell,k+1})$ and that $B(z_j,r_{\ell,k+1})$ is disjoint of $B(z_{\ell'},r_{\ell',k'}) \setminus B(z_{\ell'},s_{\ell',k'})$ for all $1 \leq \ell' < j$ and $1 \leq k' \leq n_{\ell'}$. It follows by the same argument as above that $r_{j,1} \geq 2^{-4(n-j)} r_{\ell,k+1}/4$.  Moreover, since $s_{\ell,k} > |z_j - z_\ell |$ and $s_{\ell,k} / r_{\ell,k+1} \lesssim1$, it follows that $r_{j,1} \gtrsim \min_{i < j} |z_j-z_i|$ where the implicit constant depends only on $n$, $K$ and $r_0$.

Putting together the results of the above three paragraphs, it follows that
\begin{align*}
	\frac{\prod_{k=1}^{n_j-1} s_{j,k}}{\prod_{k=1}^{n_j} r_{j,k}} = \frac{1}{r_{j,1}} \prod_{k=1}^{n_j-1} \frac{s_{j,k}}{r_{j,k+1}} \leq \wt{c}^{n_j}\frac{1}{r_{j,1}} \leq \wh{c} \frac{\wt{c}^{n_j}}{\min_{i<j} |z_j-z_i|}
\end{align*}
for some constants $\wt{c} = \wt{c}(n,K,r_0)>0$ and $\wh{c} = \wh{c}(n,K,r_0)$. Therefore,  to complete the proof of the lemma,  it suffices to show that $n_j\leq n$ for every $1 \leq j \leq n$.  But this can be easily seen since for every $1 \leq j \leq n$ such that $n_j\geq 2$ and every $1\leq k \leq n_j-1$,  the definitions of $s_{j,k}$ and $r_{j,k+1}$ imply that there exists $1 \leq \ell \leq n$ such that $z_{\ell} \in B(z_j,s_{j,k}) \setminus B(z_j,r_{j,k+1})$ and the annuli $B(z_j,s_{j,k}) \setminus B(z_j,r_{j,k+1})$ are pairwise disjoint.
\end{proof}

\subsection{Exit points of flow lines}
\label{app:flow_lines}

We now prove that the flow lines started at interior points of a set leaves said set at a finite number of boundary points.

\begin{lemma}
\label{lem:finite_strands}
Let $h$ be a zero-boundary GFF in $\D$. Fix $0 \leq r<r_0<R<1$ and let $A = B(0,R) \setminus B(0,r)$. Let $(\theta_j)_{j \in \N}$ be a countable dense subset of $[0,2\pi)$, $\eta_1^j$ (resp.\ $\eta_2^j$) be the flow line of $h$ of angle $-\tfrac{\pi}{2}$ (resp.\ $\tfrac{\pi}{2}$) that emanates from $z_j = r_0 e^{i \theta_j}$ and $\tau_{k}^j = \inf\{ t \geq 0: \eta_{k}^j(t) \notin \closure{A} \}$ for $k \in \{1,2\}$. Let $K = \closure{\cup_j ( \eta_{1}^j([0,\tau_{1}]) \cup \eta_{2}^j([0,\tau_{2}^j]))}$. Then $K \cap \partial \closure{A}$ is a.s.\ a finite set. Moreover, the same result holds if we replace $\eta_{k}^j$ for $k \in \{1,2\}$ by the flow lines started from any countable set of points in $B(0,R_1) \setminus B(0,r_1)$ for any $r < r_1 < R_1 < R$.
\end{lemma}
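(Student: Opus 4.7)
The plan is to reduce to the whole-plane setting via local absolute continuity and then exploit the merging property of interior GFF flow lines from imaginary geometry. First, since $\closure{A} \subseteq \D$ is compact, the law of $h|_{\closure{A}}$ is mutually absolutely continuous with the restriction to $\closure{A}$ of a whole-plane GFF taken modulo $2\pi\chi$ (see Section~\ref{subsec:ig}), and the flow lines $\eta_k^j$ stopped at $\tau_k^j$ are a.s.\ determined by this restriction. Hence it suffices to prove the statement when $h$ is replaced by a whole-plane field modulo $2\pi\chi$. In this setting, by \cite[Theorem~1.7]{ms2017ig4}, any two interior flow lines of the same angle a.s.\ merge upon their first intersection.

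Next, I would introduce the equivalence relation $i \sim_k j$ if $\eta_k^i$ and $\eta_k^j$ meet (and hence merge) before either one exits $\closure{A}$, for $k \in \{1,2\}$. All flow lines in a single $\sim_k$-equivalence class then exit $\closure{A}$ at the same point of $\partial \closure{A}$, so the cardinality of $K \cap \partial \closure{A}$ is at most $N_1 + N_2$, where $N_k$ denotes the number of $\sim_k$-classes. It therefore suffices to prove $N_1, N_2 < \infty$ a.s.

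To bound $N_1$ (the argument for $N_2$ being identical), I would invoke the space-filling $\SLE_{\kappa'}$ description from \cite{ms2017ig4}: the collection $\{\eta_1^j\}_{j \in \N}$ is the left-boundary tree of the space-filling $\SLE_{\kappa'}$ curve $\eta'$ associated with the whole-plane field, and each $\sim_1$-equivalence class corresponds to a distinct pocket of $\eta'$ whose left boundary reaches $\partial \closure{A}$. Parameterizing $\eta'$ by Lebesgue area shows that the total parameter time $\eta'$ spends in $\closure{A}$ is a.s.\ finite; then by the a.s.\ continuity of $\eta'$, for every $\delta > 0$ only finitely many pockets of $\eta'$ of diameter at least $\delta$ contain points of $\closure A$, which together with a short topological argument yields $N_1 < \infty$.

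The main obstacle will be rigorously matching the merging $\sim_1$-equivalence classes with distinct pockets of $\eta'$, and in particular ruling out an accumulation of infinitely many tiny pockets touching $\partial \closure{A}$. This should follow from the uniform continuity modulus of $\eta'$ on the finite time interval during which it visits $\closure{A}$, which forces any pocket whose left boundary reaches $\partial \closure{A}$ to occupy a uniformly non-trivial area subset in a neighbourhood of the corresponding boundary arc.
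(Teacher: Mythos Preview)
Your proposal and the paper's proof share the same core mechanism—continuity of the space-filling $\SLE_{\kappa'}$—but the paper reaches it more directly, and the detour through merging equivalence classes and ``pockets'' is where your argument becomes imprecise.

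The paper does not pass to the whole-plane field; instead it uses absolute continuity to replace the zero-boundary GFF on $\D$ by one whose boundary data supports a space-filling $\SLE_{\kappa'}$ curve $\ol\eta'$ in $\D$ from $-i$ to $-i$. The key observation is then immediate: for each $j$, the pair $\eta_1^j,\eta_2^j$ (continued all the way to $\partial\D$) is exactly the outer boundary of $\ol\eta'([0,\ol\tau_{z_j}])$, where $\ol\tau_{z_j}$ is the first time $\ol\eta'$ hits $z_j$. It follows at once that any point of $\eta_k^j\cap\partial A$ is a point at which $\ol\eta'$ begins an excursion from $\partial A$ into $A$ that reaches $\partial B(0,r_0)$. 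Continuity of $\ol\eta'$ (so finitely many such crossings) finishes the proof in one line.

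Your merging step is correct—flow lines of the same angle that meet do coalesce, and each $\sim_k$-class has a single exit point—but it is not needed. More importantly, the step you flag as the main obstacle, matching $\sim_1$-classes with ``pockets of $\eta'$,'' is exactly the content of the paper's observation above: what you should call a pocket is an excursion of $\ol\eta'$ from $\partial A$ that reaches $\partial B(0,r_0)$, and the identification of the flow lines with the outer boundary of $\ol\eta'$ at the hitting time of $z_j$ gives the correspondence directly, with no separate topological argument required. Once you phrase it this way, the accumulation worry you raise evaporates, because the finitely-many-excursions statement is a direct consequence of uniform continuity on a compact time interval.

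One further point: your inequality $|K\cap\partial\closure{A}|\le N_1+N_2$ presumes that each stopped flow line meets $\partial\closure{A}$ only at its terminal point $\eta_k^j(\tau_k^j)$, which you have not justified. The paper's excursion formulation sidesteps this, since any point of the stopped flow line lying on $\partial A$—terminal or not—is on the outer boundary of $\ol\eta'$ and hence attached to one of the finitely many crossing excursions.
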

\begin{proof}
By absolute continuity, it suffices to prove the lemma with any fixed choice of boundary conditions for the GFF $h$ on $\D$.  Suppose that $\wt{h}$ is a GFF on $\h$ with boundary conditions given by $\lambda'$ on $\R_-$ and $\lambda'-2\pi \chi$ on $\R_+$.  Let $\varphi \colon \D \to \h$ be a conformal map so that $\varphi(-i) = 0$, $\varphi(1) = 1$, and $\varphi(i) = \infty$.  We then take $h = \wt{h} \circ \varphi - \chi \arg \varphi'$.  Let $\ol{\eta}'$ be the space-filling $\SLE_{\kappa'}$ in $\D$ from $-i$ to $-i$ coupled with $h$ in the usual way.  For each $z \in \D$, we let $\ol{\tau}_z = \inf \{ t \geq 0: z \in \ol{\eta}'([0,t]) \}$ (note that $\ol{\eta}'$ does hit every point $z$ in $\D$). We note that if we run $\eta_{1}^j$ and $\eta_{2}^j$ until hitting $-i$, then they will form the outer boundary of $\ol{\eta}'([0,\ol{\tau}_{z_j}])$. It follows that $\eta_{k}^j$ for $k \in \{1,2\}$ can only exit $A$ at points in which $\ol{\eta}'$ starts an excursion from $\partial A$ to $\partial B(0,r_0)$. Moreover, by the continuity of space-filling $\SLE_{\kappa'}$ \cite[Theorem~4.12]{ms2017ig4}, it follows that $\ol{\eta}'$ can only make finitely many such excursions. Thus, the result follows in the first case. The proof of the second case is the same.
\end{proof}

\subsection{Wedge estimates}
\label{app:wedges}

This section is dedicated to the proof that we can decompose a weight $3 \gamma^2/2 - 2$ quantum wedge into the sum of a zero-boundary GFF and a harmonic function (provided the embedding of the wedge is chosen in a certain way) and the upper tail of said harmonic function in each compact subset of $\h$ has Gaussian decay. We begin by the proof that we can have such a decomposition.

\begin{lemma}\label{lem:wedge_harmonic_expression}
Let $\CW = (\h,h,0,\infty)$ be a quantum wedge of weight $3\gamma^2/2 - 2$ with the embedding into $\h$ such that $\qbmeasure{h}([-1,0]) = 1$.  Then we can write $h = h^0 + \Fh$ where $h^0$ is a zero-boundary GFF on $\h$ and $\Fh$ is harmonic on $\h$ and independent of $h^0$.
\end{lemma}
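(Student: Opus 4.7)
My plan is to reduce the desired decomposition to an analogous decomposition for the free-boundary GFF and then transport it to the wedge using its structural definition.

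First, I would establish that a free-boundary GFF $h^{\mathrm{free}}$ on $\h$ can be written as $h^{\mathrm{free}} = h^0_{\mathrm{ref}} + \tilde{\Fh}$, where $h^0_{\mathrm{ref}}$ is a zero-boundary GFF on $\h$ and $\tilde{\Fh}$ is an independent random distribution on $\h$ that is harmonic in $\h$. This is a consequence of the covariance identity
\[
G_{\mathrm{free}}(z,w) - G_0(z,w) = -\frac{1}{\pi}\log|z-\bar{w}|,
\]
whose right-hand side is symmetric and harmonic in each variable, and hence is the covariance of a well-defined Gaussian random distribution on $\h$ that is harmonic. By standard Gaussian theory, $h^0_{\mathrm{ref}}$ and $\tilde{\Fh}$ can be sampled independently so that their sum has the law of $h^{\mathrm{free}}$.

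Next, I would use Definition~\ref{def:quantum_wedge_def}: the projection of $h$ onto $\CH_2(\h)$ has the law of the corresponding projection of a free-boundary GFF and is independent of its projection onto $\CH_1(\h)$. I would therefore couple $h$ with a free-boundary GFF $h^{\mathrm{free}} = h^0_{\mathrm{ref}} + \tilde{\Fh}$ so that the $\CH_2(\h)$-projections agree and so that $h^0_{\mathrm{ref}}$ and $\tilde{\Fh}$ are independent of $h|_{\CH_1(\h)}$. Setting $h^0 := h^0_{\mathrm{ref}}$ and $\Fh := h - h^0 = (h - h^{\mathrm{free}}) + \tilde{\Fh}$, we have $h = h^0 + \Fh$, and independence of $h^0$ and $\Fh$ will follow from the coupling together with the orthogonality of the zero-boundary and harmonic parts of $H(\h)$.

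The final step, and the main obstacle, is verifying that $\Fh$ is harmonic on $\h$. Since $\tilde{\Fh}$ is harmonic by construction, this reduces to showing that $h - h^{\mathrm{free}}$ is harmonic. By construction this difference has vanishing $\CH_2(\h)$-projection and so is a random function depending only on $|\cdot|$; I would arrange the coupling of the Brownian motions appearing in the wedge and free-boundary descriptions so that $h - h^{\mathrm{free}}$ is almost surely of the form $a + b\log|\cdot|$ for random constants $a,b$, which is harmonic on $\h$. For log-radial times $s \geq 0$ this coupling is straightforward: the two radial processes are Brownian motions differing only by the deterministic drift $\alpha s = -\alpha \log|z|$. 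The delicate case is $s \leq 0$, where the wedge's radial process is a Brownian motion conditioned so that $\hat{B}_{2u} + (Q-\alpha)u > 0$ for all $u > 0$. I expect to resolve this using the embedding normalization $\qbmeasure{h}([-1,0])=1$, which pins down the horizontal translation of the wedge, together with an application of the Markov property of the GFF to transfer the conditioning to the auxiliary free-boundary GFF in a compatible way.
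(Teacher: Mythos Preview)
Your approach has a genuine gap at precisely the step you flag as delicate. You want $h - h^{\mathrm{free}}$ to be almost surely of the form $a + b\log|\cdot|$, i.e., the difference of the two radial processes to be affine in $s$. For $s \geq 0$ this can be arranged by using the same driving Brownian motion, leaving only the drift $\alpha s$. For $s \leq 0$, however, the wedge's radial process minus $\alpha s$ is $\hat{B}_{-2s}$, a Brownian motion \emph{conditioned} so that $\hat{B}_{2u} + (Q-\alpha)u > 0$ for all $u > 0$. This Doob $h$-transform is not equal in law to any process of the form $(\text{unconditioned BM})_u + c + du$ for random constants $(c,d)$: conditional on $(c,d)$ the latter is a Brownian motion with deterministic drift, having stationary Gaussian increments, while the conditioned process has neither property. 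The unit-boundary-length normalization only shifts the log-radial parameter by a random amount; it does not undo the conditioning. There is also a tension with your earlier requirement that $h^0_{\mathrm{ref}}$ and $\tilde{\Fh}$ be independent of $h|_{\CH_1(\h)}$: any nontrivial coupling of the radial Brownian motions destroys that independence, and with it the independence of $h^0$ from $\Fh = (h-h^{\mathrm{free}})+\tilde{\Fh}$, since $h-h^{\mathrm{free}}$ then depends on $h^0_{\mathrm{ref}}|_{\CH_1}$.

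The paper avoids the conditioned region altogether via a scaling-and-limit argument. In the circle-average embedding one has $h|_{\D\cap\h} = h^f - \alpha\log|\cdot|$ for a genuine free-boundary GFF $h^f$, so on $\D\cap\h$ the decomposition $h = h^0 + \Fh$ with $h^0$ a zero-boundary GFF on $\D\cap\h$ and $\Fh$ harmonic and independent holds directly. Since the boundary measure $\qbmeasure{h}$ is determined by $\Fh$ alone (the zero-boundary part vanishes on $\partial\h$), the point $x_\epsilon$ with $\qbmeasure{h}([-x_\epsilon,0]) = \epsilon$ is independent of $h^0$; hence $h^0(x_\epsilon\cdot)$ is still a zero-boundary GFF, now on the growing domain $x_\epsilon^{-1}\D\cap\h$. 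Tightness of the pair $(h_\epsilon, h^0(x_\epsilon\cdot))$ as $\epsilon\to 0$ then yields the claimed decomposition on all of $\h$ in the unit-boundary-length embedding.
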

\begin{proof}
First we assume that $\CW$ has the circle average embedding.  Recall that $h|_{\D \cap \h} = h^f - \alpha \log| \cdot |$, where $h^f$ is a free boundary GFF and $\alpha = 4/\gamma - \gamma/2$. It follows that we can write $h|_{\D \cap \h} = h^0+\Fh$ where $h^0$ is a zero-boundary GFF on $\D \cap \h$ and $\Fh$ is harmonic on $\D \cap \h $ and independent of $h^0$.  Fix $\epsilon \in (0,1)$ and let $x_{\epsilon}>0$ be such that $\qbmeasure{h}([-x_{\epsilon},0]) = \epsilon$.  Note that $h_\epsilon = h(x_{\epsilon}\cdot) + Q \log(x_{\epsilon}) - (\gamma/2) \log \epsilon$ has the law of a quantum wedge $(\h,h_{\epsilon},0,\infty)$ of weight $3\gamma^2/2-2$ embedded such that $\qbmeasure{h_{\epsilon}}([-1,0]) = 1$.  Moreover, $x_\epsilon$ and $h^0$ are independent (as $\qbmeasure{h}$ is determined by $\Fh$) and thus by conformal invariance, $h^0(x_\epsilon \cdot)$ is a zero-boundary GFF on $\h$ for each $\epsilon > 0$. Furthermore, since the law of $h_{\epsilon}$ does not change in $\epsilon$,  it follows that the pair $(h_{\epsilon},h^0(x_{\epsilon} \cdot))$ is tight in $\epsilon$.  Thus for every fixed subsequence,  we can find a further subsequence such that we have a joint convergence of the laws of $h_{\epsilon}$ and $h^0(x_{\epsilon} \cdot)$ and consequently of $h_{\epsilon}-h^0(x_{\epsilon} \cdot)$.  Since $h_{\epsilon}-h^0(x_{\epsilon} \cdot)$ is a.s.\  harmonic on $x_{\epsilon}^{-1}\D \cap \h$,  the limit is harmonic on $\h$.  Moreover the joint law of any subsequential limit is always the same,  and so it follows that the limit of $h_\epsilon$ (which is a quantum wedge of weight $3\gamma^2/2-2$ embedded into $\h$ such that $\qbmeasure{h}([-1,0])=1$) can be expressed as $h_*^0 + \Fh$ where $h_*^0$ is a zero-boundary GFF on $\h$ and $\Fh$ is harmonic on $\h$ and independent of $h_*^0$.  This completes the proof of the lemma.
\end{proof}

We note that Lemma~\ref{lem:wedge_harmonic_expression} implies that if $\CW = (\h,h,0,\infty)$ is quantum wedge with the circle average embedding then we can write $h = \Fh + h^0$ where~$\Fh$ is the harmonic extension of the boundary values of~$h$ from $\partial \h$ to~$\h$ and~$h^0$ is a GFF-like distribution (this decomposition comes from applying Lemma~\ref{lem:wedge_harmonic_expression} and then applying the change of coordinates formula for quantum surfaces.  In contrast to the choice of embedding from Lemma~\ref{lem:wedge_harmonic_expression}, when we use the circle average embedding, $\Fh$ and~$h^0$ are not independent and~$h^0$ does not have the law of a GFF with zero boundary conditions on~$\h$.  We now bound the exponential moments of the harmonic part when the wedge has the circle average embedding.

\begin{lemma}\label{lem:wedge_exponential_moments_circle_average}
Let $\CW = (\strip,h,-\infty,\infty)$ be a quantum wedge of weight $3\gamma^2/2-2$ with the first exit parameterization (i.e.,  parameterized such that the projection of $h$ to $H_1(\strip)$ first hits $0$ at time $u=0$) and let $\Fh$ be the harmonic extension of the values of $h$ from $\partial \strip$ to $\strip$.  Fix $y \in \R$ and $\delta \in (0,\pi/2)$.  Then there exist constants $c_1,c_2>0$ depending only on $y$, $\delta$, and $\kappa'$ such that
\begin{align*}
\p\left[ \sup_{z \in (-\infty,y] \times [\delta ,  \pi - \delta]}\Fh(z) \geq x \right] \leq c_1 \exp(-c_2 x^2) \quad \text{for every} \quad x>0.
\end{align*}
\end{lemma}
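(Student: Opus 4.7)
The approach is to decompose $\Fh$ using the orthogonal splitting $H(\strip) = \CH_1(\strip) \oplus \CH_2(\strip)$ of the wedge field and bound each piece separately via a union bound. Write $h = h^{(1)} + h^{(2)}$ where $h^{(1)}$ is the projection onto $\CH_1(\strip)$ (the function constant on vertical lines with value $\wt A_r$ on the line $r + (0,i\pi)$) and $h^{(2)}$ is the projection onto $\CH_2(\strip)$, independent of $h^{(1)}$ and distributed as the $\CH_2$-part of a free boundary GFF on $\strip$. The harmonic extension then splits as $\Fh = \Fh_1 + \Fh_2$ where $\Fh_j$ is the harmonic extension of $h^{(j)}|_{\partial\strip}$, reducing the problem to proving a Gaussian tail for each $\sup \Fh_j$.

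To bound $\Fh_1$, I would use the explicit representation
\[
\Fh_1(x+i\theta) = \int_{\R} K_\theta(x-s)\,\wt A_s\,ds,
\]
where $K_\theta$ is the Poisson kernel of $\strip$ with equal boundary data on top and bottom, satisfying $0 \leq K_\theta(u) \leq C(\delta) e^{-|u|}$ uniformly for $\theta \in [\delta, \pi - \delta]$. The first exit parameterization forces $\wt A_0 = 0$ and $\wt A_s \leq 0$ for $s \leq 0$ (so the contribution from $(-\infty,0]$ is nonpositive and can be dropped), while for $s \geq 0$ the process is $\wt A_s = B_{2s} + (Q-\alpha)s$. Splitting $\int_0^\infty$ at $s_0 = 2(|x| \vee y) + T$, the portion on $[0, s_0]$ is bounded by $\sup_{[0,s_0]} \wt A_s^+$, which has Gaussian tails with parameter $O(\sqrt{s_0})$ by standard Brownian motion estimates, and the tail $s > s_0$ is controlled by the exponential decay of $K_\theta$ against the at most linear growth of $|\wt A_s|$. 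Optimizing $T$ gives a Gaussian tail for $\sup_{x \leq y,\,\theta \in [\delta,\pi-\delta]} \Fh_1$.

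To bound $\Fh_2$, I would observe that it is a Gaussian random harmonic function on $\strip$, translation-invariant in $x$, whose pointwise variance is bounded on $\R \times [\delta, \pi-\delta]$ and whose increments $\Fh_2(z) - \Fh_2(z')$ have variance decaying exponentially in $|\re(z) - \re(z')|$. The exponential decay comes from the fact that the $\CH_2$-orthogonality removes the ``BM-in-$x$'' part of the free boundary GFF on $\strip$, which is the log-correlated obstruction. Partition $(-\infty, y]$ dyadically into $I_k = [y - 2^{k+1}, y - 2^k]$ for $k \geq 0$ and $I_{-1} = [y-1, y]$, apply Borell--TIS on each rectangle $I_k \times [\delta, \pi-\delta]$ (the expected supremum grows at most like $\sqrt{k\log 2}$ by metric entropy for a stationary Gaussian process with exponentially decaying covariance), and take a union bound over $k \geq -1$. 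The resulting Gaussian tails sum geometrically to yield a Gaussian tail for $\sup_{x \leq y,\,\theta \in [\delta,\pi-\delta]} \Fh_2$. Combining the bounds on $\Fh_1$ and $\Fh_2$ by a union bound completes the proof.

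The hard part will be Step~3: the region is non-compact and translation-invariant Gaussian processes on $\R$ with only bounded pointwise variance do not, in general, have finite supremum. One must therefore carefully verify the exponential decay of the covariance of $\Fh_2$ in $|x - x'|$ (via explicit computation with the Neumann Green's function on $\strip$, after subtracting the $\CH_1$-contribution corresponding to the two-sided Brownian motion recovered via the conformal map $z \mapsto e^z$ from the semicircle averages of the free boundary GFF on $\h$) and then combine Borell--TIS with the dyadic chaining so that the Gaussian tails do not deteriorate as one takes the union bound over infinitely many scales.
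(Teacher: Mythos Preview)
Your decomposition $\Fh = \Fh_1 + \Fh_2$ and your treatment of $\Fh_1$ are essentially the same as the paper's. The fatal problem is your Step~3: the quantity $\sup_{z \in (-\infty,y]\times[\delta,\pi-\delta]} \Fh_2(z)$ is almost surely $+\infty$, so no tail bound for it can exist. Indeed, $\Fh_2$ is a nondegenerate Gaussian process that is stationary in the horizontal coordinate with covariance tending to zero; by ergodicity, for every $M$ the set $\{x \leq y : \Fh_2(x+i\pi/2) > M\}$ has positive density and hence is nonempty a.s. Concretely, your union bound fails because on the dyadic block $I_k$ of length $2^k$ the expected supremum is of order $\sqrt{k}$, so Borell--TIS gives only $\p[\sup_{I_k}\Fh_2 \geq x] \leq \exp(-(x-C\sqrt{k})_+^2/2\sigma^2)$, which is useless once $k \gtrsim x^2$; the sum over $k$ diverges rather than ``summing geometrically''.

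The paper circumvents this by never bounding $\Fh_2$ alone on the half-strip. It proves the Gaussian tail for $\Fh_2$ only on \emph{compact} rectangles (via Borell--TIS after checking $\E[\sup_K|\Fh_2|]<\infty$), and separately proves the Gaussian tail for $\Fh_1$ on the full half-strip $\R_-\times(0,\pi)$ using that $h_1 \leq 0$ on $\R_-$. The key extra idea you are missing is then to exploit that $h_1 \to -\infty$ as $u\to -\infty$: setting $T_a = \inf\{u : h_1(u)=a\}$ and $K_n = [T_{-n},T_{-n+1}]\times[\delta,\pi-\delta]$, one first combines the two estimates (together with a tail bound on $-T_{-1}$) to get a Gaussian tail for $\sup_{K_1}\Fh$, and then uses the scale invariance of the wedge law to obtain the distributional identity $\sup_{K_n}\Fh \stackrel{d}{=} \sup_{K_1}\Fh - (n-1)$. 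Summing $\p[\sup_{K_n}\Fh \geq x] \leq c_1\exp(-c_2(x+n-1)^2)$ over $n$ then gives the claimed bound on $(-\infty,0]\times[\delta,\pi-\delta]$. In other words, it is the negative drift of $\Fh_1$, not any decay property of $\Fh_2$, that makes the supremum over the non-compact region finite.
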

\begin{proof}
We will prove the claim of the lemma in the case that $y=0$.  The general case follows from a similar argument.  We note that we can couple $h$ with a free boundary GFF $\wt{h}$ on $\strip$ with the additive constant taken so that its average of $\wt{h}$ on $\{0\} \times (0,\pi)$ is equal to $0$ and such that their projections to $H_1(\strip)$ are independent and their projections to $H_2(\strip)$ are the same.  Let $h_1$ (resp.\ $h_2$) be the projection of $h$ to $H_1(\strip)$ (resp.\ $H_2(\strip)$).  We note that $(h_1(u))_{u \geq 0}$ has the same law as $(B_{2u}+(Q-\alpha)u)_{u \geq 0}$,  where $(B_t)$ is a standard Brownian motion with $B_0 = 0$ and $\alpha = 4/\gamma - \gamma/2$.  Also,  $(h_1(-u))_{u \geq 0}$ has the law of $(\wh{B}_{2u}-(Q-\alpha)u)_{u \geq 0}$,  where $\wh{B}$ is a standard Brownian motion with $\wh{B}_0 = 0$ and conditioned so that $\wh{B}_{2u}-(Q-\alpha)u < 0$ for every $u<0$,  and such that $\wh{B}$ is independent of $B$.  Let $\Fh_1$ (resp.\ $\Fh_2$) be the harmonic extension of $h_1$ (resp.\ $h_2$) from $\partial \strip$ to $\strip$.  Note that if $\wt{h}_1$ is the projection of $\wt{h}$ to $H_1(\strip)$,  we have that $\Fh_2 = \wt{\Fh} - \wt{\Fh}_1$ where $\wt{\Fh}$ (resp.\ $\wt{\Fh}_1$) is the harmonic extension of $\wt{h}$ (resp.\ $\wt{h}_1$) from $\partial \strip$ to $\strip$.  Fix $K \subseteq \strip$ compact.  We claim that there exist constants $c_1,c_2>0$ depending only on $K$ such that for every $x>0$,
\begin{equation}\label{eqn:lateral_harmonic_part_tail_bound}
\p\left[ \sup_{z \in K}|\Fh_2(z)| \geq x \right] \leq c_1 \exp(-c_2 x^2).
\end{equation}
Indeed,  it will be more convenient to work on $\h$ instead of $\strip$.  Let $\phi(z) = e^z$ and $\wh{h} = \wt{h} \circ \phi^{-1}$.  Then $\wh{h} \circ \phi^{-1}$ is a free boundary GFF on $\h$ with the additive constant taken so that $\wh{h}_1(0) = 0$.  Moreover,  if $(\wt{X}_t)$ is the average value of $\wh{h}$ on $\h \cap \partial B(0,e^{-t})$,  then $\wt{X}_t = \wt{B}_{2t}$ where $\wt{B}$ is a two-sided Brownian motion.  Then,  it is easy to see that
\begin{equation}\label{eqn:free_boundary_upper_bound}
\int_{\R} |\wt{X}_{-\log |t|}| p(z,t) \lesssim 1 + \sup_{-T \leq t \leq T}|\wt{B}_t| + \sup_{|t| \geq 1} |\wt{B}_t| / |t|
\end{equation}
for every $z \in \phi(K)$,  where both $T>0$ and the implicit constant depend only on $K$ and $p(x+iy,t) = \frac{y}{\pi ( y^2 + (x-t)^2)}$ is the Poisson kernel on $\h$.  Note that $\wt{\Fh}_1 \circ \phi^{-1}$ is given by the harmonic extension of the function $\wt{X}^1$ on $\h$ defined by $\wt{X}^1(z) = \wt{X}_t$ for every $z \in \h$,  where $t = -\log |z|$.  Also,  we note that if $W$ is a standard Brownian motion,  then $(uW_{1/u})_{u \geq 0}$ has the law of a Brownian motion and since $\sup_{0 \leq u \leq T}|W_u|$ has Gaussian tail,  it follows that there exist universal constants $c_1,c_2>0$ such that $\p[ \sup_{u \geq 1} |W_u|/u \geq b] \leq c_1 \exp(-c_2 b^2)$ for every $b>0$.  Thus,  combining with~\eqref{eqn:free_boundary_upper_bound},  we obtain that $\p[\sup_{z \in K} |\wt{\Fh}_1(z)| \geq x ] \leq c_1 \exp(-c_2 x^2)$ for every $x>0$ possibly by taking $c_1$ to be larger and $c_2$ smaller.  Moreover,  it follows from \cite[Lemma~A.2]{kms2021regularity} that $\sup_{z \in K} |\wt{\Fh}(z)|$ has exponential moments of all orders.  Combining,  we obtain that $\E[ \sup_{z \in K}|\Fh_2(z)| ] < \infty$.  Since $(\Fh_2(z))_{z \in K}$ is a family of  Gaussians,  \eqref{eqn:lateral_harmonic_part_tail_bound} follows from the Borell-TIS inequality.

Next we show that possibly by taking $c_1$ to be larger and $c_2$ smaller,  we have for every $x \geq 0$ that
\begin{equation}\label{eqn:wedge_radial_part_upper_bound}
\p\left[ \sup_{z \in \R_- \times (0,\pi)} \Fh_1(z) \geq x \right] \leq c_1 \exp(-c_2 x^2).
\end{equation}
Indeed,  first we note that by conformally mapping onto $\h$ and using the Beurling estimate,  we obtain that there exists a universal constant $C>0$ such that the following holds.  For every $r >0,  z \in \R_- \times (0,\pi)$,  the probability that a complex Brownian motion starting from $z$ exits $\strip$ on $[r,\infty) \times \{0,\pi\}$ is at most $C e^{-r/2}$.  Since $h_1$ takes negative values on $\R_-$,  we obtain that for every $z \in \R_- \times (0,\pi)$,  
\begin{align*}
\Fh_1(z) \lesssim 1 + \sum_{m \geq 1} e^{-m/2} \sup_{2(m-1) \leq t \leq 2m} B_t
\end{align*}
where the implicit constant depends only on $\kappa'$.  Therefore,  summing over $m \in \N$, \eqref{eqn:wedge_radial_part_upper_bound} follows from the Gaussian tail of the maximum of Brownian motion combined with the Markov property.  

Now,  for every $a \in \R$,  we set $T_a = \inf\{u \in \R : h_1(u) = a\}$ and for every $n \in \N$ we set $K_n = [T_{-n} ,  T_{-n+1}] \times [\delta,\pi-\delta]$.  We claim that by adjusting the constants $c_1,c_2$,  we have for every $x >0$ that
\begin{equation}\label{eqn:wedge_harmonic_part_upper_bound}
\p\left[ \sup_{z \in K_1} \Fh(z) \geq x \right] \leq c_1 \exp(-c_2 x^2).
\end{equation}
Indeed,  first we note that $-T_{-1}$ has the same law as the first time that $B_{2t} + (Q-\alpha)t$ hits $1$.  Hence,  Girsanov's theorem implies that $-T_{-1}$ has density given by $f(t) = (2\pi t^3)^{-1/2}\exp(-(1-(Q-\alpha)t/2)^2/2t)$.  It follows that $\E[\exp(-pT_{-1})] < \infty$ for every $p<(Q-\alpha)^2/8$.  Moreover,  the law of $\Fh_2$ is translation invariant.  Therefore,  if we set $R_m = [-m,0] \times [\delta ,  \pi - \delta]$ and $S_m = [-m,-m+1] \times [\delta ,  \pi - \delta]$ for every $m \geq 0$,  combining~\eqref{eqn:lateral_harmonic_part_tail_bound} with~\eqref{eqn:wedge_radial_part_upper_bound} gives that for every $x > 1$,
\begin{align*}
\p\left[ \sup_{z \in K_1} \Fh(z) \geq x \right]& \leq \p[T_{-1} \leq -x^2] + \p\left[ \sup_{z \in R_{x^2}} \Fh_2(z) \geq x/2 \right] + \p\left[ \sup_{z \in R_{x^2}} \Fh_1(z) \geq x/2 \right]\\
&\leq \p[T_{-1} \leq -x^2] + 2x^2 \p\left[ \sup_{z \in S_1}\Fh_2(z) \geq x/2 \right] + \p\left[ \sup_{z \in \R_- \times (0,\pi)} \Fh_1(z) \geq x/2 \right]\\
& \leq c_1 \exp(-c_2 x^2)
\end{align*}
possibly by taking $c_1$ to be larger and $c_2$ smaller. This proves~\eqref{eqn:wedge_harmonic_part_upper_bound}.

Finally,  we note that $\sup_{z \in K_n} \Fh(z)$ and $-n+\sup_{z \in K_1} \Fh(z)$ have the same law for every $n \in \N$,  and so~\eqref{eqn:wedge_harmonic_part_upper_bound} implies that
\begin{align*}
\p\left[ \sup_{z \in \R_- \times [\delta ,  \pi - \delta]} \Fh(z) \geq x \right] &\leq \sum_{n\geq 1}\p\left[ \sup_{z \in K_n} \Fh(z) \geq x \right] = \sum_{n \geq 1} \p\left[ \sup_{z \in K_1} \Fh(z) \geq x+n \right]\\
&\leq c_1 \sum_{n \geq 1} \exp(-c_2 (x+n)^2) \lesssim \exp(-c_2 x^2).
\end{align*}
This completes the proof of the lemma.
\end{proof}

\begin{lemma}\label{lem:wedge_exponential_moments_unit_boundary_length}
Let $\CW = (\h , h , 0,\infty)$ be a quantum wedge of weight $3\gamma^2/2-2$ embedded into $\h$ such that $\qbmeasure{h}([-1,0]) = 1$ and let $h = h^0 + \Fh$ be the expression of $h$ as in Lemma~\ref{lem:wedge_harmonic_expression}.  Fix $K \subseteq \h$ compact. Then there exist constants $c_1,c_2>0$ depending only on $K$ and $\kappa'$ such that 
\begin{align*}
\p\left[ \sup_{z \in K}\Fh(z) \geq x \right] \leq c_1 \exp(-c_2 x^2) \quad \text{for every}\quad x > 0.
\end{align*}
\end{lemma}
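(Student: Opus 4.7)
The proof proceeds by reducing to Lemma~\ref{lem:wedge_exponential_moments_circle_average} via the conformal map $\phi \colon \strip \to \h$, $\phi(w) = -e^{-w}$, which identifies the marked points $-\infty, +\infty$ of $\strip$ with $\infty, 0$ of $\h$.  Under the coordinate change for quantum surfaces, the wedge $(\h,h,0,\infty)$ pulls back to the parameterization $\bar h(w) = h(\phi(w)) - Q\re(w)$ of the same wedge on $\strip$, and the decomposition $h = h^0 + \Fh$ from Lemma~\ref{lem:wedge_harmonic_expression} transfers to $\bar h = \bar h^0 + \bar{\Fh}$ with $\bar h^0 = h^0\circ\phi$ a zero-boundary GFF on $\strip$ and $\bar{\Fh}(w) = \Fh(\phi(w)) - Q\re(w)$ harmonic and independent of $\bar h^0$.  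Since the first-exit parameterization $\tilde h$ from Lemma~\ref{lem:wedge_exponential_moments_circle_average} is another parameterization of the same wedge on $\strip$ with the same marked points, the two differ only by a horizontal translation: there exists a random $c \in \R$ with $\bar h(w) = \tilde h(w - c)$, and hence also $\bar{\Fh}(w) = \tilde{\Fh}(w - c)$.

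For a compact $K \subseteq \h$, the preimage $\phi^{-1}(K)$ is a compact subset of $\strip$ contained in some rectangle $[a_-, a_+] \times [\delta_0, \pi-\delta_0]$ with $a_\pm, \delta_0$ depending only on $K$, and $C_K := \sup_{z \in K} |Q\log|z||$ is a finite constant depending only on $K$.  Since $\Fh(z) = \bar{\Fh}(\phi^{-1}(z)) - Q\log|z|$, the plan is to bound
\begin{align*}
\sup_{z \in K}\Fh(z) \leq C_K + \sup_{w \in \phi^{-1}(K) - c}\tilde{\Fh}(w).
\end{align*}
Partitioning according to whether $c \geq -M$ or $c < -M$ and observing that on the former event $\phi^{-1}(K) - c \subseteq (-\infty, a_+ + M] \times [\delta_0, \pi-\delta_0]$, Lemma~\ref{lem:wedge_exponential_moments_circle_average} controls the second term by $c_1(M)\exp(-c_2(M)(x - C_K)^2)$ with constants depending on $M, \delta_0, \kappa'$.

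It remains to bound the tail of the shift $c$, which is the main obstacle.  The shift is characterized by matching the boundary lengths of the two parameterizations, namely $\qbmeasure{\tilde h}([-c, +\infty) \times \{0\}) = 1$, an equation involving a boundary Gaussian multiplicative chaos driven by the conditioned Brownian motion $\tilde h_1$ from Definition~\ref{def:quantum_wedge_def}.  Standard moment estimates for boundary GMC, combined with the Gaussian tail of the underlying Brownian motion, furnish a Gaussian-type tail bound $\p[|c| \geq M] \leq c_3\exp(-c_4 M^2)$.  Choosing $M = \epsilon x$ for $\epsilon > 0$ sufficiently small balances the two contributions and yields the claimed bound $c_1\exp(-c_2 x^2)$.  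The hardest technical point is tracking the precise $M$-dependence of the constants in Lemma~\ref{lem:wedge_exponential_moments_circle_average} (which enter through the additional region $[0, M] \times [\delta_0, \pi-\delta_0]$ relative to $y=0$) and verifying that the trade-off between the two estimates is favorable; this requires revisiting the proof of Lemma~\ref{lem:wedge_exponential_moments_circle_average} to quantify how its constants scale with the right endpoint of the half-strip.
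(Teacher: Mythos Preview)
Your overall strategy matches the paper's: pass to the strip, relate the unit-boundary-length embedding to the first-exit embedding by a horizontal shift, control the shift, and feed into Lemma~\ref{lem:wedge_exponential_moments_circle_average}. However, the step you flag as the ``hardest technical point'' is a genuine gap as written, and a naive completion of it does not give Gaussian tails.

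If you use a deterministic cutoff $M$ and apply Lemma~\ref{lem:wedge_exponential_moments_circle_average} with right endpoint $y=a_++M$, you must quantify how the constants there depend on $M$. On the strip region $[0,M]\times[\delta_0,\pi-\delta_0]$ the radial part satisfies $\tilde h_1(u)=B_{2u}+(Q-\alpha)u$, so the supremum of $\tilde{\Fh}$ there has mean of order $(Q-\alpha)M$ and variance of order $M$; Borell--TIS then only yields a bound of the form $\exp(-c(x-c'M)^2/M)$. With $M=\epsilon x$ this is $\exp(-c''x)$, i.e.\ merely exponential in $x$, and no choice of $M=M(x)$ balances this against a Gaussian tail $\exp(-c_4M^2)$ for the shift to produce $\exp(-cx^2)$.

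The paper sidesteps this by replacing the deterministic $M$ with the \emph{random} level-hitting time $T_C=\inf\{u:\tilde h_1(u)=C\}$ and exploiting the exact scaling of the wedge: in law $\tilde{\Fh}(\cdot+T_C)=C+\tilde{\Fh}(\cdot)$, so
\[
\p\Bigl[\sup_{(-\infty,\,y+1+T_C]\times[\delta,\pi-\delta]}\tilde{\Fh}\ge x\Bigr]
=\p\Bigl[\sup_{(-\infty,\,y+1]\times[\delta,\pi-\delta]}\tilde{\Fh}\ge x-C\Bigr],
\]
which invokes Lemma~\ref{lem:wedge_exponential_moments_circle_average} at a \emph{fixed} $y$ with threshold $x-C$. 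The same scaling gives $\qbmeasure{\tilde h}([T_C,T_C+1]\times\{\pi\})\stackrel{d}{=}e^{\gamma C/2}\qbmeasure{\tilde h}([0,1]\times\{\pi\})$, so the event $\{T\ge T_C+1\}$ (the analogue of your $\{c<-M\}$) reduces to a small-ball probability for boundary GMC on the fixed interval $[0,1]$, which by \cite[Lemma~4.5]{ds2011lqg} is $\le c_1e^{-c_2C^2}$. Taking $C=x/2$ then gives the claimed Gaussian tail. The point is that the random $T_C$ absorbs the growing mean exactly, whereas a deterministic $M$ cannot.

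A minor issue: your map $\phi(w)=-e^{-w}$ sends $(-\infty,+\infty)$ in $\strip$ to $(\infty,0)$ in $\h$, which reverses the orientation of the marked points relative to the strip parameterization of the wedge in the paper; your $\bar h$ and $\tilde h$ then differ by a reflection as well as a translation. The paper uses $\phi(z)=e^z$.
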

\begin{proof}
Let $\phi(z) = e^z$ and $\wh{h} = h \circ \phi + Q \log|\phi'|$. Then the quantum surface $\wt{\CW}=(\strip,\wh{h},-\infty,+\infty)$ is equivalent to $\CW$ and is a quantum wedge of weight $3\gamma^2/2-2$ embedded into $\strip$ such that $\qbmeasure{\wh{h}}((-\infty,0] \times \{\pi\}) = 1$ and let $\wh{h} = \wh{h}^0 + \wh{\Fh}$,  where $\wh{h}^0$ is a zero-boundary GFF on $\strip$ and $\wh{\Fh}$ is harmonic on $\strip$ and independent of $\wh{h}^0$.  Let also $\wh{T}$ be the first time that the projection of $\wh{h}$ onto $H_1(\strip)$ hits $0$.  Then $(\strip,\wh{h}(\cdot+\wh{T}),-\infty,+\infty)$ is a quantum wedge of weight $3\gamma^2/2-2$ parameterized so that its projection onto $H_1(\strip)$ first hits $0$ at time $t=0$ and so $\wt{h}=\wh{h}(\cdot+\wh{T})$ can be decomposed as $\wt{h}^0+\wt{\Fh}$ as in Lemma~\ref{lem:wedge_exponential_moments_circle_average} where $\wt{\Fh}$ is harmonic on $\strip$.  We fix a compact set $K \subseteq \h$, let $\wh{K} = \phi^{-1}(K) \subseteq \strip$, and let $T \in \R$ be such that $\qbmeasure{\wt{h}}((-\infty,T] \times \{\pi\}) = 1$.  Then we have that $\sup_{z \in \wh{K}}\wh{\Fh}(z)$ and $\sup_{z \in (\wh{K}+T)}\wt{\Fh}(z)$ have the same law.  Also, for each $C \in \R$,  we let $T_C$ be the first time that the projection of $\wt{h}$ to $H_1(\strip)$ hits $C$.  We note that $\p[ T \geq T_C+1]$ decays to zero faster than any exponential of $-C$ as $C \to \infty$.  Indeed, $\qbmeasure{\wt{h}}([T_C,T_C+1] \times \{\pi\})$ and $e^{\gamma C/2} \qbmeasure{\wt{h}}([0,1] \times \{\pi\})$ have the same law and the proof of \cite[Lemma~4.5]{ds2011lqg} implies that there exist constants $c_1,c_2>0$ such that $\p[\qbmeasure{\wt{h}}([0,1] \times \{\pi\}) \leq e^{-\gamma C/2}] \leq c_1 e^{-c_2 \gamma^2 C^2/4}$ for every $C>0$.  Thus we have that
\begin{align*}
\p[T\geq T_C+1] \leq \p[\qbmeasure{\wt{h}}([T_C,T_C+1] \times \{\pi\}) \leq 1]= \p[ \qbmeasure{\wt{h}}([0,1] \times \{\pi\}) \leq e^{-\gamma C/2}] \leq c_1 e^{-c_2 \gamma^2 C^2 / 4}
\end{align*}
for every $C>0$.  Fix $y\in \R$ and $\delta \in (0,\pi/2)$ so that $\wh{K} \subseteq (-\infty,y] \times [\delta ,  \pi- \delta]$. We then have  for every $x,C>0$ that 
\begin{align*}
	 \p\!\left[ \sup_{z \in \wh{K}}\wh{\Fh}(z) \geq x \right] &=\p\!\left[ \sup_{z \in (\wh{K}+T)}\wt{\Fh}(z) \geq x \right]\\
	 &\leq \p\!\left[ \sup_{z \in (-\infty,y+1+T_C] \times [\delta ,  \pi - \delta]}\wt{\Fh}(z) \geq x \right] + \p[ T\geq T_C+1]\\
	&\leq \p\!\left[ \sup_{z \in (-\infty,y+1] \times [\delta ,  \pi - \delta]}\wt{\Fh}(z) \geq x-C\right] + \p[T\geq T_C+1]
\end{align*}
since $\wt{\Fh}(\cdot + T_C)$ and $C + \wt{\Fh}(\cdot)$ have the same law.  The proof is then complete by setting $C=x/2$ and combining with Lemma~\ref{lem:wedge_exponential_moments_circle_average} since $\Fh = \wh{\Fh} \circ \phi^{-1} + Q \log |(\phi^{-1})'|$.
\end{proof}

\bibliographystyle{abbrv}
\bibliography{references}

\end{document}